\documentclass{article}
\usepackage{graphicx}
\usepackage[utf8]{inputenc}
\usepackage{amsfonts}
\usepackage{amsthm}
\usepackage{amsmath}
\usepackage{amssymb}
\usepackage{tikz}
\usepackage{tikz-cd}
\usepackage{parskip}
\usepackage{tikz}
\usetikzlibrary{patterns,shapes.symbols}

\title{Superheavy Skeleta for non-Normal Crossings Divisors}
\date{}
\author{Elliot Gathercole}
\numberwithin{equation}{section}
\numberwithin{figure}{section}
\begin{document}

\maketitle

\theoremstyle{theorem}
\newtheorem{prop}{Proposition}[section]
\newtheorem{thm}[prop]{Theorem}
\newtheorem{cor}[prop]{Corollary}
\newtheorem{lem}[prop]{Lemma}

\theoremstyle{definition}
\newtheorem{defn}[prop]{Definition}
\newtheorem{set}[prop]{Setting}
\newtheorem{ex}[prop]{Example}

\tableofcontents

\bibliographystyle{plain}

\section{Introduction}

\subsection{Motivation}
The purpose of this paper is to prove non-displaceability results for certain, possibly singular or non-orientable, Lagrangians in monotone Kähler surfaces. In particular, will prove that these Lagrangians are superheavy with respect to the fundamental class (all results will be independent of ground field), which implies non-displaceability.

Consider the monotone Lagrangian Klein bottle $K\subset \mathbb{CP}^1\times \mathbb{CP}^1$ described in \cite{Ev}. We would like to know if $K$ is displaceable from itself. If the Floer cohomology of a Lagrangian is well-defined and non-zero, it gives an obstruction to displaceability. However, $K$ bounds discs of Maslov index 1, which complicate matters, and a naive calculation appears to indicate that $K$ is not even weakly unobstructed, so it appears that $HF^*(K)$ is not well-defined. Even so, we will prove the following, stronger result about $K$.
\begin{thm}
\label{Klein_bottle}
Consider $\mathbb{CP}^1\times\mathbb{CP}^1$ as a product symplectic manifold, where each factor is equipped with the Fubini-Study symplectic form, and has area $1$. 

For $i=1,2$, let $[x_i:y_i]$ be coordinates on the $i$-th factor of $\mathbb{CP}^1\times\mathbb{CP}^1$.

Let
\begin{equation}
p_i=\frac{-|x_i|^2+|y_i|^2}{2(|x_i|^2+|y_i|^2)}
\end{equation}
and
\begin{equation}
\theta_i=\arg(\overline{x}_iy_i)
\end{equation}
so $(p_i,\theta_i)$ are action-angle coordinates on the $i$-th factor.

The embedded Lagrangian Klein bottle
\begin{equation}
K=\{p_1=2p_2, 2\theta_1+\theta_2=0\}\subset\mathbb{CP}^1\times\mathbb{CP}^1
\end{equation}
is superheavy.
\end{thm}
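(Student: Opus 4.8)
The plan is to exhibit $K$ as the Liouville skeleton of the complement of a singular anticanonical divisor $D\subset\mathbb{CP}^1\times\mathbb{CP}^1$, and then to appeal to a general superheaviness statement for skeleta of anticanonical divisor complements. Note first that $\mathbb{CP}^1\times\mathbb{CP}^1$ with the stated form is monotone, $[\omega]=\tfrac12 c_1$, so the complement of an anticanonical (bidegree $(2,2)$) divisor carries a finite-type Liouville structure, at least away from the singular points of the divisor; the task is to choose $D$ so that the attractor of its Liouville flow is precisely $K$.

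To locate $D$, I would reinterpret the action--angle picture in the lattice basis in which $2\theta_1+\theta_2$ is an angle coordinate: then $K$ becomes the fibrewise conormal of a circle in the central Lagrangian torus, swept out over the segment of the moment image on which $p_1-2p_2$ vanishes. That segment runs between two opposite toric divisors, and the toric monodromy along them is exactly what replaces the expected torus fibre by a Klein bottle. Accordingly $D$ should be anticanonical, should contain those two toric divisors, and should be completed by a component concentrated along the central fibre; matching bidegree and the required degeneration forces this extra component to meet the rest of $D$ non-transversally --- a tangency or cusp --- so that $D$ is genuinely non-normal-crossings, the scenario in the title. I would then verify $K=\mathrm{Skel}(\mathbb{CP}^1\times\mathbb{CP}^1\setminus D)$: away from $\mathrm{Sing}(D)$ this is a direct computation with the Liouville flow in action--angle coordinates, contracting the $p_1-2p_2$ direction to $0$ and the transverse angle onto $\{2\theta_1+\theta_2=0\}$; near $\mathrm{Sing}(D)$ one must build a local Liouville model adapted to the non-normal-crossings singularity and check that the skeleton gains no extra components there.

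Granting $K=\mathrm{Skel}(\mathbb{CP}^1\times\mathbb{CP}^1\setminus D)$, superheaviness would follow from the general principle, established in this paper, that the symplectic quasi-state of the fundamental class localises onto the skeleton of an anticanonical divisor complement: the mechanism being that any Hamiltonian bounded above on $K$ is decomposed, via a partition of unity subordinate to a suitable almost-toric fibration of $\mathbb{CP}^1\times\mathbb{CP}^1$ adapted to $D$, into pieces supported in displaceable fibres, whence the quasi-state is bounded above by $\sup_K H$ and $K$ is superheavy with respect to the fundamental class (a Floer-theoretic route, via the vanishing of the unit of a relative symplectic cohomology, should give the same conclusion). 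Since superheavy sets are non-displaceable, this also settles the motivating question. The main obstacle throughout is the non-normal-crossings locus of $D$: both the skeleton identification and the displaceability decomposition rest on local normal forms near $D$, and the usual normal-crossings toolkit --- log-symplectic forms, product neighbourhoods of strata, the model Liouville structures on $(\mathbb{C}^\ast)^k\times\mathbb{C}^{\,n-k}$ --- does not cover a cusp or a tangency; producing the replacement local models, and checking that the quasi-state argument still runs through them, is where essentially all of the difficulty lies.
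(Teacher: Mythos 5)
Your first step --- realising $K$ as the Liouville skeleton of the complement of a singular anticanonical divisor containing the two toric divisors $\{x_2=0\}$, $\{y_2=0\}$ plus a component tangent to them --- is exactly the paper's strategy (the paper takes $s=x_2y_2(x_1^2y_2+y_1^2x_2)^2\in\Gamma(\mathcal{O}(4,4))$, a section of $-2K_M$, and identifies the skeleton by an explicit gradient-flow computation in the invariant fibre $\{p_1=2p_2\}$). The gap lies in the second half. The ``general principle'' you invoke --- that the quasi-state localises onto the skeleton of \emph{any} anticanonical divisor complement --- is false, and the paper's own examples show it: for the union of $n$ lines in $\mathbb{CP}^2$ and for the discriminant surface in $\mathbb{CP}^3$, one has $\sigma_{crit}>0$ and only a positive-volume neighbourhood of the skeleton is superheavy, not the skeleton itself. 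The correct criterion (Corollary \ref{superheavy_skeleton}) is $\lambda_v\leq 2w_v$ for \emph{every} stratum $v$, including the zero-dimensional non-normal-crossings strata. At the two tangency points the local model is $z_1(z_1+z_2^2)^2$, quasihomogeneous of weight system $(2,1,6)$, so the radial Hamiltonian there has weight $w=2+1=3$ and normalised action $\lambda=6$; the borderline equality $6=2\cdot 3$ is precisely what makes $K$ itself (rather than a fattening of it) superheavy. Your proposal contains no mechanism for extracting or checking this numerical condition, which is where the whole difficulty you correctly flag (``producing the replacement local models'') actually gets resolved: the replacement for the normal-crossings toolkit is the Hamiltonian circle action induced by the quasihomogeneous $\mathbb{C}^*$-action near the singular stratum (Lemma \ref{qhsings}), glued to the circle bundles over the smooth components via an equivariant tubular neighbourhood (Proposition \ref{hypersurf_ham}).

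Separately, your proposed displaceability mechanism does not run. A partial symplectic quasi-state is only quasi-subadditive on Poisson-commuting pairs, so decomposing $H$ by a partition of unity into pieces supported in displaceable fibres gives no bound on $\zeta(H)$; and $K$ is not presented here as a stem (the second coordinate $2\theta_1+\theta_2$ is not part of a globally defined commuting system with displaceable fibres away from $K$). The paper instead dominates an arbitrary $H$ by a single convex radial Hamiltonian $h\circ\rho^R$ built from the system of commuting Hamiltonians, and bounds its spectral number $c([M],\ell h\circ\rho^R)$ directly via the action and Conley--Zehnder index estimates of Lemmas \ref{action} and \ref{index} --- this is exactly where the weights $w_v$ and actions $\kappa\lambda_v$ enter, and it is the step your outline replaces with an argument that, as stated, would not close.
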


We could ask the same question about the displaceability of singular Lagrangians, like the subset $\{[z_0:z_1:z_2]\in\mathbb{CP}^2| \overline{z_0}z_1=\pm z_0\overline{z_2} \}$, a union of two Lagrangian $\mathbb{RP}^2$s which intersect transversely at a point, and cleanly along a circle. Defining the self Floer cohomology of such a union is much more involved than for a single submanifold. We will show the following, more general result (the above example is the case $\alpha=\pi$).
\begin{thm}
\label{RP2s}
Consider $\mathbb{CP}^2$ as a symplectic manifold, equipped with the Fubini-Study symplectic form, normalised so a line has area $1$. 

Let $[z_0:z_1:z_2]$ be the standard coordinates on $\mathbb{CP}^2$. 

Let
\begin{equation}
p_i=\frac{|z_i|^2}{|z_0|^2+|z_1|^2+|z_2|^2}
\end{equation}
and
\begin{equation}
\theta_i=\arg(\overline{z}_0z_i),
\end{equation}
so $(p_i,\theta_i)$ for $i=1,2$ are action-angle coordinates.

For $\alpha\in\mathbb{R}/2\pi\mathbb{Z}$,
\begin{equation}
L_\alpha=\{p_1=p_2, \theta_1+\theta_2=\alpha\}\subset\mathbb{CP}^1\times\mathbb{CP}^1
\end{equation}
is an embedded Lagrangian $\mathbb{RP}^2$. For $\alpha\in \left[\frac{2\pi}{3},\frac{4\pi}{3}\right]$, $L_0\cup L_\alpha\subset\mathbb{CP}^2$ is superheavy. 	

Note that for values of $\alpha$ outside this range, the union $L_0\cup L_\alpha$ is disjoinable from a Chekanov torus, and therefore cannot be superheavy.
\end{thm}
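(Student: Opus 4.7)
The plan is to realise $L_0\cup L_\alpha$ as the Liouville skeleton of the complement of an anticanonical divisor $D_\alpha\subset\mathbb{CP}^2$ and then invoke the general superheaviness theorem for non-normal-crossings divisor complements that is the main result of the paper. Since that theorem outputs superheaviness of the skeleton, this would immediately give the stated conclusion for $L_0\cup L_\alpha$.

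First, I would construct $D_\alpha$ explicitly. The standard Lagrangian $\mathbb{RP}^2\subset\mathbb{CP}^2$ is the Liouville skeleton of the complement of the smooth anticanonical conic $Q_0=\{z_0^2+z_1^2+z_2^2=0\}$ with respect to a natural primitive of $\omega_{\mathrm{FS}}$. The family $L_\alpha$ is obtained from $L_0$ by applying the $T^2$-symmetry that shifts $(\theta_1,\theta_2)\mapsto(\theta_1+\alpha/2,\theta_2+\alpha/2)$, so one correspondingly defines a ``twisted'' conic $Q_\alpha$ whose complement has $L_\alpha$ as its skeleton. I would then take $D_\alpha=Q_0\cup Q_\alpha$, a singular anticanonical divisor, and build a primitive $\lambda_\alpha$ of $\omega_{\mathrm{FS}}$ on $\mathbb{CP}^2\setminus D_\alpha$ whose Liouville vector field retracts the complement onto $L_0\cup L_\alpha$.

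Second, I would verify that $(D_\alpha,\lambda_\alpha)$ satisfies the hypotheses of the main superheaviness theorem. The two conics meet in four points, which for certain values of $\alpha$ collide to produce tacnode-type singularities; this is precisely the non-NC setting the main theorem is designed to handle. Applied to $(D_\alpha,\lambda_\alpha)$, the theorem then directly yields superheaviness of the skeleton with respect to the fundamental class.

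The main obstacle is expected to be pinning down the precise range $\alpha\in[2\pi/3,4\pi/3]$. Outside this range, the Liouville flow on $\mathbb{CP}^2\setminus D_\alpha$ presumably no longer retracts onto the whole union $L_0\cup L_\alpha$: part of the would-be skeleton collapses, consistent with the Chekanov-torus disjoinability remark. Identifying the critical endpoints $\alpha=2\pi/3,4\pi/3$ will require an explicit analysis of the Liouville vector field near the four intersection points of $Q_0$ and $Q_\alpha$, determining exactly when the unstable manifolds of those points fall into $L_0\cup L_\alpha$ rather than escaping to $D_\alpha$. This local model computation, together with the verification that $(D_\alpha,\lambda_\alpha)$ fits the hypotheses of the main theorem, is where I expect the bulk of the work to lie.
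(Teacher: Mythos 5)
Your overall strategy (relate $L_0\cup L_\alpha$ to the skeleton of the complement of a degenerate divisor built from two torus-symmetric conics with tacnodal intersections, then invoke the superheaviness criterion) is indeed the paper's, but two of your central claims fail as stated. First, the conics relevant here, $\{z_1z_2\pm z_0^2=0\}$ (or any torus-invariant twist), always meet in exactly \emph{two} points, $[0:1:0]$ and $[0:0:1]$, each a tacnode; there are no four nodal points colliding. Second, and more seriously, the complement of the unweighted union $Q_0\cup Q_\alpha$ with the compactification primitive does \emph{not} have $L_0\cup L_\alpha$ as its skeleton when $\alpha\neq\pi$. The position of the skeleton is pinned down by the relative class $[\omega,\theta]\in H^2(M,X)$ (Lemma \ref{rel_deRham}): in the symplectic reduction of the invariant fibre $\{p_1=p_2\}$ the skeleton is the preimage of two arcs cutting the reduced sphere into pieces whose areas are in the ratio of the divisor coefficients. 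With equal coefficients you always get the ``balanced'' configuration, i.e.\ two $\mathbb{RP}^2$s at angular separation $\pi$ (for $D_++D_-$ this is $L_{\pi/2}\cup L_{-\pi/2}$, even though the individual complements of $D_+$ and $D_-$ have skeleta $L_0$ and $L_\pi$). No choice of adapted primitive can move the skeleton to the unbalanced position $L_0\cup L_\alpha$, so your step ``build $\lambda_\alpha$ whose Liouville flow retracts onto $L_0\cup L_\alpha$'' has no solution for $\alpha\neq\pi$.

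The paper circumvents this by weighting the components: it takes the $\mathbb{Q}$-divisor $D=aD_++bD_-$ (section $s=s_+^as_-^b$ of $\mathcal{O}(2(a+b))$, with $-2K_{\mathbb{CP}^2}=\tfrac{3}{a+b}[D]$), so the balance condition puts the skeleton arcs in area ratio $a:b$, matching $\alpha=2\pi a/(a+b)$ — but even then only up to a Hamiltonian isotopy (Lemma \ref{arcs_approximation}), after which superheaviness of $L_0\cup L_\alpha$ itself is extracted by a neighbourhood/limiting argument together with rational approximation of $\alpha/2\pi$. Your diagnosis of where the bound $\alpha\in[\tfrac{2\pi}{3},\tfrac{4\pi}{3}]$ comes from is also off: it is not a threshold in the Liouville dynamics near the intersection points (the skeleton persists for all $\alpha$), but the numerical criterion $\sigma_{crit}=0$ of Corollary \ref{superheavy_skeleton}, namely $\lambda_Y\leq 2w_Y$ for the components $D_\pm$, i.e.\ $\tfrac{3a}{a+b}\leq 2$ and $\tfrac{3b}{a+b}\leq 2$, equivalently $\tfrac{a}{a+b}\in[\tfrac13,\tfrac23]$; at the tacnodes the local equation $(x+y^2)^a(x-y^2)^b$ is quasihomogeneous with weights $(2,1)$, giving $\lambda=6=2w$ there, so the singular points impose no further restriction. (Minor point: a conic is not anticanonical; the framework only needs $[D]=-2K$ as $\mathbb{Q}$-divisors, which is why the weighted/fractional setup is the right one.)
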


We could also consider an analogous example in three dimensions.
\begin{thm}
\label{RP3s}
Consider $\mathbb{CP}^3$ as a symplectic manifold, equipped with the Fubini-Study symplectic form, normalised so a line has area $1$. 

Let $[z_0:z_1:z_2:z_3]$ be homogeneous coordinates on $\mathbb{CP}^3$.

Let 
\begin{equation}
p_i=\frac{|z_i|^2}{\sum_{j=0}^3|z_j|^2}
\end{equation}
for $i=0,...,3$ and
\begin{equation}
\varphi=arg(z_1z_2\overline{z_3z_0}).
\end{equation}
The union
\begin{equation}
\left\{p_1=p_2, p_3=p_0, \varphi=\pm \frac{\pi}{2}\right\}
\end{equation}
of two embedded Lagrangian $\mathbb{RP}^3$s is superheavy.
\end{thm}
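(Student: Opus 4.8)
The plan is to run the argument behind Theorem~\ref{RP2s} one dimension higher. Write $L^{\pm}=\operatorname{Fix}(\sigma^{\pm})$, where $\sigma^{\pm}[z_0:z_1:z_2:z_3]=[\overline{z_3}:\pm i\,\overline{z_2}:\pm i\,\overline{z_1}:\overline{z_0}]$ are antisymplectic involutions of $\mathbb{CP}^3$; then $L^{\pm}$ is exactly the branch $\{p_1=p_2,\ p_0=p_3,\ \varphi=\pm\tfrac{\pi}{2}\}$, so the Lagrangian in the statement is $L:=L^+\cup L^-$. First I would realise $L$ as a fibre of an explicit singular Lagrangian fibration. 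Since $z_1z_2$ is invariant under the Hamiltonian circle action whose moment map is $p_1-p_2$, and $z_0z_3$ is invariant under the one whose moment map is $p_0-p_3$, the three functions
\begin{equation}
\Phi=\Bigl(\,p_1-p_2,\ \ p_0-p_3,\ \ \frac{\operatorname{Re}\bigl(z_1z_2\,\overline{z_0z_3}\bigr)}{\bigl(\sum_{j=0}^{3}|z_j|^{2}\bigr)^{2}}\,\Bigr)
\end{equation}
pairwise Poisson-commute, so $\Phi\colon\mathbb{CP}^3\to\mathbb{R}^3$ is an integrable system whose regular fibres are Lagrangian $3$-tori. One checks directly that $\Phi^{-1}(0,0,0)=L$, smooth away from the clean intersection $L^+\cap L^-$. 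Equivalently -- and this is the form in which the earlier sections operate -- $L$ is the Liouville skeleton of the complement of the non-normal-crossings anticanonical quartic
\begin{equation}
D=\{\,z_1^2z_2^2=z_0^2z_3^2\,\}=\{\,z_1z_2=z_0z_3\,\}\cup\{\,z_1z_2+z_0z_3=0\,\}\subset\mathbb{CP}^3,
\end{equation}
a union of two smooth quadrics meeting along four lines; note that $L\cap D=\varnothing$.

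The second and main step is to compute the Floer theory of the singular Lagrangian $L$ and feed it into the superheaviness criterion of the earlier sections: it suffices to exhibit on $L$ a (bulk-deformed) weak bounding cochain making its curved $A_{\infty}$-algebra unobstructed with $HF^{*}(L)\neq 0$ over an arbitrary coefficient field, after which the version of the Lagrangian spectral estimate proved there gives $\zeta(H)\le\sup_L H$ for the fundamental-class quasi-state, hence superheaviness. I would compute $HF^{*}(L)$ by a Mayer--Vietoris-type spectral sequence assembled from $HF^{*}(L^+)$, $HF^{*}(L^-)$ and the Floer complex $CF^{*}(L^+,L^-)$, the last governed by the Morse cohomology of $L^+\cap L^-$ together with the holomorphic strips whose boundary is split between the two branches, and choose the bulk class to be supported on $L^+\cap L^-$. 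This is precisely where the union does what a single branch cannot: each $L^{\pm}$ is a monotone Lagrangian $\mathbb{RP}^3$ (Hamiltonian isotopic to the standard one, since $\sigma^{\pm}$ is conjugate to complex conjugation in $PU(4)$), so $HF^{*}(L^{\pm};k)$ vanishes unless $\operatorname{char}k=2$; over characteristic $2$ the theorem already follows from the superheaviness of a single $\mathbb{RP}^3$ and the fact that superheaviness persists under enlargement, and the content is the remaining characteristics, where the strips through the singular locus supply exactly the classes that keep $HF^{*}(L;k)\neq 0$ for every $k$ -- which is what the "independent of ground field" clause in the introduction refers to.

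It then remains to record two routine points, both parallel to the two-dimensional case: that every other fibre of $\Phi$ is displaceable, shown by writing down explicit Hamiltonian isotopies displacing the regular $3$-torus fibres and the non-central singular fibres; and that $D$ satisfies the hypotheses of the general superheavy-skeleton theorem, a combinatorial check on the dual complex of $D$ and its four lines of double points. The main obstacle is the Floer-theoretic computation in the second step: controlling the holomorphic discs and strips on $L$ that approach or break along $L^+\cap L^-$, verifying that the resulting $A_{\infty}$-structure admits the required weak bounding cochain, and determining the rank of the Floer differential over every field. I expect the construction of $\Phi$, the identification of the central fibre and of the skeleton, and the displacement of the remaining fibres to be essentially bookkeeping mirroring the proofs of Theorems~\ref{Klein_bottle} and~\ref{RP2s}; it is this last step, which has no counterpart in the smooth or toric settings, that requires the machinery developed earlier in the paper.
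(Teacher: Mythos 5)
Your first step matches the paper: Lemma \ref{RP3_skeleton} identifies $\{p_1=p_2,\ p_3=p_0,\ \varphi=\pm\frac{\pi}{2}\}$ as the skeleton of the complement of the anticanonical quartic $D=\{z_1^2z_2^2=z_3^2z_0^2\}=D_+\cup D_-$, two quadrics meeting along four lines, using invariance of $d^c\log||s||$ under the $\mathbb{R}^2/\mathbb{Z}^2$-action generated by $(p_1-p_2,p_3-p_0)$ and a gradient-flow computation in the central fibre. But your ``second and main step'' is a genuine gap, on two counts. First, the criterion you want to feed it into does not exist in this paper: there is no ``Lagrangian spectral estimate'' converting unobstructedness and $HF^*(L)\neq 0$ into $\zeta(H)\leq\sup_L H$. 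The superheaviness criterion proved here (Proposition \ref{superheavy_subset} and Corollary \ref{superheavy_skeleton}) is stated entirely in terms of the divisor $D$: one needs a system of commuting radial Hamiltonians for the stratification $V_D$, an adapted primitive, and the numerical inequality $\lambda_Y\leq 2w_Y$ relating the action and the isotropy weight of each stratum's Hamiltonian. Indeed the introduction is explicit that the paper's machinery is designed to \emph{avoid} Floer cohomology of such Lagrangians, which for singular unions (and already for the Klein bottle) may not even be well-defined. Second, even taken on its own terms, your central step is not carried out: the bulk-deformed $A_\infty$-structure on $L^+\cup L^-$, the Mayer--Vietoris-type spectral sequence, and the control of strips breaking along $L^+\cap L^-$ are exactly the hard, open-ended analysis the paper sidesteps, and you acknowledge them as the main obstacle rather than resolving them. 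Your remaining ``routine points'' also do not rescue the argument: displaceability of every other fibre of $\Phi$ would only help via a stem-type theorem of Entov--Polterovich, which is a different criterion not proved here, and that displaceability claim is itself far from bookkeeping.

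For comparison, the paper's actual proof of Theorem \ref{RP3s} after the skeleton identification is short and explicit: the strata of $V_D$ are the two quadrics $D_\pm$, the four lines $\{z_i=z_j=0\}$ ($i\in\{1,2\}$, $j\in\{3,0\}$), and their four triple points; the functions $p_i+p_j$ are radial Hamiltonians for the lines (weight $2$, action $4$) and $p_i+2p_j+p_k$ for the points (weight $4$, action $8$), forming a system of commuting Hamiltonians on $ (V_D)_{\leq 1}$, which is extended over the quadrics (weight $1$, normalised action $2$) by Proposition \ref{hypersurf_ham}; since $\lambda_Y=2w_Y$ for every stratum, Corollary \ref{superheavy_skeleton} gives superheaviness of the skeleton over any ground field, with no Lagrangian Floer theory and no displacement of other fibres needed.
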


It turns out that, in the case of Theorem \ref{Klein_bottle}, the case $\alpha=\pi$ of Theorem \ref{RP2s}, and the case of Theorem \ref{RP3s}, the monotone Lagrangian in question can be realised as the skeleton of the complement of an (algebraic) divisor in a multiple of the anticanonical class, equipped with the usual compactification one-form.

In \cite{BSV}, Borman, Sheridan and Varolgunes construct a special family of Hamiltonians adapted to an anticanonical normal crossings symplectic divisor in a monotone symplectic manifold (in the algebraic setting, an anticanonical normal crossings divisor in a projective variety), which they use, among other things, to show that, under certain numerical conditions on the divisor, the skeleton of the complement is SH-full, a strong condition which implies non-displaceability (in \cite{MSV}, SH-full sets are characterised as those which intersect all closed heavy sets). 

Usually, when trying to understand the symplectic geometry of a divisor complement in a projective variety, the normal crossings condition is no hindrance, because we can blow up along the divisor without changing the geometry of the complement. However, displaceability of the Lagrangian skeleton is a relative property of the skeleton inside the ambient projective variety. Indeed, relative symplectic cohomology, the invariant constructed by Varolgunes, associated to any compact subset of a compact symplectic manifold, and used in \cite{BSV} to prove the rigidity of the skeleton, is not a local invariant in this sense.

Because the anticanonical divisors in our two examples are not normal crossings, we cannot directly apply the result in \cite{BSV}. However, in cases where the divisor has nice enough singularities, including our two examples, we can construct families of Hamiltonians with similar properties to those constructed in \cite{BSV}, which we could use to prove that the skeleta are SH-full. In fact, we will use this construction to prove that the skeleta are superheavy (a stronger condition, according to Corollary 3.10 of \cite{MSV}), using similar arguments to those of Entov and Polterovich in \cite{EP}.

More generally, as in \cite{BSV}, even when we cannot show that the skeleton is $SH$-full, we can find a SH-full (in fact, superheavy) neighbourhood of which the skeleton is a retract, contained in the divisor complement, the volume of which depends on numerical information from the divisor.

For example, we can find such neighbourhoods for the following one-dimensional cell complexes in $\mathbb{CP}^2$.
\begin{thm}
\label{arcs_nbhd}
Consider $\mathbb{CP}^2$ as a symplectic manifold, equipped with the Fubini-Study symplectic form, normalised so a line has area $1$. 

Let $[z_0:z_1:z_2]$ be the standard coordinates on $\mathbb{CP}^2$, and $(p_i,\theta_i)$ be action-angle coordinates as in Theorem \ref{RP2s}. 

The union
\begin{equation}
\bigcup_{k=1}^n\left\{z_0=0, \theta_1-\theta_2=\frac{2\pi}{n}\left(k+\frac{1}{2}\right)\right\},
\label{n_circles}
\end{equation}
of $n$ arcs in $\{z_0=0\}\subset\mathbb{CP}^2$ is a retract of a neighbourhood $U\subset\mathbb{CP}^2$ which is superheavy, and occupies $\frac{1}{9}$ of the volume of $\mathbb{CP}^2$.
\end{thm}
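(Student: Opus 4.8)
The plan is to realise \eqref{n_circles} as the Liouville skeleton of an explicit divisor complement and then transplant the machinery developed in the body of the paper. Let $C=\{z_1^n-z_2^n=0\}\subset\mathbb{CP}^2$ be the union of the $n$ lines through $[1:0:0]$ whose slopes $z_1/z_2$ are the $n$-th roots of unity; then $3C=\{(z_1^n-z_2^n)^3=0\}$ lies in the class $n(-K_{\mathbb{CP}^2})$, a positive multiple of the anticanonical class, with a single non-normal-crossings point at $[1:0:0]$ where all $n$ branches are concurrent --- exactly the sort of singularity our construction is designed to accommodate. First I would compute the Liouville skeleton of $\mathbb{CP}^2\setminus C$ for the compactification one-form attached to this divisor. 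Since $C\ni[1:0:0]$ and $C\setminus\{[1:0:0]\}$ is a union of $n$ fibres of the line bundle $\mathbb{CP}^2\setminus\{[1:0:0]\}\to\mathbb{CP}^1$, $[z_0:z_1:z_2]\mapsto[z_1:z_2]$, the complement $\mathbb{CP}^2\setminus C$ is the total space of this bundle restricted to $\mathbb{CP}^1\setminus\{q_1,\dots,q_n\}$, where $q_j$ is the image of the $j$-th line; these $n$ punctures lie equally spaced on the equator $\{|z_1|=|z_2|\}$ at $\theta_1-\theta_2=\tfrac{2\pi j}{n}$. The skeleton is therefore the spine of the punctured base, realised inside the zero-section $\{z_0=0\}$, and since the punctures sit at $\theta_1-\theta_2\in\tfrac{2\pi}{n}\mathbb Z$ the spine is the union of meridians at $\theta_1-\theta_2=\tfrac{2\pi}{n}(k+\tfrac12)$, which is precisely \eqref{n_circles}. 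I would take $U=\mathbb{CP}^2\setminus N(C)$, the complement of the open ``codisk-bundle'' neighbourhood $N(C)$ of $C$ carved out by the one-form; this is a compact neighbourhood of the skeleton, and it deformation retracts onto \eqref{n_circles} (a disk bundle over $\mathbb{CP}^1$ minus $n$ disks retracts fibrewise onto the spine of the punctured base).

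Second, I would feed the family of Hamiltonians adapted to $3C$ --- the analogue for this singular divisor of the Borman--Sheridan--Varolgunes family, built in the body of the paper --- into the Entov--Polterovich argument of \cite{EP}: these Hamiltonians control the spectral invariant of the fundamental class on functions supported in $N(C)=\mathbb{CP}^2\setminus U$, and together with the vanishing of the relative symplectic cohomology of $N(C)$ this forces $\zeta(H)\le\sup_U H$ for every $H\in C^\infty(\mathbb{CP}^2)$, where $\zeta$ is the symplectic quasi-state on $\mathbb{CP}^2$. Hence $U$ is superheavy, and by Corollary 3.10 of \cite{MSV} it is also SH-full.

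Third, the volume. The divisor $C$, hence $N(C)$ and $U$, is invariant under the circle action $[z_0:z_1:z_2]\mapsto[z_0:e^{i\alpha}z_1:e^{i\alpha}z_2]$, whose moment map $p_1+p_2$ sweeps $[0,1]$ with reduced space a $\mathbb{CP}^1$ of symplectic area $c$ at level $c$, inside which $C$ reduces to the $n$ equatorial points $q_j$. Choosing $N(C)$ circle-invariant, at each level it reduces to the neighbourhood of those $n$ points prescribed by the construction; integrating the reduced areas over $c\in[0,1]$ --- the anticanonical normalisation making the outcome independent of $n$ --- gives $\mathrm{vol}(N(C))=\tfrac89\,\mathrm{vol}(\mathbb{CP}^2)$, hence $\mathrm{vol}(U)=\tfrac19\,\mathrm{vol}(\mathbb{CP}^2)$. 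The factor $\tfrac19=(\tfrac13)^2$ records that the one-form places the skeleton's neighbourhood inside the region dual to a $\tfrac13$-scaling of $C$, a shadow of $\deg(-K_{\mathbb{CP}^2})=3$.

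The main obstacle is the local analysis at $[1:0:0]$: constructing the adapted Hamiltonian near the $n$-fold concurrence and establishing the quantitative estimates there --- the replacement for the normal-crossing local model of \cite{BSV} at a point where $n$ branches meet --- together with the vanishing of relative symplectic cohomology for $N(C)$, a non-toric neighbourhood of $C$. Once that local input is in place, the skeleton identification, the deformation retraction, and the volume integral are routine unwindings of the definitions.
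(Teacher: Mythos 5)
Your overall strategy---realise the $n$ arcs as the Liouville skeleton of the complement of the $n$ concurrent lines $\{z_1^n=z_2^n\}$ and run the paper's machinery for the quasihomogeneous singularity at $[1:0:0]$---is the same as the paper's, and your skeleton identification is essentially right (the paper does it by $S^1$-symmetry plus an explicit gradient computation in the fibre $\{z_0=0\}$ rather than via the line-bundle picture, but both work). However, there is a genuine gap at the quantitative heart of the theorem: you never compute the numerical invariants at the $n$-fold point, and as a result both your choice of $U$ and your volume count are unsupported. At $[1:0:0]$ the section is quasihomogeneous with weight system $(1,1,n)$, so Lemma \ref{qhsings} gives total weight $w=1+1=2$ and normalised action $\lambda=6$ (since $-2K_{\mathbb{CP}^2}=\tfrac{6}{n}D$); because $\lambda=6>2w=4$, one gets $\sigma_{crit}=\tfrac{\lambda-2w}{\lambda}=\tfrac13$. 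The superheavy set is then the sublevel set $K_{crit}=(\rho^0)^{-1}[0,\tfrac13]$ of Proposition \ref{superheavy_subset}, and its volume fraction is $(\tfrac13)^2=\tfrac19$ because the Liouville flow used to define $\rho^0$ rescales the symplectic form, so $\{\rho^0\le\sigma\}$ occupies the fraction $\sigma^2$ of the volume in complex dimension $2$. Your proposal instead takes $U$ to be the complement of a ``codisk-bundle neighbourhood'' $N(C)$ of $C$ and asserts $\mathrm{vol}(N(C))=\tfrac89$ by ``integrating reduced areas''; this is not a derivation (nothing pins down the size of a tubular neighbourhood of $C$ a priori), and it obscures the actual reason the answer is $\tfrac19$ and cannot be improved by this method: the action/index estimates of Lemmas \ref{action} and \ref{index} fail below the level $\sigma_{crit}=\tfrac13$ precisely because of the concurrence of the $n$ branches.

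A secondary point: your mechanism for superheaviness (``vanishing of relative symplectic cohomology of $N(C)$ forces $\zeta(H)\le\sup_U H$'') is not the argument the paper uses and does not obviously yield superheaviness---SH-invisibility of the complement gives SH-fullness of $U$, which is the \emph{weaker} conclusion. The paper instead bounds the spectral invariants $c([M],\ell h\circ\rho^R)$ directly via the Entov--Polterovich scheme, using the action and Conley--Zehnder estimates for orbits of $h\circ\rho^R$ (Lemmas \ref{main_estimate} and \ref{h_rho_estimate}); you gesture at this but conflate it with the relative-SH route. To repair the proposal you would need to (i) extract $w=2$, $\lambda=6$ at $[1:0:0]$ and hence $\sigma_{crit}=\tfrac13$, (ii) replace $U=\mathbb{CP}^2\setminus N(C)$ by $(\rho^0)^{-1}[0,\tfrac13]$, and (iii) obtain the volume from the Liouville scaling rather than from the geometry of $N(C)$.
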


We can also obtain such a neighbourhood of the Chiang Lagrangian in $\mathbb{CP}^3$ (constructed in \cite{Chiang}).
\begin{thm}
\label{Chiang_nbhd}
The Chiang Lagrangian is a retract of a neighbourhood $U\subset\mathbb{CP}^3$ which is superheavy, and occupies $\frac{1}{216}$ of the volume of $\mathbb{CP}^3$.
\end{thm}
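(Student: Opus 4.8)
The plan is to follow the scheme indicated in the introduction: realise the Chiang Lagrangian $L\subset\mathbb{CP}^3$ as a deformation retract of the complement of an anticanonical divisor whose singularities, although not normal crossings, lie in the class handled by the construction developed above, and then run the Entov--Polterovich argument of \cite{EP} --- in the form already used in this paper --- to produce a superheavy neighbourhood. Concretely, identify $\mathbb{CP}^3=\mathbb{P}(\operatorname{Sym}^3\mathbb{C}^2)$ with the space of binary cubics, so that a point of $\mathbb{CP}^3$ is an unordered triple of points of $\mathbb{CP}^1$ counted with multiplicity, and let $D\subset\mathbb{CP}^3$ be the discriminant hypersurface --- the locus of cubics with a repeated root, equivalently the projective dual of the twisted cubic $C$. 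Then $D$ is an irreducible quartic surface, hence anticanonical since $-K_{\mathbb{CP}^3}=\mathcal{O}(4)$; its singular locus is exactly $C$ (the triple roots), and along $C$ it carries a cuspidal edge with transverse model $u^2=v^3$ and no further singular strata.

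\emph{Step 1: the complement deformation retracts onto $L$.} The complement $\mathbb{CP}^3\setminus D$ is the space of unordered triples of \emph{distinct} points of $\mathbb{CP}^1$. Since $\mathrm{PGL}(2,\mathbb{C})$ acts simply transitively on ordered triples of distinct points, $\mathbb{CP}^3\setminus D$ is the quotient of $\mathrm{PGL}(2,\mathbb{C})$ by the finite subgroup $\Sigma\cong S_3$ of M\"obius transformations permuting a fixed balanced triple. Conjugating $\Sigma$ into a maximal compact subgroup $\mathrm{SO}(3)\subset\mathrm{PGL}(2,\mathbb{C})$ --- possible since every finite subgroup is so conjugate --- and applying the polar decomposition, which is equivariant for the commuting left $\mathrm{SO}(3)$-action and right $\Sigma$-action, gives a deformation retraction of $\mathbb{CP}^3\setminus D$ onto $\mathrm{SO}(3)/\Sigma$. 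This quotient is precisely the $\mathrm{SO}(3)$-orbit in $\mathbb{CP}^3$ of the balanced cubic, i.e.\ the Chiang Lagrangian $L$ (the central element $-I\in\mathrm{SU}(2)$ acts trivially on $\mathbb{P}(\operatorname{Sym}^3\mathbb{C}^2)$, so the relevant symmetry group is $\mathrm{SO}(3)$, with stabiliser of the balanced cubic the triangle group $\Sigma\cong S_3$); in particular $L\subset\mathbb{CP}^3\setminus D$. Equivalently --- and this is the form I will use below --- a deformation retraction of $\mathbb{CP}^3\setminus D$ onto $L$ is furnished by the downward gradient flow of the proper, $\mathrm{SO}(3)$-invariant electrostatic energy $E(\{x_1,x_2,x_3\})=-\sum_{i<j}\log\|x_i-x_j\|$, whose unique critical configuration is the balanced one and which tends to $+\infty$ along $D$.

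\emph{Step 2: the adapted Hamiltonians.} The genuinely new point is to verify that the cuspidal edge of $D$ along $C$ is admissible for the construction of the preceding section --- that one can build a family of Hamiltonians adapted to $D$ and the Fubini--Study compactification one-form in a neighbourhood of $C$. This is a local normal-form verification near $C$, and it is where the hypothesis ``nice enough singularities'' must be cashed out, since a cuspidal edge is strictly worse than a normal crossing: one passes to a weighted resolution of $D$ whose weights are governed by $\operatorname{lcm}(2,3)=6$ and checks that the adapted Hamiltonian descends to $\mathbb{CP}^3$. Granting this, feeding $D$ into the construction produces the adapted Hamiltonians, and the Entov--Polterovich argument used earlier in the paper then yields an open neighbourhood $U\subset\mathbb{CP}^3\setminus D$ of the skeleton of $\mathbb{CP}^3\setminus D$ which is superheavy with respect to the fundamental class, over an arbitrary ground field.

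\emph{Step 3: the retraction onto $L$, the volume, and the main obstacle.} Since $D$, the Fubini--Study form and its compactification one-form --- hence the whole construction --- are $\mathrm{SO}(3)$-invariant, $U$ may be taken $\mathrm{SO}(3)$-invariant; being a neighbourhood of the skeleton bounded away from $D$ and governed by the Liouville distance to $D$ (which is comparable to $E$), it may moreover be arranged as a sublevel set $\{E\le c\}$ for an appropriate threshold $c$. Such a $U$ contains $L$ (the minimum locus of $E$) and is invariant under the $-\nabla E$ flow of Step~1, which therefore restricts to a deformation retraction $U\to L$. Finally, the combinatorial volume formula of the preceding section, evaluated for this $D$, gives $\operatorname{vol}(U)=\tfrac{1}{216}\operatorname{vol}(\mathbb{CP}^3)$, the exponent being $\dim_{\mathbb{C}}\mathbb{CP}^3=3$ and the base $6=\operatorname{lcm}(2,3)$ the number forced by the cuspidal edge. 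I expect the main obstacle to be Step~2: verifying that the cuspidal-edge singularity of the discriminant quartic along the twisted cubic falls within the admissible class, and extracting the precise numerical input that produces the factor $\tfrac{1}{216}$. Step~1 is the classical geometry of point configurations on $S^2$, Step~3 is bookkeeping, and the superheaviness itself, once the adapted Hamiltonians are in hand, repeats the argument used earlier in the paper.
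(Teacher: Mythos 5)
Your overall framing (discriminant quartic $D\subset\mathbb{CP}^3$, anticanonical, singular along the twisted cubic with a transverse cusp $u^2=v^3$, then run the machinery of the paper) matches the paper, but the proposal has a genuine gap exactly where you flag it: Step 2 is not a verification, it is a placeholder, and it is the entire content of the theorem beyond generalities. What is needed is a radial Hamiltonian for the singular stratum $N$ (the twisted cubic) commuting with the rest of the system, together with its two numerical invariants: the total weight $w_N$ and the action $\lambda_N$ with respect to $d^c\log\|s\|$. Since Corollary \ref{qhsch} is stated only for surfaces, the paper produces this Hamiltonian globally from the $SU(2)$-symmetry: by Seidel's lemma $\|\mu\|$ generates a circle action on $\mathbb{CP}^3\setminus L$ fixing $N=\{\|\mu\|=1\}$ pointwise, so $r_N=1-\|\mu\|$ is a radial Hamiltonian adapted to $d^c\log\|s\|$; its isotropy representation at a triple point has weights $2,3$ (acting on the monomials $T^2,T^3$), so $w_N=5$, and evaluating $s$ on the disc $u(z)=\{z\alpha_1,z\alpha_2,z\alpha_3\}$ gives $u^*s=z^6\cdot\mathrm{const}$, hence $\lambda_N=2[u]\cdot[D]=12$. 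Then $\sigma_{crit}=(\lambda_N-2w_N)/\lambda_N=\tfrac{2}{12}=\tfrac16$ and the superheavy set is $\{\rho^0\le\tfrac16\}$, whose volume fraction is $\sigma_{crit}^3=\tfrac1{216}$. Your proposed derivation of the number --- ``base $6=\operatorname{lcm}(2,3)$ from a weighted resolution'' --- is numerology that happens to land on the right value; it is not the formula the construction actually produces ($\sigma_{crit}$ is $(\lambda-2w)/\lambda$, not $1/\operatorname{lcm}$), and no argument is given that a ``weighted resolution'' interacts correctly with the relative invariant being used (the paper stresses that blowing up is not harmless here, since relative symplectic cohomology is not local to the complement).

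A secondary issue is Step 1/Step 3. What the theorem needs is that the Chiang Lagrangian is the Lagrangian skeleton of $(X,d^c\log\|s\|)$, so that $U=\{\rho^0\le\sigma_{crit}\}$ retracts onto it via the Liouville flow; the paper gets this from equivariance of the moment map and Lemma \ref{skeleton_barycentre} ($L=\mu^{-1}(0)$ and $0$ is the unique adjoint fixed point). Your homotopy retraction of $\mathbb{CP}^3\setminus D$ onto $L$ via $\mathrm{PGL}(2,\mathbb{C})/\Sigma$ or an electrostatic energy $E$ is weaker: it does not identify $L$ as the skeleton of the specific Liouville structure used to build $\rho^R$, and the claims that $U$ can be arranged as a sublevel set of $E$, that $E$ is comparable to the Liouville data, and that $-\nabla E$ has no critical points in $U$ other than $L$ are all unsubstantiated. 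So as written, both the retraction onto $L$ and the quantitative superheaviness/volume statement rest on steps you have deferred rather than proved.
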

The Chiang Lagrangian itself is known to have non-zero Floer cohomology in characteristic $5$ (see \cite{EL}), but this result holds in any characteristic. 

\subsection{Key Idea}
The main original part of this paper is the construction of a smoothing family of contact hypersurfaces for certain divisors, with prescribed Reeb dynamics. Consider, for example, the divisor $\{y=0\}\cup \{y=x^2\}\subset\mathbb{C}^2$, where $(x,y)$ are coordinates on $\mathbb{C}^2$. 

The irreducible components $\{y=0\}$ and $\{y=x^2\}$ are both embedded, and we can find standard symplectic tubular neighbourhoods of these curves. For example, the curve $\{y=0\}$ is embedded as the zero-section of the standard product bundle, which carries a Hamiltonian fibrewise $U(1)$-action, generated by $\frac{1}{2}|y|^2$. The Reeb dynamics on the level sets of $|y|$ all come from this circle action. Symplectically, we could do a similar construction for $\{y=x^2\}$, or indeed any symplectic submanifold of codimension 2. 

To turn these circle bundles over the two curves into a single hypersurface enclosing the union of the two curves, we could do a surgery near the intersection point $(0,0)$, but it is not clear how the Reeb dynamics interact on the intersection (we would like the circle actions to commute, but it may not be possible to ensure this).

There is a Hamiltonian $\mathbb{R}/\mathbb{Z}$-action $t\mapsto diag(e^{4\pi it},e^{2\pi it})$ generated by $\frac{1}{2}|x|^2+|y|^2$. The level sets of this Hamiltonian give us a family of embedded ellipsoids enclosing the point $(0,0)$. Because the circle action preserves the divisor, we can ensure that our tubular neighbourhoods for each curve are equivariant, so the Reeb flow on each circle bundle commutes with the Reeb flow on the ellipsoids.

Figure \ref{symp_reduction_tubular_nbhds} shows the symplectic reduction of one of the ellipsoids, which is a sphere with two singular points. The intersection of the divisor components with the ellipsoid are points on the sphere, and the intersections of the circle bundles with the ellipsoid are closed curves enclosing these points. We ensure that the circle bundles are small enough that these curves are disjoint.

Our hypersurface would then be obtained from attaching both circle bundles separately to the ellipsoid via a surgery. At any point, the Reeb flow on this hypersurface comes from one of the three different $\mathbb{R}/\mathbb{Z}$-actions, or from the $\mathbb{R}^2/\mathbb{Z}^2$-action coming from the intersection of the ellipsoid and one of the circle bundles. At no point will the circle actions from both circle bundles contribute simultaneously (see Figure \ref{ellipsoid_tubular_nbhds}), so it does not matter whether they commute.

\begin{figure}[!h]
\begin{minipage}[c]{0.45\linewidth}
\centering
\caption{Intersection of tubular neighbourhoods of divisor components are contained in ellipsoid.}
\begin{tikzpicture}
\label{ellipsoid_tubular_nbhds}
\filldraw[red!20] (0,0.5)--(5,0.5)--(5,-0.5)--(0,-0.5)--cycle;
\filldraw[blue!20] (0,3).. controls (1.8,-0.5) ..(2.5,-0.5) .. controls (3.2,-0.5) .. (5,3)--(5,5).. controls (2.8,0.5) ..(2.5,0.5) .. controls (2.2,0.5) .. (0,5)--cycle;
\filldraw[gray!30] (2.5,0) ellipse (2 and 1);

\node[text=blue,] at (4.4,4) {$y=x^2$};
\node[text=red] at (4.4,0.2) {$y=0$};

\draw[blue] (0,4).. controls (2,0) ..(2.5,0) .. controls (3,0) .. (5,4);
\draw[red] (0,0)--(5,0);
\end{tikzpicture}
\end{minipage}\hfill
\begin{minipage}[c]{0.45\linewidth}
\centering
\caption{Symplectic reduction of ellipsoid}
\begin{tikzpicture}
\label{symp_reduction_tubular_nbhds}
\draw (2.5,2.5) circle (2.5);
\draw (2.5,5) circle (4pt);
\filldraw[red] (2.5,0) circle (4pt);
\filldraw[blue] (5,2.5) circle (4pt);
\draw[dashed] (0,2.5)--(5,2.5);
\draw [blue] (4.5,4)--(4.5,1);
\draw [red] (1,0.5)--(4,0.5);
\end{tikzpicture}
\end{minipage}
\end{figure}

We will generalise this approach to cases where the non-smooth parts of an algebraic divisor are cut out by quasihomogeneous polynomials, so we can use the $\mathbb{C}^*$-action to obtain a Hamiltonian $\mathbb{R}/\mathbb{Z}$-action near the singular strata, which we can use to control the Reeb dynamics.

\subsection{Background}
In \cite{TMZ}, McLean, Tehrani and Zinger introduce the notion of a simple crossings (SC) symplectic subvariety in a symplectic manifold $(M,\omega)$: a collection $D=\bigsqcup_i D_i$ of transversely intersecting symplectic submanifolds of real codimension 2, with compatible orientations. The latter condition allows $M$  to be equipped with a compatible almost-complex structure such that the submanifolds $D_i$ are almost-complex submanifolds.

A system of commuting Hamiltonians for $D$, as defined by McLean in \cite{McL_growth} consists of a function 
\begin{equation}
r_i:U_i\rightarrow [0,R)
\end{equation}
defined on a neighbourhood $U_i$ of $D_i$ with $r_i^{-1}(0)=D_i$ for each $i$, which generates a Hamiltonian $\mathbb{R}/\mathbb{Z}$-action fixing $D_i$, such that $\{r_i,r_j\}=0$ on $U_i\cap U_j$. McLean shows that a system of commuting Hamiltonians always exists if the components $D_i$ intersect orthogonally.

Let $X=M\setminus D$, $\theta\in \Omega^1(X)$ be a primitive for $\omega$, and $Z$ the associated Liouville vector field.

The primitive $\theta$ is said to be adapted to a system of commuting Hamiltonians if $\theta|_{U_i}$ is invariant under the $\mathbb{R}/\mathbb{Z}$-action on $U_i$.

Let $\rho^0:M\rightarrow \mathbb{R}$ be the unique continuous function such that $\rho^0|_D=1$ and along each flowline of $Z$, $d\rho^0(Z)=\rho^0$.

Given a system of commuting Hamiltonians for $D$, such that $\theta$ is adapted and
\begin{equation}
2c_1(TM)=\sum_i \lambda_i PD(D_i)\in H^2(M;\mathbb{R})
\end{equation}
and
\begin{equation}
\label{lambdas}
\int_u\omega-\int_{\partial u}\theta=\kappa\sum_i \lambda_i u\cdot D_i,
\end{equation}
for $u\in H_2(M,X)$, where $\lambda_i$ is a positive rational number for each $i$,  Borman, Sheridan and Varolgunes construct a smooth family $\rho^R$ of approximations to $\rho^0$.

In the case where $(M,\omega)$ is a monotone symplectic manifold, they show that the action and Conley-Zehnder index of 1-periodic orbits of $h\circ \rho^R$ (for some auxiliary function $h$) obey certain estimates, which they use, for $\sigma(R)<\sigma_1<\sigma_2<1$, to construct an isomorphism
\begin{equation}
\label{relSH_isom}
SH_M^*((\rho^R)^{-1}[\sigma_1,\infty);\Lambda)\rightarrow SH_M^*((\rho^R)^{-1}[\sigma_2,\infty);\Lambda),
\end{equation}
where $\sigma(R)$ is a certain continuous function of $R$. 

This implies that the set $K_{crit}=(\rho^0)^{-1}[0,\sigma_{crit}]$ is $SH$-full, where $\sigma_{crit}=\sigma(0)$, because symplectic divisors are stably displaceable, and therefore some neighbourhood $(\rho^R)^{-1}[\sigma_2,1]$ of $D$ is $SH$-invisible, and by the isomorphism, so are sets of the form $(\rho^R)^{-1}[\sigma_1,1]$ for $\sigma(R)<\sigma_1<\sigma_2$, which exhaust $(\rho^0)^{-1}(\sigma_{crit},1]$.

In particular, if $\lambda_i\leq 2$ for all $i$ (Hypothesis A of \cite{BSV}), then $\sigma_{crit}=0$ and the Lagrangian skeleton $(\rho^0)^{-1}(0)$ of $(X,\theta)$ is $SH$-full.

\subsection{Outline}
In this paper, we will attempt to apply similar techniques to a larger class of `symplectic divisors' $D$.

In Section \ref{s2}, we define the notion of a stratified symplectic subvariety in a symplectic manifold, generalising SC symplectic divisors.

In Section \ref{s3}, we give a definition of a system of commuting Hamiltonians for a stratified symplectic subvariety analogous to the case of a SC divisor. Unlike the SC case, we do not have a general criterion for the existence of a system of commuting Hamiltonians like orthogonality of the divisor components, but we give a partial result which will be sufficient for all our examples in Proposition \ref{hypersurf_ham}.

In Section \ref{s4}, we define what it means for a Liouville primitive on the complement of a stratified symplectic subvariety to be adapted to a system of commuting Hamiltonians, and give a criterion under which a given primitive is equivalent to an adapted one. This implies that if a given primitive satisfies this criterion, there exists an adapted primitive with the same Lagrangian skeleton.

In Section \ref{s5}, following Borman, Sheridan and Varolgunes' construction, we use a system of commuting Hamiltonians and adapted Liouville primitive to construct a smoothing family $\rho^R$ for $\rho^0$, in the case of a stratified symplectic subvariety. We obtain estimates for action and Conley-Zehnder index of orbits of $h\circ\rho^R$. In the case of a monotone symplectic manifold, we then use these computations, together with techniques of Entov and Polterovich, to show that $K_{crit}$ is superheavy.
 
We will see that, in general, $\sigma_{crit}$ depends on data from each stratum of the symplectic subvariety, not only, as in the SC case, the cohomology classes represented by the irreducible components. Consequently, the appropriate generalisation of Hypothesis A will, in particular, be sensitive to the local topology of singularities of the symplectic subvariety (see Corollary \ref{superheavy_skeleton}).

In Section \ref{s6}, we specialise to the algebraic setting, showing in particular that a system of commuting Hamiltonians always exists when $M$ is a projective Fano surface and $D$ an anticanonical divisor locally cut out by quasihomogeneous equations (Corollary \ref{qhsch}). We use this to establish a simple algebraic criterion for the skeleton of the affine complement of such a divisor to be superheavy.

Finally, we apply these results to some skeleta of complements of anticanonical $\mathbb{Q}$-divisors in Fano varieties, first constructing a system of commuting Hamiltonians, then calculating $\sigma_{crit}$.

By this method, we first prove Theorem \ref{Klein_bottle} by realising $K$ as the skeleton of the complement of an anticanonical divisor cut out by a quasihomogeneous polynomial, applying Corollary \ref{qhsch}, and showing $\sigma_{crit}=0$. We then prove Theorem \ref{RP2s} by the same method (though in general the unions of Lagrangian $\mathbb{RP}^2$s in question are only Hamiltonian isotopic to the skeleta). We then use an extended version of Corollary \ref{qhsch} slightly, to prove Theorem \ref{RP3s}

We end with two examples where $\sigma_{crit}\neq 0$. First, we use Corollary \ref{qhsch} to prove Theorem \ref{arcs_nbhd}.

Lastly, we prove Theorem \ref{Chiang_nbhd}, by realising the Chiang Lagrangian as the skeleton of the complement of an irreducible, singular anticanonical surface, and using the existence of a $SU(2)$-action on $\mathbb{CP}^3$, which preserves the divisor, to construct a system of commuting Hamiltonians. A smooth anticanonical surface would satisfy hypothesis A, but here $\sigma_{crit}\neq 0$ due to a contribution from the geometry of the singular locus of the surface.

\subsection{Acknowledgements}
Many thanks to Yuhan Sun for helpful comments about the relationship between SH-fullness and superheaviness, and for suggesting using the use of arguments from \cite{EP} to prove superheaviness rather than just SH-fullness.

\section{Stratified Symplectic Subvarieties}
\label{s2}
\subsection{Stratified Posets}
\begin{defn}[Stratified Poset]
A stratified poset $V$ of height $n$ consists of a finite vertex set $V$, together with a partial order $\leq$ on $V$, and a decomposition
\begin{equation}
V=\bigsqcup_{k=0}^nV_k
\end{equation}
such that if $u\in V_k$ and $v\in V_l$ and $u < v$, then $k < l$.
\end{defn}

\begin{defn}[Stratified Subposet]
A stratified subposet of a stratified poset $V$ of height $n$ is a subset $W\subset V$ such that if $u\leq v$ and $v\in W$ then $u\in W$.

The set $W$, together with the restriction of the partial order $\leq$ and the decomposition $W_k=V_k\cap W$ for $k=0,...,n$ is naturally a stratified poset of height $\leq n$.
\end{defn}

\begin{defn}
From a stratified poset $V$ of height $n$, for $k=0,...,n$, we obtain a stratified subposet of height $k$, 
\begin{equation}
V_{\leq k}=\bigsqcup_{l=0}^kV_l.
\end{equation}
\end{defn}

\begin{defn}
From a stratified poset $V$ and a vertex $v\in V_k$, we obtain a subposet of height $k$, 
\begin{equation}
V_{\leq v}=\{u\in V|u\leq v\}.
\end{equation}
\end{defn}

\begin{defn}
From a stratified poset $V$ and a vertex $v\in V_k$, we obtain a subposet of height $k-1$, also a subposet of $V_{\leq v}$, 
\begin{equation}
V_{<v}=\{u\in V|u<v\}.
\end{equation}
\end{defn}

\subsection{Stratified Symplectic Subvarieties}

\begin{defn}[Stratified Symplectic Subvariety]
A stratified symplectic subvariety of dimension $2n$ in a closed symplectic manifold $(M,\omega)$ is a stratified poset $V$ of height $n$, together with a collection of disjoint subsets
\begin{equation}
\mathring{D}_v\subset M
\end{equation}
for $v\in V$, such that whenever $v\in V_k$, $\mathring{D}_v$ is an embedded (possibly open) symplectic submanifold of dimension $2k$, and the closure of $\mathring{D}_v\subset M$ is 
\begin{equation}
\bigcup_{u\leq v}\mathring{D}_u\subset M.
\end{equation}

We denote a stratified symplectic subvariety solely by the indexing poset $V$.
\end{defn}

\begin{defn}[Subvariety of a Stratified Symplectic Subvariety]
Given a stratified symplectic subvariety $V$ in $M$, any subposet $W\subset V$ is naturally a stratified symplectic subvariety in $M$. We call the subposet together with the natural embedded stratified symplectic subvariety data a subvariety of $V$.

We denote a subvariety of $V$ solely by the indexing subposet $W$.
\end{defn}

\begin{defn}
Given a stratified symplectic subvariety $V$ in $M$, the realisation $|V|$ of $V$ is given by 
\begin{equation}
|V|=\bigcup_{v\in V}\mathring{D}_v\subset M.
\end{equation}
\end{defn}

\begin{ex}
\label{cubic_Example}
Let $M=\mathbb{CP}^2$, equipped with the Fubini-Study symplectic form. Let $[z_0:z_1:z_2]$ be homogeneous coordinates.

We can construct a stratified symplectic subvariety of $M$ associated to the stratified poset $V=\{0,1\}$ with $0<1$, and $V_i=\{i\}$ for $i=0,1$, such that 
\begin{equation}
|V|=\{z_0z_1^2=z_2^3\}\subset M
\end{equation}
by letting 
\begin{equation}
\mathring{D}_1=\{z_0z_1^2=z_2^3\}\setminus \{[1:0:0]\}
\end{equation}
and 
\begin{equation}
\mathring{D}_0=[1:0:0].
\end{equation}

We have that $\mathring{D}_1=\{y=x^3\}$ where \begin{equation}
(x,y)=\left(\frac{z_2}{z_1},\frac{z_0}{z_1}\right)
\end{equation}
are affine coordinates on $\mathbb{CP}^2\setminus \{z_1=0\}\cong \mathbb{C}^2$, so $\mathring{D}_1$ is indeed an embedded, open symplectic submanifold of $M$, of real dimension $2$, whereas the point $\mathring{D}_0$ is trivially an embedded, zero-dimensional symplectic submanifold.
\end{ex}

\begin{ex}[Simple Crossings Symplectic Divisors]
\label{NC_case}
Suppose we have a SC symplectic divisor in the sense of Definition 2.1 of \cite{TMZ}, i.e. we have embedded closed symplectic submanifolds  $D_1,...,D_k\subset M$ of real dimension $2(n-1)$ where $M$ is $2n$-dimensional, which intersect transversely, with compatible orientations. 

We can define a stratified poset of height $n-1$,
\begin{equation}
V=\{v\subset \{1,...,k\}| \bigcap_{i\in v}D_i\neq \emptyset\}\subset \mathcal{P}(\{1,...,k\}).
\end{equation}
letting $u\leq v$ if $u\subset v$, and
\begin{equation}
V_d=\{v\in V| |v|=n-d\}.
\end{equation}

We can give $V$ the structure of a stratified symplectic subvariety by letting, for $v\subset \{1,...,k\}$,
\begin{equation}
\mathring{D}_v=\bigcap_{i\in v}D_i\cap \bigcap_{i\notin v}(M\setminus D_i)
\end{equation}
which is a symplectic submanifold of dimension $2(n-|v|)$.
\end{ex}

\begin{lem}
If $V$ is a stratified symplectic subvariety of height $\leq n-1$ in a $2n$-dimensional symplectic manifold $M$, then $|V|$ is a partly stratified symplectic subset of $M$ in the sense of Definition 6.18 of \cite{McL_birational}, i.e. $|V|$ is equal to a union of disjoint subsets $S_1,...,S_l$ such that for each $j$, $\cup_{i\leq j}S_i$ is compact, and $S_j$ is a proper symplectic submanifold of $M\setminus \cup_{i<j}S_i$ of codimension $\geq 2$.
\end{lem}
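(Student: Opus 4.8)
The plan is to produce the required enumeration $S_1,\dots,S_l$ of the pieces $\mathring D_v$ directly from the height filtration of the poset $V$. Concretely, I would linearly order the vertex set as $V=\{v_1,\dots,v_l\}$ so that the heights are non-decreasing, i.e. if $v_i\in V_k$ and $v_j\in V_{k'}$ with $i<j$ then $k\le k'$; such an ordering exists trivially (list $V_0$, then $V_1$, and so on, in any internal order). Then set $S_j=\mathring D_{v_j}$. The three conditions of Definition 6.18 of \cite{McL_birational} must now be checked, and all three will follow from two structural facts: (a) the defining closure axiom $\overline{\mathring D_v}=\bigcup_{u\le v}\mathring D_u$; and (b) the poset axiom that $u<v$ strictly forces $\mathrm{height}(u)<\mathrm{height}(v)$, so that every $u\le v_j$ with $u\ne v_j$ appears strictly earlier in the enumeration.

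First I would record the key consequence of (b): for any $j$, if $u\le v_i$ for some $i\le j$, then $u=v_{i'}$ for some $i'\le j$. Indeed $u\le v_i$ gives $\mathrm{height}(u)\le\mathrm{height}(v_i)$, with equality forcing $u=v_i$; if the inequality is strict then $u$ precedes $v_i$ in the enumeration. Combining this with (a), the closure of $\bigcup_{i\le j}\mathring D_{v_i}$ in $M$ equals $\bigcup_{i\le j}\bigcup_{u\le v_i}\mathring D_u$, which is contained in $\bigcup_{i\le j}\mathring D_{v_i}$; hence $\bigcup_{i\le j}S_i$ is closed in the compact manifold $M$, so compact. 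This gives the first required property.

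Next I would verify that $S_j=\mathring D_{v_j}$ is a proper symplectic submanifold of $U_j:=M\setminus\bigcup_{i<j}S_i$ of codimension $\ge 2$. The set $U_j$ is open because $\bigcup_{i<j}S_i$ is compact, and it contains $\mathring D_{v_j}$ since the pieces $\mathring D_v$ are pairwise disjoint; as $\mathring D_{v_j}$ is by hypothesis an embedded symplectic submanifold of $M$, it is one of $U_j$ as well. Its real dimension is $2k$ with $v_j\in V_k$ and $k\le n-1$, so its codimension is $\ge 2$. For properness (closedness in $U_j$) I would again use (a): the frontier $\overline{\mathring D_{v_j}}\setminus\mathring D_{v_j}=\bigcup_{u<v_j}\mathring D_u$ consists of pieces indexed by vertices of strictly smaller height, each occurring before position $j$; hence the frontier lies in $\bigcup_{i<j}S_i$ and is removed in passing to $U_j$, so $\mathring D_{v_j}$ is closed in $U_j$.

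I do not expect a serious obstacle here; the one point needing care is that the chosen linear ordering is compatible with the partial order in the strong sense used above — that strict inequalities in $V$ strictly drop height, so that no descendant of $v_j$ can share its height — which is precisely the defining axiom of a stratified poset. Everything else is a formal consequence of that axiom together with the closure condition in the definition of a stratified symplectic subvariety.
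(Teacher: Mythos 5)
Your proposal is correct and follows essentially the same route as the paper: the paper chooses any total order on $V$ extending the partial order and enumerates the strata $\mathring{D}_v$ accordingly, and your height-ordered enumeration is exactly such an order, since the stratified-poset axiom makes ordering by height compatible with the partial order. The only difference is that you spell out the verification of compactness of the partial unions and properness of each stratum in the complement, which the paper asserts without detail; your verification is accurate.
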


\begin{proof}
Choose a total order on $V$ extending the partial order. This gives a bijective  map of posets $f:V\rightarrow \{1,...,l\}$ for some $l$.

Take $S_j=\mathring{D}_{f^{-1}(v)}$. The subsets $S_j$ satisfy the required properties.
\end{proof}

\begin{cor}
\label{stably_displaceable}
If $V$ is a stratified symplectic subvariety of dimension $\leq 2(n-1)$ in a $2n$-dimensional symplectic manifold $M$, then $|V|$ is a stably displaceable subset of $M$.
\end{cor}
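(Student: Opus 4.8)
The plan is to obtain this as an immediate consequence of the preceding Lemma together with the fact, due to McLean, that partly stratified symplectic subsets of closed symplectic manifolds are stably displaceable. Concretely: by the Lemma, $|V| = S_1 \cup \cdots \cup S_l$ with each partial union $\bigcup_{i \le j} S_i$ compact and each $S_j$ a properly embedded symplectic submanifold of codimension $\geq 2$ in $M \setminus \bigcup_{i<j} S_i$, so $|V|$ is a partly stratified symplectic subset in the sense of Definition 6.18 of \cite{McL_birational}; invoking the corresponding stable-displaceability statement from that paper finishes the proof. (This specialises, via Example \ref{NC_case}, to the well-known fact that SC symplectic divisors are stably displaceable, which is used in \cite{BSV}.)

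For completeness I would recall the shape of that argument. The elementary input is that a symplectic submanifold of codimension $\geq 2$ can be displaced from itself, inside an arbitrarily small tubular neighbourhood, by rotating the normal fibres via a fibrewise Hamiltonian circle action — and this persists after stabilising by $T^*S^1$. One then treats the strata of $|V|$ in turn, at each step displacing one stratum by a Hamiltonian supported in a tubular neighbourhood chosen small enough to respect the nesting data: the compactness of the partial unions $\bigcup_{i \le j} S_i$ guarantees that such small neighbourhoods can be found disjoint from the pieces one does not want to disturb. Composing the resulting Hamiltonian diffeomorphisms in $M \times T^*S^1$ displaces $|V| \times S^1$, which is what stable displaceability means.

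I expect the only real subtlety — and it is entirely contained in \cite{McL_birational} — to be the order in which the strata are displaced and the accompanying control on supports: one must ensure that displacing a stratum does not recreate overlaps with the strata already handled, which is exactly where properness of $S_j$ in $M \setminus \bigcup_{i<j} S_i$ and compactness of the partial unions enter. Since all of this is carried out in the cited reference, the content of the present proof is the identification made in the Lemma, and the rest is a citation.
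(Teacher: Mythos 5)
Your proposal is correct and matches the paper's proof, which simply invokes the preceding Lemma to identify $|V|$ as a partly stratified symplectic subset and then cites Proposition 6.20 of \cite{McL_birational}. The additional sketch of McLean's displacement argument is accurate but not needed, since the paper treats that result purely as a citation.
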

\begin{proof}
Follows immediately from Proposition 6.20 of \cite{McL_birational}.
\end{proof}
\subsection{Stratum Neighbourhoods}

\begin{defn}[Stratum Neighbourhood]
Let $V$ be a symplectic subvariety $V$ in $M$. A stratum neighbourhood for a vertex $v\in V$ is a neighbourhood $U_v$ of the embedded submanifold $\mathring{D}_v$ in $M$.
\end{defn}

\begin{defn}[Intersection Neighbourhood]
Given a stratified symplectic subvariety $V$ in $M$, and vertices $u\leq v\in V$, an intersection neighbourhood for the pair $u\leq v$ is, for some neighbourhood $W$ of $\mathring{D}_u$, a neighbourhood of $W\cap \mathring{D}_v$. 
\end{defn}

\begin{ex}
Returning to the case of the cubic curve in Example \ref{cubic_Example}, figure \ref{cubic_vertex_neighbourhoods} illustrates   a choice of stratum neighbourhoods $U_0$, $U_1$ for vertices $0$ and $1$ respectively, i.e. neighbourhoods of the singular point and the smooth locus of the curve, respectively.

Figure \ref{cubic_intersection_neighbourhood} illustrates a choice of intersection neighbourhood $I$ for the pair $0<1$. $W$ is a neigbhourhood of the singular point, and $I$ is a neighbourhood of the intersection of the smooth locus of the curve with $W$.

\begin{figure}
\caption{Stratum neighbourhoods for components of the cubic curve in Example \ref{cubic_Example} (real locus in $\mathbb{CP}^2\setminus \{z_0=0\}$ shown)}
\centering
\begin{tikzpicture}[scale=0.8]
\label{cubic_vertex_neighbourhoods}
\fill[fill=gray!20] (5,1) circle (1);

\fill[gray!40] (1,4).. controls (1.5,4) and (5,2.5) .. (5,1) .. controls (5,1.5) and (1.5,2.5) .. (1,2.5) -- cycle;

\fill[gray!40] (9,4).. controls (8.5,4) and (5,2.5) .. (5,1) .. controls (5,1.5) and (8.5,2.5) .. (9,2.5) -- cycle;

\draw[dashed] (5,1) circle (1);

\draw (1,3).. controls (1.5,3) and (5,2) .. (5,1);
\draw (9,3).. controls (8.5,3) and (5,2) .. (5,1);

\node at (1,3) [anchor=north west]{$\mathring{D}_1$};

\filldraw[black] (5,1) circle (0.06) node[anchor=west]{$\mathring{D}_0$};

\node at (2,3.2) {$U_1$};

\node at (5,0.5) {$U_0$};

\end{tikzpicture}
\end{figure}
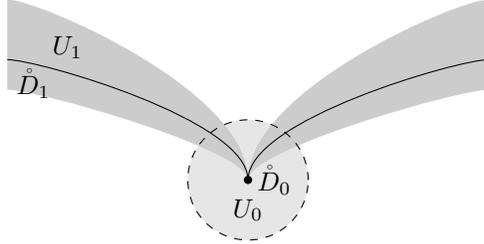

\begin{figure}
\caption{Intersection neighbourhood for components of the cubic curve in Example \ref{cubic_Example}.}
\centering
\begin{tikzpicture}[scale=0.8]
\label{cubic_intersection_neighbourhood}

\draw[fill=gray!30, dashed] (5,1.9) ellipse (1.1 and 1);

\draw[fill=gray!60, dashed] (5,1).. controls (5,3) and (2.5,2.5) .. (5,1) -- cycle;

\draw[fill=gray!60, dashed] (5,1).. controls (5,3) and (7.5,2.5) .. (5,1) -- cycle;

\draw (1,3).. controls (1.5,3) and (5,2) .. (5,1);
\draw (9,3).. controls (8.5,3) and (5,2) .. (5,1);

\node at (4.4,2) {$I$};
\node at (5,2.2) {$W$};
\node at (1,3) [anchor=north west]{$\mathring{D}_1$};

\filldraw[black] (5,1) circle (0.06) node[anchor=north]{$\mathring{D}_0$};
\end{tikzpicture}
\end{figure}
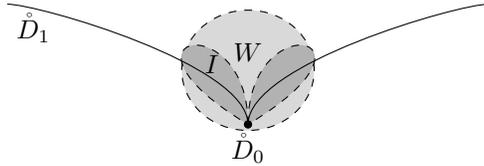
\end{ex}

\begin{lem}
\label{tidy_1}
Given a stratified symplectic subvariety $V$ in $M$, let $U_u,U_v$ be stratum neighbourhoods for vertices $u\leq v\in V$ respectively.

Then $U_u\cap U_v$ is an intersection neigbourhood for $u\leq v$.
\end{lem}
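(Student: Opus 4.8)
The plan is to unwind the two definitions and exhibit an explicit choice of the auxiliary neighbourhood $W$. Recall that an intersection neighbourhood for the pair $u\leq v$ is, by definition, a neighbourhood of $W\cap\mathring{D}_v$ for \emph{some} neighbourhood $W$ of $\mathring{D}_u$; so it is enough to produce one such $W$ for which $U_u\cap U_v$ does the job. Note that the hypothesis $u\leq v$ is not used in the argument itself — it is present only because the target notion (``intersection neighbourhood for $u\leq v$'') is defined only in that case; by the closure axiom for a stratified symplectic subvariety, $u\leq v$ also guarantees that $\mathring{D}_u$ lies in the closure of $\mathring{D}_v$, so the sets in question are genuinely nonempty and the notion is non-vacuous.

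First, since $U_u$ is a neighbourhood of $\mathring{D}_u$, I would fix an open set $W$ with $\mathring{D}_u\subseteq W\subseteq U_u$; this $W$ is itself a neighbourhood of $\mathring{D}_u$, and it is the one I take in the definition of intersection neighbourhood. Likewise, since $U_v$ is a neighbourhood of $\mathring{D}_v$, fix an open set $O_v$ with $\mathring{D}_v\subseteq O_v\subseteq U_v$.

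Now $W\cap O_v$ is open, and $W\cap\mathring{D}_v\subseteq W\cap O_v\subseteq U_u\cap U_v$. Hence $U_u\cap U_v$ contains an open set containing $W\cap\mathring{D}_v$, i.e.\ it is a neighbourhood of $W\cap\mathring{D}_v$. By the definition of an intersection neighbourhood, applied with this $W$, the set $U_u\cap U_v$ is an intersection neighbourhood for $u\leq v$, as claimed.

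The only point requiring any care is the convention for the word ``neighbourhood'': one should pin down at the outset whether neighbourhoods are required to be open or merely to contain an open set around the relevant submanifold. The argument above works verbatim under either reading — if neighbourhoods are taken to be open, one may simply take $W=U_u$ and $O_v=U_v$ and observe that $U_u\cap U_v$ is open and contains $U_u\cap\mathring{D}_v$. There is no substantive obstacle here; this is a bookkeeping lemma ensuring that intersection neighbourhoods can always be obtained by intersecting stratum neighbourhoods, which will be used to keep the later neighbourhood constructions consistent across strata.
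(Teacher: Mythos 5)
Your proof is correct and is essentially the paper's argument: the paper simply takes $W=U_u$ and observes that $U_u\cap U_v$ is a neighbourhood of $U_u\cap\mathring{D}_v$. Your extra care with an open core $W\subseteq U_u$ only addresses the convention about whether neighbourhoods are open, and changes nothing of substance.
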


\begin{proof}
$U_u$ is a neighbourhood of $\mathring{D}_u$, and $U_u\cap U_v$ is a neighbourhood of $U_u\cap \mathring{D}_v$.
\end{proof}

\begin{lem}
\label{shrink_vertex_nbhds}
Given a stratified symplectic subvariety $V$ in $M$, let $u\leq v\in V$ be vertices, and $I$ an intersection neighbourhood for $u\leq v$.

Then there exist stratum neighbourhoods $U_u,U_v$ for $u,v$ respectively, such that 
\begin{equation}
U_u\cap U_v=I.
\end{equation} 
\end{lem}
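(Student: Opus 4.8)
The plan is to build the stratum neighbourhoods $U_u$ and $U_v$ by hand, starting from the data that defines the intersection neighbourhood $I$, and then shrinking them so that their intersection is exactly $I$. By definition of an intersection neighbourhood for $u \leq v$, there is a neighbourhood $W$ of $\mathring D_u$ such that $I$ is a neighbourhood of $W \cap \mathring D_v$ inside $M$. The key point to establish first is that $I$ is in fact an \emph{open} subset of $M$ containing $W\cap\mathring D_v$ (we take ``neighbourhood'' to mean ``contains an open set containing the subset'', and after passing to interiors we may assume $I$, $W$ are open); then $I \cup (M \setminus \mathring D_v)$ is an open set, and I would like to use it as the backbone of $U_v$.

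First I would choose an open neighbourhood $U_u$ of $\mathring D_u$ with $\overline{U_u} \subset W$ — this is possible since $\mathring D_u$ is contained in the open set $W$ and (being a closed submanifold of the compact $M$, or at worst a locally closed one whose closure lies in $W$ after further shrinking) admits such a neighbourhood; concretely one can take $U_u = \{ \rho < \varepsilon\}$ for a suitable nonnegative function $\rho$ vanishing on $\mathring D_u$. Next I would set
\begin{equation}
U_v = I \cup \bigl(M \setminus \overline{U_u}\bigr).
\end{equation}
This is open. To see it is a neighbourhood of $\mathring D_v$: a point $x \in \mathring D_v$ either lies in $U_u \subset W$, in which case $x \in W \cap \mathring D_v \subset I \subset U_v$; or $x \notin \overline{U_u}$, in which case $x \in M\setminus\overline{U_u} \subset U_v$. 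So $\mathring D_v \subset U_v$ and $U_v$ is a stratum neighbourhood for $v$.

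It remains to compute $U_u \cap U_v$. Since $U_u \cap (M \setminus \overline{U_u}) = \emptyset$, we get $U_u \cap U_v = U_u \cap I$. So the construction does not yet give $U_u \cap U_v = I$ on the nose — we have $U_u \cap U_v \subset I$, but possibly strictly. To fix this I would instead run the argument the other way: shrink $U_v$ as above to arrange $U_u \cap U_v \subseteq I$, and then \emph{enlarge back} by replacing $U_v$ with $U_v \cup I$, which is still open, still a neighbourhood of $\mathring D_v$ (it contains the old $U_v$), and now satisfies $U_u \cap (U_v \cup I) = (U_u \cap U_v) \cup (U_u \cap I) = (U_u\cap U_v)\cup(U_u\cap I)$. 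This is still contained in $I$, and contains $U_u \cap I$; so it equals $U_u\cap I$, not $I$. The genuinely clean fix, which I expect to be the real content of the argument, is to first shrink $U_u$ further so that $U_u \subseteq I$ as well — this is legitimate because $I$ is a neighbourhood of $W \cap \mathring D_v$, hence in particular $I\cap\mathring D_u$ could be empty, so this step needs the observation that after shrinking $W$ we may assume $\mathring D_u \subset \overline{\mathring D_v}$ forces $\mathring D_u \subset \overline{W\cap\mathring D_v}$, and hence every neighbourhood of $\mathring D_u$ meets $I$; with enough shrinking one arranges $U_u\subset I$. Then with $U_v := I \cup (M\setminus\overline{U_u})$ one computes $U_u \cap U_v = (U_u\cap I)\cup(U_u\cap(M\setminus\overline{U_u})) = U_u \cup \emptyset = U_u$, which is contained in but generally not equal to $I$.

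Given these difficulties with getting equality \emph{exactly}, the cleanest route — and the one I would actually write up — is: enlarge $I$ to $U_v := I \cup (M \setminus \mathring D_v \setminus \overline{U_u})$ after choosing $U_u$ small with $\overline{U_u}\subset W$, note $U_u\cap U_v = U_u\cap I$, and then \emph{define} the final $U_v$ by also throwing $U_u \setminus I$ into it: set $U_v' = U_v \cup U_u$. Then $U_u \cap U_v' = U_u$, still not $I$. The true resolution is that the lemma is asking for a \emph{rescaling}: one uses the standard fact that, since $\mathring D_u \subset \overline{\mathring D_v}$, there is a neighbourhood retraction, and one can take $U_u$ and $U_v$ to be sublevel sets of a pair of compatible ``distance'' functions $\rho_u,\rho_v$ chosen so that on $W$ the set $I$ is cut out as $\{\rho_u < \varepsilon_u\} \cap \{\rho_v < \varepsilon_v\}$; then $U_u := \{\rho_u<\varepsilon_u\}$ and $U_v := \{\rho_v<\varepsilon_v\}$ do the job, with $U_u\cap U_v = I$ by construction. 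The main obstacle is precisely arranging this ``product'' description of $I$ near $\mathring D_u$, i.e. choosing the two functions $\rho_u,\rho_v$ compatibly; once that normal-form statement is in place the rest is immediate. I would spend the bulk of the proof setting up $\rho_u,\rho_v$ (using a tubular neighbourhood of $\mathring D_v$ and restricting to $W$), and dispatch the containment checks in a sentence each.
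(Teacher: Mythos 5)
There is a genuine gap: none of the constructions you propose actually achieves $U_u\cap U_v=I$ on the nose, and you say as much at each stage ("$U_u\cap I$, not $I$", "equals $U_u$, not $I$"). Your final fallback — finding functions $\rho_u,\rho_v$ so that $I=\{\rho_u<\varepsilon_u\}\cap\{\rho_v<\varepsilon_v\}$ — cannot work, because $I$ is a \emph{given, arbitrary} neighbourhood of $W\cap\mathring{D}_v$; you do not get to choose it, and an arbitrary open neighbourhood will not admit such a product description with the two factors being stratum neighbourhoods of $\mathring{D}_u$ and $\mathring{D}_v$. The structural mistake running through all your attempts is that you only ever adjoin $I$ to the neighbourhood of $\mathring{D}_v$, so the intersection with $U_u$ always gets clipped to $U_u\cap I$.

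The missing idea is to adjoin $I$ to \emph{both} neighbourhoods and make the leftover pieces disjoint. Since $W$ is a neighbourhood of $\mathring{D}_u$, the sets $\mathring{D}_u$ and $\mathring{D}_v\setminus W$ have disjoint closures, so one can choose disjoint open sets $Y\supset\mathring{D}_u$ and $Z\supset\mathring{D}_v\setminus W$. Setting $U_u=Y\cup I$ and $U_v=Z\cup I$ gives
\begin{equation*}
U_u\cap U_v=(Y\cap Z)\cup(Y\cap I)\cup(Z\cap I)\cup I=I,
\end{equation*}
since $Y\cap Z=\emptyset$ and the middle terms are contained in $I$. Moreover $U_u$ is a neighbourhood of $\mathring{D}_u$ because it contains $Y$, and $U_v$ is a neighbourhood of $\mathring{D}_v$ because $Z$ covers $\mathring{D}_v\setminus W$ while $I$ covers $W\cap\mathring{D}_v$. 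This is exactly the paper's argument; your write-up correctly isolates where the difficulty lies but does not supply this one-line set-theoretic resolution.
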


\begin{proof}
We have that $I$ is a neighbourhood of $W\cap \mathring{D}_u$ for some neighbourhood $W$ of $\mathring{D}_u$. 

The sets $\mathring{D}_u$ and $\mathring{D}_v\setminus W$ have disjoint closures, so we may choose disjoint neighbourhoods $Y$ and $Z$ of $\mathring{D}_u$ and $\mathring{D}_v\setminus W$, respectively.  

Let $U_u=Y\cup I$ and $U_v=Z\cup I$. As required, $U_u\cap U_v=I$, $U_u$ is a neighbourhood for $\mathring{D}_u$ and $U_v$ is a neighbourhood for $\mathring{D}_v$.
\end{proof}

\begin{ex}
Again in the case of the cubic curve in Example \ref{cubic_Example}, figure \ref{shrink_nbhds_Example} illustrates the proof of Lemma \ref{shrink_vertex_nbhds} applied to the stratum neighbourhood $I$ shown in figure \ref{cubic_intersection_neighbourhood}.

$Y$ is a neighbourhood of the singular point, contained in $W$, and $Z$ is a neighbourhood of the complement of $W$ in the curve, so $Y\cup I$ and $Z\cup I$ are neighbourhoods of the singular point and the smooth locus of the curve, respectively.

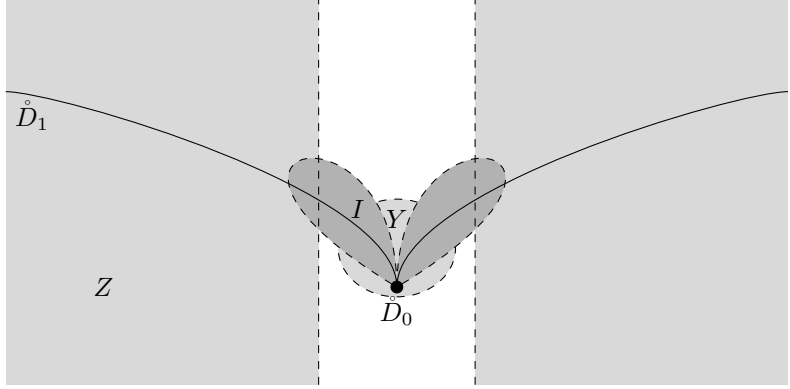
\begin{figure}
\caption{Proof of Lemma \ref{shrink_vertex_nbhds} in the case of the cubic curve in Example \ref{cubic_Example}.}
\centering
\begin{tikzpicture}[scale=1.3]
\label{shrink_nbhds_Example}
\draw[fill=gray!30, draw=none] (1,0) rectangle (4.2,4);
\draw[fill=gray!30, draw=none] (9,0) rectangle (5.8,4);
\draw[fill=gray!30, dashed] (5,1.4) ellipse (0.6 and 0.5);

\draw[fill=gray!60, dashed] (5,1).. controls (5,3) and (2.5,2.5) .. (5,1) -- cycle;

\draw[fill=gray!60, dashed] (5,1).. controls (5,3) and (7.5,2.5) .. (5,1) -- cycle;

\draw (1,3).. controls (1.5,3) and (5,2) .. (5,1);
\draw (9,3).. controls (8.5,3) and (5,2) .. (5,1);

\draw[dashed, fill=none] (4.2,0)--(4.2,4);
\draw[dashed, fill=none] (5.8,0)--(5.8,4);

\node at (4.6,1.8) {$I$};
\node at (5,1.7) {$Y$};
\node at (2,1) {$Z$};
\node at (1,3) [anchor=north west]{$\mathring{D}_1$};

\filldraw[black] (5,1) circle (0.06) node[anchor=north]{$\mathring{D}_0$};
\end{tikzpicture}
\end{figure}
\end{ex}

\section{Systems of Commuting Hamiltonians}
\label{s3}

\subsection{Radial Hamiltonians}

Throughout this section, let $V$ be a stratified symplectic subvariety in a $2n$-dimensional symplectic manifold $(M,\omega)$.

\begin{defn}[Radial Hamiltonian]
A radial Hamiltonian for a vertex $v\in V$ is a stratum neighbourhood $U$ for $v$, together with a a smooth function
\begin{equation}
r:U\rightarrow \mathbb{R}_{\geq 0}
\end{equation}
such that
\begin{itemize}
\item $r^{-1}(0)=\left(\bigcup_{u\leq v}\mathring{D}_u\right)\cap U$,
\item $r$ generates a Hamiltonian $\mathbb{R}/\mathbb{Z}$-action on $U$, denoted $t\cdot x=\phi_r^t(x)$,
\item the fixed points of the action are exactly $r^{-1}(0)$,
\item for any $v\leq w$, the action preserves $U\cap\mathring{D}_w$.
\end{itemize}
\end{defn}

\begin{defn}[Weight of a Radial Hamiltonian]
Given a radial Hamiltonian $r:U\rightarrow \mathbb{R}_{\geq 0}$ for a vertex $v\in V_k$, at any point $x\in \mathring{D}_v$, the Hamiltonian $\mathbb{R}/\mathbb{Z}$-action generated by $r$ induces an isotropy representation of $S^1$ on $(T_x\mathring{D}_v)^\omega\cong \mathbb{C}^{n-k}$, which has the form
\begin{equation}
e^{it}\mapsto diag(e^{a_{1}it},...,e^{a_{n-k}it})
\end{equation}
with respect to some choice of symplectic basis, for some integers $a_{i}\in\mathbb{Z}$ for $i=1,...,n-k$.

Because $\mathring{D}_v$ is a local minimum for $r$, we have $a_i>0$ for all $i$ (see Theorem 4.7 of \cite{GS}).

We define the total weight $w\in\mathbb{N}$ of $r$ to be
\begin{equation}
w=\sum_{i=1}^{n-k}a_{i}.
\end{equation}
This is independent of the point $x$ by continuity.
\end{defn}

\begin{ex}
\label{cubic_circle_action}
We return once again to the cubic curve from Example \ref{cubic_Example}.

The Hamiltonian 
\begin{equation}
r_0=\frac{3|z_1|^2+2|z_2|^2}{|z_0|^2+|z_1|^2+|z_2|^2}
\end{equation}
generates a $\mathbb{R}/\mathbb{Z}$-action on $\mathbb{CP}^2$ of the form
\begin{equation}
t\cdot [z_0:z_1:z_2]=[z_0:e^{6\pi i t}z_1:e^{4\pi i t}z_2].
\end{equation}

This action preserves the curve $z_0z_1^2=z_2^3$, and the fixed points are exactly $[1:0:0]=r_0^{-1}(0)$, so $r_0$ is a radial Hamiltonian for the vertex $0$.

The isotropy representation of the action is 
$t\mapsto diag(e^{6\pi i t},e^{4\pi i t})$, so we have that the weight $w_0$ of $r_0$ is given by $w_0=3+2=5$.
\end{ex}

\begin{defn}(Invariant Subsets)
Let $V$ be a stratified symplectic subvariety in $M$, and $r:U\rightarrow \mathbb{R}_{\geq 0}$ be a radial Hamiltonian for $v\in V$.

A subset $Y\subset U$ is $r$-invariant if the Hamiltonian $\mathbb{R}/\mathbb{Z}$-action generated by $r$ restricts to an action on $Y\cap U$.

Note that if $T\subset U$ is $r$-invariant and a stratum neighbourhood for $v$, then $r|_T$ is also a radial Hamiltonian for $v$.
\end{defn}

\begin{defn}(Invariant Tensors)
Let $V$ be a stratified symplectic subvariety in $M$, and $r:U\rightarrow \mathbb{R}_{\geq 0}$ be a radial Hamiltonian for $v\in V$, and $Y\subset U$ an $r$-invariant open subset.

A tensor $s$ on $Y$ is $r$-invariant if $(\phi_r^t)^*s=s$ for all $t\in \mathbb{R}/\mathbb{Z}$.
\end{defn}

\subsection{Systems of Commuting Hamiltonians}
\begin{defn}[Commuting Radial Hamiltonians]
Let $v$ be a stratified symplectic subvariety in $M$, and $r_u:U_u\rightarrow \mathbb{R}_{\geq 0}$ and $r_v:U_v\rightarrow \mathbb{R}_{\geq 0}$ be radial Hamiltonians for vertices $u\leq v\in V$, respectively. 

We say that $r_u$ commutes with $r_v$ if for some intersection neighbourhood $I$ for the pair $u\leq v$, 
\begin{equation}
\{r_u|_I,r_v|_I\}=0.
\end{equation}
\end{defn}

\begin{defn}[System of Commuting Hamiltonians]
\label{sch_def}
Let $V$ be a stratified symplectic subvariety in $M$. A system of commuting Hamiltonians for $V$ is a choice of radial Hamiltonian $r_v:U_v\rightarrow \mathbb{R}_{\geq 0}$ for each vertex $v\in V$, such that, if $u\leq v$, $r_u$ commutes with $r_v$.

For any subset $I\subset V$, we denote
\begin{equation}
U_I=\bigcap_{v\in I}U_v
\end{equation}
and
\begin{equation}
r_I=(r_v:v\in I):U_I\rightarrow \mathbb{R}_{\geq 0}^I.
\end{equation}

We say that a subset $Y\subset U_I$ is $r_I$-invariant if it is $r_v$-invariant for $v\in I$, and a tensor $s$ on a $r_I$-invariant subset $Y$ is $r_I$-invariant if $s$ is $r_v$-invariant for $v\in I$.
\end{defn}

\begin{ex}[Normal Crossings Symplectic Divisors]
Let $V$ be a stratified symplectic divisor associated to a SC symplectic divisor as in Example \ref{NC_case}. 

Suppose the submanifolds $D_i\subset M$ intersect symplectically orthogonally, i.e. the SC divisor is normal crossings.

By Lemma 5.14 of \cite{McL_growth}, there exists a system of commuting Hamiltonians 
\begin{equation}
r_i:U_i\rightarrow [0,R)
\end{equation}
for $D$ in the sense of \cite{McL_growth} and \cite{BSV}. We can use this to construct a system of commuting Hamiltonians for $V$ in the sense of Definition \ref{sch_def} by letting
\begin{equation}
r_v=\sum_{i\in v}r_i:\bigcap_{i\in v}U_i\rightarrow \mathbb{R}_{\geq 0}
\end{equation}
for $v\in V$.

Because
\begin{equation}
T_xD_v^\omega=\bigoplus_{i\in v}T_xD_v^\omega,
\end{equation}
we have that the weight $w_v$ of $r_v$ is $|v|$.
\end{ex}

\begin{ex}
To extend our radial Hamiltonian $r_0$ described in Example \ref{cubic_circle_action} to a system of commuting Hamiltonians for the cubic curve, we would require a radial Hamiltonian $r_1:U_1\rightarrow \mathbb{R}_{\geq 0}$ for the vertex $1$, which commutes with $r_0$, i.e. there exists an intersection neighbourhood $I$, like the one shown in figure \ref{cubic_intersection_neighbourhood}, such that $\{r_0|_I,r_1|_I\}=0$, so that $(r_0,r_1)$ gives us a (possibly singular) Lagrangian torus fibration on $I$. 

The following parts of this section build towards a construction of a Hamiltonian like $r_1$ in general, when, as in this case, we have a subvariety $V$ of dimension $2(n-1)$, and commuting radial Hamiltonians for each $v\in V_{\leq n-2}$, and seek to extend these choices to a system of commuting Hamiltonians for $V$ (see Proposition \ref{hypersurf_ham}). 
\end{ex}

\begin{lem}
\label{intersection_commute}
Given a stratified symplectic subvariety $V$ in $M$ and a system of commuting Hamiltonians $r_v:U_v\rightarrow \mathbb{R}_{\geq 0}$ for $v\in V$, we can choose stratum neighbourhoods $T_v\subset U_v$ for $v\in V$ such that
\begin{itemize}
\item for each $u,v\in V$, either $u\leq v$, $v\leq u$, or $T_u\cap T_v=\emptyset$
\item for each $u<v$, $\{r_u|_{T_u\cap T_v},r_v|_{T_u\cap T_v}\}=0$
\item for each $v\in V$, $T_v$ $r_v$-invariant, i.e. $r_v|_{T_v}$ for $v\in V$ is a system of commuting Hamiltonians for $V$.
\end{itemize}
\end{lem}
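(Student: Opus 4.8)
The plan is to construct the shrunken neighbourhoods $T_v$ in stages, one height-level at a time, starting from the top stratum and working down, so that at each stage we only need to shrink finitely many neighbourhoods and can control the pairwise intersections using the structure already established. First I would observe that the three desired properties are essentially local near each $\mathring{D}_v$ and that they are preserved under further shrinking, so the strategy is: process the vertices $v\in V$ in order of decreasing height (i.e. first $V_n$, then $V_{n-1}$, and so on), and when processing $v$, replace the current neighbourhood of $\mathring{D}_v$ by a smaller $r_v$-invariant one chosen to (a) be disjoint from the neighbourhoods of all vertices $w$ that are neither $\leq v$ nor $\geq v$, and (b) meet the neighbourhood of any $w$ with $v<w$ only inside the intersection neighbourhood on which $\{r_v,r_w\}=0$.

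The key technical inputs are Lemma \ref{shrink_vertex_nbhds} (which lets us prescribe the intersection of a stratum neighbourhood with another one by choosing it to agree with a given intersection neighbourhood), Lemma \ref{tidy_1} (intersections of stratum neighbourhoods are intersection neighbourhoods), and the fact that for incomparable $u,v$ the closed sets $\mathring{D}_u$ and $\mathring{D}_v$ have disjoint closures — this last point follows because the closure of $\mathring{D}_u$ is $\bigcup_{w\leq u}\mathring{D}_w$ and of $\mathring{D}_v$ is $\bigcup_{w\leq v}\mathring{D}_w$, and if these closures met at some $\mathring{D}_w$ then $w\leq u$ and $w\leq v$, but the issue is whether the closures being disjoint is automatic; actually we only need that $\mathring{D}_u$ and $\overline{\mathring{D}_v}$ are disjoint when $u\not\le v$, which holds since $\mathring{D}_u\subset\overline{\mathring{D}_v}$ would force $u\le v$, and $\overline{\mathring{D}_v}$ is closed while the $\mathring{D}_w$ are disjoint, so $\mathring{D}_u$ has a neighbourhood disjoint from $\overline{\mathring{D}_v}$ once we also remove the finitely many other closed strata — one has to be a little careful and may want to first pass to neighbourhoods small enough that $U_v\cap\mathring{D}_u=\emptyset$ whenever $u\not\le v$, which is possible since $\overline{\mathring{D}_v}\cap\mathring{D}_u=\emptyset$ in that case. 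For the $r_v$-invariance: since $U_v$ carries the Hamiltonian $\mathbb{R}/\mathbb{Z}$-action generated by $r_v$, any neighbourhood $N$ of $\mathring{D}_v$ contains the $r_v$-invariant neighbourhood $\bigcap_{t}\phi_{r_v}^t(N)$ — or, more simply, we can average or use that the action is by a compact group — so we can always replace a chosen neighbourhood by an invariant sub-neighbourhood, and invariance of $r_v|_{T_v}$ being a radial Hamiltonian is exactly the remark in the definition of invariant subsets.

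Carrying this out: fix a total order on $V$ refining the partial order and enumerate $V=\{v_1,\dots,v_l\}$ with $v_i$ of non-increasing height (so comparabilities only go "backwards"). Inductively suppose we have chosen $T_{v_1},\dots,T_{v_{i-1}}$ satisfying all three properties among themselves and with $T_{v_j}\subset U_{v_j}$. To choose $T_{v_i}$: for each $j<i$ with $v_j$ comparable to $v_i$, necessarily $v_i\le v_j$ (by the ordering), so $r_{v_i}$ commutes with $r_{v_j}$ and we have an intersection neighbourhood $I_{ij}$ for $v_i\le v_j$ with $\{r_{v_i},r_{v_j}\}=0$ on it; shrinking $I_{ij}$ we may take $I_{ij}\subset U_{v_i}\cap T_{v_j}$. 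For each $j<i$ with $v_j$ incomparable to $v_i$, we want $T_{v_i}\cap T_{v_j}=\emptyset$: using that $\overline{\mathring{D}_{v_i}}$ and $\overline{\mathring{D}_{v_j}}$ are disjoint (or at least $\mathring{D}_{v_i}\cap\overline{\mathring{D}_{v_j}}=\emptyset$, handled by prior shrinking of all $U$'s), choose a neighbourhood $Y_{ij}$ of $\mathring{D}_{v_i}$ disjoint from $\overline{\mathring{D}_{v_j}}$ and hence, after shrinking $T_{v_j}$, disjoint from $T_{v_j}$. Now apply (the proof technique of) Lemma \ref{shrink_vertex_nbhds} repeatedly, or directly build $T_{v_i}$ as a neighbourhood of $\mathring{D}_{v_i}$ contained in $\bigcap_j Y_{ij}$ (over incomparable $j$) and agreeing with $I_{ij}$ near $\mathring{D}_{v_j}$ for comparable $j$ — concretely, take $T_{v_i}^{\mathrm{pre}} = \big(\bigcup_{j} I_{ij}\big)\cup Y$ where $Y$ is a small neighbourhood of $\mathring{D}_{v_i}$ disjoint from all the relevant closed strata, then intersect with the $Y_{ij}$'s — and finally replace $T_{v_i}^{\mathrm{pre}}$ by its $r_{v_i}$-invariant sub-neighbourhood $T_{v_i}$. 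One checks $T_{v_i}\cap T_{v_j}\subseteq I_{ij}$ for comparable $j$ (so the Poisson bracket vanishes there) and $T_{v_i}\cap T_{v_j}=\emptyset$ for incomparable $j$, and that shrinking $T_{v_i}$ did not disturb the earlier pairs. The main obstacle, and the step needing the most care, is making the bookkeeping consistent — ensuring that shrinking $T_{v_j}$ while processing $v_i$ does not break properties already arranged among $T_{v_1},\dots,T_{v_{i-1}}$; this is why processing in a single pass with the total order (rather than iterating) and only ever shrinking is important, and one should state explicitly that all three listed properties are monotone under shrinking so that a final simultaneous shrink is harmless.
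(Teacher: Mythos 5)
Your proposal assembles the same ingredients as the paper's proof (intersection neighbourhoods where the brackets vanish for comparable pairs, Lemma \ref{shrink_vertex_nbhds}, topological separation for incomparable pairs, and a final invariantization by intersecting over the compact $\mathbb{R}/\mathbb{Z}$-orbit of the neighbourhood, which stays a neighbourhood because $\mathring{D}_v$ is fixed), but two of the concrete steps do not work as written. The paper avoids your bookkeeping worry altogether by making all choices at once rather than inductively: for every ordered pair $u\neq v$ it picks $T_{v,u}\subset U_v$ so that $T_{u,v}\cap T_{v,u}$ is contained in the given intersection neighbourhood (comparable case, via Lemma \ref{shrink_vertex_nbhds}) or is empty (incomparable case), sets $T_v=\bigcap_{u\neq v}T_{v,u}$, and only then invariantizes. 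Your first gap is in the incomparable case: choosing $Y_{ij}\supset\mathring{D}_{v_i}$ disjoint from $\overline{\mathring{D}_{v_j}}$ and then asserting that ``hence, after shrinking $T_{v_j}$,'' it is disjoint from $T_{v_j}$ is unjustified. An arbitrary such $Y_{ij}$ (e.g.\ $M\setminus\overline{\mathring{D}_{v_j}}$) accumulates on $\mathring{D}_{v_j}$, and then no neighbourhood of $\mathring{D}_{v_j}$ avoids it. The two neighbourhoods must be chosen simultaneously: $\mathring{D}_{v_i}$ and $\mathring{D}_{v_j}$ each miss the other's closure, so they are disjoint closed subsets of the open set obtained by removing all lower strata, and normality (this is the ``Hausdorff property'' step in the paper) gives disjoint neighbourhoods of the two at once.

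The second, more serious gap is in the comparable case. Your concrete recipe $T_{v_i}^{\mathrm{pre}}=\bigl(\bigcup_j I_{ij}\bigr)\cup Y$, with only $T_{v_i}$ being adjusted, cannot force $T_{v_i}\cap T_{v_j}\subset I_{ij}$: since $v_i<v_j$ we have $\mathring{D}_{v_i}\subset\overline{\mathring{D}_{v_j}}$, so \emph{every} neighbourhood $Y$ of $\mathring{D}_{v_i}$ meets $\mathring{D}_{v_j}$ and hence $T_{v_j}$, while $I_{ij}$ is only a neighbourhood of $W\cap\mathring{D}_{v_j}$; the points of $Y\cap T_{v_j}$ (and of $I_{ik}\cap T_{v_j}$ for $k\neq j$) need not lie in $I_{ij}$, and off $I_{ij}$ there is no control on $\{r_{v_i},r_{v_j}\}$. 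The essential device of Lemma \ref{shrink_vertex_nbhds} is that \emph{both} members of the pair are shrunk: the neighbourhood of $\mathring{D}_{v_j}$ must also be replaced by $Z\cup I_{ij}$, where $Z$ is a neighbourhood of $\mathring{D}_{v_j}\setminus W$ separated from a neighbourhood of $\mathring{D}_{v_i}$. Your parenthetical ``apply the proof technique of Lemma \ref{shrink_vertex_nbhds} repeatedly'' is the right move, but the direct construction you actually spell out omits the $v_j$-side shrink and so does not deliver the second bullet. Two further small points: the shrink of $T_{v_j}$ during the induction is harmless for the first two bullets (they are monotone under shrinking), but $r_v$-invariance is \emph{not}, so the invariantization must come last, as in the paper; and note that for incomparable $u,v$ one cannot hope for disjoint closures of the strata (they may share a lower stratum), only that each stratum misses the other's closure, which is what the separation argument actually uses.
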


\begin{proof}
For any pair $u<v$, we have an intersection neighbourhood $I_{u,v}\subset U_u\cap U_v$ such that, on restriction to $I_{u,v}$, $\{r_u,r_v\}=0$. By Lemma \ref{shrink_vertex_nbhds}, we can choose stratum neighbourhoods $T_{u,v}\subset U_u$ and $T_{v,u}\subset U_v$ for $u$ and $v$ respectively such that $T_{u,v}\cap T_{v,u}\subset I$.

For each $u,v\in V$ such that neither $u\leq v$ or $v\leq u$, by the Hausdorff property we can choose stratum neighbourhoods $T_{u,v}$ and $T_{v,u}$ for $u$ and $v$ respectively such that $T_{u,v}\cap T_{v,u}=\emptyset$.

Now, for each $v\in V$, let
\begin{equation}
T_v=\bigcap_{u\neq v}T_{v,u}
\end{equation}
$T_v\subset U_v$ is a stratum neighbourhood for $v$. 

Now we can replace $T_v$ by
\begin{equation}
\bigcap_{t=0}^1\phi_{r_v}^t(T_v)\subset T_v
\end{equation}
which is a stratum neighbourhood for $v$ because $\mathbb{R}/\mathbb{Z}$ is compact and $\mathring{D}_v$ is fixed by the action, so the intersection of all $\mathbb{R}/\mathbb{Z}$-translates of a neighbourhood of $\mathring{D}_v$ remains a neighbourhood.
\end{proof}

\subsection{Invariant Stratum Neighbourhoods and Exhaustion Functions}
\begin{lem}
\label{compact_boxes}
Let $V$ be a stratified symplectic subvariety in $M$, and $r_v:U_v\rightarrow \mathbb{R}_{\geq 0}$ for $v\in V$ smooth functions such that $U_v$ is a stratum neighbourhood for $v$ and $r_v^{-1}(0)=\mathring{D}_v$. For example, $r_v$ for $v\in V$ could be a system of commuting Hamiltonians for $V$.

Then for $v\in V$ there exist continuous, strictly increasing functions 
\begin{equation}
\epsilon_v:[0,1]\rightarrow \mathbb{R}_{\geq 0}
\end{equation}
where $\epsilon_v(0)=0$, such that for $R\in (0,1]$, the set
\begin{equation}
N_v^R=r_v^{-1}[0,\epsilon_v(R)]\cap \left(M\setminus \bigcup_{u<v}r_u^{-1}\left[0,\frac{1}{3}\epsilon_u(R)\right)\right)
\label{N_v^R}
\end{equation}
is compact and contained in the interior of $U_v$.

Furthermore, for any fixed $t>0$, we may require that whenever $u<v$, $\epsilon_v\leq t\epsilon_u$.
\end{lem}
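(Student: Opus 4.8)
The plan is to construct the functions $\epsilon_v$ by induction on the height $k$ of the stratum containing $v$, exploiting the fact that the poset is finite, so only finitely many constraints need to be satisfied. First I would fix, for each $v$, an auxiliary exhaustion: since $U_v$ is a neighbourhood of $\mathring{D}_v$ and $r_v^{-1}(0)=\mathring{D}_v$, and since $M$ is compact, I claim that for $v\in V_0$ (a point, or more generally a minimal stratum) the sets $r_v^{-1}[0,\delta]$ are compact and eventually land inside the interior of $U_v$ as $\delta\to 0$; this is a straightforward point-set argument using that $r_v^{-1}[0,\delta]$ is closed in the compact $M$ and that $\bigcap_\delta r_v^{-1}[0,\delta]=\mathring{D}_v\subset\mathrm{int}\,U_v$, together with compactness of $\mathring{D}_v$ to get a uniform $\delta_0$. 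So for minimal $v$ we may pick any continuous strictly increasing $\epsilon_v:[0,1]\to\mathbb{R}_{\geq 0}$ with $\epsilon_v(0)=0$ and $\epsilon_v(1)\leq\delta_0$, and then $N_v^R=r_v^{-1}[0,\epsilon_v(R)]$ is already compact and inside $\mathrm{int}\,U_v$ for all $R\in(0,1]$.

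For the inductive step, suppose $\epsilon_u$ has been constructed for all $u$ of height $<k$, and let $v\in V_k$. The subtlety is that $N_v^R$ is $r_v^{-1}[0,\epsilon_v(R)]$ with a shrinking open neighbourhood of the lower strata removed, namely $\bigcup_{u<v}r_u^{-1}[0,\tfrac13\epsilon_u(R))$. I would argue as follows: the closure of $\mathring{D}_v$ minus the open set $\bigcup_{u<v}r_u^{-1}(0)=\bigcup_{u<v}\mathring{D}_u$ is exactly $\mathring{D}_v$ itself (since $\overline{\mathring{D}_v}=\bigcup_{u\leq v}\mathring{D}_u$), so the compact set $\overline{\mathring{D}_v}\setminus\bigcup_{u<v}r_u^{-1}[0,\tfrac13\epsilon_u(R))$ is a closed subset of $\mathring{D}_v$; in fact, because each $r_u$ is continuous and the $\tfrac13\epsilon_u(R)$-sublevel sets form a neighbourhood basis of $\bigcup_{u<v}\mathring{D}_u$ as $R\to0$, for $R$ small this closed subset is contained in any prescribed neighbourhood of $\mathring{D}_v$ in $\overline{\mathring{D}_v}$, hence (being also contained in $M$, away from the removed open set) it is contained in $\mathrm{int}\,U_v$. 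Now a compactness argument: the function $r_v$ is continuous and $r_v^{-1}(0)\cap\bigl(M\setminus\bigcup_{u<v}r_u^{-1}[0,\tfrac13\epsilon_u(R))\bigr)$ is a compact subset of $\mathrm{int}\,U_v$, so there is $\delta(R)>0$ with $r_v^{-1}[0,\delta(R)]\cap\bigl(M\setminus\bigcup_{u<v}r_u^{-1}[0,\tfrac13\epsilon_u(R))\bigr)$ still compact and inside $\mathrm{int}\,U_v$. One checks $\delta(R)$ can be taken monotone (replace it by $\inf_{R'\geq R}\delta(R')$, which is positive for $R>0$ and nondecreasing), then set $\epsilon_v(R)=\min\bigl(\delta(R),\,R\cdot(\text{something})\bigr)$ mollified to be continuous and strictly increasing with $\epsilon_v(0)=0$; since the finitely many conditions are all of the form "$\leq$ some positive monotone bound," taking a small enough strictly increasing function works.

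For the final clause, once all the $\epsilon_v$ have been produced, the requirement $\epsilon_v\leq t\epsilon_u$ whenever $u<v$ is just finitely many more inequalities, compatible with the above because shrinking $\epsilon_v$ only shrinks $N_v^R$ and hence preserves compactness and containment in $\mathrm{int}\,U_v$; one processes the vertices in order of increasing height, at stratum $k$ replacing $\epsilon_v$ by $\min\bigl(\epsilon_v,\,t\min_{u<v}\epsilon_u\bigr)$ (again mollified to stay strictly increasing and continuous vanishing at $0$), which does not disturb the already-fixed $\epsilon_u$ for $u$ of lower height. I expect the main obstacle to be the bookkeeping in the inductive step: carefully verifying that removing the \emph{open} sets $r_u^{-1}[0,\tfrac13\epsilon_u(R))$ — with the factor $\tfrac13$, which presumably provides slack for later arguments about how the $N_v^R$ fit together — leaves $N_v^R$ compact, i.e. that the "hole" punched around the lower strata is genuinely open and its complement in the sublevel set of $r_v$ is closed in $M$; this rests on the precise description $\overline{\mathring{D}_v}=\bigcup_{u\leq v}\mathring{D}_u$ from the definition of a stratified symplectic subvariety, and on choosing the $\epsilon_v$ for the lower strata first so that their sublevel sets are genuine neighbourhoods of the corresponding closed strata.
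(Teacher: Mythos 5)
Your proposal is correct and follows essentially the same route as the paper: induct so that $\epsilon_u$ is fixed for all $u<v$ first, observe that $\overline{\mathring{D}_v}\setminus\bigcup_{u<v}r_u^{-1}\left[0,\tfrac{1}{3}\epsilon_u(R)\right)$ is a compact subset of $\mathring{D}_v\subset\mathrm{int}\,U_v$ admitting a compact neighbourhood there, choose a (possibly discontinuous) threshold $\delta(R)$ keeping the truncated sublevel set inside that neighbourhood, and then pass to a smaller continuous strictly increasing function, imposing $\epsilon_v\leq t\min_{u<v}\epsilon_u$ at the end. The only cosmetic difference is that you induct by stratum height while the paper strips off a maximal poset element, which amounts to the same ordering of the construction.
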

\begin{proof}
We proceed by induction on the poset $V$. If $V=\emptyset$ we are done. Otherwise let $v\in V$ be a maximal element, and suppose functions $\epsilon_u$ exist with all the required properties for $u\neq v$.

For $R\in (0,1]$, the subset
\begin{equation}
\mathring{D}_v\cap \left(M\setminus \bigcup_{u<v}r_u^{-1}\left[0,\frac{1}{3}\epsilon_u(R)\right]\right)\subset M\setminus \bigcup_{u<v}r_u^{-1}\left[0,\frac{1}{3}\epsilon_u(R)\right]
\end{equation}
is a properly embedded submanifold and contained in the interior of $U_v$, and therefore has a compact neighbourhood also contained in the interior of $U_v$. We can choose a not necessarily continuous function $\epsilon_v:(0,1]\rightarrow \mathbb{R}_{>0}$ such that the set
\begin{equation}
r_v^{-1}[0,\tilde{\epsilon}_v(R)]\cap \left(M\setminus \bigcup_{u<v}r_u^{-1}\left[0,\frac{1}{3}\epsilon_u(R)\right]\right)
\end{equation}
is contained in this compact neighbourhood, which implies that the set is itself compact, and contained in the interior of $U_v$.

Finally, we can replace $\epsilon_v$ by a strictly smaller continuous function, if necessary also ensuring $\epsilon_v\leq t\min_{u<v}\epsilon_u$.
\end{proof}

\begin{cor}[Existence of Systems of Invariant Neighbourhoods]
\label{invariant_neighbourhoods}
Let $V$ be a stratified symplectic subvariety  in $M$, and $r_v:U_v\rightarrow \mathbb{R}_{\geq 0}$ for $v\in V$ a system of commuting Hamiltonians for $V$.

Then there exist stratum neighbourhoods $N_v\subset U_v$ for $v\in V$, such that for any $I\subset V$, $N_I$ is $r_I$-invariant, and $N_v\cap X_v\subset X_v$ is closed, where $X_v=M\setminus |V_{<v}|$ for $v\in V$. 
\end{cor}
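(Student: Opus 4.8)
\emph{Proof plan.} First I would apply Lemma \ref{intersection_commute} to shrink the given neighbourhoods to stratum neighbourhoods (still a system of commuting Hamiltonians, which I again denote $U_v$) with the three properties there: incomparable vertices have disjoint neighbourhoods, $\{r_u,r_v\}=0$ on $U_u\cap U_v$ whenever $u<v$, and each $U_v$ is $r_v$-invariant. Two elementary consequences will be used throughout: since $|V_{<v}|\cap U_v\subset r_v^{-1}(0)=\mathrm{Fix}(\phi_{r_v})$ and since $\mathring D_u\cap U_w=\emptyset$ for $u$ incomparable to $w$ while $\phi_{r_w}$ preserves $U_w\cap\mathring D_{w'}$ for $w\le w'$, each $\phi_{r_w}^t$ preserves $U_w\cap X_v$; and $\mathring D_v$ is properly embedded in $X_v$ with $r_v^{-1}(0)\cap X_v=\mathring D_v$. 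I then reduce the statement to producing, for each $v$, a set $N_v$ with $\mathring D_v\subset\mathrm{int}\,N_v\subset N_v\subset U_v$, with $N_v\cap X_v$ closed in $X_v$, and with $N_v$ invariant under $\phi_{r_w}$ for every $w$ comparable to $v$ (meaning $\phi_{r_w}^t(N_v\cap U_w)=N_v\cap U_w$). Indeed, for such $N_v$ and any $I\subset V$ with $v\in I$, the set $N_I=\bigcap_{w\in I}N_w$ is either empty (hence trivially $r_v$-invariant) or forces each $w\in I$ to be comparable to $v$ (as $N_w\cap N_v\neq\emptyset$ gives $U_w\cap U_v\neq\emptyset$), whence $\phi_{r_v}^t(N_I)=\bigcap_{w\in I}\phi_{r_v}^t(N_w\cap U_v)=\bigcap_{w\in I}(N_w\cap U_v)=N_I$, so all the joint invariance follows.

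For the per-vertex construction I would combine two ingredients. The compact boxes $N_v^R$ of Lemma \ref{compact_boxes} lie in $X_v$ (since $\bigcup_{u<v}r_u^{-1}[0,\tfrac13\epsilon_u(R))\supset|V_{<v}|$), and going up the poset their union over $u\le v$ covers a full neighbourhood of $\mathring D_v$: $N_v^R$ covers the part of $\mathring D_v$ bounded away from $|V_{<v}|$, and any point of $\mathring D_v$ entering some $r_u^{-1}[0,\tfrac13\epsilon_u(R))$ lies in $N_{u^*}^R$ for $u^*$ minimal among such $u\le v$. This controls the approach of $\mathring D_v$ to $|V_{<v}|$. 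To control the approach to $\partial U_v$, using normality of $X_v$ together with averaging over the circle generated by $\phi_{r_v}$ (which preserves $U_v\cap X_v$), I choose an open $\phi_{r_v}$-invariant neighbourhood $\Omega_v\subset X_v$ of $\mathring D_v$ with $\overline{\Omega_v}^{X_v}\subset U_v\cap X_v$. The candidate $N_v$ is then obtained by intersecting $\overline{\Omega_v}^{X_v}$ with a suitable union over $u\le v$ of the sublevel sets $r_u^{-1}[0,\epsilon_u(R)]$ restricted to $U_v$, after saturating under the relevant flows; by construction $N_v\cap X_v=N_v$ is then closed in $X_v$, $N_v\subset\overline{\Omega_v}^{X_v}\subset U_v$, and $N_v$ is a neighbourhood of all of $\mathring D_v$.

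The mechanism making the invariance work — and the step I expect to be the main obstacle — is a uniform no-escape principle: if $C\subset U_v\cap X_v$ is closed in $X_v$ and is cut out, inside a $\phi_{r_w}$-invariant set, by conditions of the form $r_v\le(\text{a }\phi_{r_w}\text{-invariant function})$, then $C$ is $\phi_{r_w}$-invariant. The reason is that a $\phi_{r_w}$-orbit through a point of $C$ is a loop in $U_w\cap X_v$; wherever it lies in $U_v\cap U_w$ the value of $r_v$ is conserved (since $\{r_v,r_w\}=0$ there) and the cutting function is conserved as well, so the orbit stays in $C$ there; and since $C$ is closed in $X_v$ and contained in $U_v$ it equals its own closure in $X_v$, so the orbit can never reach $\partial U_v\cap X_v$, i.e.\ it remains in $U_v$ throughout, hence in $C$. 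The difficulty is to arrange the cut-off data ($\Omega_v$, and the way the boxes are glued along the overlaps $U_v\cap U_w$) to be simultaneously invariant under \emph{all} the finitely many commuting actions $\phi_{r_w}$, $w$ comparable to $v$, whose domains only partially overlap, while remaining closed in $X_v$, contained in $U_v$, and a neighbourhood of all of $\mathring D_v$. I would handle this by saturating over the relevant $w$ one at a time: the no-escape principle shows that saturating a closed-in-$X_v$ subset of $U_v$ along $\phi_{r_w}$ preserves closedness in $X_v$ and containment in $U_v$, and Lemma \ref{compact_boxes} allows me to shrink the constants $\epsilon_u(R)$ whenever a saturation threatens to push one box past the next one up; iterating over the finitely many $w$ and over the finitely many strata then produces the required neighbourhoods $N_v$.
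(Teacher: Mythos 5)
You have assembled the right ingredients --- Lemma \ref{intersection_commute}, the compact boxes $N_v^R$ of Lemma \ref{compact_boxes}, and a conservation/no-escape argument --- but there is a genuine gap precisely at the step you yourself flag as the main obstacle. Your no-escape principle applies only to a set that is \emph{already} cut out, inside a $\phi_{r_w}$-invariant region, by $\phi_{r_w}$-conserved conditions; such a set is already invariant, so it never needs saturating. When a set is not of that form, the assertion that saturating it along $\phi_{r_w}$ ``preserves closedness in $X_v$ and containment in $U_v$'' is unjustified: after Lemma \ref{intersection_commute} each $U_v$ is only $r_v$-invariant, not $r_w$-invariant for $w\neq v$, so the $\phi_{r_w}$-orbit of a point of $U_v\cap U_w$ can leave $U_v$, and nothing in your choice of $\Omega_v$ (which is adapted only to the single action $\phi_{r_v}$) prevents this. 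In addition, shrinking the constants $\epsilon_u(R)$ whenever a saturation ``threatens to push one box past the next one up'' changes the boxes and hence the saturations already performed; you give no argument that this iteration over vertices is consistent or terminates. So the simultaneous invariance under all the comparable flows --- the whole content of the corollary --- is not actually established by the plan.

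What you are missing is that no saturation is needed at all, because the boxes of Lemma \ref{compact_boxes} are already invariant wherever the flows are defined. The set $N_v^R$ is cut out by conditions on the values of $r_v$ and of $r_u$ for $u<v$ only, and after Lemma \ref{intersection_commute} each of these functions Poisson-commutes with $r_w$ on the overlap of the relevant stratum neighbourhoods; since $N_v^R$ is compact and contained in the interior of $U_v$, a $\phi_{r_w}$-orbit meeting $N_v^R$ stays in it (the set of times spent in $N_v^R$ is closed by compactness and open because the defining quantities are conserved while the orbit remains in the open set where they are defined). Hence one can simply take $N_v=\bigcup_{0<R\leq 1}N_v^R$, which is a stratum neighbourhood of $\mathring{D}_v$ contained in $X_v$, and deduce that each $N_u\cap U_v$ is $r_v$-invariant directly. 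This is exactly the paper's proof; your auxiliary collar $\Omega_v$, the averaging, and the iterative saturation scheme are unnecessary, and removing them removes the gap.
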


\begin{proof}
By Lemma \ref{intersection_commute}, we may pass to smaller stratum neighbourhoods and assume that for any $u,v\in V$, $\{r_u|_{U_u\cap U_v},r_v|_{U_u\cap U_v}\}=0$.

Let $\epsilon_v$ and $N_v^R$ for $v\in V$ and $R\in [0,1]$ be as in the statement of Proposition \ref{exhaustion_existence}. For each $v\in V$, let
\begin{equation}
N_v=\bigcup_{0< R\leq 1}N_v^R\subset U_v
\label{N_v}
\end{equation}
This is a stratum neighbourhood for $v$. 

It suffices to show that for any $u,v\in V$, $N_u\cap U_v$ is $r_v$-invariant. This follows from the fact that $N_u^R$ is compact and closed under the flow of $X_{r_v}$ where defined. 
\end{proof}

\begin{prop}
\label{exhaustion_existence}
Let $V$ be a stratified symplectic subvariety in $M$, and $r_v:U_v\rightarrow \mathbb{R}_{\geq 0}$ for $v\in V$ a system of commuting Hamiltonians.

Let $\epsilon_v:[0,1]\rightarrow \mathbb{R}_{\geq 0}$ for $v\in V$ be continuous functions as in the statement of Lemma \ref{compact_boxes}, and $N_v$ for $v\in V$ the stratum neighbourhoods defined using these functions via equations $\eqref{N_v}$ and $\eqref{N_v^R}$, for $v\in V$.

For $I\subset V$, we can define open sets
\begin{equation}
\mathring{U}_I=U_I\cap \left(M\setminus \bigcup_{v\in V\setminus I}N_v\right).
\end{equation}

Then there exist: a smooth, proper function
\begin{equation}
f:X\rightarrow (0,1]
\end{equation}
where $X=M\setminus |V|$, and smooth functions
\begin{equation}
\tilde{f}_I:\mathbb{R}_{> 0}^I\rightarrow (0,1]
\end{equation}

such that
\begin{enumerate}
\item $\tilde{f}_\emptyset=1$,
\item for $v\in I\subset V$, $\partial_v\tilde{f}_I\geq 0$,
\item for $I\subset V$, $d\tilde{f}_I\neq 0$ on $\tilde{f}_I^{-1}(0,1)$,
\item for $v\in I\subset V$ and $r\in \mathbb{R}_{>0}^I$, $r_v> \frac{2}{3}\epsilon_v(\tilde{f}_I(r))$,
\item for $J\subset I\subset V$ and $r\in \mathbb{R}_{>0}^I$, if $r_v\geq \epsilon_v(\tilde{f}_I(x))$ for $v\in I\setminus J$, then $\tilde{f}_J\circ \pi_{I,J}(r)=\tilde{f}_I(r)$,
\item and for $I\subset V$, $f|_{\mathring{U}_I}=\tilde{f}_I\circ r_I$.
\end{enumerate}

\begin{figure}[!h]
\begin{minipage}[c]{0.45\linewidth}
\centering
\caption{Graph of the function $\tilde{g}$, as in the proof of Proposition \ref{exhaustion_existence}.}
\begin{tikzpicture}[scale=0.8]
\label{tildeg}
\draw[->] (-1,0) -- (5,0);
\draw[->] (1,-1) -- (1,5);
\draw[dashed] (-1,2)--(5,2);
\draw[dashed] (3,0)--(3,5);
\draw (1,0).. controls (2,0) .. (3,2)--(4.5,5);
\node at (1,0)[anchor=north east] {0};
\node at (1,5)[anchor=south] {$y$};
\node at (3,0)[anchor=north] {$1$};
\node at (1,2)[anchor=north east] {$1$};
\node at (5,0) [anchor=west]{$x$};
\node at (4,3)[anchor=north west] {$y=\tilde{g}(x)$};
\end{tikzpicture}
\end{minipage}\hfill
\begin{minipage}[c]{0.45\linewidth}
\centering
\caption{A level set of the function $\tilde{f}=\tilde{f}_{\{0,1\}}$, as in the statement and proof of Proposition \ref{exhaustion_existence}. The level set lies above the lower dotted lines because of Condition 4, and is a straight line outside the higher dotted lines because of 5.}
\begin{tikzpicture}[scale=0.7]
\label{tildef}
\draw[->] (-1,0) -- (5,0);
\draw[->] (0,-1) -- (0,5);
\draw[dashed] (0,1)--(5,1);
\draw[dashed] (0,2)--(5,2);
\draw[dashed] (1,0)--(1,5);
\draw[dashed] (2,0)--(2,5);
\draw (1.2,5) --(1.2,2) .. controls (1.2,1.2) .. (2,1.2) -- (5,1.2);
\node at (0,0)[anchor=north east]{0};
\node at (0,5)[anchor=south]{$r_1$};
\node at (1,0)[anchor=north] {$\frac{2}{3}\epsilon_0(R)$};
\node at (2,0)[anchor=north west] {$\epsilon_0(R)$};
\node at (0,1)[anchor=east] {$\frac{2}{3}\epsilon_1(R)$};
\node at (0,2)[anchor=east] {$\epsilon_1(R)$};
\node at (5,0)[anchor=west] {$r_0$};
\node at (3,2)[anchor=north west] {$\tilde{f}(r)=R$};
\end{tikzpicture}
\end{minipage}
\end{figure}
\end{prop}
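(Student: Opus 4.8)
The plan is to treat the local models $\tilde f_I$ as the primary objects: once they are built with properties 1--5, property 6 \emph{defines} $f$ on $X$ by gluing the pieces $\tilde f_I\circ r_I$ over the $\mathring U_I\cap X$, and the remaining properties are exactly what force the gluing to be consistent. So I would first reduce to constructing the $\tilde f_I$, disposing of the gluing as follows. The sets $\mathring U_I$ cover $X$: for $x\in X$ put $I(x)=\{v\in V:x\in N_v\}$; then $x\in\mathring U_{I(x)}$ since $N_v\subset U_v$, and $\mathring U_I$ is open in $X$ because $N_v\cap X_v$ is closed in $X_v\supset X$ by Corollary \ref{invariant_neighbourhoods}. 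On an overlap $\mathring U_I\cap\mathring U_J\cap X$ one has $x\in\mathring U_K$ with $K=I(x)\subset I\cap J$, so it suffices that $\tilde f_I\circ r_I$ and $\tilde f_K\circ r_K$ agree there (and likewise for $J$), which is property 5 for $K\subset I$ provided one checks that $x\notin N_v$ and $x\in U_v$ force $r_v(x)\ge\epsilon_v(f(x))$ for $v\in I\setminus K$. That implication comes from unwinding \eqref{N_v} and \eqref{N_v^R} together with property 4, using a short downward induction on the poset to exclude the only competing possibility (that $x$ lies very close to some lower stratum $\mathring D_u$) and the freedom ``$\epsilon_v\le t\,\epsilon_u$ when $u<v$'' granted by Lemma \ref{compact_boxes}. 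Finally $f$ is smooth and positive since this is local and holds for the $\tilde f_I$, and $f$ is proper because $f\equiv 1$ on $\mathring U_\emptyset=M\setminus\bigcup_v N_v$ while property 4 and $\epsilon_v(0)=0$ force $\tilde f_I(r)\to 0$ as any coordinate $r_v\to 0$, so each $f^{-1}[\sigma,1]$ is $M$ with an open neighbourhood of $|V|$ removed, hence compact.

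To build the $\tilde f_I$ I would induct on $|I|$, processing subposets in order of increasing cardinality. The case $I=\emptyset$ is property 1. For $I=\{v\}$, properties 4 and 5 (with $J=\emptyset$) say that $\phi_v:=\tilde f_{\{v\}}$ must satisfy the sandwich $\epsilon_v^{-1}(r)<\phi_v(r)<\epsilon_v^{-1}(\tfrac32 r)$ wherever $\phi_v<1$ (the upper bound being vacuous once $\tfrac32 r\ge\epsilon_v(1)$) together with $\phi_v(r)=1$ for $r\ge\epsilon_v(1)$; since $\epsilon_v$ is continuous and strictly increasing the region between these two continuous graphs is open and nonempty, so I may take $\phi_v$ to be any smooth, strictly increasing function lying strictly inside it that flattens out to $1$ at $\epsilon_v(1)$, which automatically yields property 3, i.e. $d\phi_v\ne 0$ on $\phi_v^{-1}(0,1)$. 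The shape is that of the model $\tilde g$ of Figure \ref{tildeg}, rescaled via $\epsilon_v$. For $|I|\ge 2$, let $v=\max I$, $J_0=I\setminus\{v\}$, and set $\tilde f_I(r)=\Phi_v\big(r_v,\tilde f_{J_0}(\pi_{I,J_0}(r))\big)$, where $\Phi_v:\mathbb R_{>0}\times(0,1]\to(0,1]$ is a smoothing of $(s,R')\mapsto\min(\phi_v(s),R')$ chosen (by an elementary explicit construction) so that $\Phi_v(s,R')=R'$ exactly when $s\ge\epsilon_v(R')$, $\Phi_v(s,R')<R'$ for smaller $s$, always $s>\tfrac23\epsilon_v(\Phi_v(s,R'))$, $\Phi_v$ is increasing in each variable and $\le$ each of $\phi_v(s),R'$, and $d\Phi_v\ne 0$ wherever the value is $<1$. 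The key point is that the corner of $\min(\phi_v(s),R')$ is rounded only over the window $\phi_v^{-1}(R')\le s\le\epsilon_v(R')$, whose length shrinks to $0$ as $R'\to 0$: the smoothing is \emph{scale-adapted}. Iterating this binary step realises $\tilde f_I$ as a scale-adapted smoothed minimum of the $\phi_v(r_v)$, $v\in I$, with the rounded-box level sets of Figure \ref{tildef}.

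It then remains to verify properties 2--5 for the $\tilde f_I$ so defined. Properties 2 and 3 follow from the inductive hypothesis and the corresponding properties of $\Phi_v$; property 4 follows from $\tilde f_I(r)\le\phi_v(r_v)$ with the defining inequality of $\Phi_v$, and from $\tilde f_I(r)\le\tilde f_{J_0}(\pi_{I,J_0}(r))$ with the inductive case for $J_0$. Property 5 is the combinatorial core: fixing $J\subset I$ and $v=\max I$, one argues by cases on whether $v\in J$, reducing the claimed equality $\tilde f_J\circ\pi_{I,J}(r)=\tilde f_I(r)$ in each case to property 5 for the strictly smaller poset $J_0$; the reduction works precisely because the hypothesis ``$r_w\ge\epsilon_w(\tilde f_I(r))$ for $w\in I\setminus J$'' propagates to the analogous hypothesis with $\tilde f_{J_0}$ in place of $\tilde f_I$, and because it is exactly the scale-adaptedness of $\Phi_v$ that lets $r_v\ge\epsilon_v(\tilde f_I(r))$ force the vertex $v$ to drop out of the smoothed minimum \emph{with equality} rather than merely approximately.

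The main obstacle is the tension between properties 3, 4 and 5: property 5 demands that the level sets of $\tilde f_I$ be exactly cylindrical in the $r_v$-direction wherever $r_v$ exceeds $\epsilon_v$ of the value, property 3 demands they be smooth with nowhere-vanishing normal, and property 4 pins them between the ``$\tfrac23$-ellipsoid'' and the threshold box; since these level sets degenerate onto $|V|$ as the value tends to $0$, the rounding of their corners must be done on a scale that degenerates at exactly the matching rate, and carrying this out coherently across all the finitely many nested subposets $J\subset I$ -- using the scale separation $\epsilon_v\le t\,\epsilon_u$ for comparable vertices from Lemma \ref{compact_boxes} -- is the technical heart of the argument; by comparison the gluing in the first paragraph is essentially bookkeeping.
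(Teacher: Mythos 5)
Your first paragraph (covering $X$ by the $\mathring U_I$, the downward induction in the poset showing that $x\in\mathring U_I\cap U_v$ with $v\in I$ forces $r_v(x)\geq\epsilon_v(\tilde f_I(r_I(x)))$, and the properness argument) is sound and in fact more explicit than the paper's own treatment of the gluing. The gap is in the construction of the $\tilde f_I$, specifically in the verification of Property 5. There is first a local problem: your listed conditions on $\Phi_v$ give ``$\Phi_v(s,R')=R'$ exactly when $s\geq\epsilon_v(R')$'', but the hypothesis of Property 5 is $s\geq\epsilon_v(\Phi_v(s,R'))$, which is \emph{weaker} since $\Phi_v\leq R'$; you need the two-variable analogue of your single-vertex sandwich, namely $s<\epsilon_v(\Phi_v(s,R'))$ wherever $\Phi_v(s,R')<R'$. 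That is fixable. The serious problem is the inductive step for general $J\subset I$ with $v\in J$: you claim the hypothesis ``$r_w\geq\epsilon_w(\tilde f_I(r))$ for $w\in I\setminus J$'' propagates to the same hypothesis with $\tilde f_{J_0}$ in place of $\tilde f_I$, but $\tilde f_{J_0}\circ\pi_{I,J_0}\geq\tilde f_I$ (adding an argument to a smoothed minimum only decreases it), so the implication runs the wrong way. Concretely, with $I=\{u,v,w\}$ peeled in the order $v,u,w$ and $J=\{v,w\}$: take $r_w$ large so $\phi_w(r_w)=1$, take $r_v$ small so $\sigma:=\Phi_v(r_v,1)\ll 1$, and take $r_u\in[\epsilon_u(\sigma),\epsilon_u(1))$. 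Then $r_u\geq\epsilon_u(\tilde f_I(r))$, yet $\Phi_u(r_u,1)<1$, so $\tilde f_I(r)=\Phi_v\bigl(r_v,\Phi_u(r_u,1)\bigr)$ and $\tilde f_J(r_v,r_w)=\Phi_v(r_v,1)$ differ unless $\Phi_v(r_v,\cdot)$ is constant on $[\Phi_u(r_u,1),1]$ --- an extra coherence condition tying $\Phi_v$ to $\Phi_u$ that you have not imposed and that is in tension with strict monotonicity. The binary iteration therefore does not deliver the exact compatibility that Property 5 demands.

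The paper avoids this entirely by making the compatibility automatic rather than something to be verified recursively: it chooses smooth $\tilde\epsilon_v$ with $\tfrac34\epsilon_v<\tilde\epsilon_v<\epsilon_v$, a cutoff $\tilde g$ vanishing on $(-\infty,0]$ with $\tilde g(1)=1$, and solves the single implicit equation
\begin{equation*}
\tilde F(R,x)=R-1+\sum_{v\in V}\tilde g\Bigl(9-\tfrac{9}{\tilde\epsilon_v(R)}x_v\Bigr)=0
\end{equation*}
for $R=\tilde f(x)$ (possible since $\partial_R\tilde F>0$, which also gives Properties 2--4), and defines every $\tilde f_I$ as a slice of the one function $\tilde f$ by freezing the missing coordinates at $c_v=\tilde\epsilon_v(1)$. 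Property 5 then holds because a coordinate with $x_v\geq\epsilon_v(R)>\tilde\epsilon_v(R)$ contributes $\tilde g(\text{nonpositive})=0$ to $\tilde F$ and may be replaced by $c_v$ without changing the equation. If you want to salvage a smoothed-minimum picture, it must be organised so that all the $\tilde f_I$ are restrictions of one globally defined function in this way, rather than built by iterated binary operations.
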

\begin{proof}
For $v\in V$, let $\tilde{\epsilon}_v:(0,1]\rightarrow \mathbb{R}_{>0}$ be a smooth function such that $\tilde{\epsilon}_v'(R)>0$ and
\begin{equation}
\frac{3}{4}\epsilon_v(R)<\tilde{\epsilon}_v(R)<\epsilon_v(R),
\end{equation}
and let $c_v=\tilde{\epsilon}_v(1)$.

Let $\tilde{g}:\mathbb{R}\rightarrow \mathbb{R}_{\geq 0}$ be a smooth function such that $\tilde{g}(x)=0$ for $x\leq 0$, $\tilde{g}'(x)>0$ for $x>0$, and $\tilde{g}(1)=1$ (see Figure \ref{tildeg}).

Let $\tilde{F}:(0,1]\times\mathbb{R}_{>0}^V\rightarrow \mathbb{R}$ be the smooth function given by
\begin{equation}
\tilde{F}(R,x)=R-1+\sum_{v\in V}\tilde{g}\left(9-\frac{9}{\tilde{\epsilon}_v(R)}x_v\right).
\end{equation}

If $\tilde{F}(R,x)=0$ and $R\in (0,1]$, $x_v\geq \frac{8}{9}\tilde{\epsilon}_v(R)\geq \frac{2}{3}\epsilon_v(R)$ for all $v\in V$, and $x_v\leq \tilde{\epsilon}_v(R)<\epsilon_v(R)$ for some $v\in V$.

Fix $x\in \mathbb{R}_{>0}^V$. As $R\to 0$, $\tilde{F}(R,x)\to -1$, and $\tilde{F}(1,x)\geq 0$, so for some $R\in (0,1]$, $\tilde{F}(R,x)=0$. Since $\frac{\partial{\tilde{F}}}{\partial R}>0$, there exists a smooth function $\tilde{f}:\mathbb{R}_{>0}^V\rightarrow (0,1]$ such that $\tilde{F}(\tilde{f}(x),x)=0$ (see Figure \ref{tildef}).
For $I\subset V$, let $\tilde{s}_I:\mathbb{R}_{>0}^I\rightarrow \mathbb{R}_{>0}^V$ be the map given by 
\begin{equation}
\tilde{s}_I(x)_v=\begin{cases}
x_v & v\in I
\\c_v & v\notin I
\end{cases}
\end{equation}
and let
\begin{equation}
\tilde{f}_I=\tilde{f}\circ \tilde{s}_I.
\end{equation}

The functions $\tilde{f}_I$ satisfy Properties 1-5 by construction. Properties 4 and 5 imply that for $I,J\subset V$, $\tilde{f}_J\circ r_J|_{U_{I\cup J}}-\tilde{f}_I\circ r_I|_{U_{I\cup J}}$ is supported in $\bigcup_{v\in I\cup J\setminus I\cap J}N_v$, which is disjoint from $\mathring{U}_I\cap \mathring{U}_J$, so we can indeed define a function $f:X\rightarrow (0,1]$ which satisfies Property 6.

\end{proof}

\subsection{Existence of top-level Hamiltonians}

\begin{set}
\label{top_level}
Let $V$ be a stratified symplectic subvariety of dimension $2(n-1)$ in a $2n$-dimensional symplectic manifold $(M,\omega)$, and fix $v\in V_{n-1}$.

Suppose we have radial Hamiltonians $r_u:U_u\rightarrow \mathbb{R}_{\geq 0}$ for $u<v$ (with respect to the subvariety $V$), which form a system of commuting Hamiltonians for $V_{<v}$.

Let $\nu=T\mathring{D}_v^\omega\subset T_M|_{\mathring{D}_v}$ denote the symplectic normal bundle of $\mathring{D}_v\subset M$.

Let $\pi:\nu\rightarrow \mathring{D}_v$ denote the bundle projection, and $s_0:\mathring{D}_v\rightarrow \nu$ the zero-section.

For $u<v$, the Hamiltonian $\mathbb{R}/\mathbb{Z}$-action generated by $r_u$ induces an $\mathbb{R}/\mathbb{Z}$-action on $\pi^{-1}(U_u\cap \mathring{D}_v)$ by symplectic bundle isomorphisms covering the Hamiltonian $\mathbb{R}/\mathbb{Z}$-action generated by $r_u|_{U_u\cap\mathring{D}_v}$.
\end{set}

\begin{lem}
\label{normal_bundle_symplectic}
Suppose we are in setting \ref{top_level}.

There exists a symplectic form $\omega^\nu$ on $\nu$ such that 
\begin{itemize}
\item $\omega^\nu_{s_0(x)}=\omega_x$ for $x\in \mathring{D}_v$.
\item $\omega^\nu|_{\pi^{-1}(x)}$ agrees with the standard symplectic form on $T_x\mathring{D}_v^\omega$.
\item With respect to any compatible metric on the fibres of $\nu$, the function $r(\xi)=\frac{1}{2}||\xi||^2$ generates the standard $U(1)$-action on $\nu$.
\item Any symplectic vector bundle isomorphism covering a symplectomorphism of $(\mathring{D}_v,\omega)$ is a symplectomorphism of $(\nu,\omega^\nu)$.
\end{itemize}
\end{lem}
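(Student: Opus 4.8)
The plan is to build $\omega^\nu$ as a "linearised" version of $\omega$ near the zero-section, using a choice of connection on $\nu$, and then to check the four bullet points directly. First I would fix a compatible metric and a Hermitian structure on the symplectic vector bundle $\nu\to\mathring D_v$, giving a fibrewise symplectic form $\omega^{\mathrm{fib}}$ which is the standard form on each fibre $\pi^{-1}(x)\cong T_x\mathring D_v^\omega$. I would also choose a symplectic connection on $\nu$: a splitting $T\nu = \pi^*T\mathring D_v \oplus \ker d\pi$ of the bundle projection, with horizontal distribution preserved by the structure group $\mathrm{Sp}$ (equivalently $U$ for the Hermitian structure). This is possible because $\mathrm{Sp}(2m)$ (resp.\ $U(m)$) is connected, so such connections form a nonempty affine space. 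Then I would set
\begin{equation}
\omega^\nu = \pi^*\omega + \omega^{\mathrm{fib}},
\end{equation}
interpreted via the splitting: on horizontal vectors it is $\pi^*\omega$, on vertical vectors it is the fibrewise form, and horizontal-vertical pairs are declared orthogonal. One must check this is closed; $\pi^*\omega$ is closed since $\omega$ is, and closedness of the fibrewise term is where the symplectic (structure-group-preserving) condition on the connection enters — this is the standard minimal-coupling / Thurston-type computation. Nondegeneracy near the zero-section is automatic from the block form, and globally on $\nu$ it holds because the fibrewise form is nondegenerate on each fibre and $\pi^*\omega$ is nondegenerate on the horizontal distribution; since the splitting is $\omega^\nu$-orthogonal there is no mixing, so $\omega^\nu$ is symplectic on all of $\nu$, not just a neighbourhood of $s_0$.

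Next I would verify the four required properties. The first, $\omega^\nu_{s_0(x)}=\omega_x$, is immediate: at a zero-section point the vertical form vanishes in the relevant sense and the horizontal lift of $T_x\mathring D_v$ is identified with $T_x\mathring D_v$ itself, so $\omega^\nu$ restricts to $\pi^*\omega = \omega$. The second, that $\omega^\nu|_{\pi^{-1}(x)}$ is the standard form, is exactly the construction of $\omega^{\mathrm{fib}}$. For the third, with respect to the chosen metric $r(\xi)=\tfrac12\|\xi\|^2$: its Hamiltonian vector field computed with $\omega^{\mathrm{fib}}$ alone generates the standard $U(1)$-rotation in each fibre, and because this rotation is a bundle map covering the identity (hence "horizontal-preserving" and metric-preserving), the horizontal part of $\omega^\nu$ contributes nothing to $dr$ and nothing to the Hamiltonian equation — so $X_r$ with respect to $\omega^\nu$ is still the fibrewise rotation vector field. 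For the fourth property, if $\Phi:\nu\to\nu$ is a symplectic vector bundle isomorphism covering a symplectomorphism $\phi$ of $(\mathring D_v,\omega)$, then $\Phi^*\omega^{\mathrm{fib}}=\omega^{\mathrm{fib}}$ because $\Phi$ is fibrewise symplectic, and $\Phi^*\pi^*\omega = \pi^*\phi^*\omega=\pi^*\omega$; but this needs the splitting to be $\Phi$-equivariant, which will generally fail for an arbitrary single connection.

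The main obstacle is precisely this last point: a fixed connection need not be preserved by every symplectic bundle automorphism. I would handle it by averaging, or more robustly by observing that the only automorphisms we need to respect in Setting~\ref{top_level} are the $\mathbb R/\mathbb Z$-actions induced by the finitely many $r_u$ for $u<v$, which form a system of commuting Hamiltonians — so these actions mutually commute, generate a torus action on $\pi^{-1}(\bigcup_u U_u\cap\mathring D_v)$, and one can average the chosen connection over this compact torus to obtain an invariant symplectic connection. Averaging preserves the symplectic (structure-group) condition because the structure group is convex-combination-closed in the relevant sense after passing to the Lie-algebra level, and preserves the fibrewise form since each generator already does. With an invariant connection the fourth bullet holds for all the relevant $\Phi$. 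Strictly, the statement as written asks for invariance under \emph{any} symplectic bundle isomorphism; I would either restrict attention to the isomorphisms actually used downstream (the $r_u$-actions), or note that the $\mathrm{Sp}$-bundle admits a global reduction of structure group to $U$ via a compatible Hermitian metric and take the connection to be the canonical $U$-connection associated to that metric together with a choice of linear connection — making $\omega^\nu$ depend only on metric data, which every fibrewise-symplectic-and-metric-preserving map respects automatically. The cleanest route for the paper's purposes is the torus-averaging one, since that is exactly the equivariance needed to later patch $r$ together with the $r_u$ into a system of commuting Hamiltonians.
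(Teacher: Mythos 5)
Your construction is essentially the paper's: both build the minimal-coupling form $\omega^\nu=\pi^*\omega+(\text{fibrewise standard form})$ and then read off the four properties. The difference is packaging. The paper produces the fibre term as $\tfrac12 d\sigma$, where $\sigma_\xi=\iota_\xi\omega_x$ is a tautological one-form on the total space of $\nu$; writing the fibre term as an exact form makes closedness immediate (no Thurston-type computation), and because $\sigma$ is built only from $\omega$ and the bundle data the paper treats the first and fourth bullets as automatic by naturality, while the third bullet is the same one-line check $\iota_{i\xi}\omega^\nu=-(dr)_\xi$ you give. Your worry about the fourth bullet is a fair one: transferring $\iota_\xi\omega_x\in T_x^*M$ to $T_\xi^*\nu$ does implicitly use the identification the paper calls ``natural'' (i.e.\ a splitting of $T_\xi\nu$), so invariance of $\omega^\nu$ under a bundle isomorphism really does come down to compatibility with that auxiliary choice. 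But your proposed remedy is exactly the mechanism the paper uses downstream: the only isomorphisms that ever matter are the $\mathbb{R}/\mathbb{Z}$-actions induced by the $r_u$, $u<v$, and the required equivariance of the auxiliary structures (metric/almost-complex structure, Moser primitive) is obtained by averaging via Lemma \ref{average_construction} inside Lemmas \ref{tubular_nbhd} and \ref{invariant_Moser}; your weakened fourth bullet suffices for every later use, in particular in Proposition \ref{hypersurf_ham}, where only ``the induced action is unitary'' is invoked. Two small cautions on your write-up: the actions induced by distinct $r_u$'s only commute on intersection neighbourhoods where both are defined, so ``a torus action on $\pi^{-1}\bigl(\bigcup_u U_u\cap\mathring{D}_v\bigr)$'' should be read locally (this is precisely what the inductive averaging in Lemma \ref{average_construction} is organised around); and your last alternative, making $\omega^\nu$ depend only on metric data, does not recover the fourth bullet as literally stated, since the isomorphisms there are only required to be fibrewise symplectic, not metric-preserving.
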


\begin{proof}
We can define a  $Sp(2)$-invariant one-form $\sigma\in\Omega^1(\nu)$ by letting $\sigma_{\xi}=\iota_{\xi}\omega_{x}$ for $\xi\in T_x\mathring{D}_v^\omega\subset \nu$, under the natural identification $T_x^*M\cong T_\xi^*\nu$.

We can define a closed two-form
\begin{equation}
\omega^\nu=\pi^*\omega+\frac{1}{2}d\sigma.
\end{equation}

Fix a compatible unitary structure on $\nu$. 

For any $\xi\in T_x\mathring{D}_v^\omega$, $\frac{1}{2}(d\sigma)_\xi(\xi \wedge i\xi)=||\xi||^2$, so $\frac{1}{2}d\sigma$ is equal to the standard symplectic form on the fibre. It follows that $\omega_\nu$ is a symplectic form.

The fibre-wise $U(1)$-action is the flow of the vector field $i\xi$, and $\iota_{i\xi}\omega^\nu_\xi=-(dr)_\xi$, so $r$ indeed generates the $U(1)$-action.
\end{proof}
\begin{lem}[Invariant Tubular Neighbourhood]
\label{tubular_nbhd}
Suppose we are in Setting \ref{top_level}.

Then there exists a compatible unitary structure for $\nu$, a tubular neighbourhood $T\subset \nu$ of the zero-section, an embedding 
\begin{equation}
\iota:T\rightarrow M
\end{equation}
and for $u<v$, $r_u$-invariant intersection neighbourhoods $I_u\subset \iota(T)\cap U_u$ for the pair $u<v$, such that 
\begin{itemize}
\item The $\mathbb{R}/\mathbb{Z}$-action on $\pi^{-1}(I_u\cap\mathring{D}_v)$ induced by the Hamiltonian $\mathbb{R}/\mathbb{Z}$-action generated by $r_u$ on $I_u$ is an action by unitary line bundle isomorphisms for $u<v$,
\item $\iota\circ s_0=id_{\mathring{D}_v}$,
\item $d\iota_{s_0(x)}=id_{T_xM}$ for $x\in \mathring{D}_v$, with respect to the natural identification of the tangent spaces
\item $\iota^{-1}(I_u)\subset \pi^{-1}(I_u\cap\mathring{D}_v)$ for $u<v$,
\item $\iota|_{\iota^{-1}(I_u)}$ is equivariant with respect to the Hamiltonian $\mathbb{R}/\mathbb{Z}$-action generated by $r_u$ on $I_u$ and the induced action on $\pi^{-1}(I_u\cap\mathring{D}_v)$ for $u<v$.
\end{itemize}
\end{lem}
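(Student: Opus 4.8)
Since $\iota$ is not required to be symplectic, no Moser-type argument is needed here: the statement is an equivariant refinement of the ordinary tubular neighbourhood theorem, the various ``natural identifications'' of tangent spaces coming from the symplectic splitting $T_xM = T_x\mathring{D}_v \oplus \nu_x$ together with the zero-section splitting of $T_{s_0(x)}\nu$. The plan is to take $\iota$ to be the exponential map of a Riemannian metric on $M$ which is invariant under each $r_u$-action near the corresponding $\mathring{D}_u$, and to construct both this metric and the unitary structure on $\nu$ by averaging.

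\emph{Step 1 (invariant auxiliary data).} First I would shrink each $U_u$ to a small $r_u$-invariant tubular neighbourhood and choose a neighbourhood $W_u$ of $\mathring{D}_u$ with $W_u$ disjoint from $U_{u'}$ whenever $u,u'$ are incomparable. This is possible because $\mathring{D}_u$ and $\overline{\mathring{D}_{u'}}$ are disjoint for incomparable $u,u'$, so one can let the tube radii decay near the lower strata, controlling them by the relevant distance functions (made $r_u$-invariant by minimising over $\mathbb{R}/\mathbb{Z}$-orbits). Consequently, on $W_u$ the only active actions $r_{u'}$ are those with $u'$ comparable to $u$, and all of those commute with $r_u$ since $\{r_u\}_{u<v}$ is a system of commuting Hamiltonians. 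Now process the $r_u$ one stratum at a time, in any order: at the stratum $u$, average the current data over $r_u$ on $W_u$ and interpolate back to the previous data away from $\mathring{D}_u$ using a cutoff depending only on $r_u$ — for a metric one takes a genuine convex combination, and for a fibrewise-compatible complex structure one averages the associated metric and re-applies the canonical compatible-complex-structure construction. Because every action active on $W_u$ commutes with $r_u$ and the cutoff is $r_{u'}$-invariant for each such $u'$, no step destroys an invariance already obtained. This yields a Riemannian metric $g^M$ on $M$ which is $r_u$-invariant near each $\mathring{D}_u$, and a fibrewise-compatible complex structure $J^\nu$ on $\nu$, invariant near each $\mathring{D}_u$ under the induced $r_u$-action (which preserves $\omega$, hence $\nu = T\mathring{D}_v^\omega$, hence $\omega^\nu$ by Lemma \ref{normal_bundle_symplectic}); the Hermitian structure $\omega^\nu(\cdot,J^\nu\cdot)$ is the required unitary structure, and the induced actions, being symplectic and $J^\nu$-linear near their strata, are fibrewise unitary.

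\emph{Step 2 (the embedding).} Put $\iota_0 = \exp^{g^M}$, restricted to $\nu \subset TM|_{\mathring{D}_v}$, i.e.\ $\iota_0(\xi) = \exp^{g^M}_{\pi\xi}(\xi)$ for $\xi$ near the zero section. Then $\iota_0 \circ s_0 = \mathrm{id}_{\mathring{D}_v}$, and with respect to $T_{s_0(x)}\nu = T_x\mathring{D}_v \oplus \nu_x = T_xM$ one gets $d(\iota_0)_{s_0(x)} = \mathrm{id}$, with no correction term. Since $\phi^t_{r_u}$ is a $g^M$-isometry near $\mathring{D}_u$ preserving $\mathring{D}_v$, and isometries send geodesics to geodesics, $\iota_0$ intertwines the induced $r_u$-action on $\nu$ with the $r_u$-action on $M$ over a neighbourhood of $\mathring{D}_u$. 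Choosing a small radius function $\delta : \mathring{D}_v \to (0,\infty)$ which is $r_u$-invariant for every $u<v$ (again by the same averaging) and decays near $|V_{<v}|$, and putting $T = \{\xi \in \nu : |\xi| < \delta(\pi\xi)\}$, the restriction $\iota = \iota_0|_T$ is an embedding (standard, as $\mathring{D}_v$ is a properly embedded closed submanifold of $X_v$). Finally, for each $u<v$ pick a small $r_u$-invariant neighbourhood $W_u' \subset W_u \cap U_u$ of $\mathring{D}_u$ lying where $J^\nu$, $g^M$ and $\delta$ are $r_u$-invariant and $\iota_0$ embeds, and set $I_u = \iota\big(\pi^{-1}(W_u' \cap \mathring{D}_v) \cap T\big)$. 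Then $\iota^{-1}(I_u) = \pi^{-1}(W_u' \cap \mathring{D}_v) \cap T \subset \pi^{-1}(I_u \cap \mathring{D}_v)$; the set $I_u$ is an $r_u$-invariant neighbourhood of $W_u' \cap \mathring{D}_v$, hence an intersection neighbourhood for $u<v$, and it is contained in $\iota(T) \cap U_u$; equivariance of $\iota$ on $\iota^{-1}(I_u)$ was shown above. All the listed properties follow.

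\emph{Main obstacle.} The delicate point is the simultaneous averaging in Step 1: the $r_u$ are only partially defined and, a priori, commute only along poset chains, so one must first pass to neighbourhoods on which incomparable strata do not interact, and then perform the averaging one stratum at a time with invariant cut-offs, arranged so that treating a stratum never spoils an invariance already achieved. That this can be done is exactly what the initial shrinking of the $U_u$ and the commutativity built into a system of commuting Hamiltonians provide; everything after Step 1 is the classical (equivariant) exponential-map construction.
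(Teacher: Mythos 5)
Your proposal is correct and follows essentially the same route as the paper: average the auxiliary data over the circle actions (the paper packages exactly this stratum-by-stratum averaging with invariant neighbourhoods as Lemma \ref{average_construction}) to obtain a metric that is $r_u$-invariant near each $\mathring{D}_u$, and take the exponential map of that metric as the tubular neighbourhood, with equivariance near each lower stratum following because isometries send geodesics to geodesics. The only cosmetic difference is that the paper produces one $\omega$-compatible triple $(J,\omega,g)$ on $X_v$ with $\mathring{D}_v$ almost-complex, so the same $g$ both drives the exponential map and induces the unitary structure on $\nu$, whereas you decouple a plain invariant Riemannian metric from a separately averaged fibrewise $J^\nu$; both versions deliver all the listed properties.
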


\begin{proof}
Let $X_v=M\setminus |V_{<v}|$.

By Lemma \ref{average_construction}, we obtain an almost-complex structure $J$ and metric $g$ on $X_v$ such that $(J,\omega,g)$ is a compatible triple and $\mathring{D}_v$ is an almost-complex submanifold with respect to $J$, along with $r_u$-invariant stratum neighbourhoods $T_u\subset U_u$ such that $g|_{T_u\cap X_v}$ is $r_u$-invariant, for $u<v$.

The metric $g$ induces a unitary structure on the fibres of $\nu$, so for each $u<v$, the $\mathbb{R}/\mathbb{Z}$-action on $\pi^{-1}(T_u\cap\mathring{D}_v)$ induced by $\phi_{r_u}^t$ is an action by unitary line bundle isomorphisms.

Let $T\subset \nu$ be a tubular neighbourhood of $s_0(\mathring{D}_v)$ such that exponential map $\exp:T\rightarrow M$ obtained from the metric $g$ is an embedding.  By definition of $\exp$, $d\exp_{s_0(x)}=id_{T_xM}$, with respect to the natural identification of $T_{s_0(x)}\nu$ with $T_x\mathring{D}_v\oplus T_x\mathring{D}_v^\omega=T_xM$.

Because for each $u<v$, $g|_{T_u}$ is invariant under $\mathbb{R}/\mathbb{Z}$-action $\phi_{r_u}^t$, which also preserves $\mathring{D}_v\cap T_u$ , there exists some $r_u$-invariant neighbourhood $I_u\subset \exp(\pi^{-1}(\mathring{D}_v\cap T_u)\cap T)$ of $\mathring{D}_v\cap T_u$ such that $\exp|_{\exp^{-1}(I_u)}:\exp^{-1}(I_u)\rightarrow I_u$ is equivariant with respect to $\phi_{r_u}^t$ and the induced action. Note that $\exp(I_u)$ is an intersection neighbourhood for the pair $u<v$.
\end{proof}

\begin{ex}
\label{cubic_tubular_nbhd}
Consider the cubic curve from Example \ref{cubic_Example}. We could apply Lemma \ref{tubular_nbhd} to the vertex $1$ corresponding to the smooth locus of the curve, and the radial Hamiltonian $r_0$ constructed in \ref{cubic_circle_action}.

Recall that the smooth locus is given by an embedding $z\mapsto [z^3:1:z]$ of $\mathbb{C}$. The circle action generated  by $r_0$ induces a circle action on $\mathbb{C}$ given by $t\cdot z=e^{-2\pi i t}z$. This action extends to an induced action on the symplectic normal bundle $\nu$. The Lemma says that this action is, in fact, unitary with respect to some choice of compatible almost-complex structure. 

In the proof of the Lemma, we choose a compatible almost-complex structure, then obtain an invariant one by averaging. If we choose to start with the standard complex structure, which is, in this case, preserved by the circle action, our choice of invariant almost-complex structure is the standard one.

This means that our choice of invariant unitary structure on $\nu$ comes from the complex structure on $\mathbb{CP}^2$.

The Lemma says that we can choose a tubular neighbourhood $T$ in $\nu$ such that the exponential map $\iota:T\rightarrow \mathbb{CP}^2$ with respect to our invariant compatible metric is an embedding, and there exists an intersection neighbourhood $I$, like the one pictured in figure \ref{cubic_intersection_neighbourhood}, such that the restriction $\iota|_{\iota^{-1}(I)}$ is equivariant with respect to the circle action induced by $r_0$.

In this case, $\iota$ is just the exponential map obtained from the Fubini-Study metric, which is, in fact, globally equivariant.
\end{ex}

\begin{lem}
\label{invariant_Moser}
Suppose we are in Setting \ref{top_level}.

Let $\omega^\nu$ be a symplectic form on $\nu$ as in the statement of Lemma \ref{normal_bundle_symplectic}

Let $\iota:T\rightarrow M$ satisfy all the properties in the statement of Lemma \ref{tubular_nbhd}. Then there exists a tubular neighbourhood $S\subset T$ of the zero-section and a map 
\begin{equation}
\phi:\iota(S)\rightarrow \iota(T)
\end{equation}
such that 
\begin{itemize}
\item $\phi|_{\mathring{D}_v}=id_{\mathring{D}_v}$,
\item $(\phi\circ \iota|_S)^*\omega=\omega^\nu$,
\item for each $u<v$, there exists an $r_u$-invariant intersection neighbourhood $J_u\subset \iota(S)\cap U_u$ such that $(\phi\circ \iota|_S)^{-1}|_{J_u}$ is equivariant with respect to the Hamiltonian $\mathbb{R}/\mathbb{Z}$-action generated by $r_u$ on $U_u$ and the induced action on $\pi^{-1}(U_u\cap\mathring{D}_v)$.
\end{itemize}
\end{lem}

\begin{proof}
Because $T$ deformation retracts onto the zero-section $s_0(\mathring{D}_v)$, and $(\iota_*\omega_\nu -\omega)_x=0$ for $x\in \mathring{D}_v$, we have that $d\mu=\iota_*\omega^\nu-\omega$ for some $\mu\in\Omega^1(\iota(T))$ such that $\mu_x=0$ for $x\in\mathring{D}_v$. 

Let $I_u$ for $u<v$ be intersection neighbourhoods as in the statement of Lemma \ref{tubular_nbhd}.

Define a function $r:\nu\rightarrow \mathbb{R}_{\geq 0}$ by $r(x)=\frac{1}{2}||x||_g^2$, where $g$ is a metric on $\nu$ as in the statement of Lemma \ref{tubular_nbhd}. For $u<v$, $\{r\circ \iota^{-1}|_{I_u},r_u|_{I_u}\}=0$, so by Lemma \ref{invariant_neighbourhoods} applied to $r\circ \iota^{-1}|_{\iota(T)}$ together with $r_u$ for $u<v$, we can choose a stratum neighbourhood $Y\subset \iota(T)$ for $v$ and stratum neighbourhoods $N_u\subset U_u$ for $u<v$ such that $Y\cap N_u$ is $r_u$-invariant. 

By Lemma \ref{average_construction}, we may assume that, for some $r_u$-invariant stratum neighbourhoods $T_u\subset N_u$ for $u<v$, $\mu|_{T_u\cap Y}$ is $r_u$-invariant.

Let $\phi_\mu^t$ denote the time $t$ flow of the vector field $X_\mu$ on $Y$ $\omega$-dual to $\mu$, where defined. We have that $\phi^t_\mu|_{\mathring{D}_v}=id_{\mathring{D}_v}$, and $(\phi_\mu^1)^*\iota_*\omega^\nu=\omega$ where defined.

Choose a tubular neighbourhood $S\subset \iota^{-1}(Y)$ of $s_0(\mathring{D}_v)$ such that $\phi_\mu^t(\iota(S))\subset U$ for all $t\in [0,1]$. We have that 
\begin{equation}
\phi_\mu^1\circ \iota|_S:S\rightarrow \phi_\mu^1(\iota(S))
\end{equation}
is a symplectomorphism.

Because for $u<v$, $X_\mu|_{W_u\cap U}$ is $r_u$-invariant, and the set $\mathring{D}_v\cap W_u$ is fixed by the flow of $X_\mu$ and $r_u$-invariant, there exists some $r_u$-invariant neighbourhood $J_u\subset \phi^1_\mu(\iota(S))\cap W_u$ of $\mathring{D}_v\cap W_u$ such that $(\phi_\mu^1)^{-1}|_{J_u}$ is equivariant with respect to $\phi_{r_u}^t$. 

Note that $J_u$ is an intersection neighbourhood for the pair $u<v$.

Because $\iota^{-1}|_{W_u\cap \iota(S)}$ is also equivariant with respect to the action generated by $r_u$ on $W_u$ and the induced action on $\pi^{-1}(W_u\cap\mathring{D}_v)$, letting $\phi=\phi_\mu^1$, we have that $\phi\circ \iota^{-1}|_{J_u}$ is equivariant as required.
\end{proof}

\begin{ex}
\label{cubic_Moser}
Returning to the cubic curve in Example \ref{cubic_Example}, Lemma \ref{invariant_Moser} says that we can deform the tubular neighbourhood $\iota:T\rightarrow \mathbb{CP}^2$ of the smooth locus described in Example \ref{cubic_tubular_nbhd}, so that it becomes a symplectomorphism, and remains equivariant with respect to the circle action generated by $r_0$ near the singular point.

Because we only need to consider one circle action, which extends globally, Lemma \ref{invariant_Moser} reduces in this case to an equivariant version of the Moser trick.
\end{ex}

\begin{prop}
\label{hypersurf_ham}
Given a stratified symplectic subvariety of dimension $2(n-1)$ in a $2n$-dimensional symplectic manifold $(M,\omega)$, let $r_v:U_v\rightarrow \mathbb{R}_{\geq 0}$ for $v\in V_{\leq n-2}$ be radial Hamiltonians (with respect to the subvariety $V$, not just $V_{\leq n-2}$), which form a system of commuting Hamiltonians for $V_{\leq n-2}$.

Suppose for each $v\in V_{n-1}$, $\mathring{D}_v$ is an almost-complex submanifold with respect to some $\omega$-compatible almost-complex structure on $M$.

Then we may choose radial Hamiltonians $r_v:U_v\rightarrow \mathbb{R}_{\geq 0}$ for $v\in V_{n-1}$ such that $r_v$ for $v\in V$ is a system of commuting Hamiltonians for $V$.
\end{prop}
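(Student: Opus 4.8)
The plan is to construct, for each top-dimensional stratum $v \in V_{n-1}$, a radial Hamiltonian $r_v$ on a stratum neighbourhood $U_v$, by building it first on a model symplectic normal bundle and then transporting it to $M$ via the invariant tubular neighbourhood theorems already established. First I would fix $v \in V_{n-1}$ and work in Setting \ref{top_level}, so that the radial Hamiltonians $r_u$ for $u < v$ are given and form a system of commuting Hamiltonians for $V_{<v}$. Since $\mathring{D}_v$ is an almost-complex submanifold for some compatible $J$, the symplectic normal bundle $\nu = T\mathring{D}_v^\omega$ is a complex (hence unitary, after choosing a compatible metric) line bundle over $\mathring{D}_v$. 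On $\nu$ I take the symplectic form $\omega^\nu$ from Lemma \ref{normal_bundle_symplectic} and the function $r(\xi) = \tfrac12 \|\xi\|^2$, which generates the standard fibrewise $U(1)$-action; this is the model for $r_v$.

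Next I would apply Lemma \ref{tubular_nbhd} to obtain a unitary structure on $\nu$, a tubular neighbourhood $T \subset \nu$ of the zero-section, and an embedding $\iota : T \to M$ with $\iota \circ s_0 = \mathrm{id}$, together with $r_u$-invariant intersection neighbourhoods $I_u \subset \iota(T) \cap U_u$ on which $\iota$ intertwines the $\mathbb{R}/\mathbb{Z}$-action generated by $r_u$ with the induced unitary action on $\pi^{-1}(I_u \cap \mathring{D}_v)$. Then Lemma \ref{invariant_Moser} gives a further shrinking $S \subset T$ and a map $\phi : \iota(S) \to \iota(T)$ with $\phi|_{\mathring{D}_v} = \mathrm{id}$, $(\phi \circ \iota|_S)^* \omega = \omega^\nu$, and an $r_u$-invariant intersection neighbourhood $J_u$ on which $(\phi \circ \iota|_S)^{-1}$ is equivariant. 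I now set $U_v = \phi(\iota(S))$ and define $r_v = r \circ (\phi \circ \iota|_S)^{-1}$ on $U_v$. Because $\phi \circ \iota|_S$ is a symplectomorphism onto its image carrying $\omega^\nu$ to $\omega$, the function $r_v$ generates the push-forward of the standard fibrewise $U(1)$-action; its fixed point set is the zero-section, i.e. $r_v^{-1}(0) = \mathring{D}_v \cap U_v$, which since $v$ is maximal coincides with $\left(\bigcup_{u \le v}\mathring{D}_u\right) \cap U_v$; and there are no strata above $v$, so the last bullet in the definition of radial Hamiltonian is vacuous. Hence $r_v$ is a radial Hamiltonian for $v$.

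The remaining point is the commutation relations $\{r_u, r_v\} = 0$ for $u < v$. On the intersection neighbourhood $J_u$, the map $(\phi \circ \iota|_S)^{-1}$ conjugates the Hamiltonian flow of $r_u$ (more precisely, of $r_u$ restricted and pulled back appropriately) on $M$ to the induced unitary action on $\pi^{-1}(J_u \cap \mathring{D}_v) \subset \nu$, and it conjugates the flow of $r_v$ to the standard fibrewise $U(1)$-action on $\nu$. These two $\mathbb{R}/\mathbb{Z}$-actions on $\nu$ commute: the induced action covers the $r_u$-flow on the base and acts unitarily (in particular linearly and fibre-preserving) on the fibres, while the fibrewise $U(1)$-action is scalar multiplication on each fibre, and scalar multiplication commutes with any linear bundle automorphism. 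Therefore on $J_u$ the flows of $r_u$ and $r_v$ commute, and since both $r_u$ and $r_v$ are invariant under their own flows, $\{r_u, r_v\}$ is constant along these flows and vanishes — more directly, commuting Hamiltonian flows force $\{r_u, r_v\}$ to be locally constant, and it is $0$ along $\mathring{D}_v$. One must also check the compatibility among the different top strata $v, v' \in V_{n-1}$: these are disjoint with disjoint closures in $X_{v}$-type complements (they share only lower strata, which are handled by the given $r_u$), so by the Hausdorff property one can shrink the $U_v$ to be pairwise disjoint except through the prescribed lower-stratum intersections, and no new commutation condition arises between $v$ and $v'$ since they are incomparable. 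Assembling the $r_v$ for $v \in V_{n-1}$ with the given $r_u$ for $u \in V_{\le n-2}$ yields a system of commuting Hamiltonians for $V$.

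I expect the main obstacle to be bookkeeping rather than a genuine difficulty: carefully arranging all the successively shrunk neighbourhoods ($I_u \supset$ domain of $\phi$, $J_u$, the $r_v$-invariant pieces, and the pairwise-disjointness adjustments for distinct top strata) so that a single choice of $U_v$ works simultaneously for every $u < v$ and is compatible with the definition of an intersection neighbourhood. The one conceptually essential input — that the induced action on $\nu$ is by \emph{unitary} bundle isomorphisms, so that it commutes with the scalar fibrewise action — is exactly what Lemmas \ref{tubular_nbhd} and \ref{invariant_Moser} were set up to deliver, so once those are invoked the commutation is automatic.
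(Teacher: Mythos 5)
Your proposal is correct and follows essentially the same route as the paper: pull back the fibrewise norm Hamiltonian $r(\xi)=\tfrac12\|\xi\|^2$ through the equivariant symplectic tubular neighbourhood supplied by Lemmas \ref{tubular_nbhd} and \ref{invariant_Moser}, and deduce $\{r_u,r_v\}=0$ on $J_u$ from the fact that the induced $\mathbb{R}/\mathbb{Z}$-action on $\nu$ is unitary, hence preserves $r$. The only difference is cosmetic (your detour through commuting flows and local constancy, and the extra disjointness remarks for incomparable top strata, which the definition of a system of commuting Hamiltonians does not require).
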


\begin{proof}
Fix $v\in V_{n-1}$. We will construct a radial Hamiltonian for $v$ and show that it commutes with $r_u$ for $u<v$.

Let $\iota:T\rightarrow M$ and $\phi:\iota(S)\rightarrow \iota(T)$ be as in the statements of Lemmas \ref{tubular_nbhd} and \ref{invariant_Moser}.

Let $g$ be a metric on $\nu$ as in Lemma \ref{tubular_nbhd}.

The Hamiltonian $r(x)=\frac{1}{2}||x||_g^2$ generates a Hamiltonian $\mathbb{R}/\mathbb{Z}$-action on the fibres of $\nu$. Choose a neighbourhood $U_v\subset \phi(\iota(T))$ of $\mathring{D}_v$ such that $(\phi\circ\iota)^{-1}(U_v)$ is preserved by this action.

Let 
\begin{equation}
r_v=r\circ \iota^{-1}\circ \phi^{-1}:U_v\rightarrow \mathbb{R}_{\geq 0}
\end{equation}

Because $\phi\circ \iota$ is a symplectomorphism, $r_v$ generates an effective Hamiltonian $\mathbb{R}/\mathbb{Z}$-action on $U_v$, and this action fixes exactly $r_v^{-1}(0)=\mathring{D}_v$, so $r_v$ is a radial Hamiltonian for $v$.

Since for each $u<v$, we have an intersection neighbourhood $J_u\subset U_u\cap U_v$ such that $(\phi\circ \iota)^{-1}|_{J_u}$ is equivariant with respect to the Hamiltonian $\mathbb{R}/\mathbb{Z}$-action on $U_u$ and the induced action on $\pi^{-1}(U_u\cap \mathring{D}_v)$, and the induced action is unitary (i.e. preserves $r$), we have that $\{r_v|_{J_u\cap U_v},r_u|_{J_u\cap U_v}\}=0$, so $r_v$ commutes with $r_u$ as required.
\end{proof}

\begin{ex}
\label{cubic_sch}
In the case of the cubic curve from Example \ref{cubic_Example}, Proposition \ref{hypersurf_ham} allows us to construct a radial Hamiltonian $r_1$ for the vertex $1$ corresponding to the smooth locus of the curve, which commutes with $r_0$ (see Example \ref{cubic_circle_action}).

As discussed in Example \ref{cubic_Moser}, the preceding Lemmas allow us to construct a symplectic tubular neighbourhood of the smooth locus, which is equivariant with respect to the circle action generated by $r_0$. Furthermore, the standard complex structure on the normal bundle $\nu$ is invariant under the induced circle action, so there is a natural $U(1)$ action on the fibres of $\nu$, which commutes with the action generated by $r_0$. This action is Hamiltonian, generated by the norm on the fibres of $\nu$, which gives us our radial Hamiltonian $r_1$.
\end{ex}
\section{Adapted Primitives}
\label{s4}
\subsection{Adapted Primitives}
\begin{set}
\label{prim}
Let $V$ be a stratified symplectic subvariety in $(M,\omega)$ and $X=M\setminus |V|$, and $r_v:U_v\rightarrow \mathbb{R}_{\geq 0}$ for $v\in V$ a system of commuting Hamiltonians for $V$. 

Let $\theta\in\Omega^1(X)$ be a primitive for $\omega$, i.e.
\begin{equation}
d\theta=\omega|_X.
\end{equation}

Let $Z$ be the Liouville vector field on $X$ associated to $\theta$, i.e.
\begin{equation}
\iota_Z\omega=-\theta.
\end{equation}
\end{set}

\begin{lem}
\label{action_constant}
Let $\theta$ be a primitive as in Setting \ref{prim}.

For $v\in V$, let
\begin{equation}
\kappa_v(x)=r_v(x)-\int_{t=0}^1\left(\iota_{X_{r_v}}\theta\right)_{\phi_{r_v}^t(x)}dt.
\end{equation}
Then $\kappa_v$ is constant on $U_v\cap X$.
\end{lem}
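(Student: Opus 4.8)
The plan is to show that the function $\kappa_v$ has vanishing derivative on the connected set $U_v \cap X$, so it is locally constant; one then observes that $U_v \cap X$ is connected because $\mathring{D}_v$ is connected and $U_v$ is a neighbourhood of it (we may shrink $U_v$ to be connected, or argue component by component). The main computation is to differentiate $\kappa_v$ and use Cartan's formula together with the fact that $X_{r_v}$ is the Hamiltonian vector field of $r_v$. So first I would fix attention on the vector field $X_{r_v}$ on $U_v$, defined by $\iota_{X_{r_v}}\omega = -dr_v$ (or $+dr_v$, depending on the paper's sign convention — I will match whichever convention makes $r_v$ generate the flow $\phi_{r_v}^t$), and write $\kappa_v(x) = r_v(x) - \int_0^1 \bigl(\iota_{X_{r_v}}\theta\bigr)_{\phi_{r_v}^t(x)}\,dt$.

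The key step is to differentiate the integral term. Write $\beta = \iota_{X_{r_v}}\theta$, a function on $U_v \cap X$. Then $\int_0^1 \beta \circ \phi_{r_v}^t \, dt$ is, up to the sign in Cartan's formula, the average of $\beta$ over the $\mathbb{R}/\mathbb{Z}$-orbit. The crucial identity is that the $1$-periodicity of the flow lets us compute $d$ of this average: since $\frac{d}{dt}(\beta \circ \phi_{r_v}^t) = (\mathcal{L}_{X_{r_v}}\beta)\circ \phi_{r_v}^t$ has average zero is not quite what is needed — instead one uses that $\phi_{r_v}^1 = \mathrm{id}$ to relate the pullback $(\phi_{r_v}^t)^*\theta$ to $\theta$ via $(\phi_{r_v}^t)^*\theta - \theta = \int_0^t (\phi_{r_v}^s)^* \mathcal{L}_{X_{r_v}}\theta \, ds = \int_0^t (\phi_{r_v}^s)^*\bigl(d\iota_{X_{r_v}}\theta + \iota_{X_{r_v}}\omega\bigr)\,ds = \int_0^t (\phi_{r_v}^s)^*\bigl(d\beta - dr_v\bigr)\,ds$, using $\iota_{X_{r_v}}d\theta = \iota_{X_{r_v}}\omega = -dr_v$ and $\mathcal{L}_{X_{r_v}}\theta = d\iota_{X_{r_v}}\theta + \iota_{X_{r_v}}d\theta$. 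Setting $t = 1$ and using $(\phi_{r_v}^1)^*\theta = \theta$, this gives $0 = \int_0^1 (\phi_{r_v}^s)^*(d\beta - dr_v)\,ds = d\!\left(\int_0^1 (\beta - r_v)\circ \phi_{r_v}^s \, ds\right)$, where in the last step I pull the exterior derivative out of the integral (which is legitimate since the flow is smooth on the compact interval $[0,1]$) and use that $r_v$ is invariant under its own flow, so $r_v \circ \phi_{r_v}^s = r_v$. Hence $d\!\left(\int_0^1 \beta\circ\phi_{r_v}^s\,ds - r_v\right) = 0$, i.e. $d\kappa_v = 0$.

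The step I expect to be the main obstacle — really the only subtle point — is justifying the interchange of $d$ with the $t$-integral and getting all the signs in Cartan's formula and the $\omega$–$\theta$ duality consistent with the paper's conventions; once that is set up correctly the identity falls out immediately. A minor additional point is the connectedness of $U_v \cap X$: if $U_v$ is not assumed connected one concludes only that $\kappa_v$ is locally constant, but since $\mathring{D}_v$ is connected and $U_v \cap X$ deformation retracts onto a punctured neighbourhood of $\mathring{D}_v$ (equivalently, one may replace $U_v$ by a connected invariant sub-neighbourhood using Lemma \ref{intersection_commute} / Corollary \ref{invariant_neighbourhoods}), this is harmless. I would state the result as "$\kappa_v$ is locally constant, hence constant on each connected component of $U_v \cap X$," which suffices for all later applications.
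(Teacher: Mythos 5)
Your argument is correct and is essentially the paper's own proof: both compute $d\kappa_v=0$ from Cartan's formula $\mathcal{L}_{X_{r_v}}\theta=d\iota_{X_{r_v}}\theta+\iota_{X_{r_v}}\omega$, the $1$-periodicity $\phi_{r_v}^1=\mathrm{id}$, and the invariance of $r_v$ under its own flow, with your version merely integrating Cartan's formula along the flow before pulling $d$ out of the $t$-integral rather than differentiating under the integral sign directly. Your remark that vanishing of $d\kappa_v$ a priori only gives local constancy (and hence constancy on components of $U_v\cap X$) is a fair point of hygiene that the paper leaves implicit, but it does not change the substance.
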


\begin{proof}
\begin{equation}
\begin{split}
d\kappa_v&=dr_v-\int_{t=0}^1(\phi_{r_v}^t)^*d(\iota_{X_{r_v}}\theta)dt
\\&=dr_v-\int_{t=0}^1(\phi_{r_v}^t)^*(\mathcal{L}_{X_{r_v}}\theta-\iota_{X_{r_v}}\omega)dt
\\&=dr_v-\int_{t=0}^1\left(\frac{d}{dt}\left((\phi_{r_v}^t)^*\theta\right)+dr_v\right)dt
\\&=0.
\end{split}
\end{equation}

\end{proof}

\begin{defn}[Action of a Radial Hamiltonian]
Let $\theta$ be a primitive as in Setting \ref{prim}.

For $v\in V$, the action of $r_v$ with respect to $\theta$, is the real number 
\begin{equation}
\kappa_v=r_v(\gamma)+\int_{\mathbb{R}/\mathbb{Z}}\gamma^*\theta
\end{equation}
where $\gamma(t)=\phi_{r_v}^t(x)$ for some $x\in U_v\cap X$ is any 1-periodic Hamiltonian orbit generated by $r_v|_{U_v\cap X}$.

Note that this is well-defined by Lemma \ref{action_constant}.
\end{defn}

\begin{lem}
\label{rel_deRham}
Let $\theta$ be a primitive as in Setting \ref{prim}. 

For $v\in V_k$, $\mathring{D}_v$ represents a homology class in $H_{2k}(|V|)$. Let $PD^{rel}(\mathring{D}_v)\in H^{2(n-k)}(M,X)$ denote the cohomology class represented by $\mathring{D}_v$ via Poincaré duality.

Then 
\begin{equation}
[\omega,\theta]=\sum_{v\in V_{n-1}} \kappa_v PD^{rel}(\mathring{D}_v)\in H^2(M,X).
\end{equation}
\end{lem}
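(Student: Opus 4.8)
The plan is to compute the relative de Rham class $[\omega,\theta] \in H^2(M,X)$ by exhibiting an explicit representative $2$-form-with-compact-support-near-$|V|$, or equivalently by pairing against relative $2$-cycles. Recall that $H^2(M,X)$ is generated, after passing to real coefficients, by the classes $PD^{rel}(\mathring D_v)$ for $v \in V_{n-1}$: indeed, the long exact sequence of the pair $(M,X)$ together with the fact that $X = M \setminus |V|$ deformation retracts away from the high-codimension strata means the image of $H^2(M,X) \to H^2(M)$ is spanned by the duals of the codimension-$2$ components $\mathring D_v$, $v \in V_{n-1}$, and the relative classes $PD^{rel}(\mathring D_v)$ are a lift of these. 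So it suffices to check the identity after pairing with a collection of relative $2$-cycles $u \in H_2(M,X)$ that detects all of $H^2(M,X)$; a convenient such collection is given by small $2$-discs $u_w$ transverse to a single $\mathring D_w$ ($w\in V_{n-1}$) at an interior point, with $\partial u_w \subset X$ a small loop, together with closed surfaces in $X$ (on which both sides vanish, the right-hand side because $PD^{rel}(\mathring D_v)$ restricts to $PD(\mathring D_v)$ which pairs with closed surfaces via intersection number, handled by the same local computation).

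First I would set up the pairing: for a relative cycle $u$ with $\partial u \subset X$,
\begin{equation}
\langle [\omega,\theta], u\rangle = \int_u \omega - \int_{\partial u}\theta,
\end{equation}
which is the standard formula for the relative de Rham class of the pair $(\omega,\theta)$ with $d\theta = \omega|_X$. The right-hand side pairs against $u$ as $\sum_{v\in V_{n-1}} \kappa_v\,(u\cdot \mathring D_v)$, so the claim reduces to
\begin{equation}
\int_u \omega - \int_{\partial u}\theta = \sum_{v\in V_{n-1}} \kappa_v\,(u\cdot \mathring D_v)
\end{equation}
for all such $u$. By linearity and excision it is enough to treat the case where $u$ is a $2$-disc meeting exactly one stratum $\mathring D_w$ ($w\in V_{n-1}$), transversally and positively at one interior point, with $\partial u$ a small loop contained in $U_w \cap X$; every relative $2$-cycle is homologous rel $X$ to a sum of such discs plus a cycle in $X$.

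Next, the local model. Near the chosen point of $\mathring D_w$, use the radial Hamiltonian $r_w:U_w\to\mathbb R_{\ge 0}$ from the given system of commuting Hamiltonians. Shrink $u$ so that $\partial u$ is an orbit circle $\gamma(t) = \phi_{r_w}^t(x)$ of the Hamiltonian $\mathbb R/\mathbb Z$-action, traversed once (possible because $\mathring D_w$ is a local minimum of $r_w$ with positive weights, so nearby level sets of $r_w$ are circle-bundle-like over $\mathring D_w$ and a meridian disc can be taken with such a boundary; if the weight is $>1$ the orbit wraps, contributing a factor that matches the intersection multiplicity — this is the one bookkeeping point to be careful about). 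Then
\begin{equation}
\int_{\partial u}\theta = \int_{\mathbb R/\mathbb Z}\gamma^*\theta = \kappa_w - r_w(\gamma),
\end{equation}
by the very definition of the action $\kappa_w$ of $r_w$ with respect to $\theta$ (Lemma~\ref{action_constant} guarantees this is independent of the orbit). Meanwhile $\int_u \omega = \int_u d\theta$ cannot be applied directly since $u \not\subset X$, but $u \setminus \{$center$\}$ does lie in $X$; integrating $\omega$ over the disc and using Stokes on $u$ minus a tiny subdisc around the puncture, together with the fact that $r_w \to 0$ and the $\theta$-integral over shrinking orbit circles behaves like $-r_w + (\text{const})$, gives
\begin{equation}
\int_u \omega = r_w(\gamma) - 0 + \big(\text{contribution of the center}\big),
\end{equation}
where the center contributes $0$ since $r_w$ and the area both vanish there. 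Combining, $\int_u\omega - \int_{\partial u}\theta = r_w(\gamma) - (\kappa_w - r_w(\gamma))$ — wait, the signs and the $r_w(\gamma)$ terms must cancel to leave exactly $\kappa_w$; I would track this by writing $\omega = d\theta$ on the punctured disc, so $\int_{u\setminus D_\delta}\omega = \int_{\partial u}\theta - \int_{\partial D_\delta}\theta$, and then letting $\delta\to 0$: the inner boundary integral $\int_{\partial D_\delta}\theta$ is an orbit circle integral $= \kappa_w - r_w|_{\partial D_\delta} \to \kappa_w$ as $r_w\to 0$, while $\int_{D_\delta}\omega \to 0$. Hence $\int_u\omega = \int_{\partial u}\theta - \kappa_w$, i.e. $\int_u\omega - \int_{\partial u}\theta = -\kappa_w$. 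A sign in the definition of $PD^{rel}$ or of the action (note $\iota_Z\omega = -\theta$) absorbs this, and for a cycle meeting several strata we sum, with intersection multiplicities appearing exactly as the covering degrees of the boundary orbits. This gives the identity on all relative $2$-cycles and hence the cohomological statement.

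The main obstacle I anticipate is not the global topology — the reduction to meridian discs is routine — but the careful local analysis at the puncture: making precise that a relative $2$-cycle can be arranged to have boundary on honest Reeb/Hamiltonian orbit circles of $r_w$ (using the positivity of the weights $a_i$ and the equivariant normal-form picture from the tubular neighbourhood lemmas), correctly accounting for the winding/multiplicity when the total weight exceeds $1$ so that the intersection number $u\cdot\mathring D_v$ matches the orbit degree, and pinning down all signs so that the factor of $\kappa_v$ (rather than $-\kappa_v$) appears. Everything else — excision, the exact sequence of $(M,X)$, the vanishing of both sides on closed surfaces in $X$ — is standard.
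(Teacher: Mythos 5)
Your proposal is correct in substance and follows essentially the same route as the paper: identify $H^2(M,X)$ with the span of the classes $PD^{rel}(\mathring{D}_v)$ for $v\in V_{n-1}$, then read off each coefficient by pairing $\int_u\omega-\int_{\partial u}\theta$ against a small disc transverse to $\mathring{D}_v$ whose boundary is an orbit of $\phi_{r_v}^t$ (the paper phrases this disc as a union of orbits shrinking to a point of $\mathring{D}_v$). The sign you could not pin down is not a defect of your argument but an inconsistency in the paper's stated Definition of the action: the quantity proved constant in Lemma \ref{action_constant} is $\kappa_v=r_v(x)-\int_0^1\left(\iota_{X_{r_v}}\theta\right)_{\phi_{r_v}^t(x)}dt$, i.e.\ $\kappa_v=r_v(\gamma)-\int_\gamma\theta$, whereas the boxed Definition carries a $+$; with the minus-sign version (the one consistent with constancy and with $\kappa_v>0$), your Stokes computation on the punctured disc gives $\int_u\omega-\int_{\partial u}\theta=+\kappa_v$ directly, and nothing needs to be ``absorbed'' into $PD^{rel}$, whose sign is fixed by the symplectic orientations. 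One caveat you flag but then wave through, and which the paper shares silently: the argument needs the boundary orbit to wind once around $\mathring{D}_v$ (normal weight $1$ for top strata), since a weight-$a$ orbit bounds a disc meeting $\mathring{D}_v$ with multiplicity $a$ while still contributing only $\kappa_v$ to the relative pairing, which would rescale the coefficient; so the multiplicity does not automatically ``match'' — it is the weight-one normalisation (which holds for the Hamiltonians produced by Proposition \ref{hypersurf_ham} and in all the paper's examples) that makes the bookkeeping close.
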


\begin{proof}
The Poincaré duality map gives an isomorphism
\begin{equation}
H_{2n-2}\left(|V|\right)\rightarrow H^2\left(M,X\right),
\end{equation}
and the homology classes represented by the submanifolds $\mathring{D}_v$ for $v\in V_{n-1}$ generate the left hand side, so the classes $PD^{rel}(\mathring{D}_v)$ for $v\in V_{n-1}$ generate the right hand side. 

For any $v\in V_{n-1}$, by considering a union of $\mathbb{R}/\mathbb{Z}$-orbits of $\phi_{r_v}^t$ converging to a point in $\mathring{D}_v$, we can construct a disc $u\in \pi_2(X\cup \mathring{D}_v,X)$ such that $[u]\cdot [\mathring{D}_v]=1$, and
\begin{equation}
\int_u\omega-\int_{\partial u}\theta=\kappa_v.
\end{equation} 
This shows that the coefficient of $PD^{rel}(\mathring{D}_v)$ in $[\omega,\theta]$ is indeed $\kappa_v$.
\end{proof}

\begin{defn}[Adapted One-form]
Let $\theta$ be a primitive as in Setting \ref{prim}.

We say that $\theta$ is adapted at $v\in V$ if, on some stratum neighbourhood of $v$,
\begin{equation}
\iota_Zdr_v=r_v-\kappa_v.
\end{equation} 
where $\kappa_v$ is the action of $r_v$.

We say that $\theta$ is adapted if for each $v\in V$, $\theta$ is adapted at $v$.
\end{defn}

\begin{defn}[Weakly Adapted One-form]
Let $\theta$ be a primitive as in Setting \ref{prim}.

We say that $\theta$ is weakly adapted at $v\in V$ if, on some stratum neighbourhood of $v$,
\begin{equation}
\iota_Zdr_v<0.
\end{equation} 

We say that $\theta$ is weakly adapted if for each $v\in V$, $\theta$ is weakly adapted at $v$.

Let $\kappa_v$ denote the action of $r_v$ with respect to $\theta$. Note that if $\kappa_v>0$ and $\theta$ is adapted at $v$, then $\theta$ is weakly adapted at $v$. Conversely, if $\theta$ is weakly adapted and adapted at $v$, then $\kappa_v>0$ by definition of the action.
\end{defn}

\begin{ex}
The cubic curve in Example \ref{cubic_Example} is anticanonical, so the compactification one-form 
\begin{equation}
\theta=\kappa d^c \log\left(\frac{|z_0z_1^2-z_2^3|^2}{(|z_0|^2+|z_1|^2+|z_2|^2)^3}\right)
\end{equation}
is a primitive for the symplectic form $\omega$, where $[\omega]=2\kappa c_1(T\mathbb{CP}^2)$.

Recall the radial Hamiltonian $r_0$ from Example \ref{cubic_circle_action}, which by Proposition \ref{hypersurf_ham}, forms part of a system of commuting Hamiltonians for $\{0,1\}$. $\theta$ is invariant under the circle action generated by $r_0$, so by Equation \eqref{adapted_equivariant} $\theta$ is adapted at $0$.

An orbit of the circle action bounds a disc centred at $[1:0:0]$ inside the holomorphic plane $z\mapsto [-1:z^3:z^2]$, so the action of $r_0$ is $6\kappa$, the intersection number of the disc with the cubic curve ($H_2(\mathbb{CP}^2,X;\mathbb{R})$ is generated by the Poincaré dual of the class of the curve).
\end{ex}

\subsection{Convexity}

\begin{prop}
Let $\theta$ be a weakly adapted primitive for $\omega$.

Then $(X,\theta)$ has the structure of a finite type convex symplectic manifold, in the sense of Section 8 of \cite{McL_growth}.
\end{prop}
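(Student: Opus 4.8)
The plan is to verify the axioms of a finite type convex symplectic manifold from Section 8 of \cite{McL_growth}: we need to exhibit an exhausting function whose level sets are contact-type hypersurfaces for $\theta$ outside a compact set, with the Liouville vector field $Z$ pointing outwards. The natural candidate for this exhausting function is the function $f:X\to(0,1]$ constructed in Proposition \ref{exhaustion_existence} (or rather $-\log f$, so that it is proper and bounded below but unbounded above, with the relevant ends of $X$ being $f\to 0$). The key point to check is that $Z$ is transverse to the level sets of $f$ near the ends, pointing in the direction of decreasing $f$.

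First I would recall that on each region $\mathring{U}_I$ we have $f=\tilde f_I\circ r_I$, so $df=\sum_{v\in I}(\partial_v\tilde f_I)\,dr_v$, and hence $\iota_Z df=\sum_{v\in I}(\partial_v\tilde f_I)\,\iota_Z dr_v$. By Proposition \ref{exhaustion_existence}(2) each coefficient $\partial_v\tilde f_I\ge 0$, and by hypothesis $\theta$ is weakly adapted, so $\iota_Z dr_v<0$ on a stratum neighbourhood of each $v$; since the region where $r_v$ is small is exactly where that term is ``active,'' I would want to arrange (shrinking the $U_v$, or using the freedom in choosing $\epsilon_v$ in Lemma \ref{compact_boxes} and the auxiliary functions in Proposition \ref{exhaustion_existence}) that on $\mathring{U}_I$ the variable $r_v$ stays in the range where $\iota_Z dr_v<0$ whenever $\partial_v\tilde f_I$ is not identically zero. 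Together with Proposition \ref{exhaustion_existence}(3), that $d\tilde f_I\ne 0$ on $\tilde f_I^{-1}(0,1)$ (so $df\ne 0$ near the ends and the level sets are genuine hypersurfaces), this gives $\iota_Z df<0$ on a neighbourhood of $f^{-1}(0,\delta)$ for small $\delta$, i.e.\ $Z$ points towards decreasing $f$, equivalently $Z$ is outward-pointing for the exhaustion $-\log f$. The complement $f^{-1}[\delta,1]$ is compact since $f$ is proper.

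The remaining items are lighter: finite type amounts to the statement that outside a compact set the Liouville flow is defined for all forward time and the hypersurfaces $f^{-1}(\delta')$ for $\delta'<\delta$ are all mutually isotopic through the Liouville flow — this follows once $Z\pitchfork \{f=\delta'\}$ with the correct sign, because then the flow of $Z/(\iota_Z df)$ (suitably normalised) identifies nearby level sets, and completeness at the ends is automatic from convexity. One should also note $X$ is a manifold without boundary and that $\theta$ is a genuine Liouville form ($d\theta=\omega$), which is given. So the content is really the single transversality computation $\iota_Z df<0$ near the ends.

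The main obstacle is precisely ensuring the sign of $\iota_Z df$ \emph{globally} near the ends rather than just near each individual $\mathring{D}_v$: the neighbourhoods on which ``$\theta$ is weakly adapted at $v$'' holds are only guaranteed to be \emph{some} stratum neighbourhoods, and a priori $f$ on $\mathring{U}_I$ could involve $dr_v$ contributions from a region where weak adaptedness has not been established. Resolving this requires going back and choosing the data $U_v$, $\epsilon_v$, $\tilde\epsilon_v$ small enough that the ``active'' region of each $dr_v$ in the formula for $f$ (roughly $r_v\le\epsilon_v(R)$, cf.\ the level set in Figure \ref{tildef}) is contained in the stratum neighbourhood where $\iota_Z dr_v<0$; since these neighbourhoods and functions are chosen freely in Lemma \ref{compact_boxes} and Proposition \ref{exhaustion_existence}, this is possible, but it is the step where the hypotheses have to be used carefully, and I would devote the bulk of the proof to it. A secondary subtlety is the behaviour of $f$ and $Z$ on the overlaps $\mathring U_I\cap\mathring U_J$, but Proposition \ref{exhaustion_existence}(5)–(6) already guarantee $f$ is well defined and locally given by one of the $\tilde f_I\circ r_I$, so the sign estimate on each piece suffices.
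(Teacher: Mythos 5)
Your proposal is correct and follows essentially the same route as the paper: apply Proposition \ref{exhaustion_existence} to the restrictions $r_v|_{T_v}$, where $T_v$ are the stratum neighbourhoods on which $\iota_Z dr_v<0$, and then combine $\partial_v\tilde f_I\geq 0$, $d\tilde f_I\neq 0$ on $\tilde f_I^{-1}(0,1)$, and $f|_{\mathring U_I}=\tilde f_I\circ r_I$ to get $\iota_Z df<0$ wherever $f<1$. The subtlety you flag about the ``active'' region of each $dr_v$ is exactly what the paper resolves by feeding the restricted functions $r_v|_{T_v}$ into the exhaustion construction.
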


\begin{proof}
\label{convexity}
We have that for each $v\in V$, $\iota_Zdr_v<0$ on some stratum neighbourhood $T_v\subset U_v$ for $v$. By Proposition \ref{exhaustion_existence} applied to the functions $r_v|_{T_v}$, there exists a smooth, proper function 
\begin{equation}
f:X\rightarrow (0,1],
\end{equation}
and by properties 2,3 and 7, whenever $f(x)<1$, $\iota_{Z_x}(df)_x<0$.
\end{proof}

\begin{prop}
\label{cohomologous_adapted}
Let $\theta_0,\theta_1$ be primitives as in Setting \ref{prim}.

Suppose that $\theta_i$ is weakly adapted for $i=0,1$ and $\theta_1-\theta_0$ is exact (by Lemma \ref{rel_deRham}, this is equivalent to the condition that for $v\in V_{n-1}$, the actions of $r_v$ with respect to $\theta_0$ and $\theta_1$ are equal).

Then there is a strong deformation equivalence between $(X,\theta_0)$ and $(X,\theta_1)$, in the sense of Section 8 of \cite{McL_growth}.
\end{prop}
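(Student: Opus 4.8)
\subsection*{Proof proposal}

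The plan is to join $\theta_0$ and $\theta_1$ by the straight-line homotopy and to check that the convex (weakly adapted) structure survives along it, exploiting that the Liouville vector field depends affinely on the homotopy parameter. Since $\theta_1-\theta_0$ is exact (equivalently, by Lemma \ref{rel_deRham}, the actions $\kappa_v$ agree for $\theta_0$ and $\theta_1$), choose $h\in C^\infty(X)$ with $\theta_1-\theta_0=dh$ and set $\theta_t=\theta_0+t\,dh$ for $t\in[0,1]$, so $d\theta_t=\omega|_X$ for every $t$. Writing $Z_t$ for the Liouville vector field of $\theta_t$ and $W$ for the vector field with $\iota_W\omega=-dh$, nondegeneracy of $\omega$ gives $Z_t=Z_0+tW$; in particular $t\mapsto Z_t$, and hence $t\mapsto \iota_{Z_t}dr_v$ for each $v\in V$, is affine in $t$.

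Next I would fix $v\in V$. By hypothesis $\iota_{Z_0}dr_v<0$ on a stratum neighbourhood $T_v^0\subset U_v$ of $v$ and $\iota_{Z_1}dr_v<0$ on a stratum neighbourhood $T_v^1\subset U_v$ of $v$; on $T_v:=T_v^0\cap T_v^1$, still a stratum neighbourhood for $v$, the affine function $t\mapsto(\iota_{Z_t}dr_v)_x$ is negative at $t=0$ and at $t=1$ for every $x\in T_v$, hence negative for all $t\in[0,1]$. Shrinking $T_v$ to an $r_v$-invariant stratum neighbourhood (as in the proof of Lemma \ref{intersection_commute}) preserves this inequality, so after doing so the $r_v|_{T_v}$ form a system of commuting Hamiltonians for $V$ with $\iota_{Z_t}dr_v<0$ on $T_v$ for every $t\in[0,1]$ simultaneously; thus every $\theta_t$ is weakly adapted, witnessed on the same neighbourhoods for the whole family.

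Now I would run the construction of Proposition \ref{exhaustion_existence} on the radial Hamiltonians $r_v|_{T_v}$, exactly as in the proof that a weakly adapted primitive yields a finite-type convex symplectic manifold, obtaining a single smooth proper function $f:X\to(0,1]$ with $f|_{\mathring U_I}=\tilde f_I\circ r_I$. On each $\mathring U_I$ one has $df=\sum_{v\in I}\partial_v\tilde f_I\,dr_v$ with $\partial_v\tilde f_I\ge 0$, and by property 3 of that proposition $df\neq 0$ on $\{f<1\}$; combined with $\iota_{Z_t}dr_v<0$ on $T_v$ this gives $\iota_{Z_t}(df)<0$ throughout $\{f<1\}$, for every $t\in[0,1]$. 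Hence $(X,\theta_t)$ together with the common exhausting function $f$ is a smooth family of finite-type convex symplectic manifolds interpolating $(X,\theta_0)$ and $(X,\theta_1)$, which is precisely a strong deformation equivalence in the sense of Section 8 of \cite{McL_growth}.

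I do not expect a serious obstacle: the only danger is losing convexity somewhere along the homotopy, and the affine dependence of $Z_t$ on $t$ rules this out for free, reducing everything to the single-primitive case already established. The one point requiring a little care is arranging that a single exhaustion function serves the entire family — i.e.\ bookkeeping the stratum neighbourhoods — but this is handled by intersecting the finitely many $T_v^0\cap T_v^1$, passing to invariant sub-neighbourhoods, and invoking Proposition \ref{exhaustion_existence} just once.
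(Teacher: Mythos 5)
Your proposal is correct and follows essentially the same route as the paper: the straight-line homotopy $\theta_t$, the affine dependence of the Liouville field $Z_t$ on $t$, weak adaptedness of both endpoints on common stratum neighbourhoods, and a single application of Proposition \ref{exhaustion_existence} to get one exhaustion function $f$ with $\iota_{Z_t}df<0$ on $\{f<1\}$ for all $t$. The only cosmetic difference is that the paper checks $\iota_{Z_i}df<0$ at the endpoints and then interpolates, while you interpolate at the level of $\iota_{Z_t}dr_v$ first; both are fine.
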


\begin{proof}
Let $Z_0$ and $Z_1$ be the associated Liouville vector fields on $X$ to $\theta_0$ and $\theta_1$ respectively.

For $t\in [0,1]$, let $\theta_t=(1-t)\theta_0+t\theta_1$, so $\theta_t-\theta_0$ is exact. The Liouville vector field associated to $\theta_t$ is $Z_t=(1-t)Z_0+tZ_1$.

We have that for each $v\in V$, $\iota_{Z_i}dr_v<0$ for $i=0,1$ on some stratum neighbourhood $T_v\subset U_v$ for $v$.

As in the proof of Lemma \ref{convexity}, by applying \ref{exhaustion_existence} to the functions $r_v|_{T_v}$ for $v\in V$ we obtain an exhaustion function $f:X\rightarrow (0,1]$ such that whenever $f<1$, $\iota_{Z_i}df<0$ for $i=0,1$, so $df(Z_t)<0$ for $t\in [0,1]$, so $\theta_t$ is a finite type strong convex deformation equivalence.
\end{proof}

\subsection{Existence}

\begin{cor}
\label{adapted_existence}
Let $\theta$ be a primitive as in Setting \ref{prim}.

Suppose $\theta$ is weakly adapted.

Then there exists an adapted primitive $\overline{\theta}$ for $\omega$, and a finite type deformation equivalence between $(X,\theta)$ and $(X,\overline{\theta})$.

Note that this implies that for each $v\in V$, the action of $r_v$ with respect to $\theta$ and $\overline{\theta}$ is equal.
\end{cor}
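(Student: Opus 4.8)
The plan is to produce an adapted primitive $\overline{\theta}$ which is cohomologous to $\theta$, and then feed $\theta_0=\theta$, $\theta_1=\overline{\theta}$ into Proposition \ref{cohomologous_adapted} to obtain the finite type strong convex deformation equivalence. The equality of actions is then automatic: the action of $r_v$ is unchanged by adding an exact form to $\theta$, since $\int_{\mathbb{R}/\mathbb{Z}}\gamma^*df=0$ for a closed orbit $\gamma$. The key point is the elementary observation, essentially the computation in the proof of Lemma \ref{action_constant}, that if a primitive is invariant under the $\mathbb{R}/\mathbb{Z}$-action generated by $r_v$ on a neighbourhood of $\mathring{D}_v$, then it is adapted at $v$: from $\mathcal{L}_{X_{r_v}}\theta=0$ and Cartan's formula one gets $d(\iota_{X_{r_v}}\theta)=-\iota_{X_{r_v}}\omega=\pm dr_v$, hence $\iota_{X_{r_v}}\theta=\mp r_v+\mathrm{const}$; rewriting $\iota_Zdr_v=\pm\iota_{X_{r_v}}\theta$ via $\iota_Z\omega=-\theta$ turns this into $\iota_Zdr_v=r_v-\kappa_v$, with the constant pinned down by the definition of $\kappa_v$. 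So it suffices to modify $\theta$ by exact forms until it is $r_v$-invariant near each $\mathring{D}_v$.

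First I would check that $\kappa_v>0$ for every $v\in V$. On an $r_v$-invariant neighbourhood $N_v$ of $\mathring{D}_v$ provided by Corollary \ref{invariant_neighbourhoods}, form the averaged primitive $\theta_v=\int_0^1(\phi_{r_v}^t)^*\theta\,dt$. Using $(\phi_{r_v}^t)_*X_{r_v}=X_{r_v}$ one computes $(\iota_{Z_v}dr_v)(p)=\int_0^1(\iota_Zdr_v)(\phi_{r_v}^t(p))\,dt$, where $Z_v$ is the Liouville field of $\theta_v$; since $N_v$ is flow-invariant and $\theta$ is weakly adapted at $v$, this is negative, so $\theta_v$ is weakly adapted at $v$, and being invariant it is adapted at $v$. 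By the remark following the definition of a weakly adapted one-form this forces $\kappa_v>0$, and $\kappa_v$ is the same for $\theta$ and the cohomologous $\theta_v$.

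For the main construction I would fix a linear extension $v_1,\dots,v_l$ of the poset and build primitives $\theta=\theta^{(0)},\dots,\theta^{(l)}=\overline{\theta}$, all cohomologous to $\theta$, such that $\theta^{(k)}$ is $r_{v_i}$-invariant on a fixed invariant neighbourhood of $\mathring{D}_{v_i}$ for every $i\le k$. To pass from $\theta^{(k)}$ to $\theta^{(k+1)}$, set $\theta^{(k)}_{v_{k+1}}=\int_0^1(\phi_{r_{v_{k+1}}}^t)^*\theta^{(k)}\,dt$ on $N_{v_{k+1}}\cap X$, let $h$ be the explicit Cartan-homotopy primitive of the exact form $\theta^{(k)}_{v_{k+1}}-\theta^{(k)}$ there, choose an $r_I$-invariant cut-off $\chi$ equal to $1$ near $\mathring{D}_{v_{k+1}}$ and supported in $N_{v_{k+1}}$, and put $\theta^{(k+1)}=\theta^{(k)}+d(\chi h)$. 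Near $\mathring{D}_{v_{k+1}}$ this equals $\theta^{(k)}_{v_{k+1}}$, hence is $r_{v_{k+1}}$-invariant; near $\mathring{D}_{v_i}$ for $i\le k$ with $v_i$ incomparable to $v_{k+1}$ the modification vanishes once the $N$'s are chosen disjoint (Lemma \ref{intersection_commute}); and for $v_i$ comparable to $v_{k+1}$, the flows of $r_{v_i}$ and $r_{v_{k+1}}$ commute, so the $\phi_{r_{v_{k+1}}}$-average of the $r_{v_i}$-invariant form $\theta^{(k)}$ is again $r_{v_i}$-invariant near $\mathring{D}_{v_i}$, and $h$, built out of $(\phi_{r_{v_{k+1}}}^t)^*$ applied to $r_{v_i}$-invariant tensors, together with $\chi$ is $r_{v_i}$-invariant there, so $\theta^{(k+1)}$ remains $r_{v_i}$-invariant near $\mathring{D}_{v_i}$. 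Then $\overline{\theta}=\theta^{(l)}$ is $r_v$-invariant near each $\mathring{D}_v$, hence adapted, cohomologous to $\theta$, and since $\kappa_v>0$ also weakly adapted; Proposition \ref{cohomologous_adapted} finishes the proof.

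The main obstacle, and the only place the hypotheses beyond weak-adaptedness enter, is the compatibility of the local modifications near mutually comparable strata: averaging near $\mathring{D}_{v_{k+1}}$ could a priori destroy the invariance already arranged near a comparable $\mathring{D}_{v_i}$, and what saves the argument is precisely that the Hamiltonians commute, together with the coherent system of invariant neighbourhoods of Corollary \ref{invariant_neighbourhoods}, so that averaging over one circle action preserves invariance under the others. A subsidiary technical point is the construction of the $r_I$-invariant cut-offs $\chi$, which one gets either by averaging an arbitrary cut-off over the compact torus or by composing with the invariant exhaustion functions of Proposition \ref{exhaustion_existence}.
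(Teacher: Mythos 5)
Your proposal is correct and follows essentially the same route as the paper: average $\theta$ over the circle actions to obtain a cohomologous primitive that is $r_v$-invariant near each $\mathring{D}_v$, observe via Cartan's formula that invariance implies adaptedness (with $\kappa_v>0$ forced by weak adaptedness), and conclude with Proposition \ref{cohomologous_adapted}. The only difference is that where the paper simply invokes Lemma \ref{average_construction} for the invariant cohomologous primitive, you re-derive its content by hand (inducting over a linear extension of the poset rather than over the cardinality of the index sets $I$), which is fine but duplicates work already done in the appendix.
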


\begin{proof}
By Lemma \ref{average_construction}, there exists a one-form $\overline{\theta}\in\Omega^1(X)$, such that $\overline{\theta}-\theta$ is exact, and for each $v\in V$, an $r_v$-invariant stratum neighbourhood $T_v\subset U_v$, such that $\overline{\theta}|_{W_v\cap X}$ is $r_v|_{T_v}$-invariant.

Let $\overline{Z}$ be the Liouville vector field associated to $\overline{\theta}$.

For each $v\in V$, 
\begin{equation}
\label{adapted_equivariant}
d(\iota_{\overline{Z}}
dr_v-r_v)=d(\iota_{X_{r_v}}\overline{\theta}-r_v)=-\iota_{X_{r_v}}\omega+\mathcal{L}_{X_{r_v}}\overline{\theta}-dr_v=\mathcal{L}_{X_{r_v}}\overline{\theta}=0
\end{equation}
in the region $T_v\cap X$, so $\iota_{\overline{Z}}dr_v=r_v-\kappa_v$ on $T_v$ for some constant $\kappa_v$, which is in fact the action of $r_v$, with respect to both $\theta$ and $\overline{\theta}$ (so $\kappa_v>0$).

By Lemma \ref{cohomologous_adapted}, $\theta$ is strongly convex deformation equivalent to $\overline{\theta}$. 
\end{proof}
\section{Construction of Hamiltonian}
\label{s5}
\subsection{Construction of $\rho^R$}

\begin{lem}[Sufficiently Small Stratum Neighbourhoods]
\label{small_nbhds}
Let $V$ be a stratified symplectic subvariety in $(M,\omega)$. 

Let $r_v:U_v\rightarrow \mathbb{R}_{\geq 0}$ for $v\in V$ be a system of commuting Hamiltonians for $V$, with weights $w_v$ for $v\in V$.

Let $X=M\setminus |V|$. Let $\theta\in\Omega^1(X)$ be an adapted primitive for $\omega$, $Z$ the Liouville vector field $\omega$-dual to $\theta$, $\psi^t$ the time-$t$ flow of $Z$, and $L$ the Lagrangian skeleton of $(X,\theta)$.

Let $\kappa_v>0$ denote the action of $r_v$ with respect to $\theta$.

Fix once and for all a (not necessarily $\omega$-compatible) metric on $M$, and let $inj(M)$ denote the radius of injectivity.

Then we may assume that the neighbourhoods $U_v$ satisfy
\begin{enumerate}
\item $U_I$ is $r_I$-invariant for all $I\subset V$, 
\item $\iota_Zdr_v=r_v-\kappa_v$ on $U_v\cap X$
\item for all $v\in V$ and $x\in U_v$, $d(x,\mathring{D}_v)<\frac{1}{4}inj(M)$,
\item for any orbit $\gamma\subset U_v$ of the Hamiltonian $\mathbb{R}/\mathbb{Z}$-action generated by $r_v$, $diam(\gamma)<\frac{1}{4n}inj(M)$,
\item for all $t>0$, if $\psi^{-t}(x)\in U_v\cap X$, then $x\in U_v$.
\item for each $v\in V$, $U_v$ is an open subset of $M\setminus L$.
\item for $u,v\in V$, either $u\leq v$, $v\leq u$, or $U_u\cap U_v=\emptyset$.
\end{enumerate}
\end{lem}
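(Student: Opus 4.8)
The plan is to establish the seven conditions one at a time, shrinking the neighbourhoods $U_v$ successively, and checking at each stage that we have not destroyed any condition already arranged. The key observation that makes this possible is that all the operations we will perform --- intersecting with $\mathbb{R}/\mathbb{Z}$-translates, intersecting with preimages under the Liouville flow, removing the skeleton $L$, shrinking to metric balls --- send stratum neighbourhoods to stratum neighbourhoods, and that the conditions interact only in a one-directional way. Concretely, I would proceed as follows.

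\textbf{Step 1 (Conditions 1, 2, 7).} Apply Lemma \ref{intersection_commute} (together with Corollary \ref{invariant_neighbourhoods}) to replace the $U_v$ by $r_I$-invariant stratum neighbourhoods with the disjointness property of Condition 7 for incomparable pairs; this is precisely the content of those results. Condition 2 holds on \emph{some} stratum neighbourhood of $v$ by the hypothesis that $\theta$ is adapted (Definition of Adapted One-form), so intersect each $U_v$ with such a neighbourhood --- this can be done $r_v$-equivariantly by intersecting with the $\mathbb{R}/\mathbb{Z}$-orbit of the adapted neighbourhood, and preserves Conditions 1 and 7.

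\textbf{Step 2 (Conditions 3, 4).} Since $\mathring{D}_v$ is a submanifold and $U_v$ is an open neighbourhood of it, for each point $x\in\mathring{D}_v$ the ball of radius $\tfrac14 inj(M)$ about $x$ is contained in $M$ in a controlled way; shrink $U_v$ to the (open) set of points within distance $\tfrac14 inj(M)$ of $\mathring{D}_v$ and within the old $U_v$. For Condition 4, note that an $\mathbb{R}/\mathbb{Z}$-orbit through a point $x$ near $\mathring{D}_v$ shrinks to a point as $x\to\mathring{D}_v$ (the action fixes $\mathring{D}_v$ pointwise), so by compactness of $\mathbb{R}/\mathbb{Z}$ and continuity we can shrink $U_v$ further, $r_v$-equivariantly (intersect with an orbit-saturated subneighbourhood), so every orbit inside has diameter $<\tfrac{1}{4n}inj(M)$. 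Intersecting with $\mathbb{R}/\mathbb{Z}$-orbits and with metric balls keeps the neighbourhoods $r_v$-invariant and open; we then re-intersect over $I$ to restore Condition 1, and the disjointness in Condition 7 is preserved under shrinking.

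\textbf{Step 3 (Conditions 5, 6).} For Condition 6, simply intersect $U_v$ with $M\setminus L$; since $L$ is the Lagrangian skeleton, which is disjoint from a neighbourhood of $|V|$ (the Liouville flow escapes to infinity near the divisor because $\theta$ is weakly adapted, cf.\ Proposition \ref{convexity}), this still leaves a stratum neighbourhood, and it is $r_v$-invariant because $L$ is, being the set of points whose forward Liouville orbit does not escape and the flow commuting suitably --- more carefully, $L$ is intrinsic to $(X,\theta)$ and hence preserved by any symplectomorphism preserving $\theta$; the $\mathbb{R}/\mathbb{Z}$-action generated by $r_v$ on $U_v\cap X$ preserves $\theta$ by Condition 2 (equivalently $\mathcal{L}_{X_{r_v}}\theta=0$, as in Equation \eqref{adapted_equivariant}), hence preserves $L$. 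For Condition 5, replace $U_v$ by $\{x : \psi^{-t}(x)\notin U_v^{c}\cap X \text{ for all } t\ge 0\}$, i.e.\ the set of $x$ such that no negative-time Liouville flowline enters $U_v\cap X$ then leaves it; since Condition 2 forces $r_v$ to be strictly decreasing along $-Z$ (as $\iota_Z dr_v = r_v-\kappa_v<0$ on $U_v\cap X$ where $r_v<\kappa_v$), once a flowline is in a sublevel set of $r_v$ it stays there under $\psi^{-t}$, so taking $U_v$ to be such a sublevel set $\{r_v<\delta\}$ (with $\delta<\kappa_v$) automatically gives Condition 5, and such a sublevel set is $r_v$-invariant and open.

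\textbf{Main obstacle.} The delicate point is Condition 5 together with Condition 6: after restricting to $\{r_v<\delta\}\cap(M\setminus L)$ we must check this is still a \emph{neighbourhood} of all of $\mathring{D}_v$, not just part of it, and that the Liouville flow argument genuinely closes --- one must verify that the forward- and backward-invariance properties are compatible with the equivariance and with the inter-stratum disjointness. Concretely, one needs that $L\cap U_v$ stays away from $\mathring{D}_v$ uniformly, which follows from the convexity statement of Proposition \ref{convexity} applied to the exhaustion function built from the $r_v$; and one needs that intersecting with $\{r_v<\delta\}$ does not reintroduce intersections with incomparable strata, which is automatic since we only shrink. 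I would therefore do Steps 1--2 first to fix the invariance and disjointness skeleton, and only then perform the two flow-and-skeleton restrictions of Step 3, finishing with one last pass of intersecting over $I\subset V$ to restore Condition 1, observing that this final intersection is harmless for 2--7 because all those conditions are inherited by open $r_v$-invariant subneighbourhoods.
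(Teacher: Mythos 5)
Your overall strategy (successive shrinking, with Lemma \ref{intersection_commute} and Corollary \ref{invariant_neighbourhoods} for Conditions 1 and 7, adaptedness for 2, continuity and orbit-shrinking for 3 and 4, and the observation that these conditions are inherited by smaller stratum neighbourhoods) is the same as the paper's. The genuine divergence, and the gap, is in Condition 5. You propose to achieve forward-invariance under the Liouville flow by taking $U_v$ to be a sublevel set $\{r_v<\delta\}$ and invoking $\iota_Z dr_v=r_v-\kappa_v<0$. First, a sign slip: this inequality says $r_v$ decreases along $+Z$, i.e.\ along $\psi^t$ for $t>0$ (which is indeed the direction Condition 5 concerns, since ``$\psi^{-t}(x)\in U_v\cap X\Rightarrow x\in U_v$'' is forward-invariance of $U_v\cap X$), not ``along $-Z$'' as you write. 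More seriously, monotonicity of $r_v$ along the flow only controls the radial component of $Z$; it does not prevent a forward flowline from leaving $U_v$ \emph{laterally}, through $\partial U_v\cap\{r_v<\delta\}$ --- for instance by drifting along the level sets of $r_v$ towards a lower stratum $\mathring{D}_u$, $u<v$, which lies in the closure of $\mathring{D}_v$ but outside $\mathring{D}_v$ and possibly outside $U_v$. So $\{r_v<\delta\}\cap U_v$ need not satisfy Condition 5, and your ``main obstacle'' paragraph does not address this escape route.

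The paper sidesteps the issue by brute force: it replaces $U_v$ by $U_v\setminus\bigcup_{t\geq 0}\psi^{-t}(X\setminus U_v)$, i.e.\ it discards exactly those points whose forward Liouville trajectory ever leaves $U_v$. This set is forward-invariant by construction, and one checks it is still a stratum neighbourhood and that Conditions 1--4 and 7 survive (they are all inherited by flow-saturated subsets of the previous neighbourhoods); openness (Condition 6) is restored at the very end by passing to the interior. If you want to keep your sublevel-set picture, you would have to first arrange that the forward flow cannot exit $U_v$ except through $\{r_v=\delta\}$, which is essentially what the paper's saturation step accomplishes; as written, your Step 3 does not close.
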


\begin{proof}
Conditions 2-4 and 7 are maintained when passing to smaller stratum neighbourhoods. We can therefore, after passing to smaller neighbourhoods, assume that conditions 2-4 and 7 are satisfied (this follows from adaptedness of $\theta$ for 2, continuity for 3, the fact that as $x\to y\in\mathring{D}_v$, $\phi^t_{r_v}(x)\to y$ for 4, and Lemma \ref{intersection_commute} for 7), and also that $U_v\subset M\setminus L$. 

Applying Corollary \ref{invariant_neighbourhoods}, we obtain stratum neighbourhoods which additionally satisfy Condition 1.

We can now replace $U_v$ by
\begin{equation}
U_v\setminus \bigcup_{t\geq 0}\psi^{-t}(X\setminus U_v).
\end{equation}

This preserves conditions 1-4 and 7 and also satisfies 5.

Lastly, we can replace $U_v$ by its interior, which preserves conditions 1-5 and 7 and also satisfies 6.
\end{proof}

\begin{set}
\label{main_setting}
Let $V$ be a stratified symplectic subvariety in $(M,\omega)$. 

Let $r_v:U_v\rightarrow \mathbb{R}_{\geq 0}$ for $v\in V$ be a system of commuting Hamiltonians for $V$, with weights $w_v$ for $v\in V$.

Let $X=M\setminus |V|$. Let $\theta\in\Omega^1(X)$ be an adapted primitive for $\omega$, $Z$ the Liouville vector field $\omega$-dual to $\theta$, $\psi^t$ the time-$t$ flow of $Z$, and $L$ the Lagrangian skeleton of $(X,\theta)$.

Let $\kappa_v>0$ denote the action of $r_v$ with respect to $\theta$.

We assume that the stratum neighbourhoods $U_v$ satisfy Conditions 1-5 from Lemma \ref{small_nbhds}.

Let $\epsilon_v:[0,1]\rightarrow \mathbb{R}_{\geq 0}$ for $v\in V$ be continuous functions as in Lemma \ref{compact_boxes}, such that whenever $u<v$, $\epsilon_v<\frac{\kappa_v}{3\kappa_u}\epsilon_u$. Choose $R_0\in (0,1)$ such that $\epsilon_v(R_0)<\kappa_v$ for all $v\in V$.

Let $f:X\rightarrow (0,1]$, be an exhaustion function, and $\tilde{f}_I:\mathbb{R}_{>0}^I\rightarrow (0,1]$ be smooth functions as in the statement of Proposition \ref{exhaustion_existence}.

Now, for $R\in (0,R_0)$, let
\begin{equation}
Y^R=f^{-1}(R),
\end{equation}

For $I\subset V$, let
\begin{equation}
\tilde{B}_I=\prod_{v\in I}[0,\kappa_v)
\end{equation}
and let
\begin{equation}
\tilde{Y}_I^R=\tilde{f}_I^{-1}(R)\cap \tilde{B}_I.
\end{equation}

Let $\tilde{Z}_I$ denote the vector field on $\mathbb{R}^I$ given by
\begin{equation}
(\tilde{Z}_I)_x=\sum_{v\in I}(x_v-\kappa_v)\partial_v.
\end{equation}

By Properties 2 and 3 from Proposition \ref{exhaustion_existence}, $\iota_{\tilde{Z}_I}d\tilde{f}_I>0$ in $\tilde{B}_I$, so we can define a smooth function 
\begin{equation}
P_I^R:\tilde{B}_I\rightarrow \tilde{Y}_I^R
\end{equation}
such that $P_I^R(x)$ is the unique point of intersection of $\tilde{Y}_I^R$ and the flowline of $\tilde{Z}_I$ containing $x$ (i.e. $P_I^R$ is projection onto $\tilde{Y}_I^R$ via the flow of $\tilde{Z}_I^R$).

Let 
\begin{equation}
U_I^{\max}=U_I\cup\bigcup_{t>0}\psi^{-t}(U_I\cap X)\subset M\setminus L
\end{equation}
and let 
\begin{equation}
r_I^{\max}:U_I^{\max}\rightarrow \tilde{B}_I
\end{equation}
be the unique smooth function such that $r_I^{\max}|_{U_I}=r_I$, and $\iota_zdr_I^{\max}=\tilde{Z}_I$.
\end{set}

\begin{lem}
\label{open_cover}
Suppose we are in Setting \ref{main_setting}.

Then there exist open sets $\mathring{U}_I^{R,\max}\subset U_I^{\max}$ for $I\subset V$, which cover $M\setminus L$, such that 
\begin{equation}
\label{Y_charts}
Y^R\cap \mathring{U}_I^{R,\max}=(r_I^{\max})^{-1}(\tilde{Y}_I^R) \cap \mathring{U}_I^{R,\max}
\end{equation}
and 
\begin{equation}
\label{open_cover_intersection}
\mathring{U}_I^{R,\max}\cap \mathring{U}_J^{R,\max}\subset\left(P_{I\cup J}^R\circ r_{I\cup J}^{\max}\right)^{-1}\left(\tilde{Y}_{I\cup J}^R\cap \prod_{v\in I\cup J\setminus I\cap J}[\epsilon_v(R),\infty) \times\mathbb{R}^{I\cap J}\right)
\end{equation}
for $I,J\subset V$.
\end{lem}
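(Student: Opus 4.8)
The plan is to build the sets $\mathring{U}_I^{R,\max}$ directly from the exhaustion data, essentially as the ``$r_I$-adapted charts'' implicit in Proposition \ref{exhaustion_existence}, and then to verify the two displayed properties by tracking where the function $f$ agrees with $\tilde f_I\circ r_I^{\max}$. Concretely, I would first define, for each $I\subset V$, an open set of the form
\begin{equation}
\mathring{U}_I^{R,\max}=U_I^{\max}\cap\left(M\setminus \bigcup_{v\in V\setminus I}\left(r_v^{\max}\right)^{-1}[\epsilon_v(R),\infty)\right)\cap(M\setminus L),
\end{equation}
i.e.\ the flow-saturation of the set $\mathring{U}_I$ from Proposition \ref{exhaustion_existence}, restricted so that for every stratum $v$ \emph{not} in $I$ the coordinate $r_v$ (or its Liouville extension $r_v^{\max}$) is below the threshold $\epsilon_v(R)$. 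The point of this definition is that on $\mathring{U}_I^{R,\max}$ only the coordinates indexed by $I$ are ``active'', so by Property 6 of Proposition \ref{exhaustion_existence} (combined with Property 5, which says $\tilde f_J$ stabilises once the dropped coordinates exceed their $\epsilon$-thresholds) one gets $f=\tilde f_I\circ r_I^{\max}$ throughout $\mathring{U}_I^{R,\max}$ — after possibly shrinking using Condition 5 of Lemma \ref{small_nbhds} to guarantee the flow-saturation stays inside $U_I^{\max}$. That identity immediately yields $\eqref{Y_charts}$: a point of $\mathring{U}_I^{R,\max}$ lies on $Y^R=f^{-1}(R)$ iff $\tilde f_I(r_I^{\max}(x))=R$, i.e.\ iff $r_I^{\max}(x)\in\tilde f_I^{-1}(R)$, and it lies in $\tilde B_I$ because of the action bound $\kappa_v$ built into Condition 2 of Lemma \ref{small_nbhds} (the Liouville flow decreases $r_v$, so backward-flowing from $U_v$, where $r_v<\epsilon_v(R)<\kappa_v$, keeps $r_v^{\max}<\kappa_v$).

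Next I would check that these sets cover $M\setminus L$. Given $x\in M\setminus L$, let $I=I(x)=\{v\in V: r_v^{\max}(x)<\epsilon_v(R)\}$ — the set of strata whose radial coordinate at $x$ is small; I'd need the domains $U_v^{\max}$ to be large enough that $r_v^{\max}(x)$ is defined whenever $r_v^{\max}(x)<\epsilon_v(R)$, which is exactly the content of Condition 5 of Lemma \ref{small_nbhds}. Then $x\in U_{I(x)}^{\max}$ (using Condition 7 of Lemma \ref{small_nbhds}, so that the nonempty pairwise intersections among the relevant $U_v$ are comparable and $U_{I(x)}$ is a genuine neighbourhood of the appropriate stratum), and by construction $x\notin(r_v^{\max})^{-1}[\epsilon_v(R),\infty)$ for $v\notin I(x)$; hence $x\in\mathring{U}_{I(x)}^{R,\max}$. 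Openness of each $\mathring{U}_I^{R,\max}$ follows since $U_I^{\max}\subset M\setminus L$ is open, $L$ is closed, and each $(r_v^{\max})^{-1}[\epsilon_v(R),\infty)$ is closed in its (open) domain, with the boundary-of-domain issue handled again by Condition 5.

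For the intersection property $\eqref{open_cover_intersection}$, take $x\in\mathring{U}_I^{R,\max}\cap\mathring{U}_J^{R,\max}$. From the chart identity, $f(x)=\tilde f_I(r_I^{\max}(x))=\tilde f_J(r_J^{\max}(x))=R$; I want to promote this to a statement about $r_{I\cup J}^{\max}$. Because $x\in\mathring{U}_J^{R,\max}$, for every $v\in I\setminus J$ we have $v\notin J$, hence $r_v^{\max}(x)<\epsilon_v(R)$ — wait, that is the wrong inequality for what is claimed, so here is the actual mechanism: the set on the right of $\eqref{open_cover_intersection}$ demands $r_v\geq\epsilon_v(R)$ for $v$ in the symmetric difference, so I should instead observe that membership in \emph{both} $\mathring{U}_I$ and $\mathring{U}_J$ forces, for $v\in I\triangle J$, that $r_v^{\max}(x)$ lies simultaneously below the threshold (from the side that excludes $v$) — which would make the symmetric difference empty unless $I=J$; I will therefore reformulate $\mathring{U}_I^{R,\max}$ using a \emph{two-sided collar}, requiring $r_v^{\max}<\epsilon_v(R)$ only on a slightly larger ``outer'' set and using $[\tfrac{2}{3}\epsilon_v(R),\epsilon_v(R)]$-type buffers so that the overlap of two charts sits where the dropped coordinates are pinned into $[\epsilon_v(R),\infty)$ for the partner chart — this is precisely the picture in Figures \ref{tildef}, and Property 5 of Proposition \ref{exhaustion_existence} is what makes $\tilde f_{I\cup J}$ agree with both $\tilde f_I$ and $\tilde f_J$ on that overlap, giving $P_{I\cup J}^R(r_{I\cup J}^{\max}(x))\in\tilde Y_{I\cup J}^R$ together with the required coordinate bounds. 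The main obstacle, and the step I expect to have to do carefully, is exactly this bookkeeping of the collar widths: one must choose the open charts so that (a) they still cover, (b) they are each contained in the region where $f$ factors through $r_I^{\max}$, and (c) pairwise overlaps land in the ``large $r_v$'' region of the symmetric-difference coordinates; the interplay of the constants $\tfrac13\epsilon_v$, $\tfrac23\epsilon_v$, $\epsilon_v$ from Lemma \ref{compact_boxes} and Proposition \ref{exhaustion_existence}, and of the action bounds $\kappa_v$ from Lemma \ref{small_nbhds}, is what makes this go through, and getting all three simultaneously is the delicate point.
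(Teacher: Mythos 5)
There is a genuine gap, and you in fact stumble on it yourself partway through: your definition of $\mathring{U}_I^{R,\max}$ has the inequality pointing the wrong way. You remove, for $v\notin I$, the set $(r_v^{\max})^{-1}[\epsilon_v(R),\infty)$, which keeps precisely the points where $r_v^{\max}<\epsilon_v(R)$, i.e.\ points \emph{close} to the strata you are trying to drop. But Property 5 of Proposition \ref{exhaustion_existence} only allows you to discard the coordinate $r_v$ from $\tilde{f}$ when $r_v$ is \emph{large} (at least $\epsilon_v$), so on your charts $f$ does not factor through $r_I^{\max}$ and \eqref{Y_charts} already fails; this is not merely a problem for the intersection property. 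Your claim that your set is ``the flow-saturation of $\mathring{U}_I$ from Proposition \ref{exhaustion_existence}'' is not correct: $\mathring{U}_I$ excludes the neighbourhoods $N_v$ of the strata $v\notin I$, which is the opposite of what your formula does. The correct definition (and the one the paper uses) is $\mathring{U}_I^{R,\max}=U_I^{\max}\cap\bigcap_{v\in V\setminus I}(M\setminus N_v^{R,\max})$, where $N_v^{R,\max}$ is the backward Liouville saturation of the box $N_v^R$ from Equation \eqref{N_v^R}; the covering property then comes from the fact that the $N_v^R$ cover $|V|$, by taking $I=\{v: x\in N_v^{R,\max}\}$.

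Your proposed repair — ``two-sided collars'' with $[\tfrac{2}{3}\epsilon_v(R),\epsilon_v(R)]$ buffers — is left as a sketch and does not identify the actual mechanism needed for \eqref{open_cover_intersection}. Note that $N_v^R$ is not simply $r_v^{-1}[0,\epsilon_v(R)]$: it also excises the region $r_u<\tfrac{1}{3}\epsilon_u(R)$ for $u<v$, and this is essential. The argument for the intersection property takes $x$ in the overlap, flows forward to reduce to $U_J$, lets $u$ be the \emph{minimal} stratum with $x\in N_u^{R,\max}$, and then uses the geometry of the linear flow of $\tilde{Z}_{I\cup J}$ (Figure \ref{trapezium}) together with the constraint $\epsilon_v<\frac{\kappa_v}{3\kappa_u}\epsilon_u$ for $u<v$ from Setting \ref{main_setting} to conclude that the projection $P_{I\cup J}^R(r_{I\cup J}^{\max}(x))$ has $v$-coordinate exceeding $\epsilon_v(R)$ for every $v$ in the symmetric difference. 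None of this bookkeeping is present in your sketch, and without it the statement you flag as ``the delicate point'' remains unproved.
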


\begin{proof}
For $v\in V$, let $N_v^R\subset U_v$ be as in Equation \eqref{N_v^R}. We can now define a closed subset $N_v^{R,\max}\subset U_v^{\max}$ by letting
\begin{equation}
N_v^{R,\max}=N_v^R\cup\bigcup_{t>0}\psi^{-t}(N_v^R\cap X).
\end{equation}

Let
\begin{equation}
\mathring{U}_I^{R,\max}=U_I^{\max}\cap \bigcap_{v\in V\setminus I} \left(M\setminus N_v^{R,\max}\right).
\end{equation}

The fact that the sets $\mathring{U}_I^{R,\max}$ cover $M\setminus L$ follows from the fact that the sets $N_v^R$ cover $|V|$, so the sets $N_v^{R,\max}$ cover $M\setminus L$.

Equation \eqref{Y_charts} follows from Property 5 of the functions $\tilde{f}_I$.

Suppose $I\subset J\subset V$, and $x\in \mathring{U}_I^{R,\max}\cap \mathring{U}_J^{R,\max}$. Let $r=r_J^{\max}(x)$ and $r'=P_J^R(r)$. Let $u\in V$ denote the minimal element such that $x\in N_u^{R,\max}$. Possibly after flowing forward along $Z$, we may assume that $x\in U_J$, $r_u<\frac{1}{3}\epsilon_u(R)$, and $r_t'>r_t>\epsilon_v(R)$ for all $t<u$, so $u\in I$. By Condition 7 on the stratum neighbourhoods, either $u<v$ or $u>v$.

If $u<v$, $r'_v>\frac{\kappa_v}{3\kappa_u}\epsilon_u(R)$ (see Figure \ref{trapezium}), so $r_v'>\epsilon_v(R)$ by the condition on the functions $\epsilon_u,\epsilon_v$. If $u>v$, $r_v'>r_v>\epsilon_v(R)$ by assumption.
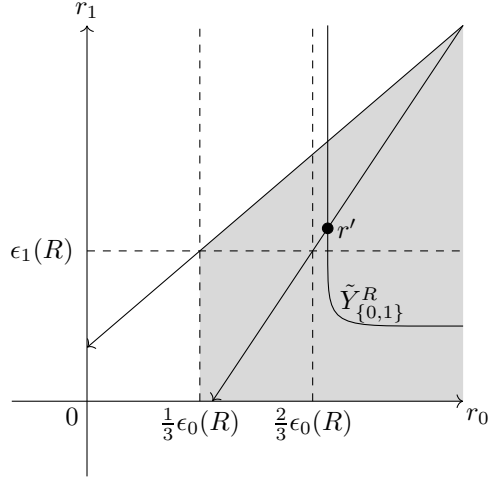
\begin{figure}
\caption{If $V=\{0<1\}$ and $r'_1<\epsilon_1(R)$, then the flowline of $\tilde{Z}_{\{0,1\}}$ containing $r$ ends in  $\left[\frac{1}{3}\epsilon_0(R),\infty\right)\times [0,\epsilon_1(R)]$.}
\label{trapezium}
\centering
\begin{tikzpicture}
\filldraw[fill=gray!30, draw=none](1.5,0)--(1.5,2)--(5,5)--(5,0)-- cycle;
\draw[->] (-1,0) -- (5,0);
\draw[->] (0,-1) -- (0,5);
\draw[dashed] (0,2)--(5,2);
\draw[dashed] (1.5,0)--(1.5,5);
\draw[dashed] (3,0)--(3,5);

\draw[->] (5,5)--(0,0.7143);
\draw[->] (5,5)--(1.666667,0);
\filldraw[black] (3.2,2.3) circle (2pt) node[anchor=west]{$r'$};

\draw (3.2,5) --(3.2,2) .. controls (3.2,1) .. (4.5,1) -- (5,1);
\node at (-0.2,-0.2) {0};
\node at (0,5.2) {$r_1$};
\node at (1.5,-0.3) {$\frac{1}{3}\epsilon_0(R)$};
\node at (3,-0.3) {$\frac{2}{3}\epsilon_0(R)$};

\node at (-0.6,2) {$\epsilon_1(R)$};
\node at (5.2,-0.2) {$r_0$};
\node at (3.8,1.3) {$\tilde{Y}_{\{0,1\}}^R$};
\end{tikzpicture}
\end{figure}

We have shown that 
\begin{equation}
\mathring{U}_I^{R,\max}\cap \mathring{U}_J^{R,\max}\subset (P_J^R\circ r_J^{\max})^{-1}\left(\tilde{Y}_J^R\cap \prod_{v\in J\setminus I}(\epsilon_v(R),\infty)\times\mathbb{R}^I\right),
\end{equation}
which, together with the fact that for any $I,J\subset V$, 
\begin{equation}
\mathring{U}_I^{R,\max}\cap \mathring{U}_J^{R,\max}\subset \mathring{U}_{I\cup J}^{R,\max}\cap \mathring{U}_{I\cap J}^{R,\max},
\end{equation}
implies Equation \eqref{open_cover_intersection}.
\end{proof}

\begin{prop}
\label{rho}
Suppose we are in Setting \ref{main_setting}, and let the open sets $\mathring{U}_I^{R,\max}$ be as in Lemma \ref{open_cover}.

Then there exists a smooth function 
\begin{equation}
\rho^R:M\setminus L\rightarrow \mathbb{R}
\end{equation}
and smooth functions
\begin{equation}
\nu^R_v:M\setminus L\rightarrow \mathbb{R}
\end{equation}
for $v\in V$, such that

\begin{enumerate}
\item For $I\subset V$, $\rho^R|_{U_I}$ and $\nu_v^R|_{U_I}$ for $v\in I$ are $r_I$-invariant,
\item $\rho^R|_{Y^R}=1$,
\item $\iota_Zd\rho^R=\rho^R$,
\item $\nu_v^R\geq 0$ for $v\in V$,
\item $\iota_Zd\nu_v^R=0$ for $v\in V$,
\item $d\rho^R=-\sum_{v\in V}\nu_v^Rdr_v^{\max}$.
\end{enumerate}
\end{prop}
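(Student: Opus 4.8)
The plan is to build $\rho^R$ by patching together locally defined model functions on the open cover $\{\mathring{U}_I^{R,\max}\}_{I\subset V}$ supplied by Lemma \ref{open_cover}, then transport the defining properties through the patching. On each chart $\mathring{U}_I^{R,\max}$, the Liouville flow identifies a neighbourhood of $Y^R$ with (a piece of) the symplectization of $Y^R\cap\mathring{U}_I^{R,\max}$, and via $r_I^{\max}$ this is modelled on $\tilde B_I$ with the vector field $\tilde Z_I$. On the model side I define $\rho_I^R$ to be the (unique) function with $\rho_I^R|_{\tilde Y_I^R}=1$ and $\iota_{\tilde Z_I}d\rho_I^R=\rho_I^R$: explicitly, if $P_I^R(x)$ is the flow-projection onto $\tilde Y_I^R$ and $\tau_I^R(x)$ the time for which $\psi^{\tau}$ carries $x$ to that projection, then $\rho_I^R = e^{-\tau_I^R}$; pulled back by $r_I^{\max}$ and then by the Liouville flow this gives a function on a neighbourhood of $Y^R$ inside $\mathring{U}_I^{R,\max}$, which I extend by the flow equation $\iota_Z d\rho=\rho$ over all of $\mathring{U}_I^{R,\max}$ (this is consistent because $Y^R$ is a hypersurface transverse to $Z$, by convexity, and the chart is $Z^{-}$-saturated by construction of $U_I^{\max}$ and $\mathring{U}_I^{R,\max}$). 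Each local $\rho_I^R$ is manifestly $r_I$-invariant (it is a function of $r_I^{\max}$ and the Liouville time) and by construction satisfies Properties 2, 3 and, being a function of $r_I^{\max}$, an equation of the shape in Property 6 with coefficients $\nu_{I,v}^R = -\partial_{r_v}(\rho_I^R\circ (r_I^{\max})^{-1}\text{-model})\ge 0$ — nonnegativity coming from $\partial_v\tilde f_I\ge 0$ (Property 2 of Proposition \ref{exhaustion_existence}), which forces $\tilde Y_I^R$ to be a downward-closed graph so that flowing along $\tilde Z_I$ toward it decreases each coordinate's contribution appropriately.

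The key point is that the local definitions agree on overlaps: on $\mathring{U}_I^{R,\max}\cap\mathring{U}_J^{R,\max}$, Equation \eqref{open_cover_intersection} of Lemma \ref{open_cover} says the $r_{I\cup J}^{\max}$-image lands where all coordinates indexed by $(I\cup J)\setminus(I\cap J)$ are $\ge\epsilon_v(R)$; by Property 5 of the functions $\tilde f_I$ (the "straightening" condition), in this region $\tilde f_{I}$, $\tilde f_J$ and $\tilde f_{I\cup J}$ all factor through $\tilde f_{I\cap J}\circ\pi$, hence the level sets $\tilde Y_I^R$, $\tilde Y_J^R$ coincide there as subsets of the relevant coordinate planes, and the flow-projections agree, so $\rho_I^R=\rho_J^R$ and $\nu_{I,v}^R=\nu_{J,v}^R$ for $v\in I\cap J$ on the overlap. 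Therefore the $\rho_I^R$ glue to a global smooth $\rho^R:M\setminus L\to\mathbb{R}$, and setting $\nu_v^R$ equal to $\nu_{I,v}^R$ on any chart with $v\in I$ and $0$ elsewhere — consistent for the same reason, since on a chart $\mathring{U}_I^{R,\max}$ with $v\notin I$ we are in the region where $\rho^R$ does not depend on $r_v$, so the "missing" coefficient is genuinely zero — gives globally defined smooth $\nu_v^R$. Properties 1–5 are then inherited chartwise from the local models (Property 4 from the graph/monotonicity discussion above, Property 5 because each $\rho_I^R$ is a function of $r_I^{\max}$ and Liouville time with $\iota_Z dr_v^{\max}=(\tilde Z_I)_v$ absorbed correctly — one checks $\iota_Z d\nu_v^R=0$ directly from $\nu_v^R$ being a function of $r_I^{\max}$ alone along the model), and Property 6 globally because $d\rho^R = \iota_Z d\rho^R\,\eta + (\text{fibre part})$ decomposition on each chart collapses, using $\iota_Z d\rho^R=\rho^R$ and $\iota_Z dr_v^{\max}= $ the $\tilde Z_I$-component, to exactly $d\rho^R=-\sum_v\nu_v^R\,dr_v^{\max}$.

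The main obstacle I expect is Property 6 together with smoothness across the "seams" where a chart index set changes — i.e. near the loci $r_v=\epsilon_v(R)$. One has to be careful that the coefficients $\nu_v^R$ obtained from the model $\tilde f_I$ vanish to infinite order (or at least smoothly match zero) as $r_v\downarrow$ toward the region governed by $\tilde f_{I\setminus v}$; this is exactly what Property 5 of Proposition \ref{exhaustion_existence} buys us (the level set is literally a product there, so $\partial_{r_v}\rho^R\equiv 0$ on an open set, not just at a point), so the gluing is smooth and $\nu_v^R$ extends by zero smoothly. A secondary technical point is verifying that $\rho^R$ extends as a genuine smooth function on all of $M\setminus L$ (not just near $Y^R$) via the flow equation $\iota_Z d\rho^R=\rho^R$: this needs every $Z$-flowline in $M\setminus L$ to meet $Y^R = f^{-1}(R)$ exactly once, which holds because $f$ is a proper exhaustion with $df(Z)<0$ where $f<1$ (Proposition after Lemma \ref{action_constant}) and $f\equiv 1$ is not attained on a $Z$-invariant set — so flowing forward decreases $f$ past $R$ and flowing backward increases it, giving the unique crossing. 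Once that is in hand the rest is bookkeeping.
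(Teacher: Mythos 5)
Your construction is essentially the paper's own proof: the paper defines $\tilde\rho_I^R(r)=\frac{r_v-\kappa_v}{(P_I^R(r))_v-\kappa_v}$ on $\tilde B_I$, which is exactly your $e^{-\tau_I^R}$ (flow time along $\tilde Z_I$ to $\tilde Y_I^R$), sets $\tilde\nu_{I,v}^R=-\partial_v\tilde\rho_I^R$ (zero for $v\notin I$), and glues over the cover $\mathring U_I^{R,\max}$ using Equation \eqref{open_cover_intersection} together with Property 5 of the $\tilde f_I$ to identify the projections $P_I^R$, $P_J^R$ with $P_{I\cap J}^R$ on overlaps, just as you argue. Your extra remarks on extending along the Liouville flow are subsumed in the paper's use of $r_I^{\max}$ on the $Z$-saturated sets $U_I^{\max}$, so the two arguments coincide in substance.
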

\begin{proof}
For $I\subset V$, we can define smooth functions $\tilde{\rho}^R_I:\tilde{B}_I\rightarrow \mathbb{R}$ by letting 
\begin{equation}
\tilde{\rho}_I^R(r)=\frac{r_v-\kappa_v}{(P_I^R(r))_v-\kappa_v}
\end{equation}
for any $v\in I$. This automatically satisfies $\tilde{\rho}_I^R=1$ and $\iota_{\tilde{Z}_I}d\tilde{\rho}_I^R=\tilde{\rho}_I^R$.

Now for $v\in V$ and $I\subset V$, let $\tilde{\nu}_{I,v}^R=-\partial_v\tilde{\rho}^R_I$ for $v\in I$, and $\tilde{\nu}_{I,v}^R=0$ for $v\notin I$.

By Equation \eqref{open_cover_intersection}, and Property 6 from Proposition \ref{exhaustion_existence}, in the intersection $\mathring{U}_I^{R,\max}\cap \mathring{U}_J^{R,\max}$, $\pi_{I,I\cap J}\circ P_I^R\circ r_I^{\max}$ and $\pi_{J,I\cap J}\circ P_J^R\circ r_J^{\max}$  both agree with $P^R_{I\cap J}\circ r_{I\cap J}^{\max}$. Therefore, in this region,  $\tilde{\rho}_I^R\circ r_I^{\max}$ agrees with $\tilde{\rho}_J^R\circ r_J^{\max}$, as does $\tilde{\nu}_{v,I}^R\circ r_I^{\max}$ with $\tilde{\nu}_{v,J}^R\circ r_J^{\max}$ for $v\in V$.

We can therefore define the functions $\rho^R$ and $\nu_v^R$ for $v\in V$ by letting
\begin{equation}
\rho^R|_{\mathring{U}_I^{R,\max}}=\tilde{\rho}_I^R\circ r_I^{\max}
\end{equation}
and
\begin{equation}
\nu_v^R|_{\mathring{U}_I^{R,\max}}=\tilde{\nu}_{I,v}^R\circ r_I^{\max}.
\end{equation}
These functions satisfy the properties 1-6 by construction.
\end{proof}

\subsection{Outer Caps}

\begin{set}
\label{rhoh}
Suppose we are in Setting \ref{main_setting}.

Suppose $h:\mathbb{R}\rightarrow \mathbb{R}$ is a smooth function such that $h(\rho)=c$ for $\rho\leq 0$, for some $c\in\mathbb{R}$. Then $h\circ \rho^R$ extends smoothly over $L$, and so does $\nu_v^{h,R}=(h'\circ\rho^R)\nu_v^R$.
\end{set}
\begin{defn}[Outer Cap]
Suppose we are in Setting \ref{rhoh}.

Let $\gamma$ be a 1-periodic Hamiltonian orbit of $h\circ \rho^R$.

The outer cap is an element of $\pi_2(M,\gamma)$ defined as follows.

If $\gamma\subset L$, then the outer cap is the constant cap.

Otherwise, if $\rho^R(\gamma)<1$, the outer cap for $\gamma$ is the union of the cylinder swept out by $\gamma$ under the Liouville flow for time $t=-\log(\rho^R(\gamma))$ (so $\psi^t(\gamma)\subset Y^R$), together with a disc $u$ bounding $\psi^t(\gamma)$, such that $diam(u)<inj(M)$.

If $\rho^R(\gamma)\geq 1$, the outer cap for $\gamma$ is a disc $u$ bounding $\gamma$, such that $diam(u)<inj(M)$.
\end{defn}

\begin{lem}
Outer caps exist.
\label{cap_existence}
\end{lem}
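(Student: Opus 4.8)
## Proof Proposal for Lemma \ref{cap_existence}

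The plan is to produce, for each 1-periodic Hamiltonian orbit $\gamma$ of $h\circ\rho^R$, an honest element of $\pi_2(M,\gamma)$ matching the case-by-case description, the only non-formal issue being the existence of small-diameter discs bounding short loops, which is where the injectivity radius condition enters.

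First I would dispose of the case $\gamma\subset L$: here the outer cap is declared to be the constant cap, and there is nothing to prove since $\gamma$ must be a constant orbit (on $L$ the function $h\circ\rho^R$ is locally constant by Setting \ref{rhoh}, as $\rho^R\leq 0$ there after the extension), so the constant disc is a legitimate element of $\pi_2(M,\gamma)$. Next, in both remaining cases I need to know that a loop of sufficiently small diameter bounds a disc in $M$ of diameter less than $\mathrm{inj}(M)$. This is the geometric core of the lemma: if $\eta:\mathbb{R}/\mathbb{Z}\to M$ has image contained in a geodesic ball of radius $<\frac{1}{2}\mathrm{inj}(M)$ around some point $x_0$, then coning off $\eta$ to $x_0$ along minimizing geodesics gives a disc whose image stays in that ball, hence has diameter $<\mathrm{inj}(M)$. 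So it suffices to check that the loops we must bound are short enough.

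For the case $\rho^R(\gamma)\geq 1$: I claim $\gamma$ itself is short. Since $d\rho^R=-\sum_v\nu_v^R\,dr_v^{\max}$ by Property 6 of Proposition \ref{rho} and the orbit equation is $\dot\gamma = X_{h\circ\rho^R}$, the orbit is tangent to the span of the Hamiltonian vector fields $X_{r_v^{\max}}$, i.e.\ it is (up to reparametrization) contained in an orbit of the torus action generated by the commuting $r_v$. By Condition 4 of Lemma \ref{small_nbhds}, any such orbit inside a single $U_v$ has diameter $<\frac{1}{4n}\mathrm{inj}(M)$, and a product-of-at-most-$n$-circles orbit therefore has diameter $<\frac{1}{4}\mathrm{inj}(M)$; the same bound extends to $U_I^{\max}$ via Condition 5 since the Liouville flow does not increase the relevant diameter estimate (one checks $\gamma$ actually lies in some $U_I$, or else $\rho^R(\gamma)<1$). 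Thus $\gamma$ lies in a ball of radius $<\frac14\mathrm{inj}(M)$ and the coning construction above produces the disc $u$ with $\mathrm{diam}(u)<\mathrm{inj}(M)$.

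For the case $0<\rho^R(\gamma)<1$: flow $\gamma$ by the Liouville flow for time $t=-\log\rho^R(\gamma)>0$. Since $\iota_Zd\rho^R=\rho^R$ (Property 3 of Proposition \ref{rho}), $\rho^R$ is multiplied by $e^t$ along the flow, so $\psi^t(\gamma)\subset Y^R=f^{-1}(R)$... wait, more precisely $\rho^R(\psi^t(\gamma))=1$, and on the relevant chart this is exactly the slice computed from $Y^R$; in any case $\psi^t(\gamma)$ is again tangent to the torus-orbit directions (the Liouville flow preserves the $r_v^{\max}$ up to the explicit rescaling built into Setting \ref{main_setting}), so the same diameter estimate applies to $\psi^t(\gamma)$, which therefore bounds a small disc $u$ with $\mathrm{diam}(u)<\mathrm{inj}(M)$. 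Concatenating the Liouville cylinder from $\gamma$ to $\psi^t(\gamma)$ with $u$ yields the outer cap as an element of $\pi_2(M,\gamma)$. The main obstacle is the bookkeeping in the last two cases — verifying that each orbit $\gamma$ (or its Liouville translate into $Y^R$) genuinely sits inside a single $U_I$ or $U_I^{\max}$ so that Conditions 4 and 5 of Lemma \ref{small_nbhds} can be invoked to get the diameter bound; once that is in place, the coning-along-geodesics argument and concatenation are routine.
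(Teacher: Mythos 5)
Your proposal is correct and follows essentially the same route as the paper: constant cap on $L$, the small-diameter bound for torus orbits from Condition 4 of Lemma \ref{small_nbhds} (the paper cites Condition 5 there, apparently a typo), a small disc inside a geodesic ball, and the Liouville cylinder concatenated with that disc when $\rho^R(\gamma)<1$. The only difference is that you spell out the coning-along-geodesics construction that the paper leaves implicit.
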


\begin{proof}
If $\gamma\subset L$, $d(h\circ \rho^R)=0$, and we can take the cap to be constant.

Otherwise, $\gamma\subset\mathring{U}_I^{R,\max}$ for some $I\subset V$. If $\rho^R(\gamma)<1$, let $t=-\log(\rho^R(\gamma))$. $\psi^t(\gamma)\subset U_I$, and $\psi^t(\gamma)$ is in fact contained in a $\mathbb{R}^I/\mathbb{Z}^I$-orbit of $r_I$.

By property 5 from Lemma \ref{small_nbhds}, $diam(\psi^t(\gamma)))<\frac{1}{2}inj(M)$. This means that $\psi^t(\gamma)$ is contained in a ball of radius $<\frac{1}{2}inj(M)$, so there exists a disc $u$ bounding $\psi^t(\gamma)$, also contained in this ball, and $diam(u)<inj(M)$.

If instead $\rho^R(\gamma)\geq 1$, the same argument applies directly to $\gamma$.
\end{proof}

\begin{lem}
Outer caps are unique up to homotopy.
\end{lem}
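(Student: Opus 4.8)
The plan is to reduce the claim to the elementary fact that, inside a geodesic ball of radius less than the injectivity radius, two discs with the same boundary are homotopic rel boundary. The first step is to isolate what, in the definition of the outer cap, is actually a choice. If $\gamma\subset L$ the outer cap is the constant cap, so there is nothing to prove. Otherwise $\rho^R$ is constant along $\gamma$ — because the Hamiltonian vector field of $h\circ\rho^R$ is $(h'\circ\rho^R)$ times the Hamiltonian vector field of $\rho^R$, hence tangent to the level sets of $\rho^R$ — and $\rho^R(\gamma)>0$, since $\rho^R$ is everywhere positive on $M\setminus L$ (it is assembled from the functions $\tilde{\rho}_I^R$, which are positive on $\tilde{B}_I$). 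So in the case $\rho^R(\gamma)<1$ the flow time $t=-\log(\rho^R(\gamma))$ is a well-defined finite number, and the connecting cylinder $s\mapsto\psi^s(\gamma)$, $s\in[0,t]$, together with the capped-off loop $\gamma':=\psi^t(\gamma)$, depend only on $\gamma$; in the case $\rho^R(\gamma)\ge 1$ there is no cylinder and $\gamma':=\gamma$. In both cases the sole remaining datum is a disc $u$ with $\partial u=\gamma'$ (as a parametrised loop) and $diam(u)<inj(M)$.

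The second, and main, step is then: any two such discs $u_0,u_1$ are homotopic rel $\partial D^2$. I would fix the point $p=\gamma'(0)$ on the common boundary. Since $p$ lies in the image of $u_i$ and $diam(u_i)<inj(M)$, the entire image of $u_i$ lies in the metric ball $B(p,inj(M))$, and $\exp_p$ restricts to a diffeomorphism of the Euclidean ball $\{\,|v|<inj(M)\,\}\subset T_pM$ onto $B(p,inj(M))$. Transporting $u_0$ and $u_1$ into $T_pM$ by $\exp_p^{-1}$ gives two discs with the same (parametrised) boundary in a convex set; the straight-line homotopy between them is stationary on $\partial D^2$ and stays in the ball, and pushing it forward by $\exp_p$ yields a homotopy rel boundary from $u_0$ to $u_1$ inside $M$. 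Gluing in the canonical cylinder (when $\rho^R(\gamma)<1$) promotes this to a homotopy rel $\gamma$ of the two outer caps, i.e.\ an equality in $\pi_2(M,\gamma)$.

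I expect the only fiddly part to be the bookkeeping in the first step — verifying that the loop $\gamma'$ and the connecting cylinder really are determined by $\gamma$ alone, so that the difference between two outer caps is concentrated in the disc $u$, and that $u_0$ and $u_1$ carry the same parametrised boundary (which is needed for the straight-line homotopy to be rel boundary rather than merely rel the image of $\gamma'$). The geometric core — contractibility of a ball of radius less than $inj(M)$ via the exponential map — is standard, and indeed the diameter bound in the definition of the outer cap is precisely what makes it applicable.
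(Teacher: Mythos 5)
Your proof is correct and follows essentially the same route as the paper's: both arguments reduce to the fact that two discs of diameter less than $inj(M)$ sharing a boundary point lie in a common geodesic ball of radius less than $inj(M)$, which is contractible, so they are homotopic rel boundary; the cylinder part and the loop $\psi^t(\gamma)$ are canonical. Your version just makes explicit (via $\exp_p^{-1}$ and the straight-line homotopy) what the paper compresses into ``contained in a contractible ball, therefore homotopic.''
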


\begin{proof}
If $\gamma\subset L$, this is trivially true. 

Otherwise, suppose first that $\rho^R(\gamma)\geq 1$. Any two outer caps for $\gamma$ have diameter $<inj(M)$, so are both contained in a (contractible) ball of radius $<inj(M)$ centred on a point of $\gamma$, and are therefore homotopic.

Now suppose that $\rho^R(\gamma)< 1$. The same argument applies to $\psi^t(\gamma)\subset Y^R$ where $t=-\log(\rho^R(\gamma))$.
\end{proof}

\subsection{Action}
\begin{lem}
\label{action}
Suppose we are in Setting \ref{rhoh}.

Let $\gamma$ be a 1-periodic orbit of $X_{h\circ \rho^R}$.

Then the action with respect to the outer cap $u$ is given by
\begin{equation}
\mathcal{A}_{h\circ\rho^R}(\gamma,u)=h(\rho^R(\gamma))+\sum_{v\in V}\nu_v^{h,R}(\gamma)r_v^{max}(\gamma).
\end{equation}
\end{lem}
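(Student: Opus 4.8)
The plan is to compute the action functional $\mathcal{A}_{h\circ\rho^R}(\gamma,u) = \int_u \omega - \int_\gamma (h\circ\rho^R)\,dt$ (using the sign conventions implicit in Setting \ref{prim}, where $\iota_Z\omega = -\theta$) directly from the outer cap, treating the three cases in the definition of the outer cap separately but reducing the first two to a single computation via the Liouville flow. First I would dispose of the case $\gamma\subset L$: here $h\circ\rho^R$ is constant near $\gamma$ (its differential vanishes), so $\gamma$ is a constant orbit, $\nu_v^{h,R}(\gamma) = 0$ for all $v$ since $d\rho^R = -\sum_v \nu_v^R\,dr_v^{\max}$ forces the $\nu_v^R$ to contribute only where $\rho^R$ varies, the cap is constant, and the formula reduces to $\mathcal{A} = h(\rho^R(\gamma)) = c$, which is immediate. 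Actually I should be slightly careful: I want $\nu_v^{h,R}(\gamma) r_v^{\max}(\gamma)=0$ on $L$, which holds because $\nu_v^{h,R} = (h'\circ\rho^R)\nu_v^R$ and $h' \circ \rho^R = 0$ near $L$ by the hypothesis on $h$ in Setting \ref{rhoh}.

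For the generic case, suppose $\gamma\subset\mathring{U}_I^{R,\max}$ for some $I\subset V$, and write $t_0 = -\log(\rho^R(\gamma))$ (with $t_0 = 0$ when $\rho^R(\gamma)\geq 1$, which makes the two remaining cases uniform). The outer cap $u$ decomposes as the Liouville cylinder $C = \{\psi^s(\gamma) : s\in[0,t_0]\}$ (a point when $t_0=0$) capped off by a small disc $u'$ bounding $\psi^{t_0}(\gamma)\subset Y^R$. The key step is to evaluate $\int_u\omega$. On the cylinder part, since $\omega = d\theta$ on $X$ and $\mathcal{L}_Z\theta = \theta$, the standard computation gives $\int_C\omega = \int_{\psi^{t_0}(\gamma)}\theta - \int_\gamma\theta = (1-\rho^R(\gamma))\cdot(\text{something})$ — more precisely I would parametrize and use that the orbit $\gamma$ of $X_{h\circ\rho^R}$ moves in the $r_v^{\max}$-level directions, so $\int_\gamma\theta$ picks up $\sum_v \nu_v^{h,R}(\gamma)\kappa_v$ via Lemma \ref{action_constant} / the adaptedness relation $\iota_Z dr_v^{\max} = \tilde Z_I$ giving $\iota_{X_{r_v^{\max}}}\theta$ a controlled average. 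For the disc part, because $\psi^{t_0}(\gamma)$ is contained in a single $\mathbb{R}^I/\mathbb{Z}^I$-orbit of $r_I^{\max}$ inside $Y^R$ with $\rho^R = 1$, the relevant integral $\int_{u'}\omega$ combined with $\int_{\psi^{t_0}(\gamma)}\theta$ reassembles into the $\kappa_v$-terms; this is exactly the mechanism by which the $r_v^{\max}(\gamma)$ (not $\kappa_v$) appear in the final answer, since $r_v^{\max}(\gamma) = r_v^{\max}(\psi^s(\gamma))$ is $Z$-invariant along the cylinder by $\iota_Z dr_v^{\max} = \tilde Z_I$... wait, that's not invariance — rather $r_v^{\max}$ is determined by the flow, and one tracks $(r_v^{\max}(\gamma) - \kappa_v) = \rho^R(\gamma)\cdot((P_I^R r_I^{\max})_v - \kappa_v)$ from the formula for $\tilde\rho_I^R$ in the proof of Proposition \ref{rho}.

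Assembling: $\int_u\omega = \int_\gamma\theta + (\text{correction from cap})$ where the correction, using $d\rho^R = -\sum_v\nu_v^R dr_v^{\max}$ and $\iota_Z d\rho^R = \rho^R$, works out to $\sum_v\nu_v^{h,R}(\gamma)(r_v^{\max}(\gamma) - \kappa_v) \cdot(\text{sign})$ plus $\int_\gamma\theta$ terms that cancel against $\sum_v\nu_v^{h,R}(\gamma)\kappa_v$, and the Hamiltonian term contributes $-h(\rho^R(\gamma)) \cdot(\text{sign})$ after noting $X_{h\circ\rho^R} = (h'\circ\rho^R)X_{\rho^R}$ and that the orbit closes up in time $1$. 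The cleanest way to organize the bookkeeping is to first establish the identity for $h = \mathrm{id}$ (so $\rho^R$ itself, where the orbits are the $\mathbb{R}^I/\mathbb{Z}^I$-orbit pieces) and then observe that reparametrizing by $h$ scales both the period-$1$ closure condition and the action in a compatible way, so the general formula follows by the chain rule $\nu_v^{h,R} = (h'\circ\rho^R)\nu_v^R$.

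\emph{Main obstacle.} The real work is the sign- and term-tracking in the cylinder-plus-disc decomposition of the outer cap: making sure that the $\theta$-integral over $\gamma$ genuinely cancels the $\sum_v\nu_v^{h,R}(\gamma)\kappa_v$ contribution, leaving precisely $\sum_v\nu_v^{h,R}(\gamma)r_v^{\max}(\gamma)$, and that the Hamiltonian term produces $h(\rho^R(\gamma))$ with the correct sign under the paper's action convention. Everything else — existence and homotopy-uniqueness of the cap (Lemmas \ref{cap_existence} and following), smoothness, the vanishing on $L$ — is either already proved or routine. The delicate point is purely the invariance/flow identities $\iota_Z dr_v^{\max} = \tilde Z_I$ and $\iota_Z d\rho^R = \rho^R$ interacting correctly with the parametrization of $\gamma$.
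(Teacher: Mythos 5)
Your overall strategy coincides with the paper's: dispose of the case $\gamma\subset L$, then decompose the outer cap into the Liouville cylinder over $\gamma$ plus a small disc $u'$ bounding $\psi^{t_0}(\gamma)\subset Y^R$, and compute the two areas. The cylinder piece is fine and is handled in the paper exactly as you suggest (Stokes with $\theta$, the adaptedness identity $\iota_Zdr_v^{\max}=r_v^{\max}-\kappa_v$, and Liouville-invariance of $\nu_v^R$). The genuine gap is the disc piece. You propose to run the whole bookkeeping through $\theta$ and the actions $\kappa_v$, asserting that $\int_{u'}\omega$ ``combined with $\int_{\psi^{t_0}(\gamma)}\theta$ reassembles into the $\kappa_v$-terms'' which then cancel; but that reassembly is precisely the statement to be proved, and it cannot be obtained by Stokes with $\theta$, because $u'$ lies in a small ball around a point of the minimal stratum $\mathring{D}_{v_0}$ and in general crosses the divisor, where $\theta$ is not defined. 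Moreover $\psi^{t_0}(\gamma)$ is not a $1$-periodic orbit of any single $r_v$ but of the linearised Hamiltonian $f=\sum_{v\in I}\nu_v^{h,R}(\gamma)\,r_v^{\max}$ (a fact you gesture at — ``moves in the $r_v^{\max}$-level directions'' — but never isolate or use), so neither the definition of $\kappa_v$ nor Lemma \ref{action_constant} applies to it directly; relatedly, the interim claim that $\int_\gamma\theta$ contributes $\sum_v\nu_v^{h,R}(\gamma)\kappa_v$ is off — adaptedness gives $\sum_v\nu_v^{h,R}(\gamma)\left(r_v^{\max}(\gamma)-\kappa_v\right)$.

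The missing idea, which is the core of the paper's proof, is a local primitive that is smooth across the divisor: since the orbit and its cap lie in a ball $B$ of radius $<inj(M)$ centred on $\mathring{D}_{v_0}$ (Conditions 3--4 of Lemma \ref{small_nbhds}), one writes $\omega|_B=d\alpha$, averages $\alpha$ over the $\mathbb{R}^I/\mathbb{Z}^I$-action to get $\overline{\alpha}$, shows $\iota_{X_{r_v}}\overline{\alpha}-r_v$ is constant and kills the constant by letting orbits degenerate to a point of $\mathring{D}_{v_0}$, and uses \eqref{Ham_pullback_exact} to replace $\alpha$ by $\overline{\alpha}$ on the closed curve. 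This yields $\int_{u'}\omega=\sum_{v\in I}\nu_v^{h,R}(\gamma)\,r_v^{\max}(\psi^{t_0}(\gamma))$ directly, with no $\kappa_v$'s appearing and no cancellation to arrange; added to the cylinder term it gives the lemma. Without this step (or an equivalent local computation of the area enclosed by a torus orbit near the stratum), the ``sign- and term-tracking'' you defer to at the end is not a routine finish but the actual content of the lemma, so as it stands the proposal does not constitute a proof.
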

\begin{proof}
If $\gamma\subset L$, $u$ is a point, and $\nu_v^{h,R}(\gamma)=0$ for all $v\in V$, so the result is trivial.

Otherwise, $\gamma\subset \mathring{U}_I^{R,\max}$ for some $I\subset V$.

Because $\mathring{U}_I^{R,\max}$ is nonempty, for all $u,v\in I$, either $u\leq v$ or $v\leq u$. We can therefore choose a minimal element $v_0\in I$ such that for all $v\in I$, $v\geq v_0$.

For $v\in I$, let $a_v=\nu_v^{h,R}(\gamma)$.

Let
\begin{equation}
\label{linearisation}
f=\sum_{v\in I} a_vr^{max}_v:U_I^{max}\rightarrow \mathbb{R}.
\end{equation}

$\gamma$ is a 1-periodic orbit of $X_f$.

Suppose first that $\rho^R(\gamma)\geq 1$. Then $\gamma\subset U_I$, and by Properties 3 and 4 from Lemma \ref{small_nbhds}, $\gamma$ is contained in a ball $B^{\frac{1}{2}}$ of radius $\frac{1}{2}inj(M)$, centred at a point in $\mathring{D}_{v_0}$. Let $O\subset U_I$ denote the union of all $\mathbb{R}^I/\mathbb{Z}^I$-orbits generated by $r_I$ intersecting $U_I\cap B^{\frac{1}{2}}$. By Property 4, $O\subset B$, where $B$ is a ball of radius $inj(M)$ centred at the same point as $B^\frac{1}{2}$. Because $B$ is contractible, we have $\omega|_B=d\alpha$ for some $\alpha\in\Omega^1(B)$. 

Let 
\begin{equation}
\overline{\alpha}=\int_{t\in [0,1]^I}(\phi_{r_I}^t)^*\alpha|_{O} dt\in\Omega^1\left(O\right).
\end{equation}
By Equation \eqref{adapted_equivariant}, we have that $\iota_{X_{r_v}}\overline{\alpha}-r_{v}$ is constant on $O$ for $v\in I$. By considering a path of $1$-periodic orbits of $X_{r_v}$ in $O$ converging to a point in $\mathring{D}_{v_0}$, we see that $\iota_{X_{r_v}}\overline{\alpha}=r_v$. By Equation \ref{Ham_pullback_exact}, $\alpha|_O-\overline{\alpha}$ is exact, so
\begin{equation}
\int_u\omega=\int_\gamma \alpha=\int_\gamma \overline{\alpha}=\sum_{v\in I}^ka_vr_{v}(\gamma).
\label{disc}
\end{equation}

Suppose instead that $\rho^R(\gamma)<1$.

Let $t=-\log(\rho^R(\gamma))$. The area of the cylinder swept out by $\gamma$ under the Liouville flow for time $t$ is given by 
\begin{equation}
\int_{\gamma-\psi^t(\gamma)}\theta=\sum_{v\in I}^ka_v(r_{v}^{max}(\gamma)-r_{v}^{\max}(\psi^t(\gamma))).
\label{tube}
\end{equation}

Adding the results of equations \ref{tube} and \ref{disc} applied to $\psi^t(\gamma)$ together yields the required result.
\end{proof}

\subsection{Index}
\begin{lem}
\label{index}
Suppose we are in Setting \ref{rhoh}.

Let $\gamma$ be a 1-periodic orbit of $X_{h\circ \rho^R}$.

Then the Conley-Zehnder index with respect to the outer cap $u$ satisfies
\begin{equation}
\left|CZ_{h\circ\rho^R}(\gamma,u)-2\sum_{v\in V}w_v \nu_v^{h,R}(\gamma) \right|\leq 2n .
\end{equation}
\end{lem}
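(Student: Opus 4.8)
The plan is to reduce the Conley--Zehnder index computation to a linearisation argument, exactly as in the action calculation of Lemma \ref{action}. If $\gamma\subset L$, then $d(h\circ\rho^R)=0$ along $\gamma$, so $\gamma$ is a constant orbit, $\nu_v^{h,R}(\gamma)=0$ for all $v$, and the Conley--Zehnder index is bounded in absolute value by $n$ (the index of a constant orbit equals the Morse index shifted by $n$, which lies in $[-n,n]$), so the inequality holds trivially. Otherwise $\gamma\subset\mathring U_I^{R,\max}$ for some $I\subset V$, and I would again pass to the linearisation $f=\sum_{v\in I}a_v r_v^{\max}$ with $a_v=\nu_v^{h,R}(\gamma)$, noting that $\gamma$ is a $1$-periodic orbit of $X_f$, and that by Property 6 of Proposition \ref{rho} the Hessian of $h\circ\rho^R$ at points of $\gamma$ agrees, up to terms that vanish on $\gamma$ because $d\rho^R$ is a linear combination of the $dr_v^{\max}$, with that of $f$; more precisely the linearised flows of $X_{h\circ\rho^R}$ and $X_f$ along $\gamma$ differ only through the extra term $\bigl(\sum_v r_v^{\max}(\gamma)\,dr_v^{\max}\bigr)\otimes d(h'\circ\rho^R)$, which is a rank-one (hence symplectically ``small'') perturbation. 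The cleanest route is to observe that $X_{h\circ\rho^R}$ and $X_f$ have the same flow on the level set $Y^R$ through (a Liouville translate of) $\gamma$ up to reparametrisation, so their linearised return maps are conjugate, and the outer cap is built from $\gamma$ together with a disc of diameter $<inj(M)$, so it is homotopic to the corresponding cap for $X_f$.

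The core estimate is therefore: for $\gamma$ a $1$-periodic orbit of $X_f$ with $f=\sum_{v\in I}a_v r_v^{\max}$, $a_v\ge 0$, and outer cap $u$ built inside a contractible ball as in Lemma \ref{cap_existence}, we have $\bigl|CZ(\gamma,u)-2\sum_{v\in I}w_v a_v\bigr|\le 2n$. To see this I would trivialise $TM$ over the ball (or over the cap $u$, using that it has small diameter so sits inside a single chart) and split the linearised flow according to the invariant symplectic splitting $T\mathring D_{v_0}\oplus\nu$ along the orbit of the commuting circle actions, where $v_0$ is the minimal element of $I$. The key point is that near $\mathring D_{v_0}$ the Hamiltonians $r_v^{\max}$ for $v\in I$ generate commuting circle (or linear-$\mathbb{R}^I$) actions whose combined linearised flow, restricted to the symplectic normal directions of $\mathring D_{v_0}$, is by the isotropy representation with weights summing to $w_{v_0}$ for each $v$; so $\sum_{v\in I}a_v r_v^{\max}$ acts with total rotation number $\sum_{v\in I}w_v a_v$ in the normal directions (counted with the factor of $2$ from the standard $CZ$ normalisation of a rotation by angle $\pi$), contributing $2\sum_v w_v a_v$ plus a bounded correction from the ``lower'' strata $T\mathring D_u^\omega\cap T\mathring D_{v_0}$ appearing in the isotropy of $r_{v_0}$ but not of the higher $r_v$'s — no, more simply: the weight $w_v$ is defined (Definition of Weight of a Radial Hamiltonian) as the sum of the isotropy weights of $r_v$ in $\nu=T\mathring D_v^\omega$, and combining the commuting actions the total weight in the $2n-2\dim\mathring D_{v_0}$ normal directions is $\sum_{v\in I}w_v$ up to terms bounded by the dimension; whatever happens in the $2\dim\mathring D_{v_0}$ tangent directions along $\gamma$ is governed by a loop of symplectic matrices that, since $\gamma$ closes up in finite time inside a small ball, has $CZ$-index bounded by $\dim\mathring D_{v_0}\le n$, and similarly the discrepancy between $\sum_v w_v$ over the relevant weight decomposition and the true normal rotation is at most $n$.

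Assembling this: writing the linearised return map of $X_f$ as a product (under the invariant splitting) of a ``rotational'' part with rotation numbers $w_v a_v$ in each of the normal weight spaces, times a remainder supported in at most $2n$ real dimensions whose $CZ$-index is bounded by $n$ in absolute value (using the standard bound $|CZ|\le n$ for any path of symplectic matrices in $Sp(2k)$ after rounding, together with additivity of $CZ$ under symplectic direct sums and its behaviour under adding a full rotation), gives $\bigl|CZ(\gamma,u)-2\sum_{v}w_v a_v\bigr|\le 2n$. Finally I would translate this back to $h\circ\rho^R$: the cap for $h\circ\rho^R$ differs from the cap for $f$ by the Liouville cylinder, along which the trivialisation extends (the Liouville flow is through symplectomorphisms, conjugating the two linearised flows), so the indices agree. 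The main obstacle I anticipate is bookkeeping the interaction of the several commuting actions on the nested strata — making precise that, along an orbit near $\mathring D_{v_0}$, the combined linearised flow of $\sum_{v\in I}a_v r_v^{\max}$ decomposes so that its total rotation in the symplectic-normal directions is $\sum_{v\in I}w_v a_v$ up to a $\pm n$ error, rather than something depending on the finer weight data $a_i$ of each individual action; this is where the definition of $w_v$ as a sum over all normal weights, and the fact that the caps have diameter $<inj(M)$ so everything happens in one chart, must be used carefully.
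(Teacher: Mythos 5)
Your overall strategy matches the paper's: dispose of $\gamma\subset L$ trivially, replace $h\circ\rho^R$ by the linearisation $f=\sum_{v\in I}a_vr_v^{\max}$ with $a_v=\nu_v^{h,R}(\gamma)$, and then read off the index of $X_f$ from the isotropy weights inside a single ball of radius $<inj(M)$. However, two steps that carry the entire quantitative content are not correctly justified. First, the comparison $\left|CZ(d\phi^t_{h\circ\rho^R})-CZ(d\phi^t_f)\right|\leq n$: your suggestion that the linearised return maps are conjugate because the two vector fields agree along $\gamma$ up to reparametrisation is false (agreement of the flows on $\gamma$ does not control the linearisation transverse to $\gamma$, and the two linearisations genuinely differ), and your fallback appeal to ``the standard bound $|CZ|\leq n$ for any path of symplectic matrices'' is also false --- the Conley--Zehnder index of an arbitrary path in $Sp(2k)$ is unbounded (consider a loop of rotations winding many times). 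What the paper actually does is show that $(d\phi^t_{h\circ\rho^R})\circ(d\phi^{-t}_f)=id+tA$ with $A=d(X_{h\circ\rho^R-f})_x$ satisfying $im(A)\subset K\subset \ker(A)$ where $K=\ker(dr_I)_x$, so the difference path is a \emph{symplectic shear}, and only then does the Robbin--Salamon normalisation give $|CZ(id+tA)|=\tfrac12|sign(A)|\leq n$. Identifying the shear structure is the missing idea; ``rank-one, hence symplectically small'' does not substitute for it.

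Second, the step you flag at the end as ``the main obstacle'' --- that the total rotation of $d\phi^t_f$ in the normal directions is $\sum_vw_va_v$ up to an error of at most $n$, independently of the individual weights $a_i$ --- is precisely the part that must be proved, and you leave it unresolved. The paper handles it by first splitting $CZ(d\phi^t_f)=\sum_{v\in I}CZ(d\phi^{a_vt}_{r_v})$ via concatenation and homotopy (using that the actions commute), and then, for each $v$ with weight decomposition $w_v=w_1+\dots+w_l$, using the exact formula $CZ(e^{2\pi iat})=2\lceil a\rceil$ together with $\sum_j a_vw_j\leq\sum_j\lceil a_vw_j\rceil\leq l+\sum_j a_vw_j$ to get $2w_va_v\leq CZ(d\phi^{a_vt}_{r_v})\leq 2w_va_v+n$. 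Without the ceiling-function bookkeeping (and without the concatenation reduction that lets you treat one circle action at a time), the claimed $\pm n$ error in this step is an assertion, not a proof. As written, the proposal therefore has a genuine gap at both of the places where the constant $2n$ is actually produced.
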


\begin{proof}
If $\gamma\subset L$, $u$ is a point, and $\nu_v^{h,R}(\gamma)=0$ for all $v\in V$, so the result is trivial.

Otherwise, $\gamma\subset \mathring{U}_I^{R,\max}$ for some $I\subset V$.

The Conley-Zehnder index is invariant under the Liouville flow, so we may assume $\rho^R(\gamma)\geq 1$, $\gamma\subset U_I$, and a disc $u$ bounding $\gamma$ of diameter $<inj(M)$ is an outer cap.

As in the proof of Lemma \ref{action}, let $v_0$ be the minimal element of $I$. The orbit $\gamma$ is contained in a ball $B$ of radius $<inj(M)$ centred on a point in $\mathring{D}_{v_0}$. 

We will first fix a trivialisation of $TM$ over $B$, and because $u\subset B$, we may calculate all Conley-Zehnder indices with respect to this trivialisation. 

Let $a_v=\nu_v^{h,R}(\gamma)$ for $v\in I$, and the function $f$ be as in Equation \eqref{linearisation}.

Fix $x\in \gamma$.

Let
\begin{equation}
A=\frac{d}{dt}(d\phi_{h\circ\rho-f}^t)_x=d(X_{h\circ\rho-f})_x\in End(T_xM).
\end{equation}

We have that 
\begin{equation}
(d\phi_{h\circ \rho}^t)_{\phi^{-t}_f(x)}\circ (d\phi_f^{-t})_x=(d\phi_{h\circ\rho-f}^t)_x=id+tA\in Sp(T_xM).
\end{equation}

Because $id+tA$ is a symplectomorphism, we have that $im(A)$ is isotropic.

Let 
\begin{equation}
K=ker(dr_I)_x\subset T_xM.
\end{equation} 

We have $K\subset ker(A)$ and $im(A)\subset K$, so $im(A)\subset ker(A)$.

Therefore $id+tA$ is a symplectic shear in the sense of Theorem 4.1 of \cite{RS}, so by the normalisation property, $|CZ(id+tA)|=\frac{1}{2}|sign(A)|\leq n$.

Therefore $|CZ(d\phi_{h\circ\rho^R}^t)-CZ(d\phi_f^t)|\leq n$.

By concatenation and homotopy, 
\begin{equation}
CZ(d\phi_f^t)=\sum_{v\in I}CZ(d\phi_{r_v}^{a_vt}).
\end{equation}

To calculate $CZ(d\phi_{r_{v}}^{a_v t})$, we can consider the isotropy representation at a point in $\mathring{D}_{v}\cap B$. 

Suppose $v\in V_{n-l}$, so recall that $w_{v}$ is a sum of $l$ positive integers, coming from a decomposition of the isotropy representation into irreducibles. So we have that $w_{v}=w_1+...+w_l$ for some positive integers $w_j$ and $d\phi_{r_{v}}^{a t}$ is diagonal with entries $e^{2\pi ia_vw_jt}$ for $j=1,...,l$.

Since $CZ(e^{2\pi iat})=2\lceil a \rceil$, and 
\begin{equation}
\sum_{j=1}^la_vw_j\leq \sum_{j=1}^l \lceil a_vw_j \rceil \leq l+\sum_{j=1}^l  a_vw_j,
\end{equation}
by the product property, we have 
\begin{equation}
2w_{v}a_v \leq CZ(d\phi_{r_{v}}^{a_vt})\leq 2w_{v}a_v +n.
\end{equation}

Therefore, we have

\begin{equation}
\left|CZ(d\phi_{h\circ\rho}^t)-2\sum_{v\in I} w_{v} a_v \right|\leq 2n.
\end{equation}

\end{proof}

\subsection{Main Theorem}
\begin{set}
\label{monotone_setting}
Suppose we are in Setting \ref{main_setting}, and suppose additionally that $(M,\omega)$ is positively monotone, i.e.
\begin{equation}
[\omega]=2\kappa c_1(TM)
\end{equation}
for some $\kappa>0$.

Let $\kappa_v=\kappa\lambda_v$ for $v\in V$, and
\begin{equation}
\sigma_{crit}=\max\left(0,\max_{v\in V}\left(\frac{\lambda_v-2w_v}{\lambda_v}\right)\right).
\end{equation}

To define superheaviness, we associate to any time-independent Hamiltonian $H:M\rightarrow \mathbb{R}$ a partial symplectic quasi-state,
\begin{equation}
\zeta(H)=\lim_{\ell\to +\infty}\frac{c([M],\ell H)}{\ell}
\end{equation}
as in Section 3.5 of \cite{EP}, where the spectral number $c([M],H)$ is defined using the PSS isomorphism as in Section 3.4 of \cite{EP}.

Recall that a subset $K\subset M$ is superheavy if $\zeta(H)\leq \sup_K H$ for any Hamiltonian $H:M\rightarrow \mathbb{R}$.
\end{set}

\begin{lem}
\label{main_estimate}
There exists a continuous, increasing function 
\begin{equation}
\sigma:[0,R_0)\rightarrow [\sigma_{crit},\infty)
\end{equation}
such that $\sigma(0)=\sigma_{crit}$, and for $R\in (0,R_0)$,
\begin{equation}
\rho^R+ \sum_{v\in V}(r_v^{\max}-2\kappa w_v)\nu_v^R\leq \sigma(R). 
\label{sigma_ineq}
\end{equation}
\end{lem}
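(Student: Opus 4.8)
The plan is to rewrite the left-hand side of \eqref{sigma_ineq} as a non-negatively weighted combination of the functions $\nu_v^R$, then to reduce its supremum over $M\setminus L$ to suprema over the model level sets $\tilde Y_I^R$ from Setting \ref{main_setting}, and finally to exploit there that $\nu_v^R$ is supported where $r_v^{\max}$ is small.

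First I would record the algebraic simplification. By the defining relation $\iota_Z dr_I^{\max}=\tilde Z_I$ one has $\iota_Z dr_v^{\max}=r_v^{\max}-\kappa_v$, so contracting $Z$ into Property $6$ of Proposition \ref{rho} and invoking Property $3$ gives, on all of $M\setminus L$,
\begin{equation}
\rho^R=\iota_Z d\rho^R=-\sum_{v\in V}\nu_v^R\,\iota_Z dr_v^{\max}=\sum_{v\in V}\nu_v^R(\kappa_v-r_v^{\max}).
\end{equation}
Substituting this together with $\kappa_v=\kappa\lambda_v$ into the left-hand side of \eqref{sigma_ineq} makes every occurrence of $r_v^{\max}$ cancel, leaving
\begin{equation}
\Phi^R:=\rho^R+\sum_{v\in V}(r_v^{\max}-2\kappa w_v)\nu_v^R=\kappa\sum_{v\in V}(\lambda_v-2w_v)\,\nu_v^R.
\end{equation}
By Property $5$ of Proposition \ref{rho}, $\iota_Z d\Phi^R=0$. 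On each chart $\mathring U_I^{R,\max}$ of Lemma \ref{open_cover} we have $\Phi^R=\tilde\Phi_I^R\circ r_I^{\max}$, where $\tilde\Phi_I^R(x)=\tilde\rho_I^R(x)+\sum_{v\in I}(x_v-2\kappa w_v)\tilde\nu_{I,v}^R(x)$; the same computation applied to $\tilde\rho_I^R$ (using $\iota_{\tilde Z_I}d\tilde\rho_I^R=\tilde\rho_I^R$ and $\tilde\nu_{I,v}^R=-\partial_v\tilde\rho_I^R$ from the proof of Proposition \ref{rho}) gives $\tilde\rho_I^R=\sum_{v\in I}(\kappa_v-x_v)\tilde\nu_{I,v}^R$ and hence $\tilde\Phi_I^R=\kappa\sum_{v\in I}(\lambda_v-2w_v)\tilde\nu_{I,v}^R$, a $\tilde Z_I$-invariant function. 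Since $r_I^{\max}$ takes values in $\tilde B_I$ and the projection $P_I^R\colon\tilde B_I\to\tilde Y_I^R$ satisfies $\tilde\Phi_I^R\circ P_I^R=\tilde\Phi_I^R$, we obtain $\sup_{\mathring U_I^{R,\max}}\Phi^R\leq\sup_{\tilde B_I}\tilde\Phi_I^R=\sup_{\tilde Y_I^R}\tilde\Phi_I^R$; as these charts cover $M\setminus L$, it suffices to bound $\tilde\Phi_I^R$ on $\tilde Y_I^R$ for each $I\subset V$.

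The estimate on $\tilde Y_I^R$ goes as follows. Fix $x\in\tilde Y_I^R$; there $\tilde\rho_I^R(x)=1$, so $\sum_{v\in I}(\kappa_v-x_v)\tilde\nu_{I,v}^R(x)=1$, a sum of non-negative terms since $\tilde\nu_{I,v}^R\geq 0$ (Property $4$ of Proposition \ref{rho}) and $0\leq x_v<\kappa_v$. By Property $5$ of Proposition \ref{exhaustion_existence}, $\tilde f_I$ is independent of the coordinate $x_v$ at points of $\tilde f_I^{-1}(R)=\tilde Y_I^R$ with $x_v\geq\epsilon_v(R)$; as $\tilde\rho_I^R$, and therefore $\tilde\nu_{I,v}^R=-\partial_v\tilde\rho_I^R$, can depend on $x_v$ only where $\tilde f_I$ does, this forces $\tilde\nu_{I,v}^R(x)=0$ unless $x_v<\epsilon_v(R)$. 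Setting $\eta(R)=\max_{v\in V}\epsilon_v(R)/\kappa_v$, which lies in $[0,1)$ on $[0,R_0)$ by the choice of $R_0$, any $v$ with $\tilde\nu_{I,v}^R(x)\neq 0$ satisfies $\kappa_v-x_v>(1-\eta(R))\kappa_v$; combined with $\kappa(\lambda_v-2w_v)=\kappa_v-2\kappa w_v\leq\sigma_{crit}\kappa_v$, which is exactly the defining inequality of $\sigma_{crit}$ for the vertex $v$, multiplying by $\tilde\nu_{I,v}^R(x)\geq 0$ and summing over $v\in I$ yields
\begin{equation}
\tilde\Phi_I^R(x)=\kappa\sum_{v\in I}(\lambda_v-2w_v)\tilde\nu_{I,v}^R(x)\leq\frac{\sigma_{crit}}{1-\eta(R)}\sum_{v\in I}(\kappa_v-x_v)\tilde\nu_{I,v}^R(x)=\frac{\sigma_{crit}}{1-\eta(R)}.
\end{equation}
To finish, I would observe that each $\epsilon_v$ is continuous, strictly increasing, and zero at $0$ (Lemma \ref{compact_boxes}), so $\eta$ is as well and stays bounded away from $1$ on $[0,R_0)$; hence $\sigma(R):=(\sigma_{crit}+\eta(R))/(1-\eta(R))$ is continuous and increasing on $[0,R_0)$, equals $\sigma_{crit}$ at $R=0$, takes values in $[\sigma_{crit},\infty)$, and dominates $\Phi^R$ pointwise on $M\setminus L$, which is \eqref{sigma_ineq}.

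The step I expect to be the main obstacle is establishing that $\tilde\nu_{I,v}^R$ vanishes on the part of $\tilde Y_I^R$ where $x_v\geq\epsilon_v(R)$: this requires exploiting Property $5$ of Proposition \ref{exhaustion_existence} — the statement that the level sets of $\tilde f_I$ become straight outside the outer dotted lines of Figure \ref{tildef} — carefully enough to conclude that $\tilde\nu_{I,v}^R$, and not merely $\tilde f_I$ itself, is locally independent of $x_v$ there. Everything else is bookkeeping and elementary inequalities.
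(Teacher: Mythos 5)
Your proposal is correct and follows essentially the same route as the paper: the telescoping identity $\rho^R=\sum_v(\kappa\lambda_v-r_v^{\max})\nu_v^R$, invariance of the left-hand side under the Liouville flow to reduce to the level set, the support observation that $\nu_v^R\neq 0$ forces $r_v^{\max}<\epsilon_v(R)$, and an elementary weighted estimate. The only differences are cosmetic: you carry out the reduction in the model charts $\tilde Y_I^R$ rather than directly on $Y^R$, and your explicit $\sigma(R)=(\sigma_{crit}+\eta(R))/(1-\eta(R))$ differs from the paper's $\max\left(0,\max_v\frac{\lambda_v-2w_v}{\lambda_v-\kappa^{-1}\epsilon_v(R)}\right)$, but both satisfy the required properties.
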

\begin{proof}
Let 
\begin{equation}
\sigma(R)=\max\left(0,\max_{v\in V}\left(\frac{\lambda_v-2w_v}{\lambda_v-\kappa^{-1}\epsilon_v(R)}\right)\right).
\end{equation}

Expressing $\rho^R$ in terms of the functions $\nu_v^R$ and $r_v^{\max}$ for $v\in V$, we have
\begin{equation}
\rho^R=\sum_{v\in V}(\kappa\lambda_v-r_v^{\max})\nu_v^R.
\label{rho_nu}
\end{equation}

The expression on the left of \eqref{sigma_ineq} is constant along flowlines of $Z$, so it suffices to check Equation \eqref{sigma_ineq} at a point $x\in Y^R$. We have $\rho^R(x)=1$, and for $v\in V$, either $\nu_v^R(x)=0$ or $r_v^{\max}(x)<\epsilon_v(R)$.

Substituting Equation \eqref{rho_nu} into the expression gives
\begin{equation}
\begin{split}
\rho^R+\sum_{v\in V}(r_v^{\max}-2\kappa w_v)\nu_v^R&=\kappa\sum_{v\in V}(\lambda_v-2w_v)\nu_v^R.
\\&=\sum_{v\in V}\frac{\lambda_v-2w_v}{\lambda_v-\kappa^{-1}r_v^{\max}}(\kappa\lambda_v-r_v^{\max})\nu_v^R
\\&<\max_{v\in V}\left(\frac{\lambda_v-2w_v}{\lambda_v-\kappa^{-1}r_v^{\max}}\right)\rho^R
\\&<\max_{v\in V}\left(\frac{\lambda_v-2w_v}{\lambda_v-\kappa^{-1}\epsilon_v(R)}\right)
\\&<\sigma(R).
\end{split}
\end{equation}

\end{proof}
\begin{lem}
\label{h_rho_estimate}
Let $\sigma$ be as in Lemma \ref{main_estimate} and let $h:\mathbb{R}\rightarrow \mathbb{R}$ be a smooth function such that $h''\geq 0$ and $h(\rho)=h_0$ for $\rho<\sigma(R)$, for some $h_0\in\mathbb{R}$.

Then
\begin{equation}
\zeta(h\circ \rho^R)\leq h_0.
\end{equation}
\end{lem}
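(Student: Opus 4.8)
The plan is to bound the spectral number $c([M],\ell\,(h\circ\rho^R))$ from above by $\ell h_0$ plus a constant that depends only on $n$ and $\kappa$, and then to divide by $\ell$ and let $\ell\to\infty$ in the definition of $\zeta$. For every $\ell>0$ the function $\ell h$ again satisfies the hypotheses of the lemma (it is convex and equals $\ell h_0$ on $(-\infty,\sigma(R))$), so it suffices to prove an estimate of the form $c([M],h\circ\rho^R)\le h_0+C(n,\kappa)$ with $C$ independent of $h$, and then to apply it with $\ell h$ in place of $h$.

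First I would combine the three estimates available for the $1$-periodic orbits of $X_{h\circ\rho^R}$. Fix such an orbit $\gamma$ with outer cap $u$ and put $t=\rho^R(\gamma)$. Since $h''\ge 0$ and $h'\equiv 0$ on $(-\infty,\sigma(R))$ we have $h'\ge 0$ everywhere, so $\nu_v^{h,R}=(h'\circ\rho^R)\,\nu_v^R\ge 0$ by Proposition \ref{rho}. Multiplying the inequality of Lemma \ref{main_estimate} by $h'(t)\ge 0$ and inserting it into Lemma \ref{action} gives
\[
\mathcal{A}_{h\circ\rho^R}(\gamma,u)=h(t)+\sum_{v\in V}\nu_v^{h,R}(\gamma)\,r_v^{\max}(\gamma)\le h(t)+h'(t)\bigl(\sigma(R)-t\bigr)+2\kappa\sum_{v\in V}w_v\,\nu_v^{h,R}(\gamma).
\]
The function $g(s)=h(s)+h'(s)(\sigma(R)-s)$ has derivative $g'(s)=h''(s)(\sigma(R)-s)$, which is $\ge 0$ for $s\le\sigma(R)$ and $\le 0$ for $s\ge\sigma(R)$, so $g(t)\le g(\sigma(R))=h(\sigma(R))=h_0$. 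Feeding in the bound $2\sum_v w_v\nu_v^{h,R}(\gamma)\le CZ_{h\circ\rho^R}(\gamma,u)+2n$ from Lemma \ref{index} then yields
\[
\mathcal{A}_{h\circ\rho^R}(\gamma,u)-\kappa\,CZ_{h\circ\rho^R}(\gamma,u)\le h_0+2\kappa n
\]
for every orbit $\gamma$ and cap $u$ (the inequality is trivial for $\gamma\subset L$). Since $(M,\omega)$ is monotone with $[\omega]=2\kappa c_1(TM)$, the quantity $\mathcal{A}-\kappa\,CZ$ does not depend on the chosen cap.

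It then remains to feed this into the spectral invariant. I would first replace $h\circ\rho^R$ by a $C^0$-small nondegenerate perturbation $H_\varepsilon$; its $1$-periodic orbits are close to the Morse–Bott families of orbits of $h\circ\rho^R$, with actions changed by $O(\varepsilon)$ and Conley–Zehnder indices changed by a bounded amount, so the displayed inequality persists with $h_0$ replaced by $h_0+O(\varepsilon)$ and $2\kappa n$ by some larger universal constant. By $C^0$-continuity of spectral invariants (\cite{EP}), $c([M],h\circ\rho^R)=\lim_{\varepsilon\to 0}c([M],H_\varepsilon)$. Now $c([M],H_\varepsilon)$ is the action $\mathcal{A}_{H_\varepsilon}(\gamma_0,u_0)$ of a $1$-periodic orbit with cap for which $CZ_{H_\varepsilon}(\gamma_0,u_0)$ equals a constant depending only on $n$ (the Floer degree carried by the image of $[M]$ under the PSS map; by monotonicity the action of an orbit capped so that its index sits in this degree is well defined). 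Combining with the cap-independent inequality above gives $c([M],H_\varepsilon)\le h_0+C(n,\kappa)+O(\varepsilon)$; letting $\varepsilon\to 0$, and then applying the result to $\ell h$, yields $c([M],\ell\,(h\circ\rho^R))\le \ell h_0+C(n,\kappa)$, whence $\zeta(h\circ\rho^R)=\lim_{\ell\to\infty}c([M],\ell\,(h\circ\rho^R))/\ell\le h_0$.

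I expect the main obstacle to be the perturbation step: $h\circ\rho^R$ is maximally degenerate — it is locally constant on the open set $\{\rho^R<\sigma(R)\}\supset L$, and its remaining orbits occur in Morse–Bott families generated by the torus actions $r_I$ — so the perturbation must be chosen compatibly with this structure in order that the perturbed orbits stay close to the unperturbed families and that Lemmas \ref{action} and \ref{index} continue to control their actions and indices up to bounded errors. The other point requiring care is the standard, but convention-sensitive, input that the fundamental-class spectral number is represented by an orbit whose capped Conley–Zehnder index is bounded in terms of $n$ alone; once that is granted, the rest is the elementary convexity computation above together with the cap-independence of $\mathcal{A}-\kappa\,CZ$ in the monotone setting.
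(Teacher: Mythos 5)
Your proposal is correct and follows essentially the same route as the paper: it combines Lemmas \ref{action}, \ref{index} and \ref{main_estimate} with the convexity of $h$ to bound $\mathcal{A}-\kappa\,CZ$ by $h_0+O(1)$ uniformly over orbits, and then feeds this into the Entov--Polterovich spectral-invariant machinery (perturbation plus the bounded-degree representation of $c([M],\cdot)$, which the paper delegates to Propositions 9.1 and 9.2 of \cite{EP}) before dividing by $\ell$. The only cosmetic difference is that you prove the estimate for $h$ and then substitute $\ell h$, whereas the paper works directly with perturbations of $\ell h\circ\rho^R$.
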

\begin{proof}

As in the proof of Proposition 9.1 of \cite{EP}, we will bound $\zeta(h\circ\rho^R)$ by estimating $c([M],H_\ell)$ for a sequence of small regular perturbations $H_\ell$ of $\ell h\circ\rho^R$ for $\ell \in\mathbb{N}$.

For any Hamiltonian $H$ and any 1-periodic orbit $\gamma$ of $X_H$, let
\begin{equation}
\label{mixed_index}
D_H(\gamma)=\mathcal{A}_H(\gamma,u)-\kappa CZ_H(\gamma,u)
\end{equation}
for any $u\in\pi_2(M,\gamma)$. This quantity is independent of $u$ because $(M,\omega)$ is monotone.

As in the aforementioned proof in \cite{EP}, as $\ell\to \infty$,
\begin{equation}
c([M],H_\ell)=D_{H_\ell}(\tilde{\gamma}_\ell)+O(1)
\end{equation}
where $\tilde{\gamma}_\ell$ is some 1-periodic orbit of $X_{H_\ell}$. By Proposition 9.2 of \cite{EP}, provided the perturbations $H_\ell$ are sufficiently $C^\infty$-close to the Hamiltonians $\ell h\circ\rho^R$, we have
\begin{equation}
c([M],H_\ell)=D_{\ell h\circ\rho^R}(\gamma_\ell)+O(1)
\end{equation}
where $\gamma_\ell$ is some 1-periodic orbit of $X_{\ell h\circ\rho^R}$.

By Lemmas \ref{action} and \ref{index}, 
\begin{equation}
\label{D_estimate}
c([M],H_\ell)=\ell h(\rho^R(\gamma_\ell))+\ell \sum_{v\in V}\left(r_v^{\max}(\gamma_\ell)-2\kappa w_v\right)\nu_v^{h,R}(\gamma_\ell)+O(1).
\end{equation}

By convexity of $h$,
\begin{equation}
h\circ \rho^R \leq h_0+(h'\circ\rho^R)\left(\rho^R-\sigma(R)\right),
\end{equation}
and by Lemma \ref{main_estimate}, 
\begin{equation}
h\circ \rho^R \leq h_0-\sum_{v\in V}(r_v^{\max}-2\kappa w_v)\nu_v^{h,R}.
\label{h_convexity}
\end{equation}

Substituting Equation \eqref{h_convexity} into Equation \ref{D_estimate}, we have
\begin{equation}
c([M],H_\ell)\leq \ell h_0 +O(1).
\end{equation}
Taking the appropriate limit gives the required result.
\end{proof}

\begin{prop}
\label{superheavy_subset}
The subset 
\begin{equation}
K_{crit}=(\rho^0)^{-1}[0,\sigma_{crit}]
\end{equation}
is superheavy.
\end{prop}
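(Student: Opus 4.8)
The proof will follow the pattern of Entov and Polterovich's argument for superheaviness of Clifford-type tori (Section~9 of \cite{EP}): bound an arbitrary autonomous Hamiltonian from above by one of the form $h\circ\rho^R$ with $h$ convex and flat near the critical level, then combine monotonicity of the partial symplectic quasi-state $\zeta$ with the estimate of Lemma~\ref{h_rho_estimate}.

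First I would reduce the statement. The quasi-state $\zeta$ is monotone and satisfies $\zeta(H+c)=\zeta(H)+c$ for constants $c$ (standard properties of the quasi-state built from spectral numbers; cf.\ Section~3 of \cite{EP}). Replacing $H$ by $H-\sup_{K_{crit}}H$, it suffices to show $\zeta(H)\le 0$ whenever $\sup_{K_{crit}}H=0$, and for that it is enough to prove $\zeta(H)\le\delta$ for each fixed $\delta>0$.

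So fix $\delta>0$ and a Hamiltonian $H$ with $\sup_{K_{crit}}H=0$. Since $K_{crit}=(\rho^0)^{-1}[0,\sigma_{crit}]$ is compact and $H$ is continuous with $H\le 0$ on it, the nested compacta $(\rho^0)^{-1}[0,\sigma_{crit}+1/k]$ decrease to $K_{crit}$, so some $\tau>\sigma_{crit}$ has $H<\delta$ on $(\rho^0)^{-1}[0,\tau]$. Since $\rho^R$ approximates $\rho^0$ and $\sigma$ is continuous with $\sigma(0)=\sigma_{crit}$ (Lemma~\ref{main_estimate}) --- this is the same mechanism that makes $K_{crit}$ turn out $SH$-full in \cite{BSV} --- I would pick $R\in(0,R_0)$ small enough that $\sigma(R)<\tau$ and $(\rho^R)^{-1}[0,\tau]\subset\{H<\delta\}$ (harmlessly including $L$, on which $\rho^0$ vanishes and $h\circ\rho^R$ will be constant). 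Put $\eta=\tau-\sigma(R)>0$, so $H<\delta$ on the region $\{\rho^R\le\sigma(R)+\eta\}$ (together with $L$). Write $C=\max_M H$. Now choose a smooth $h:\mathbb{R}\to\mathbb{R}$ with $h''\ge 0$, equal to $\delta$ on $(-\infty,\sigma(R)+\tfrac\eta2]$, rising convexly to reach the value $C$ before $\rho=\sigma(R)+\eta$ and staying $\ge C$ afterwards --- such $h$ exist since convexity puts no ceiling on $h''$, so one may glue on a steep arc, and $h\circ\rho^R$ then extends to a smooth Hamiltonian on $M$ because $h$ is constant near $\rho=0$ (Setting~\ref{rhoh}). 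By construction $h\circ\rho^R\ge H$ everywhere: on $\{\rho^R\le\sigma(R)+\eta\}$ (and on $L$) we have $h\circ\rho^R\ge\delta>H$, and on $\{\rho^R\ge\sigma(R)+\eta\}$ we have $h\circ\rho^R\ge C\ge H$.

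Finally, monotonicity of $\zeta$ gives $\zeta(H)\le\zeta(h\circ\rho^R)$, and since $h''\ge0$ and $h\equiv\delta$ for $\rho<\sigma(R)$, Lemma~\ref{h_rho_estimate} gives $\zeta(h\circ\rho^R)\le\delta$; hence $\zeta(H)\le\delta$ for all $\delta>0$, so $\zeta(H)\le 0=\sup_{K_{crit}}H$, which is the asserted superheaviness. The step requiring the most care is the choice of $R$ in the previous paragraph --- the claim that $(\rho^R)^{-1}[0,\tau]$ shrinks into any prescribed neighbourhood of $K_{crit}$ as $R\to0$; this is the only genuinely external input and depends on the precise sense in which $\rho^R$ approximates $\rho^0$ coming out of the construction of Section~\ref{s5}, not on anything new. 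Everything else (the axioms of $\zeta$, the shaping of $h$, the elementary compactness) is routine.
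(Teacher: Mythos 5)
Your proposal is correct and follows essentially the same route as the paper's proof: bound $H$ from above by $h\circ\rho^R$ with $h$ convex and constant (at roughly $\sup_{K_{crit}}H+\delta$) on $[0,\sigma(R)]$, choosing $R$ small by continuity of the family in $R$, then apply monotonicity of $\zeta$ together with Lemma~\ref{h_rho_estimate} and let $\delta\to 0$. The preliminary normalisation $\sup_{K_{crit}}H=0$ and the explicit shaping of $h$ are cosmetic elaborations of the paper's argument, and the point you flag about $(\rho^R)^{-1}[0,\sigma(R)]$ shrinking to $K_{crit}$ is exactly the continuity step the paper also invokes.
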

\begin{proof}
This proof follows the same idea as that of Theorem 1.9 of \cite{EP}, in that we will bound an arbitrary Hamiltonian by one over which we have more control.

Let $H:M\rightarrow \mathbb{R}$ be a smooth Hamiltonian. It suffices to prove that $\zeta(H)\leq \sup_{K_{crit}} H$.

For any $\delta>0$, let $h_0=\sup_{K_{crit}}H+\delta$. By continuity we can choose $R\in (0,R_0)$ such that $H|_{(\rho^R)^{-1}[0,\sigma(R)]}<h_0$. 

We can then choose some real function $h:\mathbb{R}\rightarrow \mathbb{R}$ such that $h\circ \rho^R\geq H$, $h''\geq 0$, and $h|_{[0,\sigma(R)]}=h_0$.

Now by Lemma \ref{h_rho_estimate}, and monotonicity of $\zeta$,
\begin{equation}
\zeta(H)\leq\zeta(h\circ\rho^R)\leq h_0.
\end{equation}
Since this is true for all $\delta>0$, we have that $\zeta(H)\leq \sup_{K_{crit}}H$ as required.
\end{proof}

\begin{cor}
\label{superheavy_skeleton}
If $\lambda_v\leq 2w_v$ for all $v\in V$, then $L$ is superheavy.
\end{cor}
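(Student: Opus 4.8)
The plan is to derive this immediately from Proposition \ref{superheavy_subset} by showing that, under the hypothesis $\lambda_v \le 2w_v$ for all $v \in V$, the critical value $\sigma_{crit}$ vanishes and the set $K_{crit}$ coincides with the Lagrangian skeleton $L$. The first observation is purely arithmetic: by definition,
\begin{equation}
\sigma_{crit} = \max\left(0, \max_{v\in V}\frac{\lambda_v - 2w_v}{\lambda_v}\right),
\end{equation}
and since each $\lambda_v = \kappa_v/\kappa > 0$ and the hypothesis gives $\lambda_v - 2w_v \le 0$, every term in the inner maximum is $\le 0$, so $\sigma_{crit} = 0$.

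Next I would identify $K_{crit} = (\rho^0)^{-1}[0,0] = (\rho^0)^{-1}(0)$ with $L$. Here $\rho^0$ is the continuous function on $M$ with $\rho^0|_{|V|} = 1$ characterised by $d\rho^0(Z) = \rho^0$ along flowlines of the Liouville field (the analogue of the Borman--Sheridan--Varolgunes $\rho^0$, which is the $R\to 0$ limit of the family $\rho^R$ of Proposition \ref{rho}). Since $\iota_Z d\rho^0 = \rho^0$, the value of $\rho^0$ decays exponentially backwards along the Liouville flow, so $\rho^0(x) = 0$ precisely for those $x \in X$ whose entire backward Liouville trajectory stays in a fixed compact region, i.e. precisely on the Lagrangian skeleton $L$ of $(X,\theta)$; and $\rho^0 > 0$ on $|V|$ together with the rest of $X$. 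Thus $(\rho^0)^{-1}(0) = L$. (If the paper has set up $\rho^0$ and the identification $(\rho^0)^{-1}(0) = L$ explicitly by the time this corollary is stated, I would simply cite that; otherwise this identification, via the decay estimate $\rho^R \to 0$ away from $L$ which follows from Property 3 of Proposition \ref{rho}, is the step to spell out.)

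Combining these two facts, $K_{crit} = (\rho^0)^{-1}[0,\sigma_{crit}] = (\rho^0)^{-1}(0) = L$, and Proposition \ref{superheavy_subset} asserts $K_{crit}$ is superheavy, so $L$ is superheavy.

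The only genuine subtlety — the place I would be most careful — is the passage from the smooth hypersurfaces $(\rho^R)^{-1}[0,\sigma(R)]$ used in Proposition \ref{superheavy_subset} to the singular limit $(\rho^0)^{-1}[0,\sigma_{crit}]$: one needs that the nested family of superheavy sets $(\rho^R)^{-1}[0,\sigma(R)]$ shrinks down, as $R \to 0$, to exactly $K_{crit}$, using that $\sigma$ is continuous and increasing with $\sigma(0) = \sigma_{crit}$ (Lemma \ref{main_estimate}) and that $\rho^R \to \rho^0$ appropriately. Superheaviness is preserved under such decreasing intersections of compact sets (a closed subset containing a superheavy set, or an intersection of superheavy sets with the finite-intersection property, remains superheavy), which is precisely the mechanism already exploited inside the proof of Proposition \ref{superheavy_subset}; so in fact this corollary requires no work beyond the arithmetic of $\sigma_{crit}$ and the identification of $K_{crit}$ with $L$, both of which are routine given the setup.
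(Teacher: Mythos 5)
Your proposal is correct and follows essentially the same route as the paper: the hypothesis $\lambda_v\leq 2w_v$ forces $\sigma_{crit}=0$ by its definition in Setting \ref{monotone_setting}, and then Proposition \ref{superheavy_subset} gives superheaviness of $K_{crit}=(\rho^0)^{-1}(0)=L$, which is exactly the paper's one-line argument. Your extra remarks on identifying $(\rho^0)^{-1}(0)$ with $L$ and on the $R\to 0$ limit are fine but unnecessary, since the limit issue is already absorbed into the proof of Proposition \ref{superheavy_subset} and the identification is built into the setup.
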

\begin{proof}
We have $\sigma_{crit}=0$, so by Proposition \ref{superheavy_subset}, $(\rho^0)^{-1}(0)=L$ is superheavy.
\end{proof}

\section{Algebraic Varieties}
\label{s6}

\subsection{Algebraic Varieties}

\begin{set}
\label{algebraic_setting}
Let $M$ be an $n$-dimensional complex projective variety, and $D$ an effective ample $\mathbb{Q}$-divisor on $M$, such that
\begin{equation}
[D]=-2K_M.
\end{equation}

Let $N\in\mathbb{N}$ be a positive integer such that $ND$ is a $\mathbb{Z}$-divisor. Let $\mathcal{L}$ be a line bundle corresponding to $ND$. Equip $\mathcal{L}$ with a hermitian metric, and let $F$ denote the curvature form of the Chern connection on $\mathcal{L}$.

For any $\kappa>0$, 
\begin{equation}
\omega=-\frac{i\kappa}{2N\pi}F
\end{equation}
is a symplectic form on $M$, and 
\begin{equation}
[\omega]=2\kappa c_1(TM).
\end{equation}

The complement $X=M\setminus D$ is an affine variety. For any global section $s$ of $\mathcal{L}$, 
\begin{equation}
\theta=-\frac{\kappa}{2N\pi}d^c\log ||s||
\end{equation}
is a primitive for $\omega|_X$, making $X$ a finite type convex symplectic manifold.
\end{set}

\begin{defn}[Algebraic Divisors as Stratified Symplectic Divisors]
Suppose we are in setting \ref{algebraic_setting}.

Let $V_D$ denote the minimal set of irreducible subvarieties of $M$ such that:
\begin{itemize}
\item $V_D$ contains the irreducible components of $D$
\item if $Y_1,Y_2\in V_D$, then the irreducible components of $Y_1\cap Y_2$ are in $V_D$,
\item for any $Y\in V_D$, all irreducible components of the singular locus of $Y$ are in $V_D$.
\end{itemize}

We give $V_D$ the structure of a stratified poset of height $n-1$ by setting $Y\leq Z$ whenever $Y\subset Z$, and letting $(V_D)_l=\{Y\in V_D| dim(Y)=l\}$.

We give $V_D$ the structure of a stratified symplectic subvariety in $(M,\omega)$ by letting $\mathring{D}_Y$ denote the smooth part of $Y$.

We have that $|V_D|=D$ as a subset of $M$, and $(V_D)_{n-1}$ is the set of irreducible components of $D$. 
\end{defn}
\begin{lem}
\label{hypersurf_adapted}
Suppose we are in Setting \ref{algebraic_setting}.
 
Fix $v\in (V_D)_{n-1}$ and let $r_v:U_v\rightarrow \mathbb{R}_{\geq 0}$ be a radial Hamiltonian for $v$.

Then $\theta$ is weakly adapted at $v$.
\end{lem}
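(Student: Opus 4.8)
The plan is to show directly that the Liouville vector field $Z$ dual to the compactification one-form $\theta$ points strictly inward across the level sets of any radial Hamiltonian $r_v$ near the divisor component $\mathring{D}_v$, i.e.\ that $\iota_Z dr_v < 0$ on a stratum neighbourhood of $v$. Since $v\in(V_D)_{n-1}$ is an irreducible component of $D$, write $Y=\overline{\mathring{D}_v}$, let $N_v\in\mathbb{N}$ be the coefficient so that $N_vY$ appears in $ND$, and work locally near a point of $\mathring{D}_v$ where $Y$ is cut out by a single holomorphic equation $g=0$. The key is that $\theta = -\frac{\kappa}{2N\pi}d^c\log\|s\|$ splits near $\mathring D_v$ as $-\frac{\kappa N_v}{2N\pi} d^c\log|g|^2$ plus a smooth term: the section $s$ vanishes to order exactly $N_v$ along $Y$, so $\|s\| = |g|^{N_v}\cdot e^{-\varphi}$ for some smooth function $\varphi$ (incorporating the hermitian metric and the other components of $D$, none of which vanish on $\mathring D_v$), hence $\log\|s\| = N_v\log|g| + (\text{smooth near }\mathring D_v)$.

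First I would compute the contribution of the singular part $-\frac{\kappa N_v}{2N\pi}d^c\log|g|^2$. Its Liouville vector field is, in local holomorphic coordinates where $g$ is one of the coordinates, radial in that coordinate: pairing against $dr_v$ (where $r_v$ is, after the reductions in the paper, essentially $|g|^2$ up to a positive smooth factor and the averaging in Lemma \ref{average_construction}) gives a strictly negative quantity of the form $-c\,r_v$ with $c>0$ plus higher-order terms, because $d^c\log|g|^2$ has a genuine pole-type primitive whose Liouville flow contracts toward $\{g=0\}$. This is the standard fact that the complement of a smooth ample divisor, with the compactification one-form, is Liouville with skeleton away from the divisor; here it is localized to $\mathring D_v$. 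Then I would observe that the smooth remainder term contributes a primitive $\theta'$ with $d\theta'$ a smooth $2$-form, whose Liouville vector field is bounded, so $\iota_Z dr_v = \iota_{Z_{\mathrm{sing}}}dr_v + \iota_{Z'}dr_v = -c\,r_v + O(1)\cdot(\text{gradient of }r_v)$; on a sufficiently small neighbourhood the first term dominates near $\mathring D_v$, but one must be slightly careful since $r_v\to 0$ there too — the cleaner statement is that $\iota_{Z'}dr_v$ vanishes to the same or higher order, or that $Z'$ is tangent to $\mathring D_v$ up to terms controllable by shrinking.

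The main obstacle I anticipate is precisely this comparison of orders of vanishing: near $\mathring D_v$ both $\iota_{Z_{\mathrm{sing}}}dr_v$ and $dr_v$ degenerate, so "dominates" has to be made precise. I expect the right move is to use that $r_v$ generates an $\mathbb R/\mathbb Z$-action fixing $\mathring D_v$ with all weights $a_i>0$ (the weight discussion preceding the statement), so that $r_v$ is comparable to the squared distance to $\mathring D_v$ and $\iota_{Z_{\mathrm{sing}}}dr_v$ is comparable to $-r_v$, while the smooth-remainder Liouville field $Z'$, having $d\theta'$ smooth across $\mathring D_v$, satisfies $\iota_{Z'}dr_v = O(r_v^{1/2})\cdot O(r_v^{1/2}) = O(r_v)$ with a constant that $\to 0$ as the neighbourhood shrinks (since $\theta'$ can be taken to vanish on $\mathring D_v$ after subtracting an exact form, which does not affect adaptedness). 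Combining, $\iota_Z dr_v < 0$ on a small enough stratum neighbourhood, which is exactly weak adaptedness at $v$; one then invokes Definition of weakly adapted and is done. Alternatively, and perhaps more in the spirit of the paper, one can first use Corollary \ref{adapted_existence}-style averaging to replace $\theta$ by an $r_v$-invariant primitive, reducing via \eqref{adapted_equivariant} to checking that the action $\kappa_v>0$, which is $\kappa N_v$ times a positive intersection number and hence manifestly positive since $D$ is ample — I would present this shorter route if the averaging machinery applies cleanly at a single vertex.
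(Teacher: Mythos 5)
Your decomposition of $\theta$ into a pole term $-\tfrac{\kappa N_v}{2N\pi}d^c\log|g|^2$ plus a smooth remainder is the same starting point as the paper's proof, but the quantitative heart of your argument is wrong, and the patches you propose to repair it are not available. The singular term does not pair with $dr_v$ to give something of order $-c\,r_v$: in the local model $\mathbb{C}$ with $r_v\approx\tfrac12|z|^2$, the pole of $d^c\log|z|$ scales like $1/|z|$ while $X_{r_v}$ (equivalently $dr_v$) vanishes like $|z|$, so the pairing is bounded above by a strictly \emph{negative constant} near $\mathring D_v$ --- this is exactly the paper's computation $\iota_{A^*(iz)}d^c\log|z|=-|iA^{-1}z|^2/(2|z|^2)\le -B<0$, giving $\iota_{X_{r_v}}\theta\le-\kappa\lambda_v B+O(\|z\|)$; as a sanity check, an adapted primitive satisfies $\iota_Z dr_v=r_v-\kappa_v$, which equals $-\kappa_v\neq 0$ on $r_v=0$. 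Once this is seen, the smooth remainder contributes only $O(\mathrm{dist}(\cdot,\mathring D_v))\to 0$ (bounded one-form against a vector field vanishing linearly along $\mathring D_v$), so it is dominated automatically and the delicate comparison of orders you worry about never arises. Your claimed comparison ($\iota_{Z_{\mathrm{sing}}}dr_v\sim -r_v$ versus $\iota_{Z'}dr_v=O(r_v)$ with a constant that must be made small) is therefore incorrect, and had it been correct it would indeed have been inconclusive without the extra devices you invoke.

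Those devices are the genuine gap. Subtracting an exact form so that the remainder vanishes along $\mathring D_v$, and likewise your ``shorter route'' of averaging $\theta$ to an $r_v$-invariant primitive and then quoting \eqref{adapted_equivariant} together with positivity of the action, both establish weak adaptedness of some primitive \emph{cohomologous} to $\theta$, not of $\theta=-\tfrac{\kappa}{2N\pi}d^c\log\|s\|$ itself. Weak adaptedness is not invariant under adding an exact one-form: the Liouville field changes by the $\omega$-dual of $df$, whose pairing with $dr_v$ has no sign or smallness control. Indeed the point of Corollary \ref{adapted_existence} and Proposition \ref{cohomologous_adapted} is precisely to pass from the given weakly adapted $\theta$ to a cohomologous adapted one, and their hypotheses require weak adaptedness of the given form; assuming it instead for an averaged or modified primitive is circular for the way Lemma \ref{hypersurf_adapted} is used, since in the applications (via Corollary \ref{qhsch}) the Lagrangian one cares about is the skeleton of $(X,d^c\log\|s\|)$ with this specific primitive. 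The repair is simply the corrected estimate above, carried out pointwise along $\mathring D_v$ as in the paper.
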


\begin{proof}
Fix $x\in \mathring{D}_v$.

Choose local holomorphic coordinates $(z_1,...,z_n):U\rightarrow \mathbb{C}^n$ near $x$, such that $x=(0,...,0)$, the plane $\{z_2=...=z_n=0\}$ intersects $\mathring{D}_v$ symplectically orthogonally at $x$, $\mathring{D}_v\cap U=\{z_1=0\}$, and $(dz_1)_x:T_x\mathring{D}_v^\omega\rightarrow \mathbb{C}$ is a symplectomorphism, with respect to the standard symplectic form on $\mathbb{C}$. 

The isotropy representation $d\phi_{r_v}^t$ on $T_x\mathring{D}_v^\omega$ induces a symplectic representation of $\mathbb{R}/\mathbb{Z}$ on $\mathbb{C}$ via $(dz_1)_x$, which is related to the standard representation by some $A\in Sp(2)$. With respect to the coordinates $(z_1,...,z_n)$, $X_{r_v}=(A^*(2\pi iz_1),0,...,0)+O(||z||^2)$. Near $x$, $s|_U=z_1^{N\lambda_v}f$, where $f$ is holomorphic and $f\neq 0$, so $\theta=-\frac{\kappa\lambda_v}{2\pi}d^c\log |z_1|+O(1)$.

Let $z$ be a coordinate on $\mathbb{C}$. Then on $\mathbb{C}^*$,
\begin{equation}
\iota_{A^*(iz)}d^c\log |z|=\frac{\langle z,iAiA^{-1}z\rangle}{2|z|^2} =-\frac{\langle (iA)^Tz,iA^{-1}z\rangle}{2|z|^2}=-\frac{|iA^{-1}z|^2}{2|z|^2}<-B
\end{equation}
for some constant $B>0$, so near $x$, 
\begin{equation}
\iota_{X_{r_v}}\theta<-\kappa \lambda_v B +O(||z||)
\end{equation} 
so, on some neighbourhood of $x$, $\iota_{X_{r_v}}\theta<0$. This is true for all $x\in\mathring{D}_v$, and therefore on a stratum neighbourhood for $v$ as required.
\end{proof}

\subsection{Quasihomogeneous Singularities}
\begin{defn}[Quasihomogeneous Polynomial]
A polynomial $f\in \mathbb{C}[z_1,...,z_n]$ is quasihomogeneous with weight system $(a_1,...,a_n,\lambda)\in\mathbb{Z}_{\geq 0}^{n+1}$ if for any $t\in\mathbb{C}^*$, 
\begin{equation}
f(t^{a_1}z_1,...,t^{a_n}z_n)=t^\lambda f(z_1,...,z_n).
\end{equation}
\end{defn}
\begin{lem}
\label{qhsings}
Suppose we are in Setting \ref{algebraic_setting}, $v\in V_D$, and $\mathring{D}_v$ is contained in a local analytic chart $(z_1,...,z_n):U\rightarrow\mathbb{C}^n$ such that $\mathring{D}_v=\{z_{k+1}=...=z_n=0\}$, and $s|_U=f\in\mathbb{C}[z_1,...,z_n]$, where $f$ is quasihomogeneous with weight system $(a_1,...,a_k,0,...,0,\lambda)$, where $a_i>0$ for $i=1,...,k$ and $d>0$.

Then there exists a radial Hamiltonian $r_v:U_v\rightarrow \mathbb{R}_{\geq 0}$ for $v$, where $U_v\subset U$, such that the action generated by $r_v$ factors through the standard $(\mathbb{C}^*)^n$-action on $\mathbb{C}^n$, the total weight of $r_v$ is
\begin{equation}
w_v=\sum_{i=1}^ka_i,
\end{equation}
$\theta$ is weakly adapted to $r_v$, and the action of $r_v$ with respect to $\theta$ is $\kappa\lambda_v$ where
\begin{equation}
\lambda_v=\frac{\lambda}{N}.
\end{equation}
\end{lem}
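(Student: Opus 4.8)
The plan is to realise $r_v$ as the moment map of the circle action underlying the quasihomogeneity of $f$, and then to read off its weight, the weak adaptedness of $\theta$, and the value of its action from Euler's identity for $f$. The weight system equips $U$ with the holomorphic action $t\cdot(z_1,\dots,z_n)=(t^{a_1}z_1,\dots,t^{a_k}z_k,z_{k+1},\dots,z_n)$ of $\mathbb{C}^*$; reindexing the coordinates if necessary I assume the strictly positive weights are $a_1,\dots,a_k$, so that the fixed locus of this action is exactly $\mathring{D}_v=\{z_1=\dots=z_k=0\}$. Restricting to $t\in S^1$ gives a $\mathbb{R}/\mathbb{Z}$-action $\phi^s$ with generating vector field $\xi$; it is holomorphic (hence preserves the complex structure $J_0$ on $U$), fixes $\mathring{D}_v$ pointwise, preserves $D\cap U=\{f=0\}$ because $f\circ\phi^s=e^{2\pi i\lambda s}f$, and therefore — being algebraic and connected — preserves each stratum $\mathring{D}_w\cap U$ with $w\ge v$.

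First suppose the chosen hermitian metric on $\mathcal{L}$, hence $\omega$ and $\theta$, is $\phi^s$-invariant near $\mathring{D}_v$; this is the situation in all the applications. Then $\phi^s$ is symplectic, $\iota_\xi\omega$ is closed, and on a contractible neighbourhood I can write $\iota_\xi\omega=-dr_v$ with $r_v$ normalised by $r_v|_{\mathring{D}_v}=0$ (possible since $\xi|_{\mathring{D}_v}=0$). This $r_v$ generates $\phi^s$, which has period $1$, fixed set $\mathring{D}_v$, and preserves each $\mathring{D}_w$; since all isotropy weights $a_1,\dots,a_k$ are strictly positive, $\mathring{D}_v$ is a non-degenerate transverse minimum of $r_v$, so after shrinking $U$ to a suitable $U_v$ one has $r_v\ge 0$ and $r_v^{-1}(0)=\mathring{D}_v\cap U_v$; thus $r_v$ is a radial Hamiltonian for $v$ whose action is precisely the standard $(\mathbb{C}^*)^n$-action pulled back along $s\mapsto(e^{2\pi i a_1 s},\dots,e^{2\pi i a_k s},1,\dots,1)$, and its isotropy representation on $(T_x\mathring{D}_v)^\omega$ has weights $a_1,\dots,a_k$, so $w_v=\sum_{i=1}^k a_i$. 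For a general (non-invariant) metric $\phi^s$ is not $\omega$-symplectic, so instead I would average: $\bar\omega:=\int_0^1(\phi^s)^*\omega\,ds$ is $\phi^s$-invariant and, being an average of Kähler forms for the invariant $J_0$, again Kähler, and its restriction to $\mathring{D}_v$ coincides with that of $\omega$; a relative-Poincaré-plus-Moser argument parallel to Lemmas \ref{tubular_nbhd} and \ref{invariant_Moser} then produces a diffeomorphism $\Psi$ near $\mathring{D}_v$ with $\Psi^*\bar\omega=\omega$, $\Psi|_{\mathring{D}_v}=\mathrm{id}$, preserving each $\mathring{D}_w$, and I set $r_v:=\bar r\circ\Psi$ for $\bar r$ the moment map just constructed for $(\,\cdot\,,\bar\omega)$; the weight is unchanged (it is conjugation-invariant), and the statement about the standard $(\mathbb{C}^*)^n$-action is then to be read in the chart $\Psi^{-1}\circ z$.

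Next comes the analytic heart. On $U_v\setminus D$ write $\|s\|^2=|f|^2\,e^{-2\psi}$ with $\psi$ smooth on $U_v$; then $\theta=-\tfrac{\kappa}{4N\pi}\,d^c\log|f|^2+\eta$ with $\eta=\tfrac{\kappa}{2N\pi}d^c\psi$ smooth across $\mathring{D}_v$, and the singular term is $\phi^s$-invariant since $|f\circ\phi^s|=|f|$ and $\phi^s$ is holomorphic. Contracting it with $\xi$ and using Euler's identity $\sum_{i=1}^k a_i z_i\,\partial_{z_i}f=\lambda f$ (so $\iota_\xi\partial f=2\pi i\lambda f$ and likewise $\iota_\xi\bar\partial\bar f=-2\pi i\lambda\bar f$) yields the constant $\iota_\xi\big(-\tfrac{\kappa}{4N\pi}d^c\log|f|^2\big)=-\tfrac{\kappa\lambda}{N}$. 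Since $\xi|_{\mathring{D}_v}=0$, the term $\iota_\xi\eta$ is $O(\mathrm{dist}(\cdot,\mathring{D}_v))$, so $\iota_{X_{r_v}}\theta=\iota_\xi\theta=-\tfrac{\kappa\lambda}{N}+O(\mathrm{dist}(\cdot,\mathring{D}_v))$ has a definite sign on a stratum neighbourhood of $v$; exactly as in the proof of Lemma \ref{hypersurf_adapted} this shows $\theta$ is weakly adapted at $v$. For the action, Lemma \ref{action_constant} gives that along an orbit $\gamma(t)=\phi^t(x)$ the action of $r_v$ equals $r_v(x)-\int_0^1(\iota_\xi\theta)_{\gamma(t)}\,dt$; letting $x\to\mathring{D}_v$ kills both $r_v(x)$ and the $O$-term, and as the action is independent of the orbit it must be exactly $\kappa\lambda/N$, i.e. $\lambda_v=\lambda/N$. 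In the non-invariant case I would run the same computation with the $\phi^s$-invariant primitive $\bar\theta$ of $\bar\omega$ obtained by averaging only $\eta$ (the singular part being already invariant); since $\bar\theta$ is cohomologous to $\theta$ after transport by $\Psi$, both conclusions carry over.

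The one genuinely delicate point is the reduction in the second paragraph: making $\phi^s$ Hamiltonian for the given $\omega$ while keeping simultaneous control of every higher stratum $\mathring{D}_w$, and pinning down the precise sense in which the resulting action ``factors through the standard $(\mathbb{C}^*)^n$-action'' — literally so only after passing to the chart $\Psi^{-1}\circ z$, and otherwise only up to a symplectomorphism fixing $\mathring{D}_v$, because $\omega$ itself need not be $\phi^s$-invariant. When the ambient metric is already $S^1$-invariant near $\mathring{D}_v$, as in all the cases treated later in the paper, this step is vacuous and the proof reduces to the first and third paragraphs.
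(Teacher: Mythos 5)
Your proposal is correct and its core is the same as the paper's: take the circle action $t\mapsto\mathrm{diag}(e^{2\pi i a_1 t},\dots,e^{2\pi i a_k t},1,\dots,1)$ underlying the quasihomogeneity, realise it as a Hamiltonian action with generator $r_v$ vanishing on $\mathring{D}_v$ (a nondegenerate minimum since all $a_i>0$), read off $w_v=\sum a_i$ from the isotropy representation, and get both weak adaptedness and the value $\kappa\lambda/N$ from the Euler identity $\sum a_iz_i\partial_{z_i}f=\lambda f$ together with a limit of orbits converging to $\mathring{D}_v$; you also silently fix the statement's mislabelling of $\mathring{D}_v$ as $\{z_{k+1}=\dots=z_n=0\}$, which should be $\{z_1=\dots=z_k=0\}$. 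The one genuine divergence is the step showing the action preserves $\omega$. The paper disposes of this in one line, claiming $t^*\omega-\omega=dd^c\log\left|t^*f/f\right|=0$ by pluriharmonicity; you instead note that for a general hermitian metric on $\mathcal{L}$ the action need not be symplectic, handle the invariant case directly, and sketch an averaging-plus-Moser repair otherwise. Your caution is warranted: writing $\|s\|=|f|e^{-\psi}$ in a local frame, one has $t^*\log\|s\|-\log\|s\|=\log\left|t^*f/f\right|+(t^*\psi-\psi)$, and the term $dd^c(t^*\psi-\psi)$, which the paper's computation drops, vanishes only when the curvature form is already invariant — harmless in the paper's applications (Fubini--Study metrics, torus-invariant), but not for the lemma as literally stated. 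What the paper's route buys is that the action is literally the standard one in the given chart, which is how the lemma is used later; what your route buys is validity for an arbitrary metric, at the cost that "factors through the standard $(\mathbb{C}^*)^n$-action" then holds only after composing with the Moser chart, and at the cost of an extra argument (only sketched in your proposal) that the Moser map can be made to preserve the higher strata, as in Lemma \ref{invariant_Moser}. If you intend the lemma to be applied with an invariant metric, say so and your first and third paragraphs constitute a complete proof.
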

\begin{proof}
First, choose a smaller stratum neighbourhood $U_v\subset U$ which is a union of slices of the form $\Delta^k\times\mathbb{C}^{n-k}$ where $\Delta$ is a disc centred at $0$. 

There is an algebraic $\mathbb{R}/\mathbb{Z}$-action on $U_v$, given by
\begin{equation}
t\mapsto diag(e^{2\pi a_1 it},...,e^{2\pi a_k it},1,...,1).
\end{equation}

The function $\frac{t^*f}{f}$ is holomorphic on $X\cap U_v$ and extends to $U_v$, so $t^*\omega-\omega=dd^c\log\left|\frac{t^*f}{f}\right|=0$ because $\log \left|\frac{t^*f}{f}\right|$ is pluriharmonic, so the action is symplectic. Because $U_v$ deformation retracts onto $\mathring{D}_v$, we may assume that the action is generated by some Hamiltonian $r_v:U_v\rightarrow\mathbb{R}$, and $r_v|_{\mathring{D}_v}=0$.

By considering any point in $\mathring{D}_v$, where $z_i=0$ for $i\geq k+1$, we see that the isotropy representation of the action is $t\mapsto diag(e^{2\pi a_1 it},...,e^{2\pi a_k it})$, so the total weight of $r_v$ is indeed $\sum_{i=1}^ka_i$ as required. Furthermore, the fact that $a_i>0$ for all $i$ tells us that the fixed locus $\mathring{D}_v$ is a local minimum for $r_v$, so, possibly shrinking $U_v$, we may assume that $r_v$ is in fact a map $r_v:U_v\rightarrow \mathbb{R}_{\geq 0}$, and $r_v^{-1}(0)=\mathring{D}_v$.

With respect to the coordinates $(z_1,...,z_n)$, $iX_{r_v}=-2\pi (a_1z_1,...,a_kz_k,0,...,0)$, and $\log ||s||=\log |f|+\log ||1||$, so
\begin{equation}
\begin{split}
\iota_{X_{r_v}}\theta&=-\frac{\kappa}{2N\pi}\mathcal{L}_{iX_{r_v}}\log |s|
\\&=\frac{\kappa}{2N\pi}\mathcal{L}_{iX_{r_v}}\log |f|+O(||\hat{z}||)
\\&=\frac{\kappa}{2N\pi}\frac{d}{dt}|_{t=0}\log|f(e^{-2\pi a_1 t}z_1,...,e^{-2\pi a_k t}z_k,z_{k+1},...,z_n)|+O(||\hat{z}||)
\\&=-\frac{\kappa}{2N\pi}\frac{d}{dt}|_{t=0}\log|e^{-2\pi \lambda t}f(z_1,...,z_n)|+O(||\hat{z}||)
\\&=-\frac{\lambda\kappa}{N}+O(||\hat{z}||),
\end{split}
\end{equation}
where $\hat{z}=(z_{k+1},...,z_n)\in\mathbb{C}^{n-k}$, so $\iota_{X_{r_v}}\theta<0$ in a neighbourhood of $\mathring{D}_v$, and by considering a path of orbits converging to a point in $\mathring{D}_v$, we see that the action is $\frac{\kappa \lambda}{N}$ as required.

\end{proof}

\begin{cor}
\label{qhsch}
If $n=2$, and all singular points of $D$ are quasihomogeneous, then $V_D$ admits a system of commuting Hamiltonians, to which $\theta$ is weakly adapted.
\end{cor}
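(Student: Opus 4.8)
The plan is to deduce the corollary by combining results already established in Sections \ref{s3}--\ref{s6}. Since $n=2$, the poset $V_D$ has height $1$: the top stratum $(V_D)_1$ is the set of irreducible curve components of $D$, and the bottom stratum $(V_D)_0$ is the finite set of points consisting of the singular points of the individual components together with the points where two distinct components meet. In particular \emph{every} point of $(V_D)_0$ is a singular point of the support of $D$, so by hypothesis it is quasihomogeneous; concretely, around each $v\in(V_D)_0$ we may choose a local analytic chart centred at $v$ in which the defining section $s$ is a quasihomogeneous polynomial, and since $v$ is an isolated point of $D$ the two spatial weights $a_1,a_2$ and the degree $\lambda$ are strictly positive.

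First I would construct radial Hamiltonians at the $0$-dimensional strata. For $v\in(V_D)_0$, Lemma \ref{qhsings} applied with $\mathring{D}_v=\{v\}$ produces a radial Hamiltonian $r_v\colon U_v\to\mathbb{R}_{\geq0}$ whose flow is the restriction to $S^1$ of the quasihomogeneous $\mathbb{C}^*$-action $t\cdot(z_1,z_2)=(t^{a_1}z_1,t^{a_2}z_2)$; this action preserves $\{s=0\}=D\cap U_v$, hence every $\mathring{D}_w$ with $v\leq w$, so $r_v$ is radial with respect to the whole of $V_D$. Lemma \ref{qhsings} also records that $\theta$ is weakly adapted to each such $r_v$, with total weight $w_v=a_1+a_2$ and action $\kappa\lambda_v$ where $\lambda_v=\lambda/N$. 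Since $n-2=0$, the family $(r_v)_{v\in(V_D)_0}$ is automatically a system of commuting Hamiltonians for $V_{\leq n-2}=(V_D)_0$: distinct $0$-dimensional strata are incomparable in $V_D$, so the commutation requirement of Definition \ref{sch_def} is vacuous, and after shrinking the $U_v$ the neighbourhoods may even be taken disjoint.

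Next I would extend over the top stratum via Proposition \ref{hypersurf_ham}. Its standing hypothesis is that each $\mathring{D}_v$ with $v\in(V_D)_1$ is an almost-complex submanifold for some $\omega$-compatible almost-complex structure; this holds for the integrable complex structure $J$, since in Setting \ref{algebraic_setting} we may take the Hermitian metric on $\mathcal{L}$ to be positively curved (possible because $ND$ is ample), making $\omega$ a Kähler form for $J$, and each $\mathring{D}_v$, being the smooth locus of an algebraic curve, is then $J$-holomorphic. Proposition \ref{hypersurf_ham} therefore furnishes radial Hamiltonians $r_v$ for the curve components $v\in(V_D)_1$ such that the full collection $(r_v)_{v\in V_D}$ is a system of commuting Hamiltonians for $V_D$. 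It remains to check that $\theta$ is weakly adapted to this system: at the curve components this is exactly Lemma \ref{hypersurf_adapted}, which applies verbatim in Setting \ref{algebraic_setting} to any radial Hamiltonian of a top-dimensional stratum, and at the $0$-dimensional strata it was already noted. Hence $\theta$ is weakly adapted, completing the proof.

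The one place where more care is needed is the reduction in the first paragraph: one must check that \emph{all} $0$-dimensional strata of $V_D$, not merely the singular points of individual components, carry a quasihomogeneous local equation for $s$ with positive weights. For an intersection point of two components the reduced local equation is $z_1^p z_2^q$ times a holomorphic unit $u$, which becomes the genuinely quasihomogeneous $z_1^p z_2^q$ after the coordinate change $(z_1,z_2)\mapsto(z_1 u^{1/(p+q)}, z_2 u^{1/(p+q)})$; more degenerate configurations (cusps, tacnodes, ordinary multiple points, etc.) are covered directly by the standing hypothesis. I would spell out this local normal-form discussion, and everything else follows by citing the lemmas above.
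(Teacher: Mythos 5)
Your argument is correct and follows essentially the same route as the paper's proof: Lemma \ref{qhsings} supplies radial Hamiltonians (and weak adaptedness) at the zero-dimensional strata, Proposition \ref{hypersurf_ham} extends them over the curve components, and Lemma \ref{hypersurf_adapted} gives weak adaptedness there. Your additional remarks --- that the point strata commute vacuously, that the K\"ahler structure verifies the almost-complex hypothesis of Proposition \ref{hypersurf_ham}, and that nodal intersection points admit quasihomogeneous normal forms --- are details the paper leaves implicit, but they do not change the structure of the argument.
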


\begin{proof}
We have that $(V_D)_0$ corresponds to the set of singular points of $D$, and $(V_D)_1$ to the irreducible components.
By Lemma \ref{qhsings}, we can choose a radial Hamiltonian for each $v\in (V_D)_0$. By \ref{hypersurf_ham}, we can extend this to a system of commuting Hamiltonians for $V_D$. By \ref{qhsings}, $\theta$ is adapted at each $v\in (V_D)_0$, and by \ref{hypersurf_adapted}, $\theta$ is weakly adapted at each $v\in (V_D)_1$.
\end{proof}

\section{Examples}
\subsection{Klein bottle in $\mathbb{CP}^1\times\mathbb{CP}^1$}

\begin{lem}
\label{Klein_bottle_skeleton}
Endow $M=\mathbb{CP}^1\times\mathbb{CP}^1$ with the Fubini-Study metric, and let $[x_i:y_i]$ be homogeneous coordinates on the $i$-th factor.

Let
\begin{equation}
s=x_2y_2(x_1^2y_2+y_1^2x_2)^2\in\Gamma(\mathcal{O}(4,4))
\end{equation}

Let $D=s^{-1}(0)$, and $X=M\setminus D$.

The skeleton of $(X,d^c\log ||s||)$ is the Lagrangian Klein bottle $K$ described in Theorem \ref{Klein_bottle}.
\end{lem}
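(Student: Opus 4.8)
The plan is to unwind the divisor $D=s^{-1}(0)$, realise the skeleton of $(X,\theta)$ as a union of stable sets of a Morse--Bott plurisubharmonic exhaustion, locate that critical set, and match the resulting $2$--complex with $K$. First, the divisor: with $s=x_2y_2(x_1^2y_2+y_1^2x_2)^2$, $D$ is the union of the rulings $D_1=\{x_2=0\}$ and $D_2=\{y_2=0\}$, each of multiplicity one, together with the smooth $(2,1)$--curve $C=\{x_1^2y_2+y_1^2x_2=0\}$ of multiplicity two, and $[D]=\mathcal O(4,4)=-2K_M$. In the chart $y_1,y_2\ne 0$ with $u=x_1/y_1$, $v=x_2/y_2$ one computes $s=v(u^2+v)^2$, so $C$ is tangent to $D_1$ at $([0:1],[0:1])$ and, by the $x_i\leftrightarrow y_i$ symmetry of $s$, to $D_2$ at $([1:0],[1:0])$, and near each of these two points $D$ is cut out by the quasihomogeneous polynomial $v(u^2+v)^2$, with weight system $(1,2,6)$. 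Thus the $0$--strata of $V_D$ are exactly these two points, with $w_v=1+2=3$ and $\lambda_v=6$, while the three components carry $w=1$ and $\lambda=1,1,2$. By Corollary \ref{qhsch}, $V_D$ admits a system of commuting Hamiltonians to which the compactification form $\theta$ is weakly adapted, and by Corollary \ref{adapted_existence} the Lagrangian skeleton $L$ of $(X,\theta)$ coincides with that of an adapted primitive, so $L$ is genuinely the core of a finite--type convex symplectic structure. (That $\lambda_v=2w_v$ on the $0$--strata and $\lambda\le 2w$ on the components is what later yields $\sigma_{crit}=0$ via Corollary \ref{superheavy_skeleton}, but that is a separate step.)

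For the compactification primitive the Liouville field is $Z=-\nabla_g\varphi$, where $\varphi=-\log\|s\|$ is a strictly plurisubharmonic exhaustion of $X$ and $g$ is the product Fubini--Study metric; hence $x\in L$ iff the ascending $g$--gradient flow of $\varphi$ from $x$ remains in a compact subset of $X$, i.e.\ iff it converges to a critical point, so $L$ is the union of the stable sets (for the ascending flow) of the components of $\mathrm{Crit}(\varphi)=\mathrm{Crit}(\|s\|)$. To compute $\mathrm{Crit}(\varphi)$ I would use the Hamiltonian $S^1$--action $([x_1:y_1],[x_2:y_2])\mapsto([e^{2\pi it}x_1:y_1],[e^{4\pi it}x_2:y_2])$ (moment map $p_1+2p_2$) and the holomorphic involution $[x_i:y_i]\mapsto[y_i:x_i]$, both of which preserve $\|s\|$: on the open dense torus, $\|s\|^2$ is an explicit function of $|u|^2$, $|v|^2$ and a single angular combination, whose only critical point is one circle at $p_1=p_2=0$, while the remaining critical points lie on the $S^1$--fixed loci $\{x_1=0\}$ and $\{y_1=0\}$ (meeting $X$ in punctured lines on which the $S^1$--action has $\mathbb Z/2$--stabiliser), where restricting $\|s\|$ produces one further critical circle on each. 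A Hessian computation then shows $\varphi$ is Morse--Bott with $\mathrm{Crit}(\varphi)$ these three circles, that the two on $\{x_1=0\}$, $\{y_1=0\}$ are nondegenerate \emph{maxima} of $\|s\|$ (stable set $=$ the circle itself), and that the interior one is a saddle whose $3$--dimensional normal bundle splits as $2\oplus1$ into ascending$\oplus$descending, so its stable set is $2$--dimensional. All three critical circles lie on $K$ (and on the single level set $p_1+2p_2=0$).

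It remains to identify $L$ with $K$. The decisive point is that $\theta$ restricts to $0$ on $K$: since $K$ is Lagrangian, this forces $Z$ to be tangent to $K$, so $K$ is invariant under the Liouville flow and, being compact, $K\subseteq L$. Conversely, $\varphi|_K$ attains its minimum exactly along the two maximal circles and has a local maximum along the saddle circle, so every ascending gradient trajectory inside $K$ converges to one of the three critical circles; as $\dim L=\dim K=2$, the maximal circles are their own stable sets, and the $2$--dimensional stable set of the saddle circle agrees with $K$ near that circle, we obtain $L=\overline K=K$. Topologically this $L$ is a cylinder --- the saddle circle's stable set --- capped at each end by the two--to--one collapse of the $\theta_1$--circle of $K$ onto the adjacent maximal circle, i.e.\ two Möbius bands glued along their common boundary, the Klein bottle. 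Hence the skeleton is the embedded Lagrangian Klein bottle $K$ of Theorem \ref{Klein_bottle}.

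The genuinely laborious parts will be the two computations hidden above: first, showing that $\mathrm{Crit}(\varphi)$ is \emph{exactly} these three circles (ruling out further critical points) and that the interior one has the claimed normal Hessian signature --- this is what forces $L$ to be Lagrangian and fixes its dimension; and second, the verification of $\theta|_K=0$, done carefully through the coordinate degeneration along $\{x_1=0\}$ and $\{y_1=0\}$, since that is precisely where $K$ ceases to be a product torus and the Klein bottle topology is assembled.
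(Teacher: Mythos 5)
Your proposal is correct in substance but argues differently from the paper, and in one respect it is more faithful to the printed section $s$ than the paper's own text. The paper's proof first invokes Lemma \ref{skeleton_barycentre} for a circle action preserving $d^c\log\|s\|$ to confine the skeleton to a single three-dimensional moment fibre, computes $\|s\|$ restricted to that fibre, and then traps the skeleton by explicit flow-invariance statements together with shrinking neighbourhoods of the candidate Klein bottle on whose boundaries the negative gradient points outward; no Hessian or stable-manifold input is needed. You instead work globally: you identify the full critical locus of $\varphi=-\log\|s\|$ (three circles), establish a Morse--Bott structure, deduce $K\subseteq L$ from $\theta|_K=0$ (which, since $K$ is Lagrangian, is literally equivalent to tangency of $Z$ to $K$) plus compactness, and deduce $L\subseteq K$ by matching the two-dimensional local stable manifold of the saddle circle with $K$ near that circle and saturating by the flow. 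Both routes rest on the same two verifications (the exact critical locus, and the invariance/exactness of $K$), which you rightly flag as the laborious steps; your route buys the cleaner structural statement that $L$ is a union of stable sets, at the price of needing transverse nondegeneracy of the saddle circle so that its local stable manifold is a well-defined two-manifold of the claimed dimension. The sentence ``the stable set of the saddle circle agrees with $K$ near that circle'' is exactly where that nondegeneracy, uniqueness of the local stable manifold, and the backward-flow saturation argument must be spelled out; with them the argument closes, and the nondegeneracy does hold (the transverse Hessian of $\varphi$ has signature $(+,+,-)$ there).

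One further remark: your Morse picture (the circles on $\{x_1=0\}$ and $\{y_1=0\}$ are the global maxima of $\|s\|$, the interior circle is the saddle) and your invariant level $\{p_1+2p_2=0\}$ are opposite to the paper's labelling ($\{p_1=2p_2\}$, interior circle a global maximum). For $s=x_2y_2(x_1^2y_2+y_1^2x_2)^2$ as printed, your version is the correct one: the circle action preserving $\|s\|$ rotates $(y_1,y_2)$ with speeds $(1,2)$ and is generated by $p_1+2p_2$ up to a constant (the action generated by $p_1-2p_2$ does not preserve $\|s\|$), and on $\{x_1=0\}$ one has $\|s\|^2=|x_2|^6|y_2|^2$ with maximum $27/256$, exceeding the value $1/16$ attained on the interior critical circle. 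Consequently the skeleton of the printed $(X,d^c\log\|s\|)$ is $\{p_1=-2p_2,\ 2\theta_1-\theta_2=0\}$, which is carried to the $K$ of Theorem \ref{Klein_bottle} by the holomorphic isometry $[x_1:y_1]\mapsto[y_1:x_1]$ of the first factor; the mismatch appears to be a sign slip in the paper (in $s$, in $K$, or in the conventions) rather than a flaw in your argument, but you should state explicitly which Klein bottle you obtain and how it matches the one in the theorem.
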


\begin{proof}
Let $(p_i,\theta_i)$ be action-angle coordinates on the $i$-th factor, as described in Theorem \ref{Klein_bottle}. With respect to the Fubini-Study metric,
\begin{equation}
\begin{split}
||s||^2=\left(\frac{1}{4}-p_2^2\right)\left(\left(\frac{1}{2}-p_1\right)^2\left(\frac{1}{2}+p_2\right)+\left(\frac{1}{2}+p_1\right)^2\left(\frac{1}{2}-p_2\right)\right.\\\left.+2\left(\frac{1}{4}-p_1^2\right)\sqrt{\frac{1}{4}-p_2^2}\cos(2\theta_1+\theta_2)\right)^2.
\end{split}
\end{equation}
The primitive $d^c\log ||s||$ is invariant under the Hamiltonian $\mathbb{R}/\mathbb{Z}$-action 
\begin{equation}
t\cdot \left([x_1:y_1],[x_2:y_2]\right)=\left([x_1:e^{2\pi i t}y_1],[x_2:e^{-4\pi it}y_2]\right),
\end{equation}
which is generated by $p_1-2p_2$. By Lemma \ref{skeleton_barycentre}, the skeleton is contained in a fibre of this map, which, by symmetry, is $\{p_1=2p_2\}$. On restriction to this fibre, 
\begin{equation}
||s||=\frac{1}{2}\sqrt{1-p_1^2}\left(\frac{1}{4}+\left(\frac{1}{4}-p_1^2\right)\sqrt{1-p_1^2}\cos\left(2\theta_1+\theta_2\right)\right).
\end{equation}

The critical points of $-\log ||s||$ consist three critical $\mathbb{R}/\mathbb{Z}$-orbits: the global minima $\{p_1=p_2=0, 2\theta_1+\theta_2=0\}$ and the saddle points $\{p_1=\pm\frac{1}{2}, p_2=\pm\frac{1}{4}\}$.

Let
\begin{equation}
r=\sqrt{\frac{1}{4}-p_1^2}
\end{equation}
and
\begin{equation}
x=r\cos(2\theta_1+\theta_2).
\end{equation}
In the complement of the critical points in the fibre $\{p_1=2p_2\}$, 
\begin{equation}
\nabla ||s||\cdot r=-2\pi r \left(p_1^2\sqrt{1-p_1^2}-2p_1^2(1-p_1^2)\cos(2\theta_1+\theta_2)\right)
\end{equation}
and
\begin{equation}
\nabla ||s||\cdot x=-2\pi x\left(p_1^2\sqrt{1-p_1^2}+\left(\frac{5}{4}-4p_1^2+2p_1^4\right)\cos(2\theta_1+\theta_2)\right),
\end{equation}
In particular, in the region $-\frac{\pi}{2}\leq 2\theta_1+\theta_2\leq\frac{\pi}{2}$, $-\nabla \log ||s||\cdot |x|>0$, and outside this region, $-\nabla \log ||s||\cdot r>0$. 

This implies that the region $\{x=0,-\frac{\pi}{2}\leq 2\theta_1+\theta_2\leq\frac{\pi}{2}\}$, which is equal to $\{p_1=2p_2,2\theta_1+\theta_2\}$, is preserved by the gradient flow.

Furthermore, for any $\epsilon>0$, $-\nabla \log ||s||$ is outward pointing along the boundary of the region
\begin{equation}
\{r\leq \epsilon\}\cup \left\{|x|\leq \epsilon, -\frac{\pi}{2}\leq 2\theta_1+\theta_2\leq\frac{\pi}{2}\right\}\subset \{p_1=2p_2\},
\end{equation}
which contains all the critical points of $-\log ||s||$. The skeleton is therefore contained in the intersection of all these regions for $\epsilon>0$, so the skeleton is exactly $\{p_1=2p_2,2\theta_1+\theta_2\}$.
\end{proof}

\begin{cor}[Proof of Theorem \ref{Klein_bottle}]
The Lagrangian Klein bottle  $K\subset \mathbb{CP}^1\times\mathbb{CP}^1$ is superheavy.
\end{cor}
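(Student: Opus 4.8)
The plan is to combine Lemma~\ref{Klein_bottle_skeleton}, which identifies $K$ with the Lagrangian skeleton of $(X,\theta)$ for $X=M\setminus D$ and $\theta$ the compactification primitive, with the algebraic machinery of Section~\ref{s6}. Since $M=\mathbb{CP}^1\times\mathbb{CP}^1$ is a projective Fano surface and $D=\operatorname{div}(s)$ with $s\in\Gamma(\mathcal O(4,4))$ and $\mathcal O(4,4)=-2K_M$, we are in Setting~\ref{algebraic_setting}, with $N=1$ and $(M,\omega)$ positively monotone. By Corollary~\ref{qhsch}, once every singular point of $D$ is seen to be quasihomogeneous, $V_D$ admits a system of commuting Hamiltonians to which $\theta$ is weakly adapted; by Corollary~\ref{adapted_existence} we may then replace $\theta$ by an adapted primitive with the same Lagrangian skeleton $K$, putting us in Setting~\ref{monotone_setting}. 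It then suffices, by Corollary~\ref{superheavy_skeleton}, to verify that $\lambda_v\le 2w_v$ for every stratum $v\in V_D$, which forces $\sigma_{crit}=0$ and hence makes $K=L=(\rho^0)^{-1}(0)$ superheavy.

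The next step is to describe $V_D$ explicitly. We have $D=D_1+D_2+2D_3$ with $D_1=\{x_2=0\}$, $D_2=\{y_2=0\}$ (each of multiplicity $1$), and $D_3=\{x_1^2y_2+y_1^2x_2=0\}$, which one checks is irreducible and smooth and occurs with multiplicity $2$. Moreover $D_1\cap D_2=\emptyset$, while $D_1$ meets $D_3$ (tangentially, not transversely) only at $P_1=([0:1],[0:1])$ and $D_2$ meets $D_3$ only at $P_2=([1:0],[1:0])$, so $(V_D)_1=\{D_1,D_2,D_3\}$ and $(V_D)_0=\{P_1,P_2\}$. In the affine chart near $P_1$ with coordinates $(w_1,w_2)=(x_1/y_1,\,x_2/y_2)$ and the trivialisation of $\mathcal O(4,4)$ by $y_1^4y_2^4$, the section $s$ becomes $f=w_2(w_1^2+w_2)^2$, which is quasihomogeneous with weight system $(1,2,6)$ since $f(tw_1,t^2w_2)=t^6f$; the analogous computation at $P_2$ (exchanging the roles of $x$ and $y$) gives the same weight system. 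Thus all singularities of $D$ are quasihomogeneous, and Corollaries~\ref{qhsch} and~\ref{adapted_existence} apply.

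It remains to read off the numerical data. For the components, Lemma~\ref{hypersurf_adapted} (with $N=1$) gives actions $\kappa\lambda_{D_i}$ with $\lambda_{D_1}=\lambda_{D_2}=1$ and $\lambda_{D_3}=2$, the vanishing multiplicities of $s$, while the radial Hamiltonians furnished by Proposition~\ref{hypersurf_ham} come from the fibrewise $U(1)$-action on the symplectic normal bundle and so have weight $w_{D_i}=1$. For the two singular points, Lemma~\ref{qhsings} gives $w_{P_j}=1+2=3$ and $\lambda_{P_j}=\lambda/N=6$. In every case $\lambda_v\le 2w_v$ — with equality for $D_3$, $P_1$ and $P_2$ — so $\sigma_{crit}=\max\bigl(0,\max_v\tfrac{\lambda_v-2w_v}{\lambda_v}\bigr)=0$, and Corollary~\ref{superheavy_skeleton} yields that $K$ is superheavy.

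The genuinely substantive ingredient, Lemma~\ref{Klein_bottle_skeleton}, is already established; the remainder is bookkeeping, but the step I would be most careful about is the verification of $\lambda_v\le 2w_v$ at the two singular strata. This is borderline — it holds with equality — so there is no slack, and it rests on pinning down the local normal form at $P_1$ and $P_2$, in particular that the weight-$(1,2)$ $\mathbb C^\ast$-action there acts on $s$ with weight exactly $6$, together with the normalisation $[D]=-2K_M$, $N=1$ that makes $\lambda_v$ equal the vanishing multiplicity of $s$ along each component. A secondary point is that the passage from the weakly adapted compactification primitive to an adapted one (Corollary~\ref{adapted_existence}) does not move the skeleton, which is the property recorded in the outline of Section~\ref{s4} and holds because the modification is supported in the stratum neighbourhoods $U_v\subset M\setminus L$.
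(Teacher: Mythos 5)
Your proposal is correct and follows essentially the same route as the paper's own proof: identify $K$ as the skeleton via Lemma \ref{Klein_bottle_skeleton}, decompose $D$ into the two degree-$(0,1)$ lines and the doubled conic, verify quasihomogeneity of local form $w_2(w_1^2+w_2)^2$ with weight system $(1,2,6)$ at the two tangential intersection points so that $\lambda=6=2w$ there and $\lambda_v\le 2w_v$ on the smooth components, then invoke Corollaries \ref{qhsch} and \ref{superheavy_skeleton}. Your bookkeeping (including the explicit intersection points $([0:1],[0:1])$ and $([1:0],[1:0])$) matches, and your extra care about adaptedness not moving the skeleton is handled at the same level as in the paper.
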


\begin{proof}
Let $X$, $D$ and $s$ be as in Lemma \ref{Klein_bottle_skeleton}.

Let $H_0=\{y_2=0\}$, $H_\infty=\{x_2=0\}$, and $Q=\{x_1^2y_2+x_2^2y_1=0\}$. We have that $D=H_0+H_\infty+2Q$, and
\begin{equation}
[D]=-2K_{\mathbb{CP}^1\times\mathbb{CP}^1}.
\end{equation}

The irreducible components $H_0$, $H_\infty$ and $Q$ are smooth, so $(V_D)_0$ consists of the intersection points $p_0=\left([0:1],[1:0]\right)$ and $p_\infty=\left([1:0],[0:1]\right)$.

In local coordinates near both of these points, $s$ has the form $z_1(z_1+z_2^2)^2$, which is quasihomogeneous with weight system $(2,1,6)$.

By Corollary \ref{qhsch}, there exists a system of commuting Hamiltonians for $V_D$, to which $d^c\log ||s||$ is weakly adapted. This means that we can choose an adapted one-form which also has $K$ as a skeleton, without changing the action of any of the radial Hamiltonians.

By Lemma \ref{qhsings}, for $Y=p_0,p_\infty$, we have $\lambda_Y=6$ and $w_Y=2+1=3$, so $\lambda_Y=2w_Y$.

Because for $Y=L_0,L_\infty,C$, $\lambda_Y$ is equal to the coefficient of $Y$ in $D$ which is either $1$ or $2$, and $w_Y=1$, $\lambda_Y\leq 2w_Y$.

By Corollary \ref{superheavy_skeleton}, the Lagrangian skeleton $K$ is superheavy.
\end{proof}
\subsection{Intersecting $\mathbb{RP}^2$s}
\begin{lem}
\label{arcs_approximation}
Take $M=\mathbb{CP}^2$, with the Fubini-Study symplectic form.
Let $[z_0:z_1:z_2]$ be homogeneous coordinates on $M$.

Let $D_\pm=\{z_1z_2\pm z_0^2=0\}\subset \mathbb{CP}^2$.

Fix $a,b\in\mathbb{N}$, and let $D=aD_++bD_-$.

Let $L_\alpha\subset \mathbb{CP}^2$ for $\alpha\in \mathbb{R}/2\pi \mathbb{Z}$ be the Lagrangian $\mathbb{RP}^2$s described in Theorem \ref{RP2s}.

Let $U\subset\mathbb{CP}^2$ be any neighbourhood of $L_0\cup L_{\frac{2\pi a}{a+b}}$. Then there exists a Hamiltonian isotopy $\phi^t$ of $M$ such that $\phi^1(U)$ contains the skeleton of $(X,d^c\log ||s||)$.
\end{lem}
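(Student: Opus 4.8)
The plan is to follow the template of Lemma~\ref{Klein_bottle_skeleton}: identify the defining section of $D$, compute $\|s\|$ in the action--angle coordinates of Theorem~\ref{RP2s}, use a toric symmetry to trap the skeleton in a single moment-map fibre, analyse the gradient flow of $-\log\|s\|$ on that fibre, and finally construct an explicit Hamiltonian isotopy matching the resulting skeleton with $L_0\cup L_\alpha$, where $\alpha=\tfrac{2\pi a}{a+b}$. Up to a nonzero constant the section of $\mathcal O(2a+2b)$ with divisor $D$ is $s=q_+^{\,a}q_-^{\,b}$, $q_\pm=z_1z_2\pm z_0^2$, and $d^c\log\|s\|$ is a primitive for a positive multiple of the Fubini--Study form on $X$. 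Writing the Fubini--Study norm in the coordinates $(p_i,\theta_i)$, normalised so that $|z_0|^2+|z_1|^2+|z_2|^2=1$ (so $|z_i|^2=p_i$, $p_0=1-p_1-p_2$), one obtains
\[
\|s\|^2=|q_+|^{2a}|q_-|^{2b},\qquad |q_\pm|^2=p_1p_2+p_0^2\pm 2p_0\sqrt{p_1p_2}\cos\psi,\quad \psi:=\theta_1+\theta_2,
\]
so $\|s\|$ depends only on $(p_1,p_2,\psi)$ and is symmetric under $p_1\leftrightarrow p_2$. The Hamiltonian circle action generated by $p_1-p_2$, i.e.\ $[z_0:z_1:z_2]\mapsto[z_0:e^{2\pi it}z_1:e^{-2\pi it}z_2]$, preserves $\|s\|$ and hence $\theta=d^c\log\|s\|$, so by Lemma~\ref{skeleton_barycentre} the skeleton lies in the fibre over the barycentre of the moment image, namely $\{p_1=p_2\}$.

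I would then reduce by this circle action, using coordinates $(p,\psi)$ on $\{p_1=p_2\}$ with $p=p_1=p_2\in[0,\tfrac12]$. Here $\|s\|^2=(A+B\cos\psi)^a(A-B\cos\psi)^b$ with $A=1-4p+5p^2$ and $B=2p(1-2p)$, so $A-B=(1-3p)^2$ and $D\cap\{p_1=p_2\}$ is the pair of circle orbits over $(\tfrac13,0)$ (on $D_-$) and $(\tfrac13,\pi)$ (on $D_+$). Exactly as in Lemma~\ref{Klein_bottle_skeleton}, I would compute the components of $-\nabla\log\|s\|$ in the $p$- and $\psi$-directions, exhibit explicit shrinking neighbourhoods of the critical locus across whose boundaries $-\nabla\log\|s\|$ points outward, and conclude the skeleton is their common limit. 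The critical locus turns out to consist of the global minimum $[1:0:0]$ (at $p=0$), the minimal circle $C=\{z_0=0,\ |z_1|=|z_2|\}$ (at $p=\tfrac12$), and a pair of saddle orbits over the curves $\cos\psi=\tfrac{(a-b)A(p)}{(a+b)B(p)}$, passing through $p=\tfrac25$. Consequently the skeleton $\Sigma$ is an $S^1$-invariant two-complex in $\{p_1=p_2\}$ made of $[1:0:0]$, the circle $C$, and two Lagrangian sheets running between them along those saddle curves; it reproduces the combinatorial picture of $L_0\cup L_\alpha$ --- two Lagrangian $\mathbb{RP}^2$'s meeting at $[1:0:0]$ and along $C$.

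It remains to straighten $\Sigma$ onto $L_0\cup L_\alpha$ by a Hamiltonian isotopy of $\mathbb{CP}^2$; since $U$ is an arbitrary neighbourhood of $L_0\cup L_\alpha$ it suffices to deform $\Sigma$ into any such $U$, and taking $\phi^t$ to be the inverse of that deformation then yields $\phi^1(U)\supseteq\Sigma$. Both $\Sigma$ and $L_0\cup L_\alpha$ are $S^1$-invariant two-complexes in $\{p_1=p_2\}$, and any set of the form $\{p_1=p_2,\ \psi=f(p)\}$ is Lagrangian, so each sheet of $\Sigma$ is Lagrangian-isotopic --- hence, having vanishing first cohomology, Hamiltonian-isotopic --- to a sheet $L_\beta$ at constant $\psi$. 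I would straighten the two sheets one at a time, using for each a Hamiltonian supported in a small neighbourhood of (most of) that sheet which fixes the other, then a final small isotopy near $[1:0:0]\cup C$ --- where the sheets must agree and where the bending degenerates --- to reconcile the two pieces and bring the pair to $L_0\cup L_\alpha$.

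The main obstacle is this straightening. The naive candidate, the flow of a Hamiltonian depending only on the moment map, shifts all sheets in a common torus fibre by the same amount and so cannot move the two bent sheets of $\Sigma$ differently; one is therefore forced to work with Hamiltonians that are not functions of the moment map, localised near a single sheet, and then to glue the local deformations together across the singular fibres $[1:0:0]$ and $C$, where the reduction degenerates and the torus fibration is singular. Pinning down exactly which curves the sheets trace out, verifying that the critical locus is no larger than claimed, and checking that the required local straightenings can be realised by genuine Hamiltonian flows rather than mere diffeomorphisms are where the real work lies; the remainder is the routine transcription of the Klein bottle argument to the present divisor.
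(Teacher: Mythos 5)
Your setup — writing $s=q_+^aq_-^b$, computing $\|s\|$ in action--angle coordinates, using the circle action generated by $p_1-p_2$ together with Lemma~\ref{skeleton_barycentre} to confine the skeleton to $\{p_1=p_2\}$, and analysing the reduced picture — is exactly the paper's first half, and your description of the critical locus (the two ``poles'' $[1:0:0]$ and $C$, two saddles at $p=\tfrac25$) is consistent with it. But there is a genuine gap in the second half: nothing in your argument ties the skeleton to the \emph{specific} angle $\alpha=\tfrac{2\pi a}{a+b}$. As you note, the skeleton and $L_0\cup L_\beta$ have the same combinatorial picture for \emph{every} $\beta\in(0,2\pi)$, yet $L_0\cup L_\beta$ and $L_0\cup L_{\beta'}$ are not ambient-Hamiltonian-isotopic for $\beta\neq\beta'$: in the reduced sphere $S=\{p_1=p_2\}/S^1$ the two arcs cut out two discs whose areas are invariants of any (lifted) Hamiltonian isotopy. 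Your proposal, as written, would ``prove'' the lemma with any $\alpha$ substituted, so it cannot be complete. The missing step is the paper's key quantitative input: the skeleton is exact for $d^c\log\|s\|$, so the symplectic area of each complementary disc is computed by its intersection number with $aD_++bD_-$ (each disc contains exactly one of the points $m_\pm=D_\mp\cap S$), whence the two areas are in ratio $a:b$ — matching the areas cut out by $L_0\cup L_{2\pi a/(a+b)}$. With that in hand the paper finishes with a single area-preserving isotopy of the two-sphere $S$ supported away from the poles, lifted to $\mathbb{CP}^2$, rather than your sheet-by-sheet straightening.

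Two secondary problems. First, your identification of the skeletal arcs with the explicit curves $\cos\psi=\tfrac{(a-b)A(p)}{(a+b)B(p)}$ is unjustified and almost certainly false for $a\neq b$: that locus is where $\partial_\psi\log\|s\|=0$, but the stable manifold of the saddle follows the metric gradient flow, and $\{\partial_\psi\log\|s\|=0\}$ is not flow-invariant absent the $\psi\mapsto\pi-\psi$ symmetry present only when $a=b$ (the paper deliberately avoids identifying the arcs and works only with the area invariant). Second, the sheet-by-sheet straightening is problematic at the intersection locus $[1:0:0]\cup C$: a Hamiltonian supported near one sheet and fixing the other cannot act near points where the sheets meet, and the ``final small isotopy'' you invoke there is not small unless the angular positions already agree — which again requires the area computation you have omitted. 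Note also that the lemma only asks for $\phi^1(U)$ to \emph{contain} the skeleton, which is why the paper can work with a neighbourhood of the arcs in $S$ and sidestep any exact identification of the skeleton.
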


\begin{proof}
Let
\begin{equation}
s_\pm =z_1z_2\pm z_0^2\in \mathcal{O}(2)
\end{equation}
and
\begin{equation}
s=s_+^as_-^b \in \mathcal{O}(2(a+b)),
\end{equation}
which has $D$ as its zero-divisor.

Let $(p_i,\theta_i)$ for $i=1,2$ be the standard action-angle coordinates. With respect to the Fubini-Study metric, 
\begin{equation}
\begin{split}
||s_\pm||^2=p_1p_2+(1-p_1-p_2)^2\mp 2(1-p_1-p_2)\sqrt{p_1p_2}\cos(\theta_1+\theta_2)
\end{split}.
\end{equation}

The primitive $d^c\log ||s||$ is invariant under the Hamiltonian circle action
\begin{equation}
t\cdot [z_0:z_1:z_2]=[z_0:e^{2\pi it}z_1:e^{-2\pi it}z_2],
\end{equation}
which is generated by $p_1-p_2$, so by Lemma \ref{skeleton_barycentre}, the skeleton is contained in a fibre, which, by symmetry, is $\{p_1=p_2\}$. We now consider the restriction of the gradient flow to this fibre.

Let $S=\{p_1=p_2\}/(\mathbb{R}/\mathbb{Z})$ denote the symplectic reduction of the fibre, which is a sphere with two singular points, $\{p,q\}$ and $\pi:\{p_1=p_2\}\rightarrow S$ the quotient map.

The curves $D_+$ and $D_-$ intersect the fibre along the orbits $\{p_1=p_2=\frac{1}{3},\theta_1+\theta_2=0\}$ and $\{p_1=p_2=\frac{1}{3},\theta_1+\theta_2=\pi\}$ respectively. Let $m_\pm\in S$ denote the images of these orbits under $\pi$.

The function $||s||^2|_{\{p_1=p_2\}}$ descends to $S$, and has global maxima $p$ and $q$, global minima at $m_\pm$, and two other critical points. The skeleton is therefore the preimage  under $\pi$ of two embedded arcs $\beta_1,\beta_2$ in $S$ connecting the points $p$ and $q$. 

On restriction to the fibre $\{p_1=p_2\}$,
\begin{equation}
\begin{split}
\nabla ||s||^2\cdot (\theta_1+\theta_2)&=\frac{||s||^2}{||s_+||^2||s_-||^2}\left(a||s_-||^2-b||s_+||^2\right)sin(\theta_1+\theta_2)
\end{split}
\end{equation}
so in the case $a=b$, the skeleton is exactly $\{p_1=p_2, \theta_1+\theta_2=\pm \frac{\pi}{2}\}=L_{-\frac{\pi}{2}}\cup L_{\frac{\pi}{2}}$.

In general, the arcs $\beta_1$ and $\beta_2$ partition $S$ into two discs, each of which contains one of $m_\pm$. Because the skeleton is monotone, by considering the intersection numbers with the curves $D_\pm$, we see that the areas of the discs must be in the ratio $a:b$.

The union $L_0\cup L_{\frac{2\pi a}{a+b}}$ is also a preimage of two arcs $\gamma_1,\gamma_2$ from $p$ to $q$, which also partition $S$ into two discs with areas in the ratio $a:b$. Let $U$ be any neighbourhood of $L_0\cup L_{\frac{2\pi a}{a+b}}$. The neighbourhood $U$ contains $\pi^{-1}(T)$, where $T$ is a neighbourhood of $\gamma_1\cup \gamma_2$ in $S$.

There exists a Hamiltonian isotopy $\phi^t$ of $S$, supported away from $\{p,q\}$, such that $\phi^t(T)$ contains $\beta_1\cup \beta_2$. We can lift $\phi^t$ via $\pi$ to a Hamiltonian isotopy $\phi^t$ of $\mathbb{CP}^2$, supported in a neighbourhood of the fibre $\{p_1=p_2\}$, such that $\phi^t(U)$ contains the skeleton.
\end{proof}
\begin{lem}
\label{arcs_superheavy}
If $\frac{a}{a+b}\in\left[\frac{1}{3},\frac{2}{3}\right]$, then the skeleton of $(X,d^c\log ||s||)$, as described in Lemma \ref{arcs_approximation} is $SH$-full.
\end{lem}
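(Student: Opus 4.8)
The plan is to recognise that the situation falls within the scope of Corollary \ref{qhsch}, to read off the local invariants $w_v$ and $\lambda_v$ at every vertex of $V_D$, and to deduce that the critical value $\sigma_{crit}$ of Setting \ref{monotone_setting} vanishes exactly when $\tfrac{a}{a+b}\in[\tfrac13,\tfrac23]$; superheaviness of the skeleton, and hence $SH$-fullness, will then follow from Corollary \ref{superheavy_skeleton}. First I would describe $V_D$: the conics $D_\pm=\{z_1z_2\pm z_0^2=0\}$ are smooth and meet (tangentially, not transversely) only at $p=[0:1:0]$ and $q=[0:0:1]$, so $(V_D)_1=\{D_+,D_-\}$ and $(V_D)_0=\{p,q\}$. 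In the chart $z_1=1$ with coordinates $(z_0,z_2)$ the section $s=s_+^as_-^b$ equals $(z_2+z_0^2)^a(z_2-z_0^2)^b$ up to a nonvanishing factor, which is quasihomogeneous with weight system $(1,2,2(a+b))$, and symmetrically at $q$; hence every singular point of $D$ is quasihomogeneous, and Corollary \ref{qhsch} yields a system of commuting Hamiltonians for $V_D$ to which $\theta=d^c\log\|s\|$ is weakly adapted. By Corollary \ref{adapted_existence} I may then replace $\theta$ by a genuinely adapted primitive $\overline\theta$ with the same actions $\kappa_v$ and the same Lagrangian skeleton $L$ (the one computed in Lemma \ref{arcs_approximation}), so that the action and index estimates of Section \ref{s5} apply.

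Next I would compute the data. At $p$ (and likewise $q$), Lemma \ref{qhsings} gives total weight $w_p=1+2=3$ and, after normalising the monotonicity constant so that $[\omega]=2\kappa c_1(TM)$, an action $\kappa_p=\kappa\lambda_p$ with $\lambda_p=6$, so $\lambda_p=2w_p$. Along the smooth component $D_+$ the section vanishes to order $a$ and the normal $\mathbb{R}/\mathbb{Z}$-action has weight one, so $w_{D_+}=1$ and $\lambda_{D_+}=\tfrac{3a}{a+b}$, and symmetrically $w_{D_-}=1$, $\lambda_{D_-}=\tfrac{3b}{a+b}$. Substituting into the definition in Setting \ref{monotone_setting},
\begin{equation}
\sigma_{crit}=\max\!\left(0,\ \frac{\lambda_p-2w_p}{\lambda_p},\ \frac{\lambda_{D_+}-2w_{D_+}}{\lambda_{D_+}},\ \frac{\lambda_{D_-}-2w_{D_-}}{\lambda_{D_-}}\right)=\max\!\left(0,\ \frac{a-2b}{3a},\ \frac{b-2a}{3b}\right),
\end{equation}
which is zero precisely when $a\le 2b$ and $b\le 2a$, that is, when $\tfrac{a}{a+b}\in[\tfrac13,\tfrac23]$, which is exactly the hypothesis.

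Under that hypothesis $\sigma_{crit}=0$, so Corollary \ref{superheavy_skeleton} (equivalently Proposition \ref{superheavy_subset} with $K_{crit}=(\rho^0)^{-1}(0)=L$) shows that $L$ is superheavy; being closed and superheavy it meets every closed heavy set, hence is $SH$-full by the characterisation of $SH$-full sets in \cite{MSV}, and by Corollary \ref{adapted_existence} this $L$ is the skeleton of $(X,d^c\log\|s\|)$ exhibited in Lemma \ref{arcs_approximation}. The genuinely delicate point is the passage from $d^c\log\|s\|$, which Corollary \ref{qhsch} only certifies as \emph{weakly} adapted, to an honestly adapted primitive with the \emph{same} skeleton --- the estimates of Section \ref{s5}, and so the computation of $\sigma_{crit}$ above, are only available once the primitive is adapted --- but this is precisely what Corollary \ref{adapted_existence} supplies; after that the argument reduces to the bookkeeping of $w_v$ and $\lambda_v$ recorded above, together with the standard implication that a closed superheavy set is $SH$-full.
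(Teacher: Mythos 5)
Your proposal is correct and follows essentially the same route as the paper: identify the two tangential intersection points of $D_\pm$, note that $s$ is locally quasihomogeneous there with weights $(2,1)$ (total weight $3$, normalised action $6$), invoke Corollary \ref{qhsch} to get a weakly adapted system of commuting Hamiltonians, compute $\sigma_{crit}=\max\left(0,\frac{a-2b}{3a},\frac{b-2a}{3b}\right)=0$ under the hypothesis, and conclude via Corollary \ref{superheavy_skeleton}. Your explicit invocation of Corollary \ref{adapted_existence} to replace the weakly adapted primitive by an adapted one with the same skeleton and actions is a detail the paper leaves implicit here (it is spelled out in the Klein bottle proof), and your numbers agree with the paper's.
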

\begin{proof}
Let $D$ be as in Lemma \ref{arcs_approximation}.
\begin{equation}
-2K_{\mathbb{CP}^2}=\frac{3}{a+b}[D].
\end{equation}
The irreducible components $D_{\pm}$ are smooth, and intersect at the points $[0:1:0]$ and $[0:0:1]$. Near both intersection points, there exist local coordinates such that $s$ has the form $(x+y^2)^a(x-y^2)^b$, which is quasihomogeneous with weight system $(2,1,2(a+b))$.

By Corollary \ref{qhsch}, $V_D$ admits a system of commuting Hamiltonians with 
\begin{equation}
\sigma_{crit}=\max\left(0,\frac{a-2b}{3a},\frac{b-2a}{3b}\right).
\end{equation}
By our condition on $a$ and $b$, $\sigma_{crit}=0$, so by Corollary \ref{superheavy_skeleton}, the skeleton is indeed $SH$-full.
\end{proof}
\begin{cor}[Proof of Theorem \ref{RP2s}]
Let $L_\alpha\subset \mathbb{CP}^2$ for $\alpha\in \mathbb{R}/2\pi \mathbb{Z}$ be as in Theorem \ref{RP2s}. For any $\alpha\in\left[\frac{2\pi}{3},\frac{4\pi}{3}\right]$, $L_0\cup L_\alpha\subset\mathbb{CP}^2$ is superheavy.
\end{cor}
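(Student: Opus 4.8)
The plan is to bootstrap from Lemmas \ref{arcs_approximation} and \ref{arcs_superheavy} using only formal properties of the quasi-state $\zeta$: that superheaviness passes to supersets, that it is invariant under Hamiltonian isotopy, and that a compact set $K$ is superheavy as soon as every open neighbourhood of $K$ is superheavy. For the last point, given a Hamiltonian $H$ and $\varepsilon>0$ the sublevel set $\{H<\sup_K H+\varepsilon\}$ is an open neighbourhood of $K$, so if it is superheavy then $\zeta(H)\le\sup_K H+\varepsilon$, and letting $\varepsilon\to 0$ gives $\zeta(H)\le\sup_K H$. Thus it is enough to show that \emph{every} open neighbourhood $U$ of $L_0\cup L_\alpha$ is superheavy.

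So fix $\alpha\in[\tfrac{2\pi}{3},\tfrac{4\pi}{3}]$ and an open neighbourhood $U$ of $L_0\cup L_\alpha$. First I would choose $a,b\in\mathbb{N}$ with $\tfrac{a}{a+b}\in[\tfrac13,\tfrac23]$ and with $\alpha':=\tfrac{2\pi a}{a+b}$ so close to $\alpha$ that $L_0\cup L_{\alpha'}\subset U$; this is possible because $\alpha'\mapsto L_{\alpha'}$ is continuous in the Hausdorff metric and the numbers $\tfrac{a}{a+b}$ with $a,b\in\mathbb{N}$ form a dense subset of $[\tfrac13,\tfrac23]$ (the endpoints being realised by $(a,b)=(1,2)$ and $(2,1)$). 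Then $U$ is also an open neighbourhood of $L_0\cup L_{\alpha'}$, so Lemma \ref{arcs_approximation}, applied with these $a,b$ and $s=s_+^a s_-^b$, yields a Hamiltonian isotopy $\phi^t$ of $\mathbb{CP}^2$ with $\phi^1(U)$ containing the skeleton $\Sigma$ of $(X,d^c\log\|s\|)$.

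Next I would invoke Lemma \ref{arcs_superheavy}: since $\tfrac{a}{a+b}\in[\tfrac13,\tfrac23]$ we have $\sigma_{crit}=0$, and the proof of that lemma produces, via Corollary \ref{superheavy_skeleton}, the stronger conclusion that $\Sigma$ is superheavy. As $\Sigma\subset\phi^1(U)$, the set $\phi^1(U)$ is superheavy, and hence so is $U$ by Hamiltonian invariance. Since $U$ was an arbitrary open neighbourhood of $L_0\cup L_\alpha$, the reduction in the first paragraph shows $L_0\cup L_\alpha$ is superheavy.

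The one genuine subtlety — and the step I expect to be the main obstacle to get right — is that the algebraic machinery behind Lemma \ref{arcs_superheavy} directly addresses only those $\alpha$ that are rational multiples of $2\pi$ in the admissible range, whereas the theorem asks for the whole interval. The device is to run the approximation \emph{inward}, enclosing the rational-parameter union $L_0\cup L_{\alpha'}$ inside the given neighbourhood of $L_0\cup L_\alpha$, rather than attempting to take a Hausdorff limit of the unions $L_0\cup L_{\alpha'}$, which would not respect superheaviness. One must also be slightly careful to extract superheaviness rather than mere $SH$-fullness from Lemma \ref{arcs_superheavy}, which is legitimate precisely because its proof goes through Corollary \ref{superheavy_skeleton} with $\sigma_{crit}=0$.
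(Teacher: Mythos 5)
Your proposal is correct and follows essentially the same route as the paper: approximate $\alpha$ by $\frac{2\pi a}{a+b}$ with $\frac{a}{a+b}\in\mathbb{Q}\cap\left[\frac{1}{3},\frac{2}{3}\right]$ so that $L_0\cup L_{2\pi a/(a+b)}$ lies in the given neighbourhood $U$, use Lemma \ref{arcs_approximation} to move the skeleton into $\phi^1(U)$, deduce superheaviness of the skeleton from Lemma \ref{arcs_superheavy} (via Corollary \ref{superheavy_skeleton} with $\sigma_{crit}=0$), and conclude by Hamiltonian invariance and the arbitrariness of $U$. The only difference is that you spell out the reduction "every neighbourhood superheavy $\Rightarrow$ $K$ superheavy" via the sublevel sets $\{H<\sup_K H+\varepsilon\}$, which the paper leaves implicit.
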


\begin{proof}
Let $U\subset \mathbb{CP}^2$ be any neighbourhood of $L_0\cup L_\alpha$ for some $\alpha\in\left[\frac{2\pi}{3},\frac{4\pi}{3}\right]$. 

Choose $\frac{a}{a+b}\in\mathbb{Q}\cap \left[\frac{1}{3},\frac{2}{3}\right]$ (where $a,b\in\mathbb{N}$) sufficiently close to $\frac{\alpha}{2\pi}$ such that $L_0\cup L_{\frac{2\pi a}{a+b}}$ is contained in $U$. Let $X$ and $||s||$ be as in Lemma \ref{arcs_approximation}. By the same Lemma, there exists a Hamiltonian isotopy $\phi^t$ of $\mathbb{CP}^2$ such that $\phi^1(U)$ contains the skeleton of $(X,d^c\log ||s||)$. 

By Lemma \ref{arcs_superheavy}, the skeleton, and therefore $\phi^1(U)$ is superheavy. This property is invariant under Hamiltonian isotopy, so $U$ is also superheavy.

This is true for any such $U$, so $L_0\cup L_\alpha$ is also superheavy.
\end{proof}
\subsection{Intersecting $\mathbb{RP}^3$s}
\begin{lem}
\label{RP3_skeleton}
Take $M=\mathbb{CP}^3$, with the Fubini-Study symplectic form.
Let $[z_0:z_1:z_2:z_3]$ be homogeneous coordinates on $M$.

Let
\begin{equation}
s=z_1^2z_2^2-z_3^2z_0^2\in \mathcal{O}(-4)
\end{equation}

Let $D=s^{-1}(0)$ and $X=M\setminus D$.

The skeleton of $(X,d^c\log ||s||)$ is the union of two embedded Lagrangian $\mathbb{RP}^3$s described in Theorem \ref{RP3s}.
\end{lem}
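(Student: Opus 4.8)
The plan is to argue exactly as in the proof of Lemma~\ref{Klein_bottle_skeleton} (equivalently the $a=b$ case of Lemma~\ref{arcs_approximation}): compute $\|s\|^2$ in action--angle coordinates, use a torus symmetry of the primitive $d^c\log\|s\|$ to confine the skeleton to a single fibre of a moment map, and then analyse the restricted gradient flow of $-\log\|s\|$ on that fibre by hand.

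For the first step, with respect to the Fubini--Study metric one has $\|z_iz_j\|=\sqrt{p_ip_j}$, so $\|z_1^2z_2^2\|=p_1p_2$, $\|z_3^2z_0^2\|=p_3p_0$, and expanding $|z_1^2z_2^2-z_3^2z_0^2|^2$ yields
\[
\|s\|^2=(p_1p_2)^2+(p_3p_0)^2-2\,p_0p_1p_2p_3\cos 2\varphi ,\qquad \varphi=\arg(z_1z_2\overline{z_3z_0}).
\]
In particular $\|s\|$, and hence $d^c\log\|s\|$, is invariant under the Hamiltonian $T^2$-action
\[
(\sigma,\tau)\cdot[z_0:z_1:z_2:z_3]=[e^{2\pi i\sigma}z_0:e^{2\pi i\tau}z_1:e^{-2\pi i\tau}z_2:e^{-2\pi i\sigma}z_3],
\]
whose moment map is $(p_1-p_2,\,p_0-p_3)$. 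By Lemma~\ref{skeleton_barycentre} the skeleton lies in a single fibre of this moment map; since the holomorphic involutions exchanging $z_1\leftrightarrow z_2$ and $z_0\leftrightarrow z_3$ both fix $s$ (hence preserve the skeleton while sending this fibre to $\mu^{-1}$ of the negated value), the fibre must be the central one, $F=\{p_1=p_2,\ p_0=p_3\}$, and being $T^2$-invariant the skeleton descends to the two-dimensional reduced space $F/T^2$.

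On $F$ I would set $u=p_1=p_2$, so $p_0=p_3=\tfrac12-u$, and record, using $u^2-(\tfrac12-u)^2=u-\tfrac14$, that
\[
\|s\|^2\big|_F=\Big(u-\tfrac14\Big)^2+4u^2\Big(\tfrac12-u\Big)^2\sin^2\varphi ,
\]
with $(u,\varphi)$ descending to coordinates on $F/T^2$ away from the loci $\{u=0\}$ and $\{u=\tfrac12\}$ where the $T^2$-action degenerates. Since the Fubini--Study metric is torus invariant, the gradient flow of $-\log\|s\|$ descends, and I would compute $\nabla\|s\|\cdot\varphi$ and $\nabla\|s\|\cdot u$ explicitly. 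The expected outcome: $\partial_\varphi\|s\|^2=4u^2(\tfrac12-u)^2\sin 2\varphi$ vanishes on $\{\varphi=\pm\tfrac\pi2\}$, so the flow preserves $\{\varphi=\pm\tfrac\pi2\}$; $\partial_u\|s\|^2=2(u-\tfrac14)\bigl(1-8u(\tfrac12-u)\sin^2\varphi\bigr)$ has the sign of $u-\tfrac14$ (the bracket being $\ge\tfrac12$), so $\|s\|$ is minimal along $\{u=\tfrac14\}$ and $\|s\|^2$ attains its maximum $\tfrac1{16}$ precisely on the circles $\{u=0\}=\{z_1=z_2=0\}$ and $\{u=\tfrac12\}=\{z_0=z_3=0\}$; the remaining critical orbits of $-\log\|s\|$ on $F$ are the saddles $\{u=\tfrac14,\ \varphi=\pm\tfrac\pi2\}$ and the divisor orbits $F\cap D=\{u=\tfrac14,\ \varphi\in\{0,\pi\}\}$, the latter ``at infinity''. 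Consequently the set $S_0:=\{p_1=p_2,\ p_0=p_3,\ \varphi=\pm\tfrac\pi2\}$ together with its limit circles $\{u=0\},\{u=\tfrac12\}$ is exactly the union of the maximum circles with the unstable manifolds of the saddles, and in particular is flow invariant. As in Lemma~\ref{Klein_bottle_skeleton}, for every $\epsilon>0$ there is a neighbourhood $B_\epsilon\supset S_0$ --- an $\epsilon$-tube around $\{\varphi=\pm\tfrac\pi2\}$ together with $\epsilon$-tubes around $\{u=0\}$ and $\{u=\tfrac12\}$ --- containing all critical orbits of $-\log\|s\|$ and along whose boundary $-\nabla\log\|s\|$ points outward; this forces the skeleton into $\bigcap_{\epsilon>0}B_\epsilon=S_0$, and combined with the invariance of $S_0$ gives that the skeleton equals $S_0$, which is the union of the two embedded Lagrangian $\mathbb{RP}^3$s described in Theorem~\ref{RP3s}.

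The main obstacle is the explicit gradient computation on $F$. The fibre $F$ is a singular four-dimensional space, with cone singularities precisely along the maximum circles $\{u=0\},\{u=\tfrac12\}$ where the $T^2$-action is not free, so one must verify that the gradient flow of $-\log\|s\|$ is well behaved across these loci (and near $F\cap D$), and that the nested blobs $B_\epsilon$ really contract onto $S_0$ rather than onto something larger; pinning down the sign of $\nabla\|s\|\cdot u$ everywhere, so that $\{u=\tfrac14\}$ is genuinely a minimum ridge and the flow is genuinely pushed out to $\{u=0,\tfrac12\}$, together with the companion sign of $\nabla\|s\|\cdot\varphi$ off $\{\varphi=\pm\tfrac\pi2\}$, is where the real work lies. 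The remainder is a transcription of the proof of Lemma~\ref{Klein_bottle_skeleton}.
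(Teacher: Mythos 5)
Your proposal is correct and follows essentially the paper's own route: the same computation of $\|s\|^2$ in action--angle coordinates, the same use of Lemma \ref{skeleton_barycentre} (plus the symmetry swapping $z_1\leftrightarrow z_2$, $z_0\leftrightarrow z_3$) to confine the skeleton to the fibre $\{p_1=p_2,\,p_3=p_0\}$, and the same sign analysis of the restricted gradient flow, with your formulas $\|s\|^2|_F=(u-\tfrac14)^2+4u^2(\tfrac12-u)^2\sin^2\varphi$, $\partial_\varphi\|s\|^2$ and $\partial_u\|s\|^2$ (and the bound $1-8u(\tfrac12-u)\sin^2\varphi\geq\tfrac12$) all checking out, the closing shrinking-neighbourhood step being just the Klein-bottle variant of the paper's ``any flowline converging to a saddle must lie in the invariant loci'' remark. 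Incidentally, your identification of the critical orbits is the accurate one --- the circles $\{u=0\},\{u=\tfrac12\}$ are the global minima of $-\log\|s\|$ and the orbits $\{u=\tfrac14,\varphi=\pm\tfrac\pi2\}$ are the saddles --- whereas the paper's proof states these two labels the other way around (a harmless slip that does not affect the argument).
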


\begin{proof}
Let $p_i$ for $i=0,...,3$ and $\varphi$ be as in Theorem \ref{RP3s}.

The primitive $d^c\log||s||$ is invariant under the hamiltonian $\mathbb{R}^2/\mathbb{Z}^2$-action
\begin{equation}
(t_1,t_2)\cdot [z_0:z_1:z_2:z_3]=[e^{-2\pi it_2}z_0:e^{2\pi it_1}z_1:e^{-2\pi i t_1}z_2:e^{2\pi it_2}z_3],
\end{equation}
which is generated by $(p_1-p_2,p_3-p_0)$, so by Lemma \ref{skeleton_barycentre}, the skeleton is contained in a fibre, which, by symmetry, is $\{p_1=p_2,p_3=p_0\}$. We now consider the restriction of the gradient flow to this fibre.

With respect to the Fubini-Study metric,
\begin{equation}
||s||^2=p_1^2p_2^2+p_3^2p_0^2-2p_1p_2p_3p_0\cos (2\varphi).
\end{equation}

The critical points of $-\log ||s||$ consist of four $\mathbb{R}^2/\mathbb{Z}^2$-orbits, the saddle points $\{p_1=p_2=0,p_3=p_0=\frac{1}{2}\}$ and $\{p_1=p_2=\frac{1}{2},p_3=p_0=0\}$, and the global minima $\{p_1=p_2=p_3=p_0=\frac{1}{4}, \varphi=\pm\frac{\pi}{2}\}$.

In the region $\{p_1=p_2\neq 0,\frac{1}{2}, p_3=p_0\neq 0,\frac{1}{2},\varphi\neq 0,\pm\frac{\pi}{2},\pi\}$,
\begin{equation}
\nabla ||s||^2 \cdot \varphi=-\pi p_1(1-2p_1)\sin(2\varphi)
\end{equation}
so $-\nabla \log ||s||\cdot \cos(2\varphi)\leq 0$. The regions  $\{p_1=p_2,p_3=p_0,\varphi=\pm\frac{\pi}{2}\}$ are therefore preserved by the gradient flow. Furthermore, any gradient flowline which converges to a saddle point must intersect one of these regions, so the skeleton is a union of exactly these two regions, as required.
\end{proof}

\begin{lem}[Proof of Theorem \ref{RP3s}]
The intersecting pair of $\mathbb{RP}^3$s in $\mathbb{CP}^3$ described in Lemma \ref{RP3s} is superheavy.
\end{lem}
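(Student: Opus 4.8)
The plan is to identify the union of the two $\mathbb{RP}^3$s with the Lagrangian skeleton of an anticanonical divisor complement in $\mathbb{CP}^3$, construct a system of commuting Hamiltonians for the associated stratified symplectic subvariety, and conclude via Corollary \ref{superheavy_skeleton} once $\sigma_{crit}=0$ is verified. By Lemma \ref{RP3_skeleton}, with $s=z_1^2z_2^2-z_3^2z_0^2=(z_1z_2-z_3z_0)(z_1z_2+z_3z_0)$, the union is the skeleton of $(X,d^c\log\|s\|)$, where $X=\mathbb{CP}^3\setminus D$, $D=D_+\cup D_-$, and $D_\pm=\{z_1z_2\pm z_3z_0=0\}$ are smooth quadric surfaces. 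Since $[D]=-K_{\mathbb{CP}^3}$, to land in Setting \ref{algebraic_setting} I would instead work with the $\mathbb{Q}$-divisor $2D$ (so $[2D]=-2K_{\mathbb{CP}^3}$), taking $N=1$, $\mathcal{L}=\mathcal{O}(8)$ and the section $s^2$; the resulting primitive $\theta$ differs from $d^c\log\|s\|$ by a constant factor, hence has the same Liouville vector field and the same skeleton $L$.

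First I would describe $V_D$. The locus $D_+\cap D_-=\{z_1z_2=z_3z_0=0\}$ is the union of the four coordinate lines $\{z_1=z_3=0\}$, $\{z_1=z_0=0\}$, $\{z_2=z_3=0\}$, $\{z_2=z_0=0\}$, which form a quadrilateral meeting at the four coordinate points of $\mathbb{CP}^3$; each $D_\pm$ is smooth, $D$ is normal crossings along $D_+\cap D_-$ away from these four points, and $D_+,D_-$ are tangent at each of them. Hence $(V_D)_2=\{D_+,D_-\}$, $(V_D)_1$ is the set of four lines, and $(V_D)_0$ the set of four points.

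The main work, and the \emph{main obstacle}, is to construct a system of commuting Hamiltonians for $V_D$: Corollary \ref{qhsch} is stated only for surfaces, so one needs its three-dimensional analogue, and the difficulty is not the existence of the quasihomogeneous radial Hamiltonians but arranging that the ones at the vertices commute with the ones along the lines through them. The key point is that $D$, and the primitive $d^c\log\|s\|$, are invariant under the two-dimensional subtorus $T=\{[\mathrm{diag}(t_0,t_1,t_2,t_3)]:t_1t_2=t_0t_3\}\subset\mathrm{PGL}_4(\mathbb{C})$, which fixes every stratum of $V_D$. Near each of the four vertices there are affine coordinates in which $s^2$ is quasihomogeneous with weights $(2,1,1)$ on the coordinates; Lemma \ref{qhsings} then provides a radial Hamiltonian there, generating a one-parameter subgroup of $T$, with total weight $w_v=4$ and $\lambda_v=8$. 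Transverse to each line, $s^2$ is quasihomogeneous with both normal weights equal to $1$, giving (again via a one-parameter subgroup of $T$) a radial Hamiltonian with $w_v=2$ and $\lambda_v=4$. As $T$ is abelian, all these Hamiltonians Poisson-commute wherever simultaneously defined, so they form a system of commuting Hamiltonians for $V_{\le 1}$; since $D_\pm$ are complex, hence $\omega$-compatible almost-complex submanifolds (with respect to the Fubini--Study form), Proposition \ref{hypersurf_ham} extends this to radial Hamiltonians for $D_\pm$, which have $w_v=1$ and, by Lemma \ref{rel_deRham}, $\lambda_v=2$. By Lemmas \ref{qhsings} and \ref{hypersurf_adapted} the primitive $d^c\log\|s\|$ is weakly adapted to this system, so by Corollary \ref{adapted_existence} it may be replaced by an adapted primitive with the same skeleton $L$ and the same radial-Hamiltonian actions.

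Finally one computes $\sigma_{crit}$. Every stratum $v$ of $V_D$ satisfies $\lambda_v=2w_v$: the pairs $(\lambda_v,w_v)$ are $(2,1)$ for the components $D_\pm$, $(4,2)$ for the four lines, and $(8,4)$ for the four points. Hence $(\lambda_v-2w_v)/\lambda_v=0$ for every $v$, so $\sigma_{crit}=0$, and Corollary \ref{superheavy_skeleton} shows that $L$ --- which by Lemma \ref{RP3_skeleton} is the union of the two embedded Lagrangian $\mathbb{RP}^3$s --- is superheavy.
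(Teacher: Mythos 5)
Your proposal is correct and follows essentially the same route as the paper: realise the union as the skeleton of the complement of $D=D_++D_-$ via Lemma \ref{RP3_skeleton}, build commuting radial Hamiltonians for the four lines and four coordinate points out of the torus action (the paper writes them explicitly as $p_i+p_j$ and $p_i+2p_j+p_k$), extend to $D_\pm$ by Proposition \ref{hypersurf_ham}, and check $\lambda_v=2w_v$ for every stratum so that $\sigma_{crit}=0$ and Corollary \ref{superheavy_skeleton} applies. Your weight and action computations $(\lambda_v,w_v)=(2,1),(4,2),(8,4)$ agree exactly with the paper's.
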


\begin{proof}
Let $X$, $D$ and $s$ be as in Lemma \ref{RP3_skeleton}.

Let $D_\pm=\{z_1z_2\pm z_3z_0=0\}$. We have that $D=D_++D_-$, and
\begin{equation}
2[D]=-2K_{\mathbb{CP}^2}.
\end{equation}

$D_\pm$ are smooth hypersurfaces. Their intersection $D_+\cap D_-$ consists of (smooth) lines of the form $\{z_i=z_j=0\}$ for $i=1,2$ and $j=3,0$, so $(V_D)_1$ consists of these four lines, and $(V_D)_0$ consists of their four pairwise intersection points, $\{z_i=z_j=z_k=0\}$ for $i,j,k$ distinct.

In accordance with Lemma \ref{qhsings}, the function $p_i+p_j$ is a radial hamiltonian for the line $\{z_i=z_j=0\}$, with weight $2$ and action $4$, and $p_i+2p_j+p_k$ is a radial Hamiltonian for the point $\{z_i=z_j=z_k=0\}$ with weight $4$ and action $8$, which form a system of commuting Hamiltonians for $(V_D)_{\leq 1}$, where either $\{i,k\}=\{1,2\}$ or $\{i,k\}=\{3,0\}$.

By Proposition \ref{hypersurf_ham}, we can extend these choices to a system of commuting Hamiltonians for $V_D$. The radial Hamiltonians for $D_\pm$ have weight $1$ and normalised action $2$, which is the coefficient of $D_\pm$ in $2D$.

For each $Y\in V_D$, $\lambda_Y= 2w_Y$, so by Proposition \ref{superheavy_skeleton}, the skeleton is superheavy.
\end{proof}

\subsection{A Subcritical Example}
\begin{lem}
Take $M=\mathbb{CP}^2$, with the Fubini-Study symplectic form.
Let $[z_0:z_1:z_2]$ be homogeneous coordinates on $M$.

Let
\begin{equation}
s=z_1^n-z_2^n\in \mathcal{O}(n)
\end{equation}

Let $D=s^{-1}(0)$ and $X=M\setminus D$.

The skeleton of $(X,d^c\log ||s||)$ is the union of $n$ arcs inside $\{z_0=0\}$ described in Theorem \ref{arcs_nbhd}.
\end{lem}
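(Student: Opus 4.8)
The plan is to follow the pattern of Lemmas \ref{Klein_bottle_skeleton}, \ref{arcs_approximation} and \ref{RP3_skeleton}: exhibit a Hamiltonian circle action preserving the primitive, use it to confine the skeleton to a single fibre, and then analyse the gradient flow of $-\log\|s\|$ on that fibre explicitly. Here $s=z_1^n-z_2^n$, so $D$ is the union of the $n$ lines $\{z_1=\zeta^k z_2\}$ ($\zeta=e^{2\pi i/n}$), all through $[1:0:0]$, and $\|s\|^2=|z_1^n-z_2^n|^2/(|z_0|^2+|z_1|^2+|z_2|^2)^n$ is invariant under the Hamiltonian circle action $[z_0:z_1:z_2]\mapsto[e^{2\pi i t}z_0:z_1:z_2]$ generated by $p_0$. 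Hence $d^c\log\|s\|$ is invariant under this action, and by Lemma \ref{skeleton_barycentre} the skeleton lies in one fibre of $p_0$. To identify the fibre: an orbit lying close to $\{z_0=0\}$ (so $p_0$ small and $z_1^n\neq z_2^n$) bounds a small disc in the $z_0$-direction which is disjoint from $D$, so that both $p_0(\gamma)$ and the area of this disc tend to $0$ as the orbit approaches $\{z_0=0\}$; by the computation of Lemma \ref{action_constant} the action of $p_0$ with respect to $d^c\log\|s\|$ is therefore $0$, and the skeleton is contained in $\{p_0=0\}=\{z_0=0\}$.

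I would then restrict to the line $\{z_0=0\}\cong\mathbb{CP}^1$, on which $p_1+p_2=1$; writing $p=p_1\in[0,1]$ and $\phi=\theta_1-\theta_2=\arg(z_1\overline{z_2})$, one computes with respect to the Fubini--Study metric that
\[
\|s\|^2\big|_{\{z_0=0\}}=p^n+(1-p)^n-2\big(p(1-p)\big)^{n/2}\cos(n\phi).
\]
The critical points of $-\log\|s\|$ on this sphere are the two poles $[0:1:0]$, $[0:0:1]$ (global maxima of $\|s\|$, value $1$), the $n$ points $\{p=\tfrac12,\ \phi=\tfrac{2\pi k}{n}\}$ lying on $D$, and the $n$ saddle points $\{p=\tfrac12,\ \phi=\tfrac{2\pi}{n}(k+\tfrac12)\}$ (for $n\ge 3$; when $n=2$ this last locus degenerates to a circle of critical points, but the conclusion below is unchanged). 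In particular every critical point away from $D$ lies on one of the arcs $\Gamma_k=\{z_0=0,\ \theta_1-\theta_2=\tfrac{2\pi}{n}(k+\tfrac12)\}$, $k=1,\dots,n$, each a great circular arc joining the two poles.

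To conclude, I would argue as in Lemma \ref{RP3_skeleton}. Since $\partial_\phi\|s\|^2=2n(p(1-p))^{n/2}\sin(n\phi)$ vanishes on each $\Gamma_k$ (there $n\phi=(2k+1)\pi$), the gradient of $\|s\|^2$ is tangent to $\Gamma_k$, and computing $\nabla\|s\|^2\cdot\cos(n\phi)$ off the degenerate loci yields a quantity proportional to $-\sin^2(n\phi)$, so $-\nabla\log\|s\|\cdot\cos(n\phi)\le 0$ and each $\Gamma_k$ is preserved by the gradient flow. Along $\Gamma_k$ one has $\|s\|^2=p^n+(1-p)^n+2(p(1-p))^{n/2}$, which equals $1$ only at the two poles and has an interior minimum at the saddle, so every gradient trajectory inside $\Gamma_k$ stays bounded away from $D$; conversely, by the sign of $-\nabla\log\|s\|\cdot\cos(n\phi)$ a trajectory starting off $\bigcup_k\Gamma_k$ is driven toward $D$ unless it converges to a saddle, and a trajectory converging to a saddle must lie in the $\Gamma_k$ through it. Hence the skeleton of $(X,d^c\log\|s\|)$ is exactly $\bigcup_{k=1}^n\Gamma_k$, the union of the $n$ arcs of Theorem \ref{arcs_nbhd}.

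The step requiring the most care is the last one, namely promoting ``the skeleton is contained in $\bigcup_k\Gamma_k$'' to equality. As in Lemma \ref{Klein_bottle_skeleton}, this is done by checking that $-\nabla\log\|s\|$ points outward along the boundary of each region of the form $\{p\le\epsilon\}\cup\{p\ge 1-\epsilon\}\cup\bigcup_k\{\operatorname{dist}(\,\cdot\,,\Gamma_k)\le\epsilon\}$ (which contains all critical points away from $D$), and then intersecting over $\epsilon>0$. The bookkeeping near the two poles, where all $n$ arcs meet, and, when $n=2$, near the degenerate critical circle, needs a little attention, but introduces no new idea beyond the explicit gradient estimates above.
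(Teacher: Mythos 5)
You follow essentially the same route as the paper: the circle action generated by $p_0$ (equivalently by $p_1+p_2$) confines the skeleton to the fibre $\{z_0=0\}$ via Lemma \ref{skeleton_barycentre} --- the paper pins down the fibre by observing that it must contain the global minima of $-\log\|s\|$, while your shrinking-disc computation of the action constant is an equally valid alternative --- and then the explicit formula for $\|s\|^2$, the identification of the critical points, and the monotonicity of $\cos(n(\theta_1-\theta_2))$ along the flow give the skeleton as the $n$ arcs, exactly as in the paper's proof (your explicit treatment of the degenerate $n=2$ case and of the poles is a harmless refinement). One correction: with $\partial_{\phi}\|s\|^2=2n\bigl(p(1-p)\bigr)^{n/2}\sin(n\phi)$ for $\phi=\theta_1-\theta_2$, the function $\cos(n\phi)$ is non-decreasing along the flow of $-\nabla\log\|s\|$, i.e.\ $-\nabla\log\|s\|\cdot\cos(n\phi)\ge 0$ rather than $\le 0$ as you wrote; it is precisely this sign that forces a trajectory converging to a saddle (where $\cos(n\phi)=-1$, its minimal value) to lie entirely in the corresponding arc, whereas with your stated sign that step would not follow, so the slip should be fixed even though your verbal description of the dynamics matches the correct sign.
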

\begin{proof}
Let $(p_i,\theta_i)$ be standard action-angle coordinates for $i=1,2$.

The primitive $d^c\log ||s||$ is invariant under the Hamiltonian $\mathbb{R}/\mathbb{Z}$-action
\begin{equation}
t\cdot [z_0:z_1:z_2]=[e^{-2\pi it}z_0:z_1:z_2]
\end{equation}
 which is generated by $p_1+p_2$, so by Lemma \ref{skeleton_barycentre}, the skeleton is contained in a fibre. The global minima of $-\log ||s||$ are contained in the fibre $\{p_1+p_2=1\}=\{z_0=0\}\cong\mathbb{CP}^1$, so the skeleton is also contained in this fibre.

With respect to the Fubini-Study metric,
\begin{equation}
||s||^2=p_1^n+p_2^n-2(p_1p_2)^{\frac{n}{2}}\cos(n(\theta_1-\theta_2)),
\end{equation}
so the critical points of $-\log ||s||$ are the global minima $[0:1:0]$ and $[0:0:1]$, and the saddle points $\{p_1=p_2=\frac{1}{2}, \theta_1-\theta_2=\frac{2\pi}{n}(k+\frac{1}{2})\}$ for $k=1,...,n$.

In the complement of the critical points, 
\begin{equation}
\nabla ||s||^2 \cdot (\theta_1-\theta_2)=-\pi n(p_1p_2)^{\frac{n-2}{2}}\sin(n(\theta_1-\theta_2))
\label{cos_n_-n}
\end{equation}
so $-\nabla \log ||s||\cdot \cos(n(\theta_1-\theta_2))\geq 0$. This shows that the arcs of the form $\{z_0=0, \theta_1-\theta_2=\frac{2\pi}{n}(k+\frac{1}{2})\}$ for $k=1,...,n$, are preserved by the gradient flow, and any gradient flowline converging to a saddle point must be contained in one of these arcs, so the skeleton is the union of these arcs as required.
\end{proof}

\begin{cor}[Proof of Theorem \ref{arcs_nbhd}]
For $n\geq 3$, the union of $n$ arcs in Equation \eqref{n_circles} is a retract of a neighbourhood $U\subset \mathbb{CP}^2$ which is superheavy, and occupies $\frac{1}{9}$ of the total volume of $\mathbb{CP}^2$.
\end{cor}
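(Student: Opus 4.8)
The plan is to exhibit the union $L$ of the $n$ arcs as the superheavy set $K_{crit}$ attached by Proposition \ref{superheavy_subset} to the divisor $D=\{z_1^n-z_2^n=0\}$, after checking that $\sigma_{crit}=\tfrac13$, and then to compute $\mathrm{vol}(K_{crit})$ by a rescaling under the Liouville flow.

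First I would analyse $D$. Since $z_1^n-z_2^n=\prod_{k=0}^{n-1}(z_1-\zeta^k z_2)$ with $\zeta=e^{2\pi i/n}$, $D$ is a union of $n$ projective lines, all passing through $v_0=[1:0:0]$ and pairwise meeting nowhere else; thus $(V_D)_1$ consists of these $n$ lines and $(V_D)_0=\{v_0\}$. The $\mathbb Q$-divisor $\tfrac6n D$ is anticanonical, and the monotonicity constant of $\mathbb{CP}^2$ with the Fubini--Study form $\omega$ (line area $1$) is $\kappa=\tfrac16$. Near $v_0$, in coordinates $(x,y)=(z_1/z_0,z_2/z_0)$, the defining section is $x^n-y^n$, which is quasihomogeneous of weight system $(1,1,n)$; so Corollary \ref{qhsch} provides a system of commuting Hamiltonians for $V_D$ to which $\theta=d^c\log\|s\|$ is weakly adapted, and by Corollary \ref{adapted_existence} I may replace $\theta$ by an adapted primitive of $\omega$ with the same Lagrangian skeleton (the modification being supported on the stratum neighbourhoods, which are disjoint from the skeleton), namely the union $L$ of the $n$ arcs identified in the preceding Lemma.

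Next I would compute $\sigma_{crit}$ of Setting \ref{monotone_setting}. Each line $L_k$ is a smooth irreducible component, so $w_{L_k}=1$, and $\lambda_{L_k}$ is its coefficient $\tfrac6n$ in the anticanonical $\mathbb Q$-divisor; hence it contributes $\tfrac{\lambda_{L_k}-2w_{L_k}}{\lambda_{L_k}}=1-\tfrac n3$, which is $\le 0$ precisely because $n\ge 3$. For $v_0$, Lemma \ref{qhsings} gives $w_{v_0}=1+1=2$ and $\lambda_{v_0}=6$ (the quasihomogeneous weight of the defining section equals $6N$ once $ND$ is taken integral, so $\lambda_{v_0}=\lambda/N=6$ independently of $n$; one works with a suitable power of $s$ if $6\nmid n$), so $v_0$ contributes $\tfrac{6-2\cdot 2}{6}=\tfrac13$. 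Therefore $\sigma_{crit}=\max\!\big(0,\,1-\tfrac n3,\,\tfrac13\big)=\tfrac13$ for every $n\ge 3$. By Proposition \ref{superheavy_subset}, $K_{crit}=(\rho^0)^{-1}[0,\tfrac13]$ is superheavy; it is a closed neighbourhood of $L=(\rho^0)^{-1}(0)$, and flowing inward along the Liouville vector field (a flow complete towards the skeleton) deformation retracts $K_{crit}$ onto $L$. Thus $L$ is a retract of the superheavy neighbourhood $U:=K_{crit}$.

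For the volume, observe that $\rho^0\circ\psi^t=e^t\rho^0$, so for $\sigma\in(0,1)$ the Liouville flow $\psi^{\log\sigma}$ is defined on all of $\mathbb{CP}^2\setminus D$ and carries it diffeomorphically onto $(\rho^0)^{-1}[0,\sigma)$; since the Liouville flow rescales the symplectic volume form $\tfrac{\omega^2}{2}$, a short computation gives $\mathrm{vol}\big((\rho^0)^{-1}[0,\sigma]\big)=\sigma^{\,\dim_{\mathbb C}\mathbb{CP}^2}\,\mathrm{vol}(\mathbb{CP}^2)=\sigma^{2}\,\mathrm{vol}(\mathbb{CP}^2)$ (the boundary level set has measure zero). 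With $\sigma=\sigma_{crit}=\tfrac13$ this yields $\mathrm{vol}(U)=\tfrac19\,\mathrm{vol}(\mathbb{CP}^2)$, as claimed; the same mechanism gives $(\tfrac16)^3=\tfrac1{216}$ in Theorem \ref{Chiang_nbhd}. The main obstacle is establishing $\sigma_{crit}=\tfrac13$: unlike in the simple crossings case, the singular stratum $v_0$ contributes a strictly positive number, so one must extract $w_{v_0}$ and $\lambda_{v_0}$ correctly from the quasihomogeneous singularity — in particular noticing that the global normalisation cancels the $n$-dependence so that $\lambda_{v_0}=6$ — while checking that the $n$ line strata are harmless exactly in the stated range $n\ge 3$; the superheaviness and volume statements are then immediate consequences of the machinery of Sections \ref{s5}--\ref{s6}.
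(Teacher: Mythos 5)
Your proposal is correct and follows essentially the same route as the paper: identify the skeleton with the union of arcs, compute $w_{v_0}=2$, $\lambda_{v_0}=6$ at the quasihomogeneous point and $\lambda=\tfrac 6n$, $w=1$ on each line, conclude $\sigma_{crit}=\tfrac13$ for $n\geq 3$, and apply Proposition \ref{superheavy_subset}. You supply slightly more detail than the paper on two points it leaves implicit — the Liouville-flow rescaling argument giving $\mathrm{vol}\bigl((\rho^0)^{-1}[0,\sigma]\bigr)=\sigma^2\,\mathrm{vol}(\mathbb{CP}^2)$, and the retraction of $K_{crit}$ onto the skeleton — both of which are accurate.
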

\begin{proof}
We have that
\begin{equation} 
-2K_{\mathbb{CP}^2}=\frac{6}{n}D.
\end{equation}

The divisor $D$ is a union of $n$ lines, intersecting at the point $[1:0:0]$. At this point, $s$ is quasihomogeneous with weight system $(1,1,n)$, so by Lemma \ref{qhsings}, there exists a system of commuting Hamiltonians for $V_D$ with
\begin{equation}
\sigma_{crit}=\max\left(\frac{3-n}{3},\frac{1}{3}\right).
\end{equation}

In particular, for $n\geq 3$, $\sigma_{crit}=\frac{1}{3}$. In this case, by Theorem \ref{superheavy_subset}, $\{\rho^0\leq \frac{1}{3}\}$, which has volume $\frac{1}{3^2}$ by construction, is superheavy.
\end{proof}

\subsection{Chiang Lagrangian}
\begin{lem}
Take $M=\mathbb{CP}^3$, with the Fubini-Study symplectic form.

Identifying $M$ with the projectivisation of the space of cubic polynomials, let $s\in\mathcal{O}(4)$ denote the discriminant.

Let $D=s^{-1}(0)$ and $M=X\setminus D$.

The skeleton of $(X,d^c\log ||s||)$ is the Chiang Lagrangian, as described in \cite{Chiang}.
\end{lem}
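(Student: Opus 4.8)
The plan is to exploit a structural feature that the earlier examples lack: $X=\mathbb{CP}^3\setminus D$ is a \emph{single} orbit of $PGL(2,\mathbb{C})$, the group of Möbius transformations of $\mathbb{CP}^1$. Indeed a point of $X$ is an unordered triple of distinct points of $\mathbb{CP}^1$ (the roots of a square-free binary cubic), and $PGL(2,\mathbb{C})$ acts $3$-transitively on such triples. Fix the point $x_0\in X$ corresponding to the cubic $z_1^3-z_0^3$, whose roots are the cube roots of unity, i.e.\ the vertices of a regular triangle inscribed in the equator of $S^2=\mathbb{CP}^1$. By definition (see \cite{Chiang}) the Chiang Lagrangian is the orbit $SU(2)\cdot x_0$. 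Its stabiliser $\Gamma\subset SL(2,\mathbb{C})$ is the symmetry group of this triangle, a binary dihedral group of order $12$; since every symmetry of the triangle is a rotation of $S^2$, we have $\Gamma\subset SU(2)$, so $SU(2)\cdot x_0\cong SU(2)/\Gamma$ is a closed $3$-manifold and $X\cong SL(2,\mathbb{C})/\Gamma$ deformation retracts onto it.

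First I would note that $\|s\|$ is $SU(2)$-invariant: the Fubini--Study metric on $\mathcal{O}_{\mathbb{CP}^3}(1)$ is the one induced by the $SU(2)$-invariant Hermitian inner product on $\mathbb{C}^4=\mathrm{Sym}^3\mathbb{C}^2$, and the discriminant $s$ is an $SL(2,\mathbb{C})$-invariant polynomial on $\mathrm{Sym}^3\mathbb{C}^2$ (it has degree $4$ in the coefficients and transforms by $(\det g)^{6}$ under $GL(2,\mathbb{C})$), so $\|s\|^2=|s(w)|^2/\|w\|^{8}$ is $SU(2)$-invariant, hence so are $\theta=d^c\log\|s\|$, its Liouville vector field, and the skeleton. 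Next, following the method of Lemmas \ref{Klein_bottle_skeleton} and \ref{RP3_skeleton}, I would identify the skeleton with the maximal compact set invariant under the (inward) Liouville flow, which is built from the critical points of $-\log\|s\|$, the bottom stratum being where $\|s\|$ is maximal on $X$. Writing a point of $X$ as $g\cdot x_0$ and using $SL(2,\mathbb{C})$-invariance of $s$ one gets $-\log\|s(g\cdot x_0)\|^2=\mathrm{const}+4\log\|g\cdot w_0\|^2$, so the critical points of $-\log\|s\|$ on $X$ are exactly the critical points of the Kempf--Ness function $g\mapsto\log\|g\cdot w_0\|^2$ on $SL(2,\mathbb{C})/\Gamma$. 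The vector $w_0$ is stable (finite stabiliser) and balanced: the $SU(2)$-moment map value $\mu(w_0)$, a vector in $\mathbb{R}^3$ under the usual identification of the dual of the Lie algebra of $SU(2)$, is fixed by $\Gamma$ acting through $\Gamma\to SO(3)$, which is the dihedral group of order $6$ and has no nonzero fixed vector, so $\mu(w_0)=0$. By the standard (strict, for stable points) convexity of the Kempf--Ness function along geodesics of $SL(2,\mathbb{C})/SU(2)$, together with its properness on $X$, $-\log\|s\|$ is therefore Morse--Bott on $X$ with a \emph{single} critical manifold, namely its global minimum $SU(2)\cdot x_0$, and no other critical points; the negative gradient flow of $-\log\|s\|$ carries all of $X$ into this manifold. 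Hence the skeleton equals $SU(2)\cdot x_0$, the Chiang Lagrangian.

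The main obstacle is the convexity/balancedness step: one must work with the correct ($SU(2)$-invariant, weighted) Hermitian structure on $\mathrm{Sym}^3\mathbb{C}^2$ when computing $\mu(w_0)$, and invoke the convexity of the Kempf--Ness function in a form strong enough to rule out \emph{all} critical points other than the minimum --- it is precisely this absence of saddles that makes the skeleton here a single $SU(2)$-orbit, unlike the stratified skeleta of the previous examples. A more hands-on alternative would be to use the maximal torus $U(1)\subset SU(2)$ together with Lemma \ref{skeleton_barycentre} to confine the skeleton to a moment-map fibre, and then to compute the gradient of $\|s\|^2$ directly on a $3$-dimensional slice transverse to the $SU(2)$-orbits; this avoids Kempf--Ness theory at the cost of a longer explicit computation.
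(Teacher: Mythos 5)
Your proposal is correct, but it takes a genuinely different route from the paper. The paper's proof is a one-step application of Lemma \ref{skeleton_barycentre} with the \emph{full} nonabelian group $G=SU(2)$: the primitive $d^c\log\|s\|$ is $SU(2)$-invariant, the moment map $\mu$ is proper and equivariant, and the only fixed point of the coadjoint action of $SU(2)$ is $0$, so the skeleton lies in $\mu^{-1}(0)$; since three unit vectors in $\mathfrak{su}_2^*\cong\mathbb{R}^3$ sum to zero precisely when they form an equilateral triangle on a great circle, $\mu^{-1}(0)$ is exactly the single orbit $SU(2)\cdot\Delta$, i.e.\ the Chiang Lagrangian, and equality of the skeleton with this orbit is then immediate (the skeleton is a nonempty, closed, $SU(2)$-invariant subset of a single orbit). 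So no critical-point analysis and no GIT input are needed; in particular your ``hands-on alternative'' (maximal torus plus a slice computation) undersells what Lemma \ref{skeleton_barycentre} already delivers when applied with $SU(2)$ rather than a torus. Your Kempf--Ness argument proves the stronger statement that $-\log\|s\|$ has no critical points on $X$ besides its global-minimum orbit, which is genuinely informative: it explains structurally why this skeleton is a smooth single orbit, in contrast with the stratified skeleta of the earlier examples, and it would transfer to other complements of invariant divisors whose open stratum is one orbit of a reductive group with a balanced base point. Three small repairs to your write-up: stability should be stated as ``closed orbit with finite stabiliser'' (closedness follows from Hilbert--Mumford for square-free binary cubics, or from $\mu(w_0)=0$ together with Kempf--Ness, so there is no circularity, but it should be said); the Morse--Bott assertion is both more than strict geodesic convexity gives you and more than you need --- all that is required is that the critical locus equals the minimum orbit, after which any forward Liouville trajectory remaining in a compact subset of $X$ has $\omega$-limit set in the minimum locus while $-\log\|s\|$ increases along it, forcing the trajectory to be constant, so the skeleton is the minimum orbit; and you should record explicitly that critical points of $-\log\|s\|$ on $X$ correspond to critical points of the Kempf--Ness function because $g\mapsto g\cdot x_0$ is a finite covering $SL(2,\mathbb{C})\to X$ and the pullback is left-$SU(2)$-invariant. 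The Hermitian-structure caveat you flag (use the $SU(2)$-invariant inner product on $\mathrm{Sym}^3\mathbb{C}^2$) is indeed the paper's implicit convention and is the right one.
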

\begin{proof}
The algebraic group $SL(2,\mathbb{C})$ acts on $\mathbb{CP}^1$ by Möbius transformations. 

We can identify $\mathbb{CP}^3$ with $Sym^3\mathbb{CP}^1$, by viewing $\mathbb{CP}^3$ as the projectivisation of the space of degree $3$ polynomials. We therefore obtain an action of $SL(2,\mathbb{C})$ on $\mathbb{CP}^3$.

Let $\Delta=\{e^{\frac{2\pi i k}{3}}| k=0,1,2\}\subset \mathbb{CP}^1$ denote the 3rd roots of unity. Under stereographic projection, these are the vertices of an equilateral triangle on the equator of $S^2$. We can view $\Delta$ as a point in $\mathbb{CP}^3$. Because $SL(2,\mathbb{C})$ acts 3-transitively, $X$ is the affine $SL(2,\mathbb{C})$-orbit of $\Delta$.

The discriminant locus $D=s^{-1}(0)$ is the algebraic surface consisting of configurations where at least $2$ points coincide.

Let $N$ denote the singular locus of $D$. This is the twisted cubic in $\mathbb{CP}^3$, the smooth genus 0 curve consisting of configurations where all $3$ points coincide.

The action of $SU(2)\subset SL(2,\mathbb{C})$ on $M$ is Hamiltonian. Let $\mu:M\rightarrow \mathfrak{su}_2^*$ denote the moment map. Let $\pi^{-1}:\mathbb{CP}^1\rightarrow S^2$ denote the inverse of stereographic projection, where we view $S^2$ as the unit sphere in $\mathfrak{su}_2^*$. Then we have that
\begin{equation}
\mu\left(\{\alpha_1,\alpha_2,\alpha_3\}\right)=\frac{1}{3}\sum_{i=1}^3\pi^{-1}(\alpha_i).
\end{equation}

The Chiang Lagrangian, denoted $L\subset X$, is the $SU(2)$-orbit of $\Delta$, which is a smooth quotient of $SU(2)$, and equal to $\mu^{-1}(0)$. By Lemma \ref{skeleton_barycentre}, because $0$ is the only fixed point of the adjoint representation of $SU(2)$, $L$ is the Lagrangian skeleton of $(X,d^c\log ||s||)$.
\end{proof}

\begin{cor}[Proof of Theorem \ref{Chiang_nbhd}]
The Chiang Lagrangian is a retract of a neighbourhood $U\subset\mathbb{CP}^3$ which is superheavy, and occupies $\frac{1}{216}$ of the volume of $\mathbb{CP}^3$.
\end{cor}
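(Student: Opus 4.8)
By the previous lemma, the Chiang Lagrangian $L$ is the skeleton of $(X,d^c\log\|\delta\|)$, where $\delta\in H^0(\mathbb{CP}^3,\mathcal{O}(4))$ is the discriminant and $X=\mathbb{CP}^3\setminus D_0$ with $D_0=\{\delta=0\}$. Since $-2K_{\mathbb{CP}^3}=8H$ while $[D_0]=4H$, I would apply the framework of Setting \ref{algebraic_setting} to the anticanonical $\mathbb{Z}$-divisor $D:=2D_0$, with $\mathcal{L}=\mathcal{O}(8)$, $N_{\mathrm{int}}=1$ and section $s:=\delta^2$; as $d^c\log\|s\|=2\,d^c\log\|\delta\|$ and rescaling a Liouville form does not move its skeleton, $L$ is still the skeleton of $(X,d^c\log\|s\|)$ and $L=(\rho^0)^{-1}(0)$. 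The stratified symplectic subvariety $V_D$ has two strata: the reduced discriminant hypersurface $D_0$ (degree $2$), whose open stratum is its smooth locus, and the twisted cubic $N$ (degree $1$), the locus of triple roots, which is smooth --- so there are no degree-$0$ strata.

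The heart of the proof, and the step I expect to be the main obstacle, is producing a system of commuting Hamiltonians for $V_D$: Corollary \ref{qhsch} does not apply ($\dim_{\mathbb{C}}\mathbb{CP}^3=3$), and Lemma \ref{qhsings} does not apply directly at $N$ because $N\cong\mathbb{CP}^1$ is not contained in a single analytic chart. I would instead use the $SU(2)$-action on $\mathbb{CP}^3=\mathbb{P}(\mathrm{Sym}^3\mathbb{C}^2)$: $N$ is the $SU(2)$-orbit of the triple point $3[\infty]$, with stabiliser the maximal torus $U(1)$, so $N\cong SU(2)/U(1)$, and the symplectic normal bundle $\nu_N$ is $SU(2)$-equivariant; since the $U(1)$-isotropy representation on $\nu_{3[\infty]}$ has two distinct weights, $\nu_N$ splits $SU(2)$-equivariantly into two line bundles. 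As $D_0$ has a transverse $A_2$ (cusp) singularity along $N$, in holomorphic coordinates compatible with this splitting $D_0$ has germ $\{z_3^2=z_2^3\}$ with $N=\{z_2=z_3=0\}$, which pins the relevant weights, made effective, down to $(2,3)$. The fibrewise circle action on $\nu_N$ with these weights is $SU(2)$-equivariant, is generated by an evident quadratic Hamiltonian $r_N$ of total weight $w_N=2+3=5$, has zero section $N$ as its fixed locus, and preserves the cusp; transporting it to a neighbourhood of $N$ in $\mathbb{CP}^3$ via the $SU(2)$-equivariant version of the tubular-neighbourhood construction of Lemmas \ref{normal_bundle_symplectic}--\ref{invariant_Moser} (whose proofs use only compactness of the group averaged over, and so apply with $SU(2)$ in place of the stratum circles) gives a radial Hamiltonian $r_N$ for $N$ with respect to $V_D$. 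Since the smooth locus of $D_0$ is a complex --- hence almost-complex --- submanifold of $\mathbb{CP}^3$ and $\{r_N\}$ is vacuously a system of commuting Hamiltonians for $(V_D)_{\le 1}=\{N\}$, Proposition \ref{hypersurf_ham} extends it to a system of commuting Hamiltonians $\{r_N,r_{D_0}\}$ for $V_D$, with $w_{D_0}=1$.

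Next I would verify adaptedness and extract the numerics. The discriminant of a binary cubic is a relative $SL(2)$-invariant, hence genuinely $SU(2)$-invariant, and the Fubini--Study metric is $SU(2)$-invariant, so the Liouville primitive $\theta=d^c\log\|s\|$ (normalised as in Setting \ref{algebraic_setting}) is $SU(2)$-invariant. By Lemma \ref{hypersurf_adapted}, $\theta$ is weakly adapted at $D_0$; at $N$, running the local computation inside the proof of Lemma \ref{qhsings} in a chart (with $s\sim(z_3^2-z_2^3)^2$, quasihomogeneous of weight $12$ for the weight-$(2,3)$ action, and $N_{\mathrm{int}}=1$) and globalising by $SU(2)$-invariance shows $\theta$ is weakly adapted at $N$ as well, with the action of $r_N$ equal to $12\kappa$. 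By Corollary \ref{adapted_existence} I may then pass to an adapted primitive with the same skeleton $L$ and the same radial-Hamiltonian actions. Thus $\lambda_{D_0}=2$, $w_{D_0}=1$, $\lambda_N=12$, $w_N=5$, so by Setting \ref{monotone_setting}
\begin{equation}
\sigma_{\mathrm{crit}}=\max\left(0,\ \frac{\lambda_{D_0}-2w_{D_0}}{\lambda_{D_0}},\ \frac{\lambda_N-2w_N}{\lambda_N}\right)=\max\left(0,\ 0,\ \frac{2}{12}\right)=\frac16 .
\end{equation}
(A smooth discriminant would give $\sigma_{\mathrm{crit}}=0$; the non-zero value comes entirely from the singular locus $N$.)

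Finally, Proposition \ref{superheavy_subset} gives that $U:=K_{\mathrm{crit}}=(\rho^0)^{-1}[0,\tfrac16]$ is superheavy. It is a neighbourhood of $L=(\rho^0)^{-1}(0)$ which, using the radial behaviour of $\rho^0$ along the Liouville flow (as in Lemma \ref{convexity}), deformation retracts onto $L$; and, exactly as in the proof of Theorem \ref{arcs_nbhd}, by construction $(\rho^0)^{-1}[0,\tfrac16]$ occupies the fraction $\left(\tfrac16\right)^3=\tfrac1{216}$ of the volume of $\mathbb{CP}^3$.
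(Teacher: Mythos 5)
Your overall strategy coincides with the paper's: exploit the $SU(2)$-symmetry to produce a radial Hamiltonian for the twisted cubic $N$, extend it to a system of commuting Hamiltonians via Proposition \ref{hypersurf_ham}, get weak adaptedness at the top stratum from Lemma \ref{hypersurf_adapted}, and conclude via Proposition \ref{superheavy_subset}; your numerics ($w_N=5$, $\lambda_N=12$, $\lambda_{D_0}=2w_{D_0}$, $\sigma_{\mathrm{crit}}=\tfrac16$, volume fraction $(\tfrac16)^3$) agree exactly with the paper's. The difference lies in how $r_N$ is built, and there your sketch has a genuine gap. A radial Hamiltonian for $N$ with respect to $V_D$ must generate a circle action preserving $\mathring{D}_{D_0}$ near $N$: this is part of the definition, it is exactly what Setting \ref{top_level} and Proposition \ref{hypersurf_ham} need from the lower-stratum Hamiltonians, and it is also what your computations of $\lambda_N$ and of weak adaptedness at $N$ implicitly use. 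You only check that the model action on $\nu_N$ preserves the model cusp; the $SU(2)$-equivariant tubular neighbourhood supplied by (equivariant versions of) Lemmas \ref{normal_bundle_symplectic}--\ref{invariant_Moser} is symplectic and equivariant but is not arranged to carry the model cusp onto $D_0$, so the transported action is not shown to preserve the actual discriminant, and the quasihomogeneous local computation (weight $12$, weak adaptedness) is performed for the model rather than for the transported Hamiltonian.

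The paper sidesteps this by taking $r_N=1-\|\mu\|$, with $\mu$ the $SU(2)$ moment map: by Seidel's lemma this generates a circle action whose flow at each point runs along a circle subgroup of $SU(2)$, hence preserves the $SU(2)$-invariant divisor automatically; it fixes $N=\{\|\mu\|=1\}$ pointwise, $\theta$ is adapted to it by construction (by $SU(2)$-invariance of $\theta$), the isotropy weights $(2,3)$ are read off from the monomial basis $T^2,T^3$ of the normal slice, and $\lambda_N=12$ comes from the explicit disc $u(z)=\{z\alpha_1,z\alpha_2,z\alpha_3\}$ with $u^*s=z^6\cdot\mathrm{const}$ (doubled because $D=2D_0$). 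Your construction can be repaired by one observation that essentially reduces it to the paper's: the fibrewise weight-$(2,3)$ rotation on $\nu_N\cong SU(2)\times_{U(1)}\left(\mathbb{C}_{(2)}\oplus\mathbb{C}_{(3)}\right)$ is, at each point $[g,w]$, realised by the left action of the conjugated stabiliser circle $gU(1)g^{-1}\subset SU(2)$; since your tubular embedding is $SU(2)$-equivariant, the transported orbits are honest orbits of circle subgroups of $SU(2)$, hence preserve $D_0$, and near a point of $N$ the action is the standard rotation of root configurations, so the Lemma \ref{qhsings}-style computation legitimately yields the action $12\kappa$ and weak adaptedness. Without that observation (or an equivariant normal form matching the fibrewise cusp with $D_0$), the key hypothesis of Proposition \ref{hypersurf_ham} remains unverified.
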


\begin{proof}
By Lemma 1.2 of \cite{Seidel}, $||\mu||$ generates a Hamiltonian circle action on $M\setminus L$. This action preserves orbits of the $SU(2)$-action, and therefore preserves $D$. Furthermore, $||\mu||$ attains a global maximum, along $N=\{||\mu||=1\}$, so the action fixes $N$ pointwise. 

We may therefore define 
\begin{equation}
r_N=1-||\mu||,
\end{equation}
a radial Hamiltonian for $N\in V_D$, to which $d^c\log ||s||$ is adapted by construction.

We will now determine the isotropy representation of the action generated by $r_{N}$, at the point $x=\{0,...,0\}$.

The hyperplane $H=\left\{\{\alpha_1,\alpha_2,\alpha_3\}|\sum_{i=1}^3\alpha_i=0\right\}$ is orthogonal to $N$ at $x$. Let $\xi=\mu(x)$.  $H$ is the preimage of the ray in $\mathfrak{su}_2^*$ generated by $\xi$, so $||\mu|||_H=\xi^T\mu|_H$. The restriction of the circle action to $H$ is therefore the stabiliser of $x$ in $SU(2)$, which is the subgroup of the form $diag(e^{\frac{1}{2}it},e^{-\frac{1}{2}it})$, which acts via $\{\alpha_1,\alpha_2,\alpha_3\}\to \{e^{it}\alpha_1,e^{it}\alpha_2,e^{it}\alpha_3\}$. The tangent space
$T_xH=(T_xN_C)^\perp$ has an orthogonal basis given by the monomials $T^2,T^3$. With respect to this basis, the isotropy representation is 
\begin{equation}
e^{it}\mapsto diag(e^{2it},e^{3it}).
\end{equation}
The weight of $r_{N}$ is therefore given by $w_N=2+3=5$.

We can compute the rescaled action $\lambda_{N}$ by considering a disc $u$ of the form 
\begin{equation}
u(z)= \{z\alpha_1,z\alpha_2,z\alpha_3\}
\end{equation}
for some $\{\alpha_1,\alpha_2,\alpha_3\}\in W\cap H$.
\begin{equation}
u^*s=z^6\prod_{i\neq j}(\alpha_i-\alpha_j)
\end{equation}
so $\lambda_{N}=2[u]\cdot [Y]=12$.

By Lemma \ref{hypersurf_adapted}, $V_D$ admits a system of commuting Hamiltonians to which $d^c\log ||s||$ is weakly adapted, such that
\begin{equation}
\sigma_{crit}=\frac{12-10}{12}=\frac{1}{6}.
\end{equation}
By Theorem \ref{superheavy_subset}, $\{\rho^0\leq \frac{1}{6}\}$, which has volume $\frac{1}{6^3}$ by construction, is superheavy.
\end{proof}
\section{Appendix}
\subsection{Construction of Adapted Structures}
\begin{lem}
\label{average_construction}
Let $V$ be a stratified symplectic subvariety in $(M,\omega)$, and $r_v:U_v\rightarrow\mathbb{R}_{\geq 0}$ for $v\in V$ a system of commuting Hamiltonians for $V$.

Let $Y\subset X$ be an open set, $r_v$-invariant for all $v\in V$.

Let $\alpha\in\Omega^1(Y)$ be a one-form such that $d\alpha|_{Y\cap U_v}$ is $r_v$-invariant for $v\in V$.

Then there exists $\overline{\alpha}\in\Omega^1(Y)$, an $\omega$-compatible almost-complex structure $J$ on $Y$, and $r_v$-invariant stratum neighbourhoods $T_v\subset U_v$ for $v\in V$ such that $\overline{\alpha}|_{Y\cap T_v}$ and $J|_{Y\cap T_v}$ are $r_v|_{T_v}$-invariant, and $\overline{\alpha}=\alpha+df$. 

Furthermore, let $D\subset Y$ be a submanifold, $r_v$-invariant for $v\in V$. If $\alpha_x=0$ for $x\in D$, we may ensure that $\overline{\alpha}_x=0$ for $x\in D$. If $D$ is an almost-complex submanifold with respect to some compatible almost-complex structure on $Y$, then we may ensure that $D$ is an almost-complex submanifold with respect to $J$.

\end{lem}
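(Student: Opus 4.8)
The plan is to build $\overline\alpha$, $J$ and the neighbourhoods $T_v$ by induction over the poset, fixing a linear extension $V=\{v_1,\dots,v_l\}$ with $v_i\le v_j\Rightarrow i\le j$ and, at the $i$-th step, averaging the current data over the circle action $\phi_{r_{v_i}}^t$ on a small invariant neighbourhood of $\mathring D_{v_i}$ and gluing it to the old data by an invariant cutoff. First I would normalise the setup: by Lemma \ref{intersection_commute} we may assume $U_u\cap U_v=\emptyset$ for incomparable $u,v$ and $\{r_u,r_v\}=0$ on $U_u\cap U_v$ whenever $u<v$, so near any point of $Y$ only a chain of the $U_v$ overlaps and the corresponding actions pairwise commute; and by Corollary \ref{invariant_neighbourhoods} we fix $r_I$-invariant stratum neighbourhoods $N_v\subset U_v$. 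For a tensor $s$ on an $r_v$-invariant set write $A_v(s)=\int_0^1(\phi_{r_v}^t)^*s\,dt$. Start from $\alpha^{(0)}=\alpha$, $f^{(0)}=0$, and from any $\omega$-compatible metric $g^{(0)}$; in the situation of the last addendum take $g^{(0)}=\omega(\cdot,J'\cdot)$, so that $TD$ and $TD^\omega$ are $g^{(0)}$-orthogonal.

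At step $i$ the inductive hypothesis provides $\alpha^{(i-1)}=\alpha+df^{(i-1)}$, an $\omega$-compatible metric $g^{(i-1)}$, and $r_{v_j}$-invariant stratum neighbourhoods $T_{v_j}\subset N_{v_j}$ ($j<i$) on which $\alpha^{(i-1)}$ and $g^{(i-1)}$ are $r_{v_j}$-invariant (and, in the addenda, $\alpha^{(i-1)}$ vanishes on $D$ and $TD$, $TD^\omega$ are $g^{(i-1)}$-orthogonal). Write $v=v_i$ and pick a small $r_I$-invariant neighbourhood $W\subseteq N_v$ of $\mathring D_v$ with $W\subseteq T_u$ whenever $u<v$ and $W$ meets $U_u$. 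Since $d\alpha$ is $r_v$-invariant, $A_v(\alpha^{(i-1)})-\alpha^{(i-1)}$ is closed on $W\cap Y$; it is also exact there — its class is $\tfrac12[\iota_{X_{r_v}}d\alpha^{(i-1)}]$, which vanishes because $\iota_{X_{r_v}}d\alpha^{(i-1)}$ contracts a $2$-form with $X_{r_v}$ along the orbits generating the relevant homology, and in every application $d\alpha=\omega$ so this form is simply $-dr_v$ — so write it as $dg_v$ with $g_v\in C^\infty(W\cap Y)$, normalised to vanish along $D$ in the addendum case (legitimate as $(dg_v)_x=0$ for $x\in D$). On $W\cap T_u$ ($u<v$) the actions $r_u,r_v$ commute, so $A_v$ preserves the $r_u$-invariance of $\alpha^{(i-1)}$ there; hence $dg_v$, and after replacing $g_v$ by its $r_u$-average where necessary (which leaves $dg_v$ unchanged there), $g_v$ itself is $r_u$-invariant on $W\cap T_u$. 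Now choose an $r_I$-invariant cutoff $\chi_v$ supported in $W$ and equal to $1$ on a smaller $r_I$-invariant neighbourhood $T_v\subseteq W$ of $\mathring D_v$, and set
\begin{equation}
\alpha^{(i)}=\alpha^{(i-1)}+d(\chi_v g_v),\qquad g^{(i)}=(1-\chi_v)g^{(i-1)}+\chi_v A_v(g^{(i-1)}),
\end{equation}
where $g^{(i)}$ is a metric as a convex combination of metrics and still has $TD\perp TD^\omega$ because $\phi_{r_v}^t$ preserves $D$ and the splitting $TM|_D=TD\oplus TD^\omega$. On $T_v$ one gets $\alpha^{(i)}=A_v(\alpha^{(i-1)})$ and $g^{(i)}=A_v(g^{(i-1)})$, both $r_v$-invariant; on $T_u\setminus\mathrm{supp}\,\chi_v$ nothing changes; and on $T_u\cap\mathrm{supp}\,\chi_v\subseteq T_u\cap W$ the quantities $\chi_v,g_v,g^{(i-1)},A_v(g^{(i-1)})$ are $r_u$-invariant, so $\alpha^{(i)}$ and $g^{(i)}$ remain $r_u$-invariant on $T_u$. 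Vanishing on $D$ persists, since $d(\chi_v g_v)_x=\chi_v(x)(dg_v)_x+g_v(x)(d\chi_v)_x=0$ for $x\in D$.

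After the $l$-th step, set $\overline\alpha=\alpha^{(l)}=\alpha+df$ with $f=f^{(l)}$, and let $J$ be the $\omega$-compatible almost-complex structure produced from $g^{(l)}$ by the canonical polar-decomposition construction. This construction is natural, hence commutes with symplectomorphisms, so $J$ is $r_v$-invariant on each $T_v$; and $D$ is $J$-complex, since $TD$ is invariant under the polar decomposition whenever $TD^{\perp_{g^{(l)}}}=TD^\omega$. This yields all the claimed conclusions. The one genuinely delicate point — and the reason for the bookkeeping with invariant neighbourhoods and cutoffs — is ensuring that the $i$-th averaging does not spoil the $r_{v_j}$-invariance ($j<i$) already achieved near $\mathring D_{v_j}$; this works precisely because the $r_v$ form a \emph{commuting} system (so $A_{v_i}$ preserves $r_{v_j}$-invariance on overlaps, by $\{r_{v_i},r_{v_j}\}=0$) and because Corollary \ref{invariant_neighbourhoods} supplies a coherent nested system of invariant neighbourhoods in which the gluing regions match up.
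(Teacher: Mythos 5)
Your induction is organised differently from the paper's, but it rests on the same engine and is essentially sound. The paper inducts \emph{downward} on the cardinality $k$ of subsets $I\subset V$: it averages over the whole torus $\phi_{r_I}^t$, $t\in[0,1]^I$, in one go, and exploits the fact that on the deeper overlaps $T_{I\cup\{v\}}$ the data is already invariant from the previous stage, so the local primitives $f_I$ vanish there and the only gluing needed is an extension away from the divisor, where no invariance has to be preserved — no cutoffs enter. You instead fix a linear extension of the poset and average one circle at a time, patching with an invariant cutoff; the price is that the transition region of $\chi_{v_i}$ meets the already-treated $T_u$ for $u<v_i$, and you must verify that every ingredient ($\chi_v$, $g_v$, the averaged tensor) is $r_u$-invariant there. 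That verification does go through, for exactly the two reasons the paper's does: $\{r_u,r_v\}=0$ on the intersection neighbourhoods after Lemma \ref{intersection_commute}, and Corollary \ref{invariant_neighbourhoods} supplies bi-invariant intersections on which averaging over one action commutes with the other. A cosmetic difference: you average the metric and polar-decompose once at the end, while the paper averages $J$ and polar-decomposes at each stage; both are fine, and your observation that the splitting $TD\oplus TD^\omega$ survives averaging and polar decomposition is correct.

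Two points in the write-up need repair. First, the requirement ``$W\subseteq T_u$ whenever $u<v$ and $W$ meets $U_u$'' is unsatisfiable: since $\mathring{D}_u\subset\overline{\mathring{D}_v}$, every neighbourhood $W$ of $\mathring{D}_v$ meets $U_u$, yet $W\supseteq\mathring{D}_v\not\subseteq T_u$ in general. Fortunately you never use this condition; what your argument actually needs is only that $T_u\cap W$ and $T_u\cap\operatorname{supp}\chi_v$ be $r_u$- and $r_v$-invariant and lie in the locus where the brackets vanish, which the normalisations already provide — so replace the condition accordingly. Second, your justification that $A_v(\alpha^{(i-1)})-\alpha^{(i-1)}$ is exact is muddled, and the fallback ``in every application $d\alpha=\omega$'' is false: Lemma \ref{invariant_Moser} invokes this lemma for $\mu$ with $d\mu=\iota_*\omega^\nu-\omega$. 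The clean argument is the paper's identity \eqref{Ham_pullback_exact}: integrating Cartan's formula, $r_v$-invariance of $d\alpha$ together with $1$-periodicity of the flow produces an explicit primitive. The same periodicity argument shows $(\phi_{r_u}^t)^*g_v-g_v$ is a locally constant path vanishing at $t=0$ and $t=1$ with locally constant derivative, hence zero — so $g_v$ is automatically $r_u$-invariant wherever $dg_v$ is, and your ``replace $g_v$ by its $r_u$-average'' step is unnecessary (and, performed only on $W\cap T_u$, would break smoothness of $g_v$ on the rest of $W$).
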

\begin{proof}
We say that $\alpha$ and $J$ are $k$-adapted if there exist stratum neighbourhoods $T_v\subset U_v$ for $v\in V$ such that, for every $I\subset V$ such that $\# I=k$, $T_I$ is $r_I$-invariant and $\alpha|_{T_I\cap Y}$ and $J|_{T_I\cap Y}$ are $r_I|_{T_I}$-invariant.

If $k>\# V$, any $\alpha$ and $J$ are vacuously $k$-adapted.

Suppose $\alpha$ and $J$ are $k+1$-adapted, and let the stratum neighbourhoods $T_v\subset U_v$ be as above. By Lemma \ref{invariant_neighbourhoods}, we may assume $T_I$ is $r_I$-invariant for any $I\subset V$. We may also assume that $T_v\subset M$ is open for $v\in V$. 

Now for each $I\subset V$ such that $\# I=k$, let 
\begin{equation}
\alpha_I=\int_{t\in [0,1]^I}\left(\phi_{r_I}^t\right)^*\alpha|_{Y\cap T_I}dt.
\end{equation}
We can define $J_I$ similarly, using a standard polar decomposition argument. First let
\begin{equation}
A_I=\int_{t\in [0,1]^I}\left(\phi_{r_I}^t\right)^*J|_{Y\cap T_I}dt.
\end{equation}
We can define a metric $g_I(u,v)=\omega(u,A_Iv)$. Now with respect to $g_I$, $A_IA_I^T$ is positive definite and symmetric, and has a positive definite square root. Let $J_I=\left(\sqrt{A_IA^T_I}\right)^{-1}A_I$, which is $\omega$-compatible and $r_I$-invariant by construction.  

For $v\in V$, on restriction to $U_v$, by integrating Cartan's formula for the Lie derivative of the left hand side, we have
\begin{equation}
\label{Ham_pullback_exact}
(\phi_{r_v}^t)^*\alpha-\alpha=d\int_{\tau=0}^t\iota_{X_{r_v}}\left((\phi_{r_v}^\tau)^*\alpha-\alpha\right)d\tau,
\end{equation}
so $\alpha_I=\alpha|_{Y\cap T_I}+df_I$, where $f_I$ vanishes on $T_{I\cup \{v\}}$ for $v\in V\setminus I$, because $\alpha$ is already invariant in this region. Similarly, $A_I$ and therefore $J_I$ agrees with $J$ on this region. 

Furthermore, if $\alpha_x=0$ for $x\in D$, then $(\alpha_I)_x=0$ for $x\in D\cap T_I$. If $D$ is an almost-complex submanifold for $J$, then $D\cap T_I$ is an almost-complex submanifold for $J_I$, because $(A_I)_x$ respects the decomposition $T_xD\oplus T_xD^\omega$ for $x\in D\cap T_I$.

By Lemma \ref{invariant_neighbourhoods}, we can choose stratum neighbourhoods $S_v\subset T_v$ for $v\in V$ such that $S_v\cap X\subset X$ is closed for $v\in V$, and $S_I$ is $r_I$-invariant for any $I\subset V$. We can choose a smooth function $f$ on $Y$ such that $f|_{S_I\cap Y}=f_I|_{S_I\cap Y}$, for $\# I=k$. Let $\overline{\alpha}=\alpha+df$. Then $\overline{\alpha}|_{Y\cap S_I}=\alpha_I|_{Y\cap S_I}$. Similarly we can choose a compatible almost-complex structure $\overline{J}$ on $Y$ such that $\overline{J}|_{Y\cap S_I}=J_I$.

By construction, $\overline{\alpha}$  and $\overline{J}$ are $k$-adapted. By induction, there exist such $\alpha$ and $J$ which are $1$-adapted.
\end{proof}

\subsection{Equivariant Skeleta}
\begin{lem}
\label{skeleton_barycentre}
Suppose $(X,\theta)$ is a finite type convex symplectic manifold.

Let $G$ be a compact Lie group and $\mathfrak{g}$ its Lie algebra, and suppose
\begin{equation}
\mu:X\rightarrow \mathfrak{g}^*
\end{equation}
is a proper map which generates a hamiltonian $G$-action on $X$, which preserves $\theta$, and $\mu$ is equivariant with respect to the coadjoint representation.

Then the Lagrangian skeleton $L$ of $(X,\theta)$ is contained in a fibre $\mu^{-1}(\xi)$ for some fixed point $\xi$ of the adjoint representation, and $\psi^{-t}(\mu^{-1}(\xi))\subset \mu^{-1}(\xi)$ for all $t>0$.
\end{lem}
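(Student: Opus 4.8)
The plan is to show that the Liouville flow $\psi^{-t}$ pushes everything towards the critical value of $\|\mu\|$ that is attained on the skeleton, and that this forces $\mu$ to be constant on $L$. First I would observe that since $G$ preserves $\theta$, it preserves $\omega$ and the Liouville vector field $Z$; hence the Liouville flow $\psi^t$ commutes with the $G$-action, and therefore $\mu\circ\psi^t$ is again a moment map for the $G$-action, equivariant for the coadjoint representation. The key computation is to differentiate $\mu$ along $Z$: for $\xi\in\mathfrak g$ with induced vector field $X_\xi$ on $X$, we have $d\langle\mu,\xi\rangle = \iota_{X_\xi}\omega$, so
\begin{equation}
\frac{d}{dt}\langle\mu\circ\psi^t,\xi\rangle = (\mathcal L_Z\langle\mu,\xi\rangle)\circ\psi^t = (\iota_Z d\langle\mu,\xi\rangle)\circ\psi^t = (\iota_Z\iota_{X_\xi}\omega)\circ\psi^t = -(\iota_{X_\xi}\theta)\circ\psi^t = -(d\langle\mu,\xi\rangle(Z))\circ\psi^t,
\end{equation}
but more usefully, since $G$ preserves $\theta$ we have $\mathcal L_{X_\xi}\theta = 0$, and combined with $d\theta=\omega$ this gives $\iota_{X_\xi}\omega = -d(\iota_{X_\xi}\theta)$, i.e. $\langle\mu,\xi\rangle = \iota_{X_\xi}\theta$ up to a constant, which I would normalise to zero (the normalisation being precisely the choice of equivariant moment map). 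Then $\mathcal L_Z\langle\mu,\xi\rangle = \iota_Z d(\iota_{X_\xi}\theta) = \mathcal L_Z\iota_{X_\xi}\theta = \iota_{[Z,X_\xi]}\theta + \iota_{X_\xi}\mathcal L_Z\theta = \iota_{X_\xi}\theta = \langle\mu,\xi\rangle$, using $[Z,X_\xi]=0$ and $\mathcal L_Z\theta = \iota_Z\omega + d\iota_Z\theta = -\theta + d\iota_Z\theta$, so actually $\mathcal L_Z\theta = -\theta + d(\text{const})= \dots$ — here I should be careful, but the upshot is that each component $\langle\mu,\xi\rangle$ is an eigenfunction-type quantity: along Liouville flowlines $\langle\mu\circ\psi^t,\xi\rangle = e^{t}\langle\mu,\xi\rangle$ or similar exponential behaviour, so $\|\mu\|$ is monotone along flowlines.

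Given this, since $L = \bigcap_{t>0}\psi^{-t}(X)$ is the set of points whose backward Liouville orbit stays in any fixed compact set (equivalently, $L=\bigcap_{t>0}\psi^{-t}(K)$ for a suitable compact $K$ by finite-type convexity), and along flowlines $\mu$ scales exponentially, the only way a backward orbit can stay bounded is if $\mu$ is constant along it. More precisely, I would argue that $\mu$ is constant on $L$: the function $\|\mu\circ\psi^t\|^2$ is monotone in $t$ (by the eigenfunction relation, $\frac{d}{dt}\|\mu\circ\psi^t\|^2 = 2\|\mu\circ\psi^t\|^2 \geq 0$ after normalising the moment map so that $0$ is the barycentre of the image), hence on $L$, which is forward- and backward-invariant under the flow up to the appropriate direction and is compact, the function must be constant, forcing $\|\mu\|\equiv c$ on $L$; and then differentiating again shows $d\mu = 0$ on $L$ as well, so $\mu(L)$ is a single point $\xi$. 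Equivariance of $\mu$ and $G$-invariance of $L$ (which follows because $G$ commutes with $\psi$ and hence preserves $L$) force $\xi$ to be fixed by the coadjoint action, i.e. $\mathrm{Ad}^*$-fixed, equivalently fixed by the adjoint representation under the identification $\mathfrak g\cong\mathfrak g^*$. Finally, the inclusion $\psi^{-t}(\mu^{-1}(\xi))\subset\mu^{-1}(\xi)$ follows from the eigenfunction relation: if $\langle\mu,\eta\rangle = 0$ at a point (true at $\xi$ after centering, since $\xi$ being $\mathrm{Ad}^*$-fixed and the moment map centered means $\xi = 0$), then it remains $0$ along the flow.

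The main obstacle I anticipate is getting the normalisation of $\mu$ exactly right so that the eigenfunction identity $\mathcal L_Z\langle\mu,\xi\rangle = \langle\mu,\xi\rangle$ holds on the nose (the ambiguity in the moment map is exactly a constant in $\mathfrak g^*$, and the correct choice is forced by $\langle\mu,\xi\rangle = \iota_{X_\xi}\theta$, which is canonical once $\theta$ is fixed); and then verifying that this canonical $\mu$ is automatically coadjoint-equivariant (it is, since $\theta$ is $G$-invariant, so $\iota_{X_\xi}\theta$ transforms correctly under $G$) — in other words I must check the hypothesis that $\mu$ is equivariant is consistent with, indeed implied by, the canonical choice, so that centering at an $\mathrm{Ad}^*$-fixed point is legitimate. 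The rest is a compactness argument on $L$ using finite-type convexity, which should be routine given Proposition \ref{convexity} and the structure already set up.
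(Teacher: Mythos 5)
Your proposal is correct and follows essentially the same route as the paper: both compute $\mathcal{L}_Z\langle\mu,\eta\rangle$ using $G$-invariance of $\theta$ to obtain a linear ODE for $\mu$ along the Liouville flow, deduce exponential convergence of $\mu\circ\psi^{-t}$ to a constant, and conclude that the skeleton lies in the fibre over that constant, which is coadjoint-fixed by equivariance. The only difference is presentational: the paper keeps the given $\mu$ and identifies the fixed point as the integration constant $\xi$ in $\iota_Z d\mu=\mu-\xi$, whereas you renormalise to the canonical moment map $\iota_{X_\eta}\theta$ and centre at $0$; your monotonicity detour via $\|\mu\|^2$ and $d\mu=0$ is more roundabout than necessary but not wrong.
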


\begin{proof}
Let $\omega=d\theta$, $Z$ denote the Liouville vector field $\omega$-dual to $\theta$, and $\psi^{-t}$ the negative time-$t$ flow of $Z$ for $t>0$.

We have
\begin{equation}
d(\iota_Zd\mu-\mu)=d(\iota_{X_\mu}\theta-\mu)=\mathcal{L}_{X_\mu}\theta-\iota_{X_\mu}d\theta-d\mu=\mathcal{L}_{X_\mu}\theta=0,
\end{equation}
so $\iota_Zd\mu=\mu-\xi$ for some $\xi\in \mathfrak{g}^*$, and for $t>0$,
\begin{equation}
\mu\circ \psi^{-t}=\xi+e^{-t}(\mu-\xi).
\end{equation}
Therefore as $t\to \infty$, $\mu(\psi^{-t}(x))\to \xi$ for all $x\in X$, so
\begin{equation}
L=\bigcap_{t>0}\psi^{-t}(X)\subset \mu^{-1}(\xi).
\end{equation} 
Because $\psi^{-t}$ and $\mu$ are $G$-equivariant, $\xi$ must be a fixed point.

\end{proof}
\bibliography{refs}

\end{document}